\def\BibTeX{{\rm B\kern-.05em{\sc i\kern-.025em b}\kern-.08em
    T\kern-.1667em\lower.7ex\hbox{E}\kern-.125emX}}
\newcommand{\MATLABStyle}[1]{{\fontfamily{pcr}\selectfont{#1}}}
\newcommand{\Lim}[1]{\underset{#1}{\mathrm{Lim}}}
\newcommand{\Liminf}[1]{\underset{#1}{\mathrm{Liminf}}}
\newcommand{\Limsup}[1]{\underset{#1}{\mathrm{Limsup}}}
\newcommand{\squeezeup}{\vspace{-2.5mm}}
\newtheorem{theorem}{Theorem}[section]
\newtheorem{lemma}{Lemma}[section]
\newtheorem{corollary}{Corollary}[section]
\newtheorem{assumption}{Assumption}[section]
\newtheorem{definition}{Definition}[section]
\newtheorem{claim}{Claim}[section]
\newtheorem{XxmpX}{Remark}[section] 
\newenvironment{remark} 
  {%
  \pushQED{\qed}\begin{XxmpX}}
  {\popQED\end{XxmpX}}
\renewenvironment{proof}{{\textit{Proof:}}}{\qed}
\newenvironment{proofOf}[1]{{\textit{Proof of {#1}:}}}{\qed}
\begin{document}
\title{Pareto optimal multi-robot motion planning}
\author{Guoxiang Zhao and Minghui Zhu
\thanks{
This work was supported by the grants NSF ECCS-1710859 and NSF CNS-1830390.}
\thanks{G. Zhao and M. Zhu are with the School of Electrical Engineering and Computer Science, Pennsylvania State University, University Park, PA 16802 USA. (e-mail: gfz5014@psu.edu; muz16@psu.edu) }
}

\maketitle

\begin{abstract}
This paper studies a class of multi-robot coordination problems where a team of robots aim to reach their goal regions with minimum time and avoid collisions with obstacles and other robots.
A novel numerical algorithm is proposed to identify the Pareto optimal solutions where no robot can unilaterally reduce its traveling time without extending others'. 
The consistent approximation of the algorithm in the epigraphical profile sense is guaranteed using set-valued numerical analysis. 
Experiments on an indoor multi-robot platform and computer simulations show the anytime property of the proposed algorithm; i.e., it is able to quickly return a feasible control policy that safely steers the robots to their goal regions and it keeps improving policy optimality if more time is given.

\end{abstract}

\begin{IEEEkeywords}
robotic motion planning, multi-robot coordination, Pareto optimality
\end{IEEEkeywords}

\begin{section}{Introduction}\label{sect:intro}

Robotic motion planning is a fundamental problem where a control sequence is found to steer a mobile robot from an initial state to a goal set, while enforcing dynamic constraints and environmental rules. It is well-known that the problem is computationally challenging. For example, the piano-mover problem is shown to be PSPACE-hard in general \cite{reif1979complexity}. Sampling-based algorithms are demonstrated to be efficient in addressing robotic motion planning in high-dimensional spaces. The Rapidly-exploring Random Tree (RRT) algorithm \cite{lavalle2001randomized} and its variants are able to quickly find feasible paths. However, the optimality of returned paths is probably lost. In fact, computing optimal motion planners is much more computationally challenging than finding feasible motion planners \cite{canny1988complexity}. It is shown that computing the shortest path in $\mathbb R^3$ populated with obstacles is NP-hard in the number of obstacles \cite{canny1988complexity}. Recently, RRT* \cite{karaman2011samplingbased} and its variants are shown to be both computationally efficient and asymptotically optimal.

Multi-robot optimal motion planning is even more computationally challenging, because the worst-case computational complexity exponentially grows as the robot number. 
Current multi-robot motion planning mainly falls into three categories: centralized planning \cite{sanchez2002delaying}\cite{xidias2008motion}, decoupled planning \cite{kant1986efficient}\cite{simeon2002path} and priority planning \cite{buckley1989fast}\cite{erdmann1987multiple}. 
Noticeably, none of these multi-robot motion planners are able to guarantee the optimality of returned solutions. 
Recent papers \cite{zhu2014game} and \cite{jha2015game} employ game theory to synthesize open-loop planners and closed-loop controllers to coordinate multiple robots respectively. 
It is shown that the proposed algorithms converge to Nash equilibrium \cite{nash1951noncooperative} where no robot can benefit from unilateral deviations. 
As RRTs, the algorithms in \cite{zhu2014game}\cite{jha2015game} leverage incremental sampling and steering functions, the latter of which require to solve two-point boundary value problems. There are only a very limited number of dynamic systems whose steering functions have known analytical solutions, including single integrators, double integrators and Dubin's cars \cite{karaman2013samplingbased}.
Heuristic methods are needed to compute steering functions when dynamic systems are complicated.

In the control community, distributed coordination of multi-robot systems has been extensively studied in last decades \cite{ren2008distributed,bullo2009distributed,zhu2015distributed}. A large number of algorithms have been proposed to accomplish a variety of missions; e.g., rendezvous \cite{cao2008reaching}, formation control \cite{ren2008distributed}, vehicle routing \cite{frazzoli2004decentralized} and sensor deployment \cite{cortes2004coverage}\cite{schwager2009decentralized}. This set of work is mainly focused on the design and analysis of algorithms, which are scalable with respect to network expansion. To achieve scalability, most algorithms adopt gradient descent methodologies, which are easy to implement. 
Their long-term behavior; e.g., asymptotic convergence, can be ensured but usually there is no guarantee on transient performance; e.g., aggregate costs, due to the myopic nature of the algorithms. 
Another set of more relevant papers is about (distributed) receding-horizon control or model predictive control (MPC) for multi-robot coordination. Representative works include \cite{dunbar2006distributed}\cite{zhu2013distributed} on formation stabilization, \cite{dunbar2011distributed}\cite{li2016distributed} on vehicle platooning and \cite{kuwata2007distributed} on trajectory optimization. 
Model predictive control bears the following benefits \cite{bemporad1999robust, mayne2000constrained, mayne2014model}.
First, it has a unique ability to cope with hard constraints on controls and states.
Second, it can deal with system uncertainties and control disturbances and its robust stability can be formally guaranteed.
Third, it is suitable for control applications requiring rapid computations thanks to its online fashion of implementation.
The infinite-horizon performance of $N$-horizon MPC policy exponentially converges to the optimal value function of the infinite-horizon optimal control problem as the computing horizon $N$ extends to infinity \cite{grune2008infinite}.
In contrast, multi-robot motion planning aims to find controllers which can optimize certain cost functionals over entire missions; e.g., finding collision-free paths with shortest distances or minimum fuel consumption.

Differential games extend optimal control from single players to multiple players. Linear-quadratic differential games are the most basic, and their solutions can be formulated as coupled Riccati equations \cite{basar1999dynamic}. 
For nonlinear systems with state and input constraints, there are a very limited number of differential games whose closed-form solutions are known, and some examples include the homicidal-chauffeur and the lady-in-the-lake games \cite{basar1999dynamic}\cite{isaacs1999differential}. 
Otherwise, numerical algorithms are desired. Existing numerical algorithms are mainly based on partial differential equations \cite{bardi2008optimal,bardi1999numerical,souganidis1999twoplayer} and viability theory \cite{cardaliaguet1999setvalued,aubin2009viability,aubin2011viability}. Noticeably, this set of papers only considers zero-sum two-player scenarios.

\emph{Contribution statement:} This paper investigates a class of multi-robot closed-loop motion planning problems where multiple robots aim to reach their respective goal regions as soon as possible.
The robots are restricted to complex dynamic constraints and need to avoid the collisions with static obstacles and other robots. 
Pareto optimality is used as the solution notion where no robot can reduce its own travelling time without extending others'. 
A numerical algorithm is proposed to identify the Pareto optimal solutions. 
It is shown that, under mild regularity conditions, the algorithm can consistently approximate the epigraph of the minimal arrival time function.
The proofs are based on set-valued numerical analysis \cite{cardaliaguet1999setvalued,aubin2009viability, aubin2011viability}, which are the first to point out the promise in extending set-valued tools to multi-robot motion planning problems.
Experiments on an indoor multi-robot platform and computer simulations on unicycle robots are conducted to demonstrate the anytime property of our algorithm; i.e., it is able to quickly return a feasible control policy that safely steers the robots to their goal regions and it keeps improving policy optimality if more time is given.
Detailed proofs are provided in Section \ref{sect:analysis}.
Preliminary results are included in \cite{zhao2018pareto} where all the proofs and experimental results are removed due to space limitation.

\end{section}

\begin{section}{Problem Formulation}\label{sect:formulation}

Consider a team of mobile robots labeled by $\mathcal{V}\triangleq\{1, ..., N\}$.
The dynamic of robot $i$ is governed by:\begin{equation}\label{eq:0}
\dot{x}_i(s)=f_i(x_i(s), u_i(s)),\quad\forall i\in\mathcal{V},
\end{equation}
where $x_i(s)\in X_i$ is the state of robot $i$ and $u_i:[0, +\infty)\to U_i$ is the control of robot $i$. 
Here, the state space and the set of all possible control values for robot $i$ are denoted by $X_i\subseteq\mathbb{R}^{d_i}$ and $U_i\subseteq\mathbb{R}^{m_i}$ respectively.
The obstacle region and goal region for robot $i\in\mathcal V$ are denoted by $ X_i^O\subseteq X_i$ and $X_i^G\subseteq X_i\setminus X_i^O$ respectively.
Denote the minimum safety distance between any two robots as $\sigma>0$.
The free region for robot $i$ is denoted by $X_i^F\triangleq\{x_i\in X_i\setminus X_i^O|\|x_i-x_j\|\geq\sigma, x_j\in X_j^G, i\neq j\}$.
Let $\textbf{X}\triangleq\prod_{i\in\mathcal V} X_i$, $\textbf{X}^G\triangleq\prod_{i\in\mathcal V} X_i^G$ and $\textbf{X}^F\triangleq\prod_{i\in\mathcal V} X_i^F$.
Assume $\|x_i-x_j\|\geq\sigma, \forall x\in \textbf{X}^G, i\neq j$.
Define the safety region as $\textbf{S}\triangleq\{x\in\textbf{X}^F|\|x_i-x_j\|\geq\sigma, i\neq j\}$.
Here $\|\cdot\|$ denotes the $2$-norm.

The sets of state feedback control policies for robot $i$ and the whole robot team are defined as $\varpi_i\triangleq\{\pi_i(\cdot):\textbf{X}\to U_i\}$ and $\varpi\triangleq\{\prod_{i\in\mathcal V}\pi_i(\cdot)| \pi_i(\cdot)\in\varpi_i\}$ respectively.
Consider the scenario where the robot team starts from $x\in\textbf{X}$ and executes policy $\pi\in\varpi$.
The induced minimal arrival time vector is characterized as $\vartheta(x, \pi)\triangleq\inf\{t\in\bar{\mathbb{R}}_{\geq0}^N|\forall i\in\mathcal{V}, x_i(0)=x_i, \dot{x}_i(s)=f_i(x_i(s), \pi_i(x(s))), x(s)\in\textbf{S}, x_i(t_i)\in X_i^G, 0\leq s\leq \max_{i\in\mathcal V}t_i\}$, where the infimum uses the partial order in footnote~\ref{ftnt:1}.
\footnotetext[1]{
Throughout this paper, product order is imposed; i.e. two vectors $a, b\in\mathbb R^N$ are said ``$a$ is less than $b$ in the Pareto sense'', denoted by $a\preceq b$, if and only if  $a_i\leq b_i, \forall i\in\{1,\cdots,N\}$. 
Similarly, strict inequality can be defined by $a\prec b\iff a_i<b_i, \forall i\in\{1, \cdots, N\}$.
\label{ftnt:1}
}
The $i$-th element of $\vartheta(x, \pi)$ represents the first time robot $i$ reaches its goal region without collisions when the robot team starts from initial state $x$ and executes policy $\pi$.
In our multi-robot motion planning problem, the minimal arrival time function $\varTheta^*:\textbf{X}\rightrightarrows\bar{\mathbb{R}}_{\geq0}^N$ is a set-valued map and is defined as $\varTheta^*(x)\triangleq\mathcal{E}[cl(\{\vartheta(x, \pi)|\pi\in\varpi\})]$, where $\mathcal{E}$ is the Pareto minimization defined as $\mathcal{E}(\mathcal{T})\triangleq\{\tau\in\mathcal{T}|\nexists \tau'\in\mathcal{T}\text{ s.t. }\tau'\neq\tau\text{ and }\tau'\preceq\tau\}$ for $\mathcal{T}\subseteq\mathbb{R}^N_{\geq0}$ and $cl(\cdot)$ is the closure.
The closure ensures the existence of $\varTheta^*(x)$ per Theorem 4.1 of \cite{hartley1978cone}.
The vectors in $\varTheta^*(x)$ indicate that no robot can unilaterally reach its goal region earlier without extending other robots' travelling times. 
The associated set of Pareto optimal solutions is defined as $\mathcal{U}^*(x)\triangleq\{\pi^*\in\varpi|\vartheta(x, \pi^*)\in\varTheta^*(x)\}$.
Note that the elements of $\vartheta(x, \pi^*)$ could be infinite, indicating that some robots cannot safely reach their goal regions.
Infinite time may cause numerical issues.
To tackle this, transformed minimal arrival time function is defined as $v^*(x)\triangleq\Psi(\varTheta^*(x))$, where Kruzhkov transform $\Psi(t)\triangleq\begin{bmatrix}1-e^{-t_1}\\1-e^{-t_2}\\\vdots\\1-e^{-t_N}\end{bmatrix}$
 for $t\in\bar{\mathbb R}^N_{\geq0}$ normalizes $[0, +\infty]$ to $[0,1]$.
Notice that Kruzhkov transform is bijective and monotonically increasing.

The objective of this paper is to identify optimal control policies in $\mathcal{U}^*(x)$ and the corresponding minimal arrival time function $\varTheta^*(x)$ (or equivalently $v^*(x)$).

\end{section}

\begin{section}{Assumptions and Notations}\label{sect:notations}
This section summarizes the assumptions, notions and notations used throughout the paper. 
Most notions and notations on sets and set-valued maps follow the presentation of \cite{aubin2009setvalued}.

The multi-robot system \eqref{eq:0} can be written in the differential inclusion form:
$\dot{x}_i(s)\in F_i(x_i(s)), \forall s\geq0,$
where the set-valued map $F_i: X_i\rightrightarrows\mathbb R^{d_i}$ is defined as $F_i(x_i)\triangleq\{f_i(x_i, u_i)| u_i\in U_i\}$.
Let $F(x)\triangleq\prod_{i\in\mathcal V}F_i(x_i)$.
The following assumptions are imposed.
\begin{assumption}\label{asmp:1}
The following properties hold for $i\in\mathcal V$:
\begin{enumerate}[label=\textbf{(A\arabic*)}, leftmargin = *, align=left]
\item\label{asmp:compact} $ X_i\text{ and } U_i$ are non-empty and compact;
\item\label{asmp:continuous} $f_i(x_i, u_i)$ is continuous over both variables;
\item\label{asmp:lineargrowth} $f_i(x_i, u_i)$ is linear growth; i.e., $\exists c_i\geq0 \text{ s.t. }\forall x_i\in X_i$ and $\forall u_i\in U_i$, $\|f_i(x_i, u_i)\|\leq c_i(\|x_i\|+\|u_i\|+1)$;
\item\label{asmp:uconvex} For each $x_i\in  X_i$, $F_i(x_i)\text{ is convex}$;
\item\label{asmp:Lipschitz} $F_i(x_i)$ is Lipschitz with Lipschitz constant $l_i$.
\end{enumerate}
\end{assumption}

Assumptions~\ref{asmp:compact} and \ref{asmp:continuous} imply $\|f_i(x_i, u_i)\|$ is bounded for each $i\in\mathcal V$.
Define $M_i\triangleq\max_{x_i\in X_i, u_i\in U_i}\|f_i(x_i, u_i)\|$ and let $M^+\triangleq\sqrt{\sum_{i\in\mathcal V}M_i^2}$ and $l^+\triangleq\sqrt{\sum_{i\in\mathcal V}l_i^2}$.
Then $F$ is bounded by $M^+$ and is $l^+$-Lipschitz.

\begin{remark}
One sufficient condition of Assumption \ref{asmp:uconvex} is that $f_i(x_i, u_i)$ is linear with respect to $u_i$ and $U_i$ is convex.
One sufficient condition of Assumption \ref{asmp:Lipschitz} is that $f_i(x_i, u_i)$ is Lipschitz continuous with respect to both variables on $X_i\times U_i$.
\end{remark}

Define the distance from a point $x\in\mathcal{X}$ to a set $A\subseteq\mathcal{X}$ as $d(x, A) \triangleq \inf \{ \|x-a\| | a \in A \}$.
A closed unit ball around $x\in\mathcal X$ in space $\mathcal X$ is denoted as $x+\mathcal{B}_{\mathcal X}\triangleq\{y\in\mathcal X|\|y-x\|\leq1\}$. 
Similarly, $\delta$ expansion of a set $A\subseteq\mathcal X$ is defined as $A+\delta\mathcal B_{\mathcal X}\triangleq\{x\in\mathcal X|d(x, A)\leq \delta\}$ for some $\delta\geq0$.
Specifically, we denote $x+\mathcal B_N\triangleq\{y\in\mathbb R^N|\|y-x\|\leq1\}$ if $x\in\mathbb R^N$.
Similar notation applies to a set $A$.
The subscript of closed unit ball may be omitted when there is no ambiguity.
The Hausdorff distance that measures the distance of two sets $A$ and $B$ is defined by $d_H(A, B)\triangleq\inf\{\delta\geq0|A\subseteq B+\delta\mathcal B, B\subseteq A+\delta\mathcal B\}$.
Kuratowski lower limit and Kuratowski upper limit of sets $\{A_n\}\subseteq\mathcal X$ are denoted by $\mathrm{Liminf}_{n \to+\infty} A_{n}=\{x\in\mathcal X | \lim_{n \to+\infty} d(x, A_{n}) = 0 \}$ and $\mathrm{Limsup}_{n \to+\infty} A_{n} = \{ x\in\mathcal X| \liminf_{n \to+\infty} d(x, A_{n}) = 0\}$ respectively. If $\mathrm{Liminf}_{n \to+\infty} A_{n}= \mathrm{Limsup}_{n \to+\infty} A_{n}$, the common limit is defined as Kuratowski limit $\mathrm{Lim}_{n \to+\infty} A_{n}$.

The Pareto frontier of a nonempty set $A\subseteq\mathcal X$ is denoted as $\mathcal{E}(A)\triangleq\{t\in A|\nexists t'\in A \text{ s.t. } t'\neq t, t'\preceq t\}$.
Let $A+B\triangleq\{a+b|a\in A, b\in B\}$ be the sum of two sets $A$ and $B$.
Denote the $n$-fold Cartesian product of a set $A$ by $A^n$.
Specifically, when $A$ is an interval; e.g., $A=[a, b]$, its $n$-fold product is denoted by $[a, b]^n$.
When $A$ is a singleton; e.g., $A=\{a\}$, its $n$-fold product is written as $\{a\}^n$.
Let $A\times\{b\}\triangleq\{(a,b)|a\in A\}$ be the Cartesian product of a set $A$ and a point $b$.
Define Hadamard product for two vectors $a,b\in\mathbb R^N$ as $a\circ b\triangleq\begin{bmatrix}a_1b_1&\cdots&a_Nb_N\end{bmatrix}^T$.
Define $a\circ B\triangleq \{a\circ b|b\in B\}$.
Denote $N$-dimensional zero vector and all-ones vector by $\textbf{0}_N$ and $\textbf{1}_N$ respectively.
The subscript may be omitted when there is no ambiguity.
The cardinality of a set is denoted as $|\cdot|$.

Define the distance between two set-valued maps $g, \bar g:\textbf{X}\rightrightarrows[0,1]^N$ by $d_{\textbf{X}}(g, \bar g)\triangleq\sup_{x\in \textbf{X}}d_H(g(x),\bar g(x))$.
\begin{definition}[Epigraph]\label{def:epigraph}
The epigraph of $\varTheta$ is defined by $\mathcal{E}pi(\varTheta)\triangleq\{(x,t)\in\mathcal X\times\mathbb R^N|\exists t'\in\varTheta(x)\text{ s.t }t\succeq t'\}$.
\end{definition}
\begin{definition}[Epigraphical Profile]\label{def:epigraphicalProfile}
The epigraphical profile of $\varTheta$ is defined by $E_\varTheta(x)\triangleq\varTheta(x)+\mathbb R_{\geq0}^N$.
\end{definition}

\begin{remark}
For a Kruzhkov transformed function $v$, we define its epigraphical profile by $E_v(x)\triangleq(v(x)+\mathbb R^N_{\geq0})\cap[0,1]^N$.
\end{remark}

\end{section}

\begin{section}{Algorithm Statement and Performance Guarantee}\label{sect:algorithm}

In this section, we present our algorithmic solution and summarize its convergence in Theorem~\ref{thm:multiRobotConvergence}.

\begin{subsection}{Algorithm statement}\label{sect:as}
The proposed algorithms, Algorithms~\ref{alg:1}, \ref{alg:SVD} and \ref{alg:VI}, are informally stated as follows.
The state space of each robot is discretized by a sequence of finite grids $\{X^p_i\}\subseteq X_i$ s.t. $X^p_i\subseteq X^{p+1}_i, \forall p\geq1$, where $p$ is the grid index and by convention $X^0_i=\emptyset$. 
The state space for the robot team is discretized by $\{\textbf{X}^p\}\subseteq \textbf{X}$ with monotonic spatial resolutions $h_p\to0$, where $\textbf{X}^p\triangleq\prod_{i\in\mathcal V}X^p_i$.
The safety region \textbf{S} is discretized as $\textbf{S}^p\triangleq(\textbf{S}+h_p\mathcal B_\textbf{X})\cap\textbf{X}^p$.
On each grid $\textbf{X}^p$, our algorithm chooses temporal resolution $\epsilon_p > 2h_p$.
Denote $\mathbb R^p_{\geq0}$ as an integer lattice on $\mathbb R_{\geq0}$ consisting of segments of length $h_p$, and $(\mathbb R^N_{\geq0})^p$ as a lattice on $\mathbb R^N_{\geq0}$.

\begin{algorithm}[H]\caption{Pareto-based anytime algorithm}\label{alg:1}
\begin{algorithmic}[1]
\STATE \textbf{Input:} System dynamics $f$, state space $\textbf{X}$, discretization grids $\{\textbf{X}^p\}_{p=1}^P$, the  associated resolutions $h_p$, $\epsilon_p$ and the number of value iterations to be executed $n_p$.
\FOR{$1\leq p\leq P$}
\item[] \emph{Grid refinement}
\STATE $\alpha_p = 2h_p+\epsilon_p h_pl^++\epsilon_p^2l^+M^+$
\STATE $\textbf{S}^p=(\textbf{S}+h_p\mathcal B_\textbf{X})\cap\textbf{X}^p$\label{alg:1:Sp}
\item[] \emph{Value function interpolation}
\FOR{$x\in\textbf{X}^{p-1}$}
\label{alg:1:step1_begin}
	\STATE $\tilde v^{p-1}(x)=v^{p-1}_{\bar n_{p-1}}(x)$
\ENDFOR\label{alg:1:tildeXp_end}
\FOR{$x\in\textbf{S}^p\setminus\textbf{X}^{p-1}$}
	\FOR{$i\in\mathcal V$}
		\IF{$d(x_i,  X^G_i)\leq M_i\epsilon_{p}+h_p$}
			\STATE $\tilde v^{p-1}_i(x)=0$
		\ELSE
			\STATE $\tilde v^{p-1}_i(x)=1$
		\ENDIF
	\ENDFOR
\ENDFOR

\FOR{$x\in\textbf{X}^p\setminus(\textbf{S}^p\bigcup\textbf{X}^{p-1})$}
	\STATE $\tilde v^{p-1}(x)=\{\textbf{1}_N\}$
\ENDFOR\label{alg:1:step1_end}
\item[] \emph{Value function initialization}
\FOR{$x\in\textbf{X}^{p-1}$}\label{alg:1:Xp_begin}
	\STATE $v^p_0(x)=\tilde v^{p-1}(x)$
\ENDFOR\label{alg:1:Xp_end}\FOR{$x\in\textbf{X}^p\setminus\textbf{X}^{p-1}$}\label{alg:1:init_begin}
	\STATE $v^p_0(x)=\bigcup_{\tilde x\in X^p_E(x)}\tilde v^{p-1}(\tilde x)$\label{alg:1:initializeValue}
\ENDFOR\label{alg:1:init_end}
\item[] \emph{Value function update}
 
\FOR{$x\in \textbf{S}^p\setminus(\textbf{X}^G+(M^+\epsilon_p+h_p)\mathcal B_\textbf{X})$}\label{alg:1:SVD_begin}
	\STATE$(\tilde X^p(x), \tilde T^p(x))=\text{\MATLABStyle{Set\_Valued\_Dynamic}}(x, \textbf{S}^p)$
\ENDFOR\label{alg:1:SVD_end}
\STATE $n=0$
\WHILE{ $n\leq n_p$ \textbf{and} $v^p_n\neq v^p_{n-1}$}\label{alg:1:np}
		\STATE $n=n+1$
		\FOR{$x\in \textbf{S}^p\setminus(\textbf{X}^G+(M^+\epsilon_p+h_p)\mathcal B_\textbf{X})$}
			\STATE $(v^p_n(x), \mathcal U^p(x))\break\text{\quad}=\text{\MATLABStyle{Value\_Iteration}}(x, \tilde X^p(x), \tilde T^p(x), v^p_{n-1})$\label{alg:1:VI}
		\ENDFOR\label{alg:1:VIloopend}
\ENDWHILE\label{alg:1:np_end}
\STATE $\bar n_p=n$
\FOR{$x\in(\textbf{X}^p\cap(\textbf{X}^G+(M^+\epsilon_p+h_p)\mathcal B_\textbf{X}))\cup(\textbf{X}^p\setminus\textbf{S}^p)$}
\STATE $v^p_{\bar n_p}(x)=\tilde v^{p-1}(x)$
\ENDFOR
\ENDFOR
\STATE \textbf{Output:} $v^p_{\bar n_p}$, $\mathcal{U}^p$\label{alg:final}
\end{algorithmic}
\end{algorithm}

With these spatial and temporal discretization, Algorithm~\ref{alg:1} leverages the idea of multi-grid methods to search for the minimal arrival time function.
Specifically, Algorithm~\ref{alg:1} iteratively executes the following two phases: initializing the solution on $\textbf{X}^p$ by utilizing the results from $\textbf{X}^{p-1}$ and partially solving a multi-robot optimal control problem on grid $\textbf{X}^p$.
We start with the second phase, which consists of two steps: construction of set-valued dynamics as Algorithm~\ref{alg:SVD} and execution of value iteration as Algorithm~\ref{alg:VI}.

\begin{algorithm}[t]\caption{\MATLABStyle{Set\_Valued\_Dynamic}$(x, \textbf{S}^p)$}\label{alg:SVD}
\begin{algorithmic}[1]
\STATE \textbf{Input:} $x, \textbf{S}^p$\\
\FOR{$i\in\mathcal V$}
\label{alg:SVD:constructDyn_begin}
	\IF{$d(x_i,  X_i^G)>M_i\epsilon_p+h_p$}
		\STATE $\tilde{T}_i^p=\epsilon_p+2h_p\mathcal{B}_1$; $\tilde{X}_i^p=x_i+\epsilon_p F_i(x_i)+\alpha_p\mathcal{B}_{X_i}$;
	\ELSE
		\STATE $\tilde{T}_i^p=\{0\}$; $\tilde{X}_i^p=\{x_i\}$;
	\ENDIF
\ENDFOR\label{alg:SVD:constructDyn_end}
\STATE $\tilde{T}^p=(\prod_{i\in\mathcal V}\tilde{T}_i^p)\cap(\mathbb{R}^N_{\geq0})^p$; $\tilde{X}^p=(\prod_{i\in\mathcal V}\tilde{X}_i^p)\cap\textbf{S}^p$;\label{alg:SVD:intersect}\\
\STATE \textbf{Output:} $\tilde X^p, \tilde T^p$
\label{code:SVD_end}
\end{algorithmic}
\end{algorithm}

\begin{algorithm}[t]\caption{\MATLABStyle{Value\_Iteration}$(x, \tilde X^p, \tilde T^p, v^p_{n-1})$}\label{alg:VI}
\begin{algorithmic}[1]
\STATE \textbf{Input:} $x, \tilde X^p, \tilde T^p, v^p_{n-1}$\\
\label{code:VI_begin}
\STATE $v^p_n(x)=\mathcal{E}(\{\tau+\tilde\tau-\tau\circ\tilde\tau|\tilde\tau=\mathcal E(\Psi(\tilde T^p)), \tilde{x}\in\tilde{X}^p,  \tau\in v^p_{n-1}(\tilde{x})\})$\;\label{alg:VI:MODP}
\label{alg:VI:Up}
\STATE $\mathcal U^p(x)=$ \{the solutions to $u$ in the above step\}\;\label{alg:VI:control}
\STATE \textbf{Output:} $v^p_n, \mathcal U^p$
\label{code:VI_end}
\end{algorithmic}
\end{algorithm}

Step 1: in lines \ref{alg:SVD:constructDyn_begin}-\ref{alg:SVD:constructDyn_end} of Algorithm \ref{alg:SVD}, the following set-valued dynamics are constructed to approximate system~\eqref{eq:0}:
\begin{align}\label{eq:tildeXi}
\tilde{X}_i^p(x_i)=
	\begin{cases}
	x_i+\epsilon_pF_i(x_i)+\alpha_{p}\mathcal{B}_{X_i},\\
	\quad\quad\text{if }d(x_i,  X_i^G)>M_i\epsilon_p+h_p;\\
	x_i,\quad\text{otherwise,}\\
	\end{cases}
\end{align} and time dynamic $\dot{t} = 1$ is approximated by:
\begin{align}\label{eq:tildeTi}
\tilde{T}_i^p(x_i)=
	\begin{cases}
	\epsilon_p+2h_p\mathcal{B}_1, &\text{if }d(x_i,  X_i^G)> M_i\epsilon_p+h_p;\\
	0, &\text{otherwise,}\\
	\end{cases}
\end{align}
where $\alpha_p\triangleq2h_p+\epsilon_ ph_pl^++\epsilon_p^2l^+M^+$. 
Let $\tilde{X}^p(x)\triangleq\prod_{i\in\mathcal V}\tilde{X}^p_i(x_i)\cap\textbf{S}^p$ and $\tilde{T}^p(x)\triangleq\prod_{i\in\mathcal V}\tilde{T}^p_i(x_i)\cap (\mathbb R_{\geq0}^N)^p$ as line \ref{alg:SVD:intersect} in Algorithm~\ref{alg:SVD}.
The balls $\alpha_{p}\mathcal{B}_{X_i}$ in~\eqref{eq:tildeXi} and $2h_p\mathcal{B}_1$ in~\eqref{eq:tildeTi} represent perturbations on the dynamics. 
The perturbations ensure that the image set of any $x$ is non-empty and the set-valued dynamic is well-defined. 
Figure $\ref{fig:dis}$ illustrates the set-valued dynamics \eqref{eq:tildeXi}, where robot $i$ at state $x_i$ takes a constant control $u_i$ for a time duration $\epsilon_p$ and transits to the red cross.
The next state of robot $i$ could be any red diamond, which lies in the intersection of the grid and the ball centered at $x_i+\epsilon_p f_i(x_i, u_i)$ with radius $\alpha_p$.
Let $\epsilon_p\to0$ and $\frac{h_p}{\epsilon_p}\to0$; i.e., the spatial resolution $h_p$ diminishes faster than the temporal resolution $\epsilon_p$. 
This ensures the validity of the approximation in three phases: when $\alpha_p$ is very small compared to $\epsilon_p$ and $h_p$, the set-valued dynamics transit on the grid $\textbf{X}^p$; since $h_p$ is diminishing faster than $\epsilon_p$, the set-valued dynamics can well approximate the discrete-time system on $\textbf{X}$ when $p$ is sufficiently large; finally, as $\epsilon_p$ converges to $0$, the discrete-time system further converges to the continuous-time system.
When $d(x_i,  X_i^G)\leq M_i\epsilon_p+h_p$, robot $i$ is considered in the goal region, and hence it could stay still and stop counting traveling time.

\begin{figure}
\includegraphics[scale=0.55]{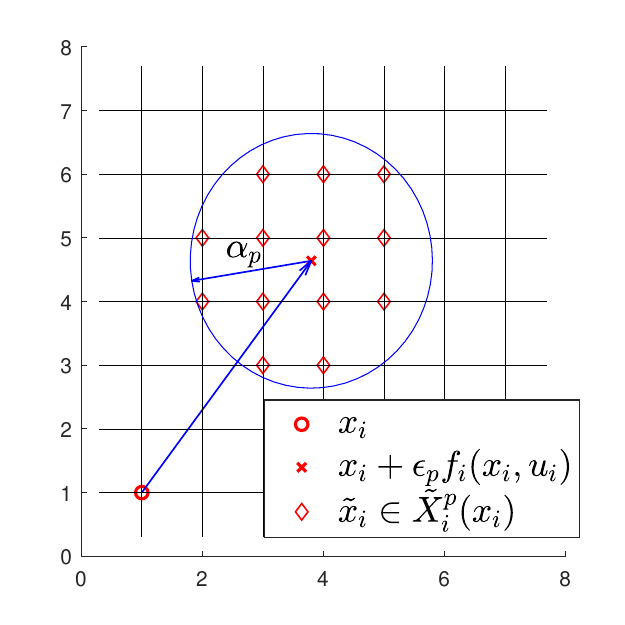}\centering\\
\caption{Set-valued discretization of robot dynamics}\label{fig:dis}
\squeezeup
\end{figure}

Step 2: given the above set-valued dynamics, Algorithm \ref{alg:VI} searches for Pareto optimal solutions of minimal arrival time vectors and stores values in $\varTheta^p_n$ and the last controls in $\mathcal{U}^p$. 
The Bellman operator in the Pareto sense is defined by 
\begin{equation}\label{eq:bellman}
\begin{split}
(\mathbb T\varTheta^p_n)(x)\triangleq&\mathcal E(\{\tilde t+t|\tilde t\in\tilde T^p(x), \tilde{x}\in \tilde{X}^p(x), t\in\varTheta^p_n(\tilde{x})\}),
\end{split}
\end{equation}
where $\varTheta^p_n:\textbf{X}^p\rightrightarrows\bar{\mathbb R}_{\geq0}^N$ is the estimate of $\varTheta^*$ after $n$ value iterations on grid $\textbf{X}^p$.
Since $\mathcal E(\tilde T^p(x))$ is a singleton, $\tilde t=\mathcal E(\tilde T^p(x))$.
When no feasible control policy exists at $x$, $\varTheta^p_n(x)$ is infinity. 
To remedy this numerical issue, we apply Kruzhkov transform on both sides of \eqref{eq:bellman} and replace $\varTheta^p_n$ with $\Psi^{-1}v^p_n$, which produces the transformed Bellman operator in the Pareto sense:
\begin{equation}\label{eq:krubellman}
\begin{aligned}
(\mathbb{G}v^p_n)(x)=
&\mathcal E(\{\tilde\tau+\tau-\tilde\tau\circ\tau|\tilde\tau=\Psi(\mathcal E(\tilde T^p(x))), \\
&\quad\tilde x\in\tilde X^p(x),\tau\in v^p_n(\tilde x)\}),
\end{aligned}
\end{equation}
where $\mathbb G\triangleq\Psi\mathbb T\Psi^{-1}$ summarizes line~\ref{alg:VI:MODP} of Algorithm \ref{alg:VI}.
Let $\mathcal{U}^p(x)$ be the set of controls which solve the last value iteration $v^p_{n}(x)=(\mathbb G v^p_{n-1})(x)$ on grid $\textbf{X}^p$.
It corresponds to line~\ref{alg:VI:control} of Algorithm~\ref{alg:VI}.

With the above two steps, Algorithm~\ref{alg:1} iteratively calls Algorithms~\ref{alg:SVD} and \ref{alg:VI} to search for the minimal arrival time function.
Denote the last estimate of minimal arrival time function on $\textbf{X}^p$ by $v^p_{\bar n_p}$, where $\bar n_p$ denotes the total number of value iterations executed on $\textbf{X}^p$.
When proceeding to grid $\textbf{X}^p$, Algorithm \ref{alg:1} first interpolates $v^{p-1}_{\bar n_{p-1}}$ to generate $\tilde v^p$ as lines \ref{alg:1:step1_begin}-\ref{alg:1:step1_end} to reuse previous computational results, then initializes value function $v^p_0$ as lines \ref{alg:1:Xp_begin}-\ref{alg:1:init_end} to reduce coupling among robots.
In particular, we maintain the estimates of minimal arrival time on the last grid $\textbf{X}^{p-1}$, assuming the fixed points on two consecutive grids are close to each other.
On new nodes $x\in\textbf{X}^p\setminus\textbf{X}^{p-1}$, $\tilde v^p(x)$ sets its $i$-th element as $0$ if robot $i$ is considered in the goal region, indicating that robot $i$ is not supposed to move and affect other robots' motions; and as $1$ otherwise, meaning no feasible solution has been found for robot $i$ yet.
Define the set of equivalent nodes $X^p_E(x)$ of $x\in\textbf{X}^p$ by
\begin{equation}\label{eq:XpE}
\begin{split}
X^p_E(x)\triangleq\{&x'\in \textbf{X}^p|x_i=x'_i, \forall i\in\mathcal V\setminus\mathcal V^G_p(x),\\
&d(x'_i, X^G_i)\leq M_i\epsilon_p+h_p,\forall i\in\mathcal V^G_p(x)\},
\end{split}\end{equation}
where $\mathcal V^G_p(x)\triangleq\{i\in\mathcal V|d(x_i, X^G_i)\leq M_i\epsilon_p+h_p\}$ denotes the set of robots which are close to or already in the goal regions.
Since robots in the goal regions never interfere with others and thus are excluded in value iterations, the values of equivalent nodes are the same.
Then the value function is initialized by $v^p_0(x)=\bigcup_{\tilde x\in X^p_E(x)}\tilde v^{p-1}(\tilde x), \forall x\in\textbf{X}^p\setminus\textbf{X}^{p-1}$; i.e., line \ref{alg:1:initializeValue} in Algorithm \ref{alg:1}.
With the initialized value function, Algorithm~\ref{alg:1} in lines~\ref{alg:1:SVD_begin}-\ref{alg:1:SVD_end} first calls Algorithm~\ref{alg:SVD} to construct set-valued dynamics and then in lines~\ref{alg:1:np}-\ref{alg:1:np_end} calls Algorithm~\ref{alg:VI} to execute value iterations for $n_p$ times or until a fixed point is reached. 
Notice that the total number of value iterations $\bar n_p$ may be less than $n_p$.
After that, Algorithm \ref{alg:1} refines the grid and begins a new cycle of updates.
\end{subsection}

\begin{subsection}{Performance guarantee}

Recall that $n_p$ at line \ref{alg:1:np} of Algorithm \ref{alg:1} is the number of value iterations to be executed on grid $\textbf{X}^p$. 
The choice of $n_p$ needs to satisfy the following assumption to ensure the convergence of Algorithm~\ref{alg:1}.

\begin{assumption}\label{asmp:DWindow}
There is a subsequence $\{D_k\}$ of the grid index sequence $\{p\}$ with $D_0=0$ s.t. $D_k-D_{k-1}\leq \bar D$ for some constant $\bar D$ and all $k\geq0$ and $\exp(-\sum_{p=D_{k-1}+1}^{D_k} n_p\kappa_p)\leq \gamma<1$ for every $k\geq0$, where $\kappa_p\triangleq(\lceil{\frac{\epsilon_p}{h_p}}\rceil-2)h_p$ is the minimum running cost.
\end{assumption}

Assumption~\ref{asmp:DWindow} implies that the distance between the estimate and the fixed point on the $D_k$-th grid reduces at least by $\gamma\in[0,1)$ over the update window length $\{D_{k-1}+1, \dots,  D_k\}$.

The choice of $\epsilon_p$ and $h_p$ should satisfy the following technical assumptions.
\begin{assumption}\label{asmp:resolutions}
The following hold for the sequences of $\{\epsilon_p\}$ and $\{h_p\}$:
\begin{enumerate}[label=\textbf{(A\arabic*)}, leftmargin = *, align=left]\setcounter{enumi}{5}
\item\label{asmp:double} $\epsilon_p>2h_p, \forall p\geq1$;
\item\label{asmp:monoConv} $\epsilon_p\to0$ and $\frac{h_p}{\epsilon_p}\to0$ monotonically as $p\to+\infty$;
\item\label{asmp:alpha} $2h_p+\epsilon_p h_pl^++\epsilon_p^2l^+M^+\geq h_{p-1}, \forall p\geq1$;
\item\label{asmp:lowerBound}$[X^G_i+(\sigma+M_i\epsilon_1+h_1)\mathcal B_{X_i}]\cap X^F_j=\emptyset, \forall i\neq j$.
\end{enumerate}
\end{assumption}
The consistent approximation of $v^*$ via Algorithm \ref{alg:1} in the epigraphical profile sense is summarized in Theorem \ref{thm:multiRobotConvergence}.
\begin{theorem}\label{thm:multiRobotConvergence}
Suppose Assumption \ref{asmp:1}, \ref{asmp:DWindow} and \ref{asmp:resolutions} hold, then the sequence $\{v^p_{\bar n_p}\}$ in Algorithm \ref{alg:1} converges to $v^*$ in the epigraphical profile sense; i.e., for any $x\in\textbf{X}$, $$E_{v^*}(x)=\Lim{p\to+\infty}\bigcup_{\tilde x\in(x+h_p\mathcal B_\textbf{X})\cap\textbf{X}^p}E_{v^p_{\bar n_p}}(\tilde x).$$
\end{theorem}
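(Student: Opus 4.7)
\medskip
\noindent\textbf{Proof proposal for Theorem~\ref{thm:multiRobotConvergence}.}
The plan is to decompose the Kuratowski limit via a triangle-type inequality in Hausdorff distance, splitting the total error into a \emph{value iteration error} on a fixed grid and a \emph{grid refinement error}, and then to drive each term to zero using a distinct set of hypotheses. Concretely, I would introduce the ideal fixed point $v^p_\infty$ of the operator $\mathbb{G}$ on grid $\textbf{X}^p$ (shown to exist and be unique on $\textbf{S}^p$ by a contraction argument below) together with a ``grid-projected'' version $v^{p,*}$ of the continuous transformed value $v^*$, and control $d_{\textbf{X}}(E_{v^p_{\bar n_p}}, E_{v^*})$ by
\[
d_{\textbf{X}}(E_{v^p_{\bar n_p}}, E_{v^p_\infty}) + d_{\textbf{X}}(E_{v^p_\infty}, E_{v^{p,*}}) + d_{\textbf{X}}(E_{v^{p,*}}, E_{v^*}).
\]
The first two terms are handled by Algorithm-side contraction arguments; the last is a purely analytic consistency claim about the perturbed set-valued Euler scheme \eqref{eq:tildeXi}--\eqref{eq:tildeTi} approximating the differential inclusion $\dot{x}\in F(x)$.

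\medskip
For the first term (anytime/iteration error), I would verify that $\mathbb{G}$ is monotone and $(1-\kappa_p)$-contractive with respect to $d_H$ on epigraphical profiles, which follows from the Kruzhkov identity $\tilde\tau+\tau-\tilde\tau\circ\tau = 1-(1-\tilde\tau)\circ(1-\tau)$ appearing in line~\ref{alg:VI:MODP} of Algorithm~\ref{alg:VI} and the fact that $\mathcal{E}(\Psi(\tilde T^p))\succeq \Psi(\kappa_p\mathbf{1})$. Iterating over the update window $\{D_{k-1}+1,\dots,D_k\}$ and composing the per-iteration factors gives a cumulative contraction by $\gamma<1$ via Assumption~\ref{asmp:DWindow}, with the bounded window length $\bar D$ preventing degenerate refinements. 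The value function interpolation and initialization phase (lines~\ref{alg:1:step1_begin}--\ref{alg:1:init_end}) must also be shown non-expansive in the epigraphical sense; here I would exploit Assumption~\ref{asmp:alpha} to ensure that previous-grid estimates, translated to the new grid via $X^p_E(x)$, remain within the basin of attraction of $\mathbb{G}$ on $\textbf{X}^p$. This yields $d_H(E_{v^p_{\bar n_p}(x)}, E_{v^p_\infty(x)})\to 0$ uniformly.

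\medskip
For the second and third terms (grid refinement / consistency), the key is a set-valued analogue of the Barles--Souganidis monotone consistent stable scheme argument, carried out in the epigraphical framework of \cite{cardaliaguet1999setvalued}. The epigraph $\mathcal{E}pi(\varTheta^*)$ coincides with the capture basin of $\textbf{X}^G\times\mathbb{R}^N_{\geq0}$ under the augmented dynamics $(F(x), \mathbf{1})$ on $\textbf{S}\times\mathbb{R}^N_{\geq 0}$, and I would identify $\mathcal{E}pi(\varTheta^p_\infty)$ (equivalently $E_{v^p_\infty}$ after $\Psi$) with a discrete capture basin for the perturbed set-valued dynamics. Using Assumption~\ref{asmp:1}, the Lipschitz property of $F$, and the scalings $\epsilon_p\to 0$, $h_p/\epsilon_p\to 0$, one shows that $\alpha_p=o(\epsilon_p)$ and that the graph of $(\tilde X^p,\tilde T^p)$ converges in Kuratowski sense to that of $(x+[0,\epsilon_p]F(x), \epsilon_p)$, which in turn converges to the continuous-time graph; Assumption~\ref{asmp:lowerBound} guarantees that the goal-region ``absorption'' in~\eqref{eq:tildeXi}--\eqref{eq:tildeTi} does not spuriously shortcut any safe trajectory. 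Combining graphical convergence of the dynamics with stability of capture basins under Kuratowski perturbation of set-valued dynamics gives the Kuratowski convergence of $E_{v^p_\infty}$ to $E_{v^*}$, first pointwise and then, after taking the $(x+h_p\mathcal{B}_\textbf{X})\cap\textbf{X}^p$ union to bridge the spatial grid, in the precise form stated in the theorem.

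\medskip
I expect the main obstacle to be the \emph{lower-semicontinuity} half of the Kuratowski limit, namely $E_{v^*}(x)\subseteq \Liminf{p\to\infty}\bigcup_{\tilde x} E_{v^p_{\bar n_p}}(\tilde x)$. The upper-semicontinuity half is comparatively routine because any discrete trajectory produced by $\tilde X^p$ can be interpolated to a feasible continuous trajectory for $F$ up to an error of order $\alpha_p$, so the discrete value dominates $v^*$ minus a vanishing term. The reverse inclusion requires constructing, for each optimal continuous trajectory realizing a point in $E_{v^*}(x)$, a nearby discrete trajectory on $\textbf{X}^p$ that remains in the \emph{safety} region $\textbf{S}^p$; the collision constraint $\|x_i-x_j\|\geq\sigma$ can be violated by $O(\alpha_p)$ perturbations at grazing configurations, so the Pareto minimization $\mathcal{E}(\cdot)$ in~\eqref{eq:bellman} can reject otherwise admissible approximants. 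Resolving this requires a careful perturbation-of-safety-region argument using the $h_p$-inflation $\textbf{S}^p=(\textbf{S}+h_p\mathcal{B}_\textbf{X})\cap\textbf{X}^p$ (line~\ref{alg:1:Sp}) to absorb the discretization error, together with the monotone scaling in Assumption~\ref{asmp:monoConv} to ensure the inflated region Kuratowski-converges to $\textbf{S}$ itself without generating spurious feasible trajectories in the limit.
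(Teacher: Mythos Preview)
Your broad architecture---split the error into an \emph{iteration error} $d(E_{v^p_{\bar n_p}},E_{v^p_\infty})$ and a \emph{consistency error} $d(E_{v^p_\infty},E_{v^*})$, handle the latter via a viability\,/\,capture-basin identification of $\mathcal{E}pi(\varTheta^*)$---is the same as the paper's. The consistency half is essentially Theorem~\ref{thm:kru1} together with the Appendix (Theorems~\ref{thm:viabepi}, \ref{thm:fullyApprox}, \ref{thm:varthetaconv} and Corollary~\ref{thm:varThetaPointwiseConvergence}), and your sketch of it is adequate. The intermediate object $v^{p,*}$ you introduce is unnecessary; the paper goes directly from $v^p_\infty$ to $v^*$.

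There is, however, a genuine gap in your contraction step. You assert that $\mathbb G$ is $(1-\kappa_p)$-contractive because ``$\mathcal E(\Psi(\tilde T^p))\succeq \Psi(\kappa_p\mathbf 1)$''. This is false: by \eqref{eq:deltaTau}, $\Delta\tau_i(x)=0$ whenever $i\in\mathcal V^G_p(x)$, i.e.\ whenever robot $i$ is within one hop of its goal. In those coordinates the Bellman update multiplies by $1$, not by $e^{-\kappa_p}$, so a naive Hausdorff bound on \eqref{eq:epiProfileBellman} gives no contraction at all at such states. The Kruzhkov identity by itself does not rescue this.

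The paper closes this gap with two ingredients you do not mention. First, Lemma~\ref{thm:zeroValue} shows that every $\tau\in v^p_n(x)$ already has $\tau_i=0$ for $i\in\mathcal V^G_p(x)$; hence the non-contracting coordinates are frozen at zero throughout the iteration and can be excised from the Hausdorff distance (this is the content of Lemma~\ref{thm:preliminaryContractionProperty}). Second, to push the resulting inequality through the union $\bigcup_{\tilde x\in\tilde X^p(x)}$ and down to a bound of the form \eqref{eq:contractionInequality1}--\eqref{eq:contractionInequality2}, the paper introduces an auxiliary operator $\hat{\mathbb G}$ in \eqref{eq:auxBellman} that replaces the degenerate set $\tilde X^p_i(x_i)=\{x_i\}$ by $x_i+\alpha_p\mathcal B$ for robots at goal, and proves $E_{\mathbb G v^p_n}=E_{\hat{\mathbb G} v^p_n}$ (Lemma~\ref{thm:tildeHatEquivalence}). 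That equivalence in turn rests on a monotonicity property for ``partially perturbed'' nodes (Lemmas~\ref{thm:fixedGridSub} and \ref{thm:multiGridSub}), which is where Assumption~\ref{asmp:lowerBound} is actually used---not, as you suggest, to prevent spurious shortcuts in the consistency argument.

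A smaller point: the interpolation\,/\,initialization phase is not simply ``non-expansive''. The paper quantifies the error it introduces as a separate additive term $b_p$ (Lemma~\ref{thm:Lemma9}), proves $b_p\to 0$, and then shows in Lemma~\ref{thm:discountedDistanceOnOneGrid} that the recursion $d_{\textbf X^p}(E_{v^p_{\bar n_p}},E_{v^p_\infty})\le \gamma_p\, d_{\textbf X^{p-1}}(E_{v^{p-1}_{\bar n_{p-1}}},E_{v^{p-1}_\infty})+b_p$ holds; Assumption~\ref{asmp:DWindow} and Lemma~\ref{thm:lemmaIX5} then finish the argument across update windows. Your proposal gestures at this but does not identify the $b_p$ mechanism or the role of Assumption~\ref{asmp:alpha}, which is what guarantees $(x+\alpha_p\mathcal B)\cap\textbf X^{p-1}\neq\emptyset$ so that the interpolated fixed points can be compared at all.
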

\end{subsection}

\begin{subsection}{Discussion}\label{sect:discussion}
Our proposed algorithm extends \cite{cardaliaguet1999setvalued} to multi-robot scenario.
For single robot scenario; i.e., $N=1$, if we set $\bar D = 1$ and $\gamma = 0$ and only impose Assumptions \ref{asmp:1}, \ref{asmp:double} and \ref{asmp:monoConv}, Algorithm~\ref{alg:1} and Theorem~\ref{thm:multiRobotConvergence} become Algorithm 3.2.4 on page 211 and Corollary 3.7 on page 210 of \cite{cardaliaguet1999setvalued} respectively.

However, from the analysis point of view, non-zero $\gamma$ and non-uniform lengths for update windows in the multi-robot scenario; i.e., $N\geq2$, require a set of novel analysis, which is provided in Sections~\ref{sect:analysis} and \ref{sect:proofs}.

The progress towards $v^*$ slows down or even stops as more value iterations are performed on a single grid.
A $\gamma$ close to one ensures that excessive value iterations are postponed to finer grids, and a longer update interval reduces each grid's efforts to reach the discount factor.

\end{subsection}

\end{section}

\begin{section}{Analysis}\label{sect:analysis}

In this section, we provide the major theoretic results that lead to the proof of Theorem \ref{thm:multiRobotConvergence}, which consist of four steps:

Step 1: we characterize the convergence of fixed points $v^p_\infty$ to the minimal arrival time function $v^*$; i.e., in Theorem~\ref{thm:kru1}.
The fixed point $v^p_\infty$ functions as a benchmark and we will show later that the last value function $v^p_{\bar n_p}$ on each grid $\textbf{X}^p$ can closely follow $v^p_\infty$ to converge;

Step 2: we introduce an auxiliary Bellman operator $\hat{\mathbb G}$ defined in \eqref{eq:auxBellman} to facilitate the analysis of the contraction property of the transformed Bellman operator $\mathbb G$ in the next step.
Specifically, the contraction property requires to add perturbations around all nodes in value iteration, but $\mathbb G$ imposes zero perturbation when robots are close to their goal regions.
Then $\hat{\mathbb G}$ bridges this technical gap and is equivalent to $\mathbb G$ in terms of updating value functions, which is shown in Lemma~\ref{thm:tildeHatEquivalence};

Step 3: we prove the contraction property of $\mathbb G$ via $\hat{\mathbb G}$ in Step 2 and it is summarized in Theorem \ref{thm:ContractionProperty}. 
The contraction property shows that the distance between the estimate of minimal arrival time function $v^p_n$ and the fixed point $v^p_\infty$ is exponentially discounted as value iterations are executed;

Step 4: we integrate Step 3 with Step 1 and show that $v^p_{\bar n_p}$ can closely follow $v^p_\infty$ and thus converge to $v^*$. 
In particular, the approximation errors induced by grid refinement are shown to be suppressed by sufficient value iterations and thereby the distance between $v^p_{\bar n_p}$ and $v^p_\infty$ is decreasing to zero.

This section is organized as follows.
Subsection~\ref{sect:convergenceFixedPoints} corresponds to Step 1 and introduces the convergence of fixed points; i.e., Theorem~\ref{thm:kru1}.
Subsection~\ref{sect:auxiliaryBellman} corresponds to Step 2 and confirms the equivalence of $\mathbb G$ and $\hat{\mathbb G}$ in terms of updating value functions.
Subsection~\ref{sect:propertiesHausdorff} corresponds to Step 3 and proves the contraction property of $\mathbb G$.
Step~4 is summarized in Section~\ref{sect:multiRobotMainProof}, which shows the proof of Theorem~\ref{thm:multiRobotConvergence}.
We only keep theorem statements in this section and postpone all the proofs to Section~\ref{sect:proofs}.

\begin{subsection}{Convergence of fixed points}\label{sect:convergenceFixedPoints}
The following theorem characterizes the convergence of fixed points $v^p_\infty$ to the optimal arrival time function $v^*$.

\begin{theorem}\label{thm:kru1}
Suppose Assumption \ref{asmp:1} holds and let $\epsilon_p>2h_p$, $h_p\to0$, $\frac{h_p}{\epsilon_p}\to0$.
Construct the sequence $\{v^{p}_n:\textbf{X}^p\rightrightarrows[0, 1]^N\}$ as follows:
\begin{align*}\label{eq:constructionv}
\begin{cases}
	v^{p}_0(x)=\begin{cases}	\{\textbf{\emph{0}}_N\},	&\text{ if } x\in\textbf{S}^p,\\
						\{\textbf{\emph{1}}_N\},	&\text{ otherwise},\\
						\end{cases}\\
	v^{p}_{n+1}(x)=\begin{cases}\mathbb Gv^{p}_n(x), &\text{ if } x\in\textbf{S}^p,\\
						v^p_n(x),	&\text{ otherwise},\\
						\end{cases}
\end{cases}
\end{align*}
where $\mathbb G$ is defined in \eqref{eq:krubellman}.
Then, for each $p$, there exists $v^{p}_\infty$ s.t. $\mathbb Gv^{p}_\infty=v^{p}_\infty$ and $v^p_\infty(x)=\mathrm{Lim}_{n\to+\infty}v^p_n(x), \forall x\in\textbf{X}^p$.
Further, the fixed points converge to $v^*$ in the epigraphical sense; i.e. for any $\{\eta_p\}$ s.t. $\eta_p\geq h_p$ and $\lim_{p\to+\infty}\eta_p=0$, the following holds:
\begin{equation*}
\begin{split}
\forall x\in\textbf{X}, E_{v^*}(x)=\Lim{p\to+\infty}\bigcup_{\tilde x\in(x+h_p\mathcal B_\textbf{X})\cap\textbf{X}^p}E_{v^p_\infty}(\tilde x).
\end{split}\end{equation*}
\end{theorem}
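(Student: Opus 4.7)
\medskip
\noindent\textbf{Proof proposal for Theorem \ref{thm:kru1}.}
The plan is to split the argument into two parts: (i) existence and pointwise Kuratowski convergence of $v^p_n$ to a fixed point $v^p_\infty$ on each fixed grid, and (ii) epigraphical convergence of the fixed points $\{v^p_\infty\}$ to $v^*$ as $p\to+\infty$. The first part is essentially a monotone fixed point argument, while the second is a consistency result in the style of set-valued numerical analysis \cite{cardaliaguet1999setvalued}.

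For part (i), first I would verify that $\mathbb G$ preserves compact-valued, nonempty set-valued maps with values in $[0,1]^N$ and is monotone with respect to the epigraphical profile ordering, using the algebraic identity $\Psi(a+b)=\Psi(a)+\Psi(b)-\Psi(a)\circ\Psi(b)$ for $a,b\in\bar{\mathbb R}^N_{\geq 0}$ together with the monotonicity of $\Psi$. Starting from $v^p_0$ whose epigraphical profile equals $[0,1]^N$ on $\textbf{S}^p$, the first application of $\mathbb G$ can only pull the Pareto frontier away from the origin, so by induction the sequence of epigraphical profiles $\{E_{v^p_n}(x)\}_n$ is nested decreasing, each compact and contained in $[0,1]^N$. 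For nested decreasing sequences of nonempty compact sets in a Euclidean space, the Kuratowski limit exists and equals the intersection; define $E_{v^p_\infty}(x)$ as this intersection and $v^p_\infty(x)\triangleq\mathcal E(E_{v^p_\infty}(x))$. Closedness of $\tilde X^p(x)$ and $\tilde T^p(x)$ together with Berge's theorem on continuity of image unions under Kuratowski limits then lets me pass $\mathbb G$ through the limit, giving $\mathbb Gv^p_\infty=v^p_\infty$ and $v^p_n\to v^p_\infty$ in the Kuratowski sense pointwise.

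For part (ii), I would establish Kuratowski convergence of $\bigcup_{\tilde x\in(x+h_p\mathcal B_\textbf{X})\cap\textbf{X}^p} E_{v^p_\infty}(\tilde x)$ to $E_{v^*}(x)$ by proving the two inclusions $\Limsup{p\to+\infty}\subseteq E_{v^*}(x)$ and $\Liminf{p\to+\infty}\supseteq E_{v^*}(x)$. The upper limit inclusion (discrete trajectories induce continuous ones) proceeds by selecting, for any sequence of discrete optimal trajectories realizing points $\tau_p\in E_{v^p_\infty}(\tilde x_p)$, a limit trajectory via a compactness-of-trajectories argument: the set-valued dynamic $\tilde X^p_i$ lies in $x_i+\epsilon_pF_i(x_i)+\alpha_p\mathcal B$, and under Assumption~\ref{asmp:monoConv} the $\alpha_p/\epsilon_p$ term vanishes, so the piecewise-constant interpolations lie in a tube around $F$ whose radius shrinks to zero; Assumption~\ref{asmp:uconvex} and \ref{asmp:Lipschitz} then let me invoke the classical relaxation/limit theorem to obtain a genuine solution of \eqref{eq:0} reaching each goal region in the limiting time vector, so the limit belongs to $E_{v^*}(x)$. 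Conversely, for the lower limit inclusion I would take any continuous feasible trajectory profile achieving a point in $E_{v^*}(x)$, shift slightly so that inter-robot and obstacle distances exceed $\sigma+O(h_p+\epsilon_p)$ (which is possible by Assumption~\ref{asmp:lowerBound} combined with a small time/position perturbation), and then produce a discrete companion trajectory on $\textbf{X}^p$ using standard Euler-type tracking with step $\epsilon_p$; Assumption~\ref{asmp:alpha} guarantees that the accumulated drift caused by snapping to $\textbf{X}^p$ is absorbed by the $\alpha_p$-perturbation in $\tilde X_i^p$, so the discrete trajectory is admissible for $\mathbb G$ and achieves an arrival vector within $O(\epsilon_p+h_p/\epsilon_p)$ of the continuous one after Kruzhkov transformation.

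The main obstacle I expect is the lower limit inclusion in the epigraphical sense for the Pareto (vector-valued) setting: in the scalar case \cite{cardaliaguet1999setvalued}, one only needs to track a single scalar value, but here one must simultaneously control every robot's arrival time without allowing Pareto dominance to collapse the frontier, and one must guard against a discrete trajectory artificially reducing one component at the cost of another in a way that has no continuous counterpart. The remedy will be to exploit that the Kruzhkov-transformed cost is componentwise multiplicatively separable and that $\mathcal E$ is Kuratowski-continuous on nested compact families, so Pareto-minimality is preserved under the limit; I would formalize this by showing that any Pareto-minimal limit point $\tau\in\Limsup E_{v^p_\infty}(x)$ which were dominated by some $\tau'\in E_{v^*}(x)$ would contradict the discrete optimality of the approximating sequence. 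Once both Kuratowski inclusions are in place, the epigraphical profile equality stated in the theorem follows directly.
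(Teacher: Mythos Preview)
Your proposal diverges from the paper's route in a substantive way. The paper does not argue directly with trajectory limits; instead it passes through viability theory. It first shows (Theorem~\ref{thm:viabepi}) that $\mathcal Epi(\varTheta^*)=Viab_\Phi(\mathcal S)$ for an augmented dynamics $\Phi$ on the spatial--temporal product, then builds a fully discrete dynamics $\Gamma^p$ on $\mathcal H^p$, verifies the abstract consistency hypotheses (H0)--(H4) of \cite{cardaliaguet1999setvalued}, and invokes the ready-made approximation theorem there to obtain $\mathrm{Lim}_{p}\overrightarrow{Viab}_{\Gamma^p}(\mathcal S^p)=Viab_\Phi(\mathcal S)$. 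The identification $\mathcal Epi(\varTheta^p_n)=\mathcal S^p_n$ (Lemma~\ref{thm:vartheta_s}) and the monotonicity of $\mathcal Epi(\varTheta^p_n)$ (Claim~\ref{thm:epimonotone}) then translate this back to the value functions, and a short epigraphical argument (Corollary~\ref{thm:varThetaPointwiseConvergence}) gives the pointwise statement. Your Part~(i) is essentially the paper's monotonicity step in Kruzhkov coordinates, but your Part~(ii) replaces the viability machinery with bare-hands compactness-of-trajectories and Euler tracking. What the paper's route buys is economy: once (H0)--(H4) are checked, both Kuratowski inclusions come for free from \cite{cardaliaguet1999setvalued}; what your route buys is self-containment, at the cost of re-proving that theorem in this specific setting.

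There is, however, a concrete gap in your lower-limit argument. You invoke Assumptions~\ref{asmp:alpha} and~\ref{asmp:lowerBound}, but neither is a hypothesis of Theorem~\ref{thm:kru1}, which assumes only Assumption~\ref{asmp:1} together with $\epsilon_p>2h_p$, $h_p\to 0$, $h_p/\epsilon_p\to 0$. More importantly, your plan to ``shift slightly so that inter-robot and obstacle distances exceed $\sigma+O(h_p+\epsilon_p)$'' is not generally possible: an optimal continuous trajectory may graze $\partial\textbf{S}$, and there need be no nearby admissible trajectory strictly interior to $\textbf{S}$. The paper sidesteps this entirely by enlarging the \emph{discrete} constraint set to $\textbf{S}^p=(\textbf{S}+h_p\mathcal B)\cap\textbf{X}^p$; the $h_p$-fattening is exactly what makes hypothesis (H4) go through and lets the snapped Euler trajectory remain feasible without perturbing the continuous one. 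If you want to keep your direct approach, you should drop the trajectory-shifting idea and instead exploit this fattening: track the given continuous solution on $\textbf{X}^p$ and observe that the $\alpha_p$-ball in $\tilde X^p_i$ (whose radius is fixed by its definition, not by Assumption~\ref{asmp:alpha}) together with the $h_p$-expansion of $\textbf{S}$ absorbs the snapping error.
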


The proof of Theorem~\ref{thm:kru1} mainly follows those of Lemma 3.6 and Corollary 3.7 in \cite{cardaliaguet1999setvalued}. 
For the sake of completeness, we include the details of proofs in Section~\ref{sect:multiRobotAppendix}.
Please refer to Theorem~IX.3 and Corollary~IX.1.

\end{subsection}

\begin{subsection}{Auxiliary Bellman operator $\hat{\mathbb G}$: Lemma~\ref{thm:tildeHatEquivalence}}\label{sect:auxiliaryBellman}

In this subsection, an auxiliary Bellman operator $\hat{\mathbb G}$ is introduced as a stepping stone towards the contraction property of $\mathbb G$ in Section \ref{sect:propertiesHausdorff}.
This subsection consists of three phases:

First, $\hat{\mathbb G}$ is formally defined as \eqref{eq:auxBellman}. 
The auxiliary Bellman operator $\hat{\mathbb G}$ differs from $\mathbb G$ in the perturbations around nodes within one hop of the goal regions;

Second, the properties of $\hat{\mathbb G}$ are analyzed and it is shown that $\hat{\mathbb G}v^p_n$ is no less than $\mathbb Gv^p_n$, as Lemmas \ref{thm:fixedGridSub} and \ref{thm:multiGridSub};

Finally, $\hat{\mathbb G}v^p_n$ is no larger than $\mathbb Gv^p_n$, either, and thereby the equivalence of $\hat{\mathbb G}$ and $\mathbb G$ is established in Lemma \ref{thm:tildeHatEquivalence}.

We proceed to the first phase and derive the Bellman operator in terms of epigraphical profiles and its Kruzhkov transformed version.
We start with \eqref{eq:bellman} by adding $\mathbb R^N_{\geq0}$ to both sides:
\begin{equation*}
\begin{split}
E_{\mathbb T\varTheta^p_n}(x)&=(\mathbb T\varTheta^p_n)(x)+\mathbb R^N_{\geq0}\\
=&\{\tilde t+t|\tilde t=\mathcal E(\tilde T^p(x)),\tilde x\in\tilde X^p(x), t\in \varTheta^p_n(\tilde x)+\mathbb R^N_{\geq0}\}\\
=&\{\mathcal E(\tilde T^p(x))+t|\tilde x\in\tilde X^p(x), t\in E_{\varTheta^p_n}(\tilde x)\}\\
=&\mathcal E(\tilde T^p(x))+\bigcup_{\tilde x\in\tilde X^p(x)}E_{\varTheta^p_n}(\tilde x).
\end{split}\end{equation*}
Recall that $v^p_n(x)=(\Psi\varTheta^p_n)(x)$. Denote $\Delta\tau(x)\triangleq\Psi(\mathcal E(\tilde T^p(x))).$
Applying Kruzhkov transform to both sides yields
\begin{equation}\label{eq:epiProfileBellman}
\begin{split}
E_{\mathbb Gv^p_n}(x)
=&\Delta\tau(x)+(\textbf{1}-\Delta\tau(x))\circ \bigcup_{\tilde x\in\tilde X^p(x)}E_{v^p_n}(\tilde x).
\end{split}\end{equation}
The $i$-th element of $\Delta\tau$ can be written as
\begin{align}\label{eq:deltaTau}
\Delta\tau_i(x)=\begin{cases}
0, &\text{if } i\in\mathcal V^G_p(x);\\
1-e^{-\kappa_{p}}, &\text{otherwise,}
\end{cases}
\end{align}
where $\kappa_p$ follows the definition in Assumption \ref{asmp:DWindow}.

Now we define the auxiliary Bellman operator $\hat{\mathbb G}$ by 
\begin{align}\label{eq:auxBellman}
E_{\hat{\mathbb G}v}(x)\triangleq\Delta\tau(x)+(\textbf{1}-\Delta\tau(x))\circ\bigcup_{\hat x\in\hat X^p(x)}E_v(\hat x),
\end{align}
where $\hat X^p(x)\triangleq(\prod_{i\in\mathcal V}\hat X^p_i(x_i))\cap\textbf{S}^p$ and
\begin{equation*}\label{eq:hatXi}
\begin{split}
\hat X^p_i(x_i)\triangleq
\begin{cases}
x_i+\epsilon_p F_i(x_i)+\alpha_p\mathcal B, \text{ if }d(x_i, X^G_i)>M_i\epsilon_p+h_p;\\
x_i+\alpha_p\mathcal B, \text{ otherwise}.
\end{cases}
\end{split}\end{equation*}
If $d(x_i, X^G_i)\leq M_i\epsilon_p+h_p$, then $\tilde X^p_i(x_x)=x_i$ in $\mathbb G$ and $\hat X^p_i(x_i)=x_i+\alpha_p\mathcal B$ in $\hat{\mathbb G}$.
This is the only difference between $\mathbb G$ and $\hat{\mathbb G}$.

Before we move on to the second phase, intermediate results are required to facilitate our analysis.
The next lemma shows that the equivalent nodes of $x\in\textbf{S}^p$ are also in the safety region.
\begin{lemma}\label{thm:EquivalentSafety}
Suppose Assumption \ref{asmp:monoConv} and \ref{asmp:lowerBound} are satisfied.
Then for any $p\geq1$ and $x\in\textbf{S}^p$, it holds that $X^p_E(x)\subseteq\textbf{S}^p$.
\end{lemma}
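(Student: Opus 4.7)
The plan is to build, for any $x'\in X^p_E(x)$, a nearby point $y'\in\textbf{S}$ with $\|y'-x'\|_\textbf{X}\leq h_p$; since $x'\in\textbf{X}^p$ is built into the definition of $X^p_E(x)$, this directly gives $x'\in(\textbf{S}+h_p\mathcal{B}_\textbf{X})\cap\textbf{X}^p=\textbf{S}^p$.

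From $x\in\textbf{S}^p$, I would pick a witness $y\in\textbf{S}$ with $\|y-x\|_\textbf{X}\leq h_p$, and then define $y'$ by altering only the coordinates of $y$ corresponding to robots already close to their goals: $y'_j=y_j$ for $j\in\mathcal{V}\setminus\mathcal{V}^G_p(x)$ and $y'_i=x'_i$ for $i\in\mathcal{V}^G_p(x)$. Because the definition of $X^p_E(x)$ enforces $x'_j=x_j$ for every $j\notin\mathcal{V}^G_p(x)$, only those non-goal coordinates contribute to $y'-x'$, so
\[
\|y'-x'\|_\textbf{X}^2=\sum_{j\notin\mathcal{V}^G_p(x)}\|y_j-x_j\|^2\leq\|y-x\|_\textbf{X}^2\leq h_p^2,
\]
which immediately yields the desired distance bound.

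The remaining task is to verify $y'\in\textbf{S}$, which amounts to checking (i) $y'_k\in X^F_k$ for each $k$ and (ii) pairwise safety $\|y'_k-y'_l\|\geq\sigma$ for $k\neq l$. Coordinates $j\notin\mathcal{V}^G_p(x)$ inherit both properties from $y\in\textbf{S}$. For a goal-close index $i\in\mathcal{V}^G_p(x)$, I would first observe that the standing goal-separation assumption from the problem formulation implies $X^G_k\subseteq X^F_k$, whereupon Assumption~\ref{asmp:lowerBound} yields $d(X^G_i,X^G_l)>\sigma+M_i\epsilon_1+h_1$ for every $l\neq i$. Combining this with $d(x'_i,X^G_i)\leq M_i\epsilon_p+h_p$ and the monotonicity $\epsilon_p\leq\epsilon_1$, $h_p\leq h_1$ from Assumption~\ref{asmp:monoConv}, a triangle-inequality chain gives $d(x'_i,X^G_l)>\sigma$. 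This single estimate simultaneously delivers the goal-separation clause of $x'_i\in X^F_i$ and, together with the analogous bound applied in the mixed case $l\notin\mathcal{V}^G_p(x)$ (where $y'_l=y_l\in X^F_l$ already satisfies $d(y_l,X^G_i)>\sigma+M_i\epsilon_1+h_1$) and symmetrically in the case $i,l\in\mathcal{V}^G_p(x)$ through both endpoints' goal regions, the pairwise condition $\|y'_i-y'_l\|\geq\sigma$.

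The step I expect to be the main obstacle is the non-obstacle clause of $X^F_i$, namely $x'_i\in X_i\setminus X^O_i$: neither Assumption~\ref{asmp:monoConv} nor Assumption~\ref{asmp:lowerBound} directly controls the distance between $X^G_i$ and $X^O_i$, so a standalone argument is needed to justify that any state within $M_i\epsilon_p+h_p$ of $X^G_i$ avoids obstacles. I plan to dispatch this by appealing to the standard motion-planning convention, implicit in the formulation $X^G_i\subseteq X_i\setminus X^O_i$, that goal regions lie in the interior of the free region with a buffer of at least $M_i\epsilon_1+h_1$; once this is granted, the case analysis on index pairs in or out of $\mathcal{V}^G_p(x)$ becomes routine and closes the argument.
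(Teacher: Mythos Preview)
Your construction is identical to the paper's: pick a witness $y\in\textbf{S}$ within $h_p$ of $x$, form $y'$ by retaining $y_j$ on non-goal indices and setting $y'_i=x'_i$ on goal indices, bound $\|y'-x'\|\le h_p$ by the same coordinate-sum inequality, and then verify pairwise safety via Assumptions~\ref{asmp:monoConv} and~\ref{asmp:lowerBound} in the same case split. (The paper's $\tilde x,\,x',\,\tilde x'$ are your $x',\,y,\,y'$.)

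Where you differ is in thoroughness, not strategy. You correctly decompose ``$y'\in\textbf{S}$'' into the $\textbf{X}^F$-membership clause and the pairwise inter-robot clause, and you flag the obstacle condition $y'_i\in X_i\setminus X^O_i$ (for $i\in\mathcal V^G_p(x)$) as not following from the stated assumptions. The paper's proof simply does not check this: it verifies only the pairwise inequalities $\|\tilde x'_i-\tilde x'_j\|\ge\sigma$ and then asserts $\tilde x'\in\textbf{S}$. So the gap you isolated is real, it is shared by the paper's own argument, and your proposed workaround (an unstated buffer between goals and obstacles) is precisely the kind of tacit hypothesis the paper appears to be relying on; you should not expect to close it from Assumptions~\ref{asmp:monoConv} and~\ref{asmp:lowerBound} alone.
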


The next lemma shows that for any robot $i\in\mathcal V^G_p(x)$, its estimate of travelling time is always $0$.
\begin{lemma}\label{thm:zeroValue}
For any $p\geq1$, the following hold:
\begin{enumerate}
\item $\mathcal V^G_p(x)\subseteq \mathcal V^G_p(\tilde x)$ for any $x\in\textbf{X}^p$ and $\tilde x\in\tilde X^p(x)$;
\item $\mathcal V^G_p(x)\supseteq \mathcal V^G_{p+1}(x)$ for any $x\in\textbf{X}^p$;
\item $\tau_i=0$ for any $x\in \textbf{S}^p$ $\tau\in v^p_n(x)$, $0\leq n\leq\bar n_p$ and $i\in\mathcal V^G_p(x)$.
\end{enumerate}
\end{lemma}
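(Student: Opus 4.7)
My plan is to prove the three claims in the stated order, using each earlier claim to support the later ones. Claims~1 and 2 are short set-inclusion arguments that follow directly from the definitions, while Claim~3 is the main work.

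For Claim~1, if $i\in\mathcal V^G_p(x)$, then the lower branch of \eqref{eq:tildeXi} forces $\tilde X^p_i(x_i)=\{x_i\}$, so $\tilde x_i=x_i$ for every $\tilde x\in\tilde X^p(x)$, giving $d(\tilde x_i,X^G_i)\leq M_i\epsilon_p+h_p$ and hence $i\in\mathcal V^G_p(\tilde x)$. For Claim~2, Assumption~\ref{asmp:monoConv} makes $\epsilon_p$ and $h_p/\epsilon_p$ monotonically decrease to zero, so the product $h_p=\epsilon_p(h_p/\epsilon_p)$ is also non-increasing; hence the threshold $M_i\epsilon_p+h_p$ is non-increasing in $p$, and $\mathcal V^G_p(x)\supseteq\mathcal V^G_{p+1}(x)$.

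Claim~3 I would prove by a double induction: outer on the grid index $p$ and, for each fixed $p$, inner on the value-iteration index $n$. The outer inductive hypothesis states that Claim~3 already holds on every previous grid. For the base case $n=0$ at grid $p$, and any $x\in\textbf{S}^p$, I would split on whether $x\in\textbf{X}^{p-1}$. If yes, then $v^p_0(x)=v^{p-1}_{\bar n_{p-1}}(x)$; since $h_p\leq h_{p-1}$, $x$ also lies in $\textbf{S}^{p-1}$, so the outer hypothesis applies on grid $p-1$, and Claim~2 lifts the zero-component property from $\mathcal V^G_{p-1}(x)$ to its subset $\mathcal V^G_p(x)$. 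If instead $x\in\textbf{S}^p\setminus\textbf{X}^{p-1}$, then $v^p_0(x)=\bigcup_{\tilde x\in X^p_E(x)}\tilde v^{p-1}(\tilde x)$; Lemma~\ref{thm:EquivalentSafety} places each $\tilde x$ inside $\textbf{S}^p$, the definition of $X^p_E(x)$ in \eqref{eq:XpE} yields $i\in\mathcal V^G_p(\tilde x)$ whenever $i\in\mathcal V^G_p(x)$, and each $\tilde x$ is then handled by whether it lies in $\textbf{X}^{p-1}$ (outer hypothesis plus Claim~2) or in $\textbf{S}^p\setminus\textbf{X}^{p-1}$ (where the interpolation rule directly forces $\tilde v^{p-1}_i(\tilde x)=0$).

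For the inner inductive step, any $x\in\textbf{S}^p$ falls into one of two regions. If $x\in\textbf{X}^G+(M^+\epsilon_p+h_p)\mathcal B_\textbf{X}$, then $v^p_n(x)=v^p_0(x)$ throughout the intermediate iterations and $v^p_{\bar n_p}(x)$ is overwritten to $\tilde v^{p-1}(x)$ at termination; both variants reduce to the base-case analysis. Otherwise the Bellman update \eqref{eq:krubellman} applies, and I would invoke Claim~1 to obtain $i\in\mathcal V^G_p(\tilde x)$ for every $\tilde x\in\tilde X^p(x)$, apply the inner hypothesis to zero out the $i$-th coordinate of every $\tau\in v^p_{n-1}(\tilde x)$, and note from \eqref{eq:deltaTau} that $i\in\mathcal V^G_p(x)$ forces $\Delta\tau_i(x)=0$; hence every candidate $\tau+\tilde\tau-\tau\circ\tilde\tau$ has a zero $i$-th slot, a property preserved by $\mathcal E$ since it selects a subset of its argument. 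The principal obstacle is the bookkeeping across the several regions that the algorithm treats differently (inherited nodes, new safe nodes, goal-neighborhood nodes, and the final overwrite step), combined with the need to keep Claims~1 and 2 and Lemma~\ref{thm:EquivalentSafety} simultaneously in play to close the double induction.
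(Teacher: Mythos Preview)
Your proposal is correct and follows essentially the same approach as the paper's proof: Claims~1 and 2 are handled identically via the definitions, and Claim~3 is established by a double induction on $p$ (outer) and $n$ (inner), with the same case split on $x\in\textbf{X}^{p-1}$ versus $x\in\textbf{S}^p\setminus\textbf{X}^{p-1}$ for the base $n=0$, the same appeal to Lemma~\ref{thm:EquivalentSafety} and the interpolation rule for new nodes, and the same use of Claim~1 together with $\Delta\tau_i(x)=0$ for the inner step. Your bookkeeping of the goal-neighborhood nodes and the final overwrite is slightly more explicit than the paper's, but the argument is the same.
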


\begin{remark}
Notice that $v^p_n=\mathbb G^nv^p_0$.
Fix $p\geq1$.
It follows from the proof of the third property of Lemma \ref{thm:zeroValue} that $\tau_i=0$ for any $\tau\in\mathbb G^mv^p_0(x)$, $m\geq0$ and $i\in\mathcal V^G_p(x)$.
Specifically, by Theorem \ref{thm:kru1}, we have $v^p_\infty=\mathrm{Lim}_{n\to+\infty}v^p_n(x)=\mathrm{Lim}_{n\to+\infty}\mathbb G^nv^p_0(x)$.
Then the third property of Lemma \ref{thm:zeroValue} also applies to $v^p_\infty$.
\end{remark}
\begin{remark}
Fix $x\in\textbf{S}^p$ and $m\geq0$ and let $\mathcal V^G_p(x)=\{1, \dots, N_p\}$, where $0\leq N_p\leq N$.
By the third property of Lemma \ref{thm:zeroValue}, we have $\forall\tilde\tau\in[0, 1]^{N_p}\times\{1\}^{N-N_p}$, $\exists\tau\in \mathbb G^mv^p_0(x)$ s.t. $\tilde\tau\succeq\tau$.
This implies $[0,1]^{N_p}\times\{1\}^{N-N_p}\subseteq E_{\mathbb G^mv^p_0}(x)=(\mathbb G^mv^p_0(x)+\mathbb R^N_{\geq0})\cap[0,1]^N$.
\end{remark}

Define the set of partially perturbed state nodes $x'\in X^p_P(x)$ of $x\in\textbf{X}^p$ by 
\begin{equation*}\label{eq:XpP}
\begin{split}
X^p_P(x)\triangleq\{x'\in\textbf{S}^p|x'_i=x_i, \forall i\in\mathcal V\setminus\mathcal V^G_p(x)\}.
\end{split}\end{equation*}
The term ``partially perturbed state node'' means that $x'$ differs from $x$ only at the perturbations added to the positions of robots $i\in\mathcal V^G_p(x)$.
It is a superset of $X^p_E(x)$ in \eqref{eq:XpE}.

The following lemma shows that on a fixed grid, the partially perturbed nodes cannot have less value.
\begin{lemma}\label{thm:fixedGridSub}
Fix $p\geq1$ s.t. Assumptions \ref{asmp:monoConv} and \ref{asmp:lowerBound} are satisfied.
Consider $v^p_n:\textbf{X}^p\rightrightarrows[0,1]^N$.
If $E_{v^p_n}(x')\subseteq E_{v^p_n}(x)$ for any pair of $x\in\textbf{S}^p$ and $x'\in X^p_P(x)$, then $E_{\mathbb G^mv^p_n}(x')\subseteq E_{\mathbb G^mv^p_n}(x)$ holds for all $m\geq1$ and any pair of $x\in\textbf{S}^p$ and $x'\in X^p_P(x)$.
\end{lemma}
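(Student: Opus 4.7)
The plan is to proceed by induction on $m$, with the base case $m=0$ (identifying $\mathbb G^0$ with the identity) being exactly the hypothesis. For the inductive step, I would fix $x\in\textbf{S}^p$, $x'\in X^p_P(x)$ and $\tau\in E_{\mathbb G^m v^p_n}(x')$, and use the epigraphical-profile form of the Bellman operator \eqref{eq:epiProfileBellman} to write $\tau=\Delta\tau(x')+(\mathbf{1}-\Delta\tau(x'))\circ s$ for some $\tilde x'\in\tilde X^p(x')$ and $s\in E_{\mathbb G^{m-1}v^p_n}(\tilde x')$. The task then reduces to producing $\tilde x\in\tilde X^p(x)$ and $s''\in E_{\mathbb G^{m-1}v^p_n}(\tilde x)$ such that $\tau=\Delta\tau(x)+(\mathbf{1}-\Delta\tau(x))\circ s''$; the conclusion will then follow from \eqref{eq:epiProfileBellman}.

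The key construction is to set $\tilde x_i=\tilde x'_i$ for $i\notin\mathcal V^G_p(x)$ and $\tilde x_i=x_i$ for $i\in\mathcal V^G_p(x)$. Coordinatewise inclusion $\tilde x_i\in\tilde X^p_i(x_i)$ is straightforward: $x'_i=x_i$ on $\mathcal V\setminus\mathcal V^G_p(x)$ gives $\tilde X^p_i(x'_i)=\tilde X^p_i(x_i)$, while $\tilde X^p_i(x_i)=\{x_i\}$ when $i\in\mathcal V^G_p(x)$. Lemma~\ref{thm:zeroValue}(1) delivers $\mathcal V^G_p(x)\subseteq\mathcal V^G_p(\tilde x)$, so $\mathcal V\setminus\mathcal V^G_p(\tilde x)\subseteq\mathcal V\setminus\mathcal V^G_p(x)$; combined with the construction, this yields $\tilde x'\in X^p_P(\tilde x)$ once $\tilde x\in\textbf{S}^p$ has been established. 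The inductive hypothesis applied to $(\tilde x,\tilde x')$ then hands over $s\in E_{\mathbb G^{m-1}v^p_n}(\tilde x)$.

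To identify $s''$, observe that $x'_i=x_i$ for $i\notin\mathcal V^G_p(x)$ forces $\mathcal V^G_p(x')\subseteq\mathcal V^G_p(x)$, so by \eqref{eq:deltaTau} one has $\Delta\tau(x')\succeq\Delta\tau(x)$ componentwise. Solving the required identity for $s''$ coordinate by coordinate, I would verify that $s''_i=s_i$ whenever $\Delta\tau_i(x)=\Delta\tau_i(x')$, while $s''_i-s_i=(1-e^{-\kappa_p})(1-s_i)\geq 0$ on the remaining indices, using $s\in[0,1]^N$. Hence $s''\succeq s$, and upward closedness of the epigraphical profile implies $s''\in E_{\mathbb G^{m-1}v^p_n}(\tilde x)$, which closes the induction.

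The main obstacle is verifying $\tilde x\in\textbf{S}^p$. My plan here is to stitch a safety witness $\bar x\in\textbf{S}$ of $x$ with a safety witness $\bar{\tilde x'}\in\textbf{S}$ of $\tilde x'$ along the partition induced by $\mathcal V^G_p(x)$, producing a candidate safe configuration near $\tilde x$. Within-class pairwise separations are inherited from $\bar x,\bar{\tilde x'}\in\textbf{S}$; the cross-class distances $\|\bar x_i-\bar{\tilde x'}_j\|$ with $i\in\mathcal V^G_p(x)$ and $j\notin\mathcal V^G_p(x)$ are where Assumption~\ref{asmp:lowerBound} must do the work---its excess margin $M_i\epsilon_1+h_1$ beyond $\sigma$ is designed precisely to absorb the perturbation and discretization slack between $x$, $\bar x$, $\tilde x'$, $\bar{\tilde x'}$ and deliver a $\sigma$-lower bound up to the admissible $h_p$-tolerance built into $\textbf{S}^p$.
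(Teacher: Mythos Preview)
Your approach is essentially the same as the paper's: induction on $m$, the same construction of $\tilde x$ from $\tilde x'$ by freezing the $\mathcal V^G_p(x)$-coordinates at $x_i$, the same use of Lemma~\ref{thm:zeroValue}(1) to obtain $\tilde x'\in X^p_P(\tilde x)$, and the same coordinatewise calculation showing $s''\succeq s$ (the paper's $\hat\tau$). The only wrinkle is in your safety-witness construction for $\tilde x\in\textbf{S}^p$: if you stitch a witness $\bar x$ of $x$ with a witness $\bar{\tilde x'}$ of $\tilde x'$, the resulting candidate is only within $\sqrt{2}\,h_p$ of $\tilde x$, not $h_p$, which is what $\textbf{S}^p=(\textbf{S}+h_p\mathcal B)\cap\textbf{X}^p$ demands. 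The paper sidesteps this by taking the goal-region coordinates of the witness to be $x_i$ itself (not $\bar x_i$), so those coordinates contribute zero slack and the non-goal coordinates inherit the full $h_p$ budget from the witness of $\tilde x'$; Assumption~\ref{asmp:lowerBound} then handles the cross-class separations exactly as you anticipate. With that one adjustment your plan goes through verbatim.
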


The next lemma extends Lemma \ref{thm:fixedGridSub} to all the iterations of Algorithm \ref{alg:1}.

\begin{lemma}\label{thm:multiGridSub}
For any pair of $x\in\textbf{S}^p$ and $x'\in X^p_P(x)$, if Assumptions \ref{asmp:monoConv} and \ref{asmp:lowerBound} are satisfied, it holds that $E_{v^p_n}(x')\subseteq E_{v^p_n}(x)$ for any $p\geq1$ and $0\leq n\leq\bar n_p$.
\end{lemma}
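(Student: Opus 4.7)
The plan is to induct on the grid index $p$: on each grid, once the inclusion $E_{v^p_0}(x')\subseteq E_{v^p_0}(x)$ is verified at initialization, Lemma \ref{thm:fixedGridSub} propagates it to every $1\leq n\leq\bar n_p$, so the only real work on each grid is to handle $n=0$.

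For the base case $p=1$, exploit $\textbf{X}^0=\emptyset$: the interpolation block yields $\tilde v^0(y)$ as a singleton whose $i$-th coordinate is $0$ if $d(y_i,X^G_i)\leq M_i\epsilon_1+h_1$ and $1$ otherwise, and then $v^1_0(x)=\bigcup_{\tilde x\in X^1_E(x)}\tilde v^0(\tilde x)$. Using Lemma \ref{thm:EquivalentSafety} to ensure $X^1_E(x)\subseteq\textbf{S}^1$, together with the observation $\mathcal V^G_1(\tilde x)=\mathcal V^G_1(x)$ for every $\tilde x\in X^1_E(x)$, one sees that $v^1_0(x)$ reduces to the singleton carrying $0$ on exactly the indices of $\mathcal V^G_1(x)$. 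Since $x'\in X^1_P(x)$ forces $x'_i=x_i$ for $i\notin\mathcal V^G_1(x)$, one has $\mathcal V^G_1(x')\subseteq\mathcal V^G_1(x)$, so the singleton $v^1_0(x')$ dominates $v^1_0(x)$ coordinatewise, and $E_{v^1_0}(x')\subseteq E_{v^1_0}(x)$ follows by direct comparison of the two product sets in $[0,1]^N$.

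For the inductive step, assume the conclusion holds on grid $p-1$; this supplies $E_{\tilde v^{p-1}}(\xi')\subseteq E_{\tilde v^{p-1}}(\xi)$ for every $\xi\in\textbf{S}^{p-1}$ and $\xi'\in X^{p-1}_P(\xi)$. Fix $x\in\textbf{S}^p$ and $x'\in X^p_P(x)$ and split the analysis according to whether $x,x'$ lie in $\textbf{X}^{p-1}$. Assumption \ref{asmp:monoConv} gives $h_p\leq h_{p-1}$ and hence $\textbf{S}^p\cap\textbf{X}^{p-1}\subseteq\textbf{S}^{p-1}$, while Lemma \ref{thm:zeroValue} gives $\mathcal V^G_{p-1}(y)\supseteq\mathcal V^G_p(y)$; together they imply that any $x'\in X^p_P(x)\cap\textbf{X}^{p-1}$ with $x\in\textbf{X}^{p-1}$ also belongs to $X^{p-1}_P(x)$, reducing that sub-case to the identity $v^p_0(\cdot)=\tilde v^{p-1}(\cdot)$ and the inductive hypothesis. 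When $x$ or $x'$ lies in $\textbf{X}^p\setminus\textbf{X}^{p-1}$, rewrite $E_{v^p_0}(\cdot)$ as $\bigcup_{\tilde x\in X^p_E(\cdot)}E_{\tilde v^{p-1}}(\tilde x)$ and, for each $\tilde x'\in X^p_E(x')$, exhibit a matching $\tilde x\in X^p_E(x)$ for which the inductive hypothesis yields $E_{\tilde v^{p-1}}(\tilde x')\subseteq E_{\tilde v^{p-1}}(\tilde x)$, so the union defining $E_{v^p_0}(x')$ is dominated term-by-term by the union defining $E_{v^p_0}(x)$. Lemma \ref{thm:fixedGridSub} then extends the initial inclusion to all subsequent iterations on grid $p$.

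The main obstacle is the interpolation step just outlined: the equivalence classes $\mathcal V^G_p$ and $\mathcal V^G_{p-1}$ generally differ, so a node that is partially perturbed on grid $p$ need not be partially perturbed on grid $p-1$, and the three heterogeneous branches of the interpolation (existing $\textbf{X}^{p-1}$ nodes, new safety-region nodes, and new obstacle-region nodes) interact non-trivially with $X^p_E$. The reconciling ingredients are the monotonicities $\textbf{S}^p\cap\textbf{X}^{p-1}\subseteq\textbf{S}^{p-1}$ and $\mathcal V^G_{p-1}\supseteq\mathcal V^G_p$, Lemma \ref{thm:EquivalentSafety} which keeps $X^p_E$ inside the safety region where the inductive hypothesis is available, and Assumption \ref{asmp:lowerBound} which guarantees that the goal-region perturbations never intrude into other robots' free regions, so that a legitimate matching $\tilde x\in X^p_E(x)$ can always be chosen.
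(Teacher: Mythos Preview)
Your proposal is correct and follows essentially the same route as the paper: induction on $p$, reduction of each grid to the initialization $n=0$ via Lemma~\ref{thm:fixedGridSub}, and a case split at $n=0$ according to whether $x$ and $x'$ already lie on the previous grid, with the inductive hypothesis feeding in through $\tilde v^{p-1}=v^{p-1}_{\bar n_{p-1}}$ on old nodes and the explicit interpolation formula on new ones. The paper organizes the split slightly differently (four cases on membership in $\textbf{S}^p$ versus $\textbf{S}^{p+1}\setminus\textbf{S}^p$, with further sub-cases), and in the mixed cases it does not always produce a termwise matching $\tilde x\in X^p_E(x)$ for each $\tilde x'\in X^p_E(x')$ as you describe: when $x'$ is an old node it compares $E_{v^{p-1}_{\bar n_{p-1}}}(x')$ directly against a single carefully chosen $\tilde x\in X^p_E(x)\cap\textbf{S}^{p-1}$, and when $\tilde x'$ is a new node it invokes the third part of Lemma~\ref{thm:zeroValue} to absorb the interpolation value $E_{V^{p-1}}(\tilde x')$ into $E_{v^{p-1}_{\bar n_{p-1}}}(x)$. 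These are exactly the ``reconciling ingredients'' you list in your last paragraph, so your plan and the paper's execution coincide.
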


The next corollary shows the values of all equivalent nodes are the same.

\begin{corollary}\label{thm:XEEquivalence}
If all conditions in Lemma \ref{thm:multiGridSub} are satisfied, for any $p\geq1$, $0\leq n\leq\bar n_p$ and any pair of $x\in\textbf{S}^p$ and $x'\in X^p_E(x)$, $E_{v^p_n}(x)=E_{v^p_n}(x')$.
In addition, $E_{v^p_\infty}(x)=E_{v^p_\infty}(x')$.
\end{corollary}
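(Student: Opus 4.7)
The plan is to reduce Corollary~\ref{thm:XEEquivalence} to a double application of Lemma~\ref{thm:multiGridSub} together with the symmetry of the equivalence relation defined by $X^p_E(\cdot)$.

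First, I would verify that the relation $x' \in X^p_E(x)$ is symmetric and, in particular, that $\mathcal V^G_p(x)=\mathcal V^G_p(x')$. Unpacking \eqref{eq:XpE}: for $i \notin \mathcal V^G_p(x)$ one has $x'_i=x_i$, hence $d(x'_i,X^G_i)=d(x_i,X^G_i)>M_i\epsilon_p+h_p$, so $i\notin\mathcal V^G_p(x')$; for $i\in\mathcal V^G_p(x)$ the definition forces $d(x'_i,X^G_i)\leq M_i\epsilon_p+h_p$, so $i\in\mathcal V^G_p(x')$. Therefore $\mathcal V^G_p(x)=\mathcal V^G_p(x')$, and swapping the roles of $x$ and $x'$ in the two conditions of \eqref{eq:XpE} shows $x\in X^p_E(x')$. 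By Lemma~\ref{thm:EquivalentSafety}, both $x$ and $x'$ lie in $\textbf{S}^p$.

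Second, since $X^p_E(x)\subseteq X^p_P(x)$ by construction (the $i\in\mathcal V^G_p(x)$ coordinates are unconstrained in $X^p_P(x)$ aside from membership in $\textbf{S}^p$), Lemma~\ref{thm:multiGridSub} applied to the pair $(x,x')$ yields $E_{v^p_n}(x')\subseteq E_{v^p_n}(x)$ for all $0\leq n\leq\bar n_p$. By the symmetry established in the previous paragraph, $x\in X^p_E(x')\subseteq X^p_P(x')$, so a second application of Lemma~\ref{thm:multiGridSub} to the pair $(x',x)$ gives $E_{v^p_n}(x)\subseteq E_{v^p_n}(x')$. Combining the two inclusions delivers $E_{v^p_n}(x)=E_{v^p_n}(x')$ for every $0\leq n\leq\bar n_p$, which proves the first assertion.

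Third, for the claim about the fixed point I would invoke Theorem~\ref{thm:kru1}: $v^p_\infty(x)=\mathrm{Lim}_{n\to+\infty}v^p_n(x)$ in the Kuratowski sense, which implies $E_{v^p_\infty}(x)=\mathrm{Lim}_{n\to+\infty}E_{v^p_n}(x)$ (epigraphical profile is obtained by Minkowski-adding the closed cone $\mathbb R^N_{\geq0}$ intersected with $[0,1]^N$, an operation that commutes with Kuratowski convergence of uniformly bounded sets). Since the value iteration underlying Theorem~\ref{thm:kru1} is defined on the fixed grid $\textbf{X}^p$ and uses the same operator $\mathbb G$ that produced the identity $E_{v^p_n}(x)=E_{v^p_n}(x')$ established above for all $n$, the equality persists in the limit, giving $E_{v^p_\infty}(x)=E_{v^p_\infty}(x')$.

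The main obstacle is the symmetry check: one must be careful that $X^p_E(x)$ is defined asymmetrically with respect to $x$ (``$x'_i$ close to the goal for indices in $\mathcal V^G_p(x)$''), so the fact that $\mathcal V^G_p(x)=\mathcal V^G_p(x')$ is exactly what makes the relation symmetric and unlocks the double inclusion. Once this is in hand, the rest is an immediate corollary of Lemma~\ref{thm:multiGridSub} and Theorem~\ref{thm:kru1}.
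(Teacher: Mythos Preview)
Your proposal is correct and follows essentially the same approach as the paper: exploit the symmetry $x'\in X^p_E(x)\Leftrightarrow x\in X^p_E(x')$ together with $X^p_E\subseteq X^p_P$ to apply Lemma~\ref{thm:multiGridSub} twice, then pass to the limit for $v^p_\infty$. Your explicit verification that $\mathcal V^G_p(x)=\mathcal V^G_p(x')$ and the invocation of Lemma~\ref{thm:EquivalentSafety} to place $x'$ in $\textbf{S}^p$ are details the paper leaves implicit.

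One small sharpening for the fixed-point part: Lemma~\ref{thm:multiGridSub} only delivers the equality for $0\leq n\leq\bar n_p$, not for all $n$. The paper closes this gap by observing that Lemma~\ref{thm:fixedGridSub} applies for every $m\geq0$, so once the inclusion holds at $v^p_0$ it propagates to $\mathbb G^m v^p_0$ for all $m$, and then one takes $m\to+\infty$. Your phrase ``uses the same operator $\mathbb G$ \ldots\ the equality persists in the limit'' gestures at this, but citing Lemma~\ref{thm:fixedGridSub} explicitly would make the step airtight.
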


Finally, we arrive at the last phase and the next lemma is the main result of this subsection that reveals the equivalence of $\mathbb G$ and $\hat{\mathbb G}$.
\begin{lemma}\label{thm:tildeHatEquivalence}
If Assumptions \ref{asmp:monoConv} and \ref{asmp:lowerBound} are satisfied, for any $p\geq1$, $0\leq n\leq\bar n_p$ and $x\in\textbf{S}^p$, it holds that $\bigcup_{\tilde x\in\tilde X^p(x)}E_{v^p_n}(\tilde x)=\bigcup_{\hat x\in\hat X^p(x)}E_{v^p_n}(\hat x)$ and $E_{\mathbb Gv^p_n}(x)=E_{\hat{\mathbb G}v^p_n}(x)$.
In addition, $\bigcup_{\tilde x\in\tilde X^p(x)}E_{v^p_\infty}(\tilde x)=\bigcup_{\hat x\in\hat X^p(x)}E_{v^p_\infty}(\hat x)$ and $E_{\mathbb Gv^p_\infty}(x)=E_{\hat{\mathbb G}v^p_\infty}(x)$.
\end{lemma}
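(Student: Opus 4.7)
The plan is to derive all four claims of the lemma from a single set identity, $\bigcup_{\tilde x \in \tilde X^p(x)} E_{v^p_n}(\tilde x) = \bigcup_{\hat x \in \hat X^p(x)} E_{v^p_n}(\hat x)$, and its fixed-point analogue. Indeed, \eqref{eq:epiProfileBellman} and \eqref{eq:auxBellman} display $E_{\mathbb G v^p_n}(x)$ and $E_{\hat{\mathbb G} v^p_n}(x)$ as the same affine functional of these unions, so the operator equalities reduce by direct substitution to the corresponding set equalities. Both are established by double inclusion.

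For the inclusion $\bigcup_{\tilde x} E_{v^p_n}(\tilde x) \subseteq \bigcup_{\hat x} E_{v^p_n}(\hat x)$, I use the coordinatewise containment $\tilde X^p_i(x_i) \subseteq \hat X^p_i(x_i)$: the two sets coincide when $i \notin \mathcal V^G_p(x)$, and for $i \in \mathcal V^G_p(x)$ the singleton $\{x_i\}$ lies inside $x_i + \alpha_p \mathcal B$. Intersecting with the common $\textbf{S}^p$ preserves the containment, so $\tilde X^p(x) \subseteq \hat X^p(x)$ and the inclusion of unions is trivial.

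The reverse inclusion is the heart of the proof. Given $\hat x \in \hat X^p(x)$, I construct a companion $\tilde x$ by setting $\tilde x_i = x_i$ for every $i \in \mathcal V^G_p(x)$ and $\tilde x_i = \hat x_i$ otherwise. Then $\tilde x \in \prod_i \tilde X^p_i(x_i)$ by construction. The crucial step is verifying $\tilde x \in \textbf{S}^p$: I invoke Assumption~\ref{asmp:lowerBound}, whose separation bound $[X^G_i + (\sigma + M_i\epsilon_1 + h_1) \mathcal B_{X_i}] \cap X^F_j = \emptyset$ keeps every $\hat x_j$ far from each $x_i$ with $i \in \mathcal V^G_p(x)$, so the replacement $\hat x_i \mapsto x_i$ cannot induce an inter-robot collision nor push the state outside the $h_p$-thickening of $\textbf{S}$. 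Once $\tilde x \in \tilde X^p(x)$ is secured, the first property of Lemma~\ref{thm:zeroValue} gives $\mathcal V^G_p(\tilde x) \supseteq \mathcal V^G_p(x)$, which forces $\hat x_i = \tilde x_i$ for every $i \notin \mathcal V^G_p(\tilde x)$ and hence $\hat x \in X^p_P(\tilde x)$. Lemma~\ref{thm:multiGridSub} then delivers $E_{v^p_n}(\hat x) \subseteq E_{v^p_n}(\tilde x)$, and taking a union over $\hat x$ completes the set equality for every $0 \leq n \leq \bar n_p$.

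The fixed-point analogue follows by passing to the Kuratowski limit. The initial value function $v^p_0$ from Theorem~\ref{thm:kru1} trivially satisfies the partially perturbed inclusion on $\textbf{S}^p$, so iterating Lemma~\ref{thm:fixedGridSub} propagates that inclusion to $\mathbb G^m v^p_0$ for every $m \geq 1$; since $v^p_\infty = \mathrm{Lim}_{m \to +\infty} \mathbb G^m v^p_0$, the epigraphical inclusion survives in the Kuratowski limit to give $E_{v^p_\infty}(\hat x) \subseteq E_{v^p_\infty}(\tilde x)$, and the same two-sided argument finishes the $v^p_\infty$ claims. The principal obstacle throughout is the safety verification $\tilde x \in \textbf{S}^p$: it is the unique place where the geometric hypothesis \ref{asmp:lowerBound} is genuinely needed, and without the quantitative separation it enforces the companion-point construction would collapse.
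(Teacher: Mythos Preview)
Your proposal is correct and follows essentially the same route as the paper's proof: reduce the operator identities to the union equality via \eqref{eq:epiProfileBellman} and \eqref{eq:auxBellman}, obtain the trivial inclusion from $\tilde X^p(x)\subseteq\hat X^p(x)$, and for the reverse inclusion build the companion point $\tilde x$ (replacing the $\hat x_i$ with $x_i$ on $\mathcal V^G_p(x)$), verify $\tilde x\in\textbf{S}^p$ via Assumption~\ref{asmp:lowerBound}, recognize $\hat x\in X^p_P(\tilde x)$, and invoke Lemma~\ref{thm:multiGridSub}. Your treatment of the fixed-point case via Lemma~\ref{thm:fixedGridSub} and the Kuratowski limit from Theorem~\ref{thm:kru1} also matches the paper's argument.
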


\end{subsection}

\begin{subsection}{Contraction property of $\mathbb G$: Theorem \ref{thm:ContractionProperty}}\label{sect:propertiesHausdorff}

In this subsection, Theorem \ref{thm:ContractionProperty} shows that the transformed Bellman operator $\mathbb G$ in \eqref{eq:krubellman} is contractive with factor $e^{-\kappa_p}$.

Before we proceed to the final conclusion, the following notations are defined to facilitate our analysis.
Given a set-valued map $v:\textbf{X}^p\rightrightarrows[0,1]^N$, define the interpolation operation $\mathbb I^p$ by 
\begin{equation*}
\begin{split}
(\mathbb I^pv)(x)\triangleq\begin{cases}
v(x), &\text{if } x\in \textbf{X}^p;\\
\{V^p(x)\}, &\text{if } x\in\textbf{S}^{p+1}\setminus\textbf{X}^p;\\
\{\textbf{1}_N\}, &\text{if } x\in\textbf{X}^{p+1}\setminus(\textbf{S}^{p+1}\cup\textbf{X}^p),\\
\end{cases}
\end{split}\end{equation*}
where interpolation function $V^p:\textbf{X}^p\rightarrow\{0,1\}^N$ is defined as
\begin{equation}\label{eq:interpolationFunction}
\begin{split}
V^p_i(x)\triangleq
\begin{cases}
0, &\text{if } d(x_i,  X^G_i)\leq M_i\epsilon_{p+1}+h_{p+1};\\
1, &\text{otherwise.}\\ 
\end{cases}
\end{split}\end{equation} 
Then the interpolated value function $\tilde v^p:\textbf{X}^{p+1}\rightrightarrows [0, 1]^N$ in Algorithm \ref{alg:1} can be represented by $\tilde v^p\triangleq\mathbb I^pv^p_{\bar n_p}$.
The interpolated fixed point $\tilde v^p_\infty: \textbf{X}^{p+1}\rightrightarrows[0,1]^N$ is written as $\tilde v^p_\infty\triangleq\mathbb I^pv^p_\infty$.
Correspondingly, define the initialization operator $\mathbb P$ by 
\begin{align*}
E_{\mathbb Pv}(x)\triangleq\begin{cases}
E_{v}(x), &\text{if }x\in\textbf{X}^{p-1};\\
\bigcup_{\tilde x\in X^p_E(x)}E_v(\tilde x), &\text{if } x\in\textbf{X}^p\setminus\textbf{X}^{p-1}.
\end{cases}
\end{align*}

Define the distance between two consecutive fixed points at $x\in\textbf{X}$ by $b_p(x)\triangleq d_H(\bigcup_{\tilde x\in(x+\alpha_p\mathcal B)\cap\textbf{X}^p}E_{\mathbb P\tilde v_\infty^{p-1}}(\tilde x),\allowbreak\bigcup_{\tilde x\in(x+\alpha_p\mathcal B)\cap\textbf{X}^p}E_{v_\infty^p}(\tilde x))$.
Define $b_p\triangleq\sup_{x\in\textbf{X}}b_p(x)$.
The next lemma shows the distance diminishes.

\begin{lemma}\label{thm:Lemma9}
If Assumptions \ref{asmp:monoConv} and \ref{asmp:alpha} are satisfied, it holds that $\lim_{p\to+\infty}b_p=0$.
\end{lemma}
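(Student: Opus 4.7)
The plan is to bound $b_p(x)$ by a triangle inequality pivoting on $E_{v^*}(x)$ and then invoke Theorem~\ref{thm:kru1} twice. Setting
\[
U^2_p(x)\triangleq\bigcup_{\tilde x\in(x+\alpha_p\mathcal B)\cap\textbf{X}^p}E_{v^p_\infty}(\tilde x),\qquad U^1_p(x)\triangleq\bigcup_{\tilde x\in(x+\alpha_p\mathcal B)\cap\textbf{X}^p}E_{\mathbb P\tilde v^{p-1}_\infty}(\tilde x),
\]
we have $b_p(x)\le d_H(U^1_p(x),E_{v^*}(x))+d_H(E_{v^*}(x),U^2_p(x))$, so it suffices to show each distance vanishes, eventually uniformly in $x$.

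First I would verify that $\alpha_p$ is an admissible rate for Theorem~\ref{thm:kru1}. Assumption~\ref{asmp:monoConv} gives $h_p=\epsilon_p\cdot(h_p/\epsilon_p)\to 0$ (a product of two monotonically vanishing positive factors), and therefore $\alpha_p=2h_p+\epsilon_p h_p l^++\epsilon_p^2 l^+ M^+\to 0$; moreover $\alpha_p\ge 2h_p\ge h_p$ and, by Assumption~\ref{asmp:alpha}, $\alpha_p\ge h_{p-1}$. Consequently Theorem~\ref{thm:kru1} with $\eta_p=\alpha_p$ gives $\Lim{p\to\infty}U^2_p(x)=E_{v^*}(x)$ pointwise in $x\in\textbf{X}$, and applied on grid $\textbf{X}^{p-1}$ with $\eta_{p-1}=\alpha_p$ it also gives $\Lim{p\to\infty}\bigcup_{\tilde x\in(x+\alpha_p\mathcal B)\cap\textbf{X}^{p-1}}E_{v^{p-1}_\infty}(\tilde x)=E_{v^*}(x)$.

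Next I would handle $U^1_p(x)$. Since $\mathbb P\tilde v^{p-1}_\infty$ agrees with $v^{p-1}_\infty$ on $\textbf{X}^{p-1}\subseteq\textbf{X}^p$ and $(x+\alpha_p\mathcal B)\cap\textbf{X}^{p-1}\neq\emptyset$, we already have $U^1_p(x)\supseteq\bigcup_{\tilde x\in(x+\alpha_p\mathcal B)\cap\textbf{X}^{p-1}}E_{v^{p-1}_\infty}(\tilde x)$, which delivers the $\mathrm{Liminf}$ direction. For the matching $\mathrm{Limsup}$ inclusion I would control the extra contributions at $\tilde x\in\textbf{X}^p\setminus\textbf{X}^{p-1}$: unpacking the definition, $\mathbb P\tilde v^{p-1}_\infty(\tilde x)=\bigcup_{x'\in X^p_E(\tilde x)}\tilde v^{p-1}_\infty(x')$ decomposes into (i) values $v^{p-1}_\infty(x')$ at $\textbf{X}^{p-1}$-equivalent nodes, which are already controlled above, (ii) interpolation vectors $V^{p-1}(x')$ whose $0$-entries encode $d(x'_i,X^G_i)\le M_i\epsilon_p+h_p$ and whose $1$-entries trivially sit in every epigraphical profile, and (iii) the default $\textbf{1}_N$, likewise in every profile. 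Since $\alpha_p\to 0$ forces $x'\to x$, the $0$-coordinates of $V^{p-1}(x')$ correspond to coordinates at which $v^*_i(x)=0$ in the limit, so these extras are absorbed in $E_{v^*}(x)$ and $\Lim{p\to\infty}U^1_p(x)=E_{v^*}(x)$ as well.

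The main obstacle will be promoting these pointwise Kuratowski limits to the uniform bound $b_p=\sup_{x\in\textbf{X}}b_p(x)\to 0$. Exploiting the compactness of $\textbf{X}$ and the fact that all epigraphical profiles lie in the compact set $[0,1]^N$, I would argue by contradiction: if $b_{p_k}\ge\delta>0$ along some subsequence $p_k\uparrow\infty$, pick $x_{p_k}\in\textbf{X}$ nearly achieving the sup and extract a convergent subsequence $x_{p_k}\to x_\star\in\textbf{X}$; because $\alpha_{p_k}\to 0$, the balls $(x_{p_k}+\alpha_{p_k}\mathcal B)$ eventually fall inside any prescribed neighborhood of $x_\star$, and combining this with an appropriate semicontinuity property of $x\mapsto E_{v^*}(x)$ (inherited from the regularity of the minimal arrival time established in Theorem~\ref{thm:kru1}) together with the pointwise convergences at $x_\star$ yields the desired contradiction, completing the proof.
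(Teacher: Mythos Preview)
Your triangle-inequality plan has a genuine gap in the $\mathrm{Limsup}$ step for $U^1_p(x)$. The assertion ``$\alpha_p\to 0$ forces $x'\to x$'' is false: for $\tilde x\in(x+\alpha_p\mathcal B)\cap\textbf{X}^p$ and $x'\in X^p_E(\tilde x)$, the coordinates $x'_i$ with $i\in\mathcal V^G_p(\tilde x)$ are only constrained by $d(x'_i,X^G_i)\le M_i\epsilon_p+h_p$ and can lie anywhere near $X^G_i$, hence far from $x_i$. Consequently your type-(i) term---the profiles $E_{v^{p-1}_\infty}(x')$ at equivalent nodes $x'\in X^p_E(\tilde x)\cap\textbf{X}^{p-1}$---is \emph{not} covered by the union $\bigcup_{\tilde x\in(x+\alpha_p\mathcal B)\cap\textbf{X}^{p-1}}E_{v^{p-1}_\infty}(\tilde x)$ you invoke earlier, since those $x'$ need not lie in $x+\alpha_p\mathcal B$. (For type (ii) the conclusion happens to survive, because the $0$-entries of $V^{p-1}(x')$ coincide with $\mathcal V^G_p(\tilde x)$ and hence are governed by $\tilde x$, not by $x'$; but the stated reason is still wrong.)

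The paper closes this gap by a different route that avoids pivoting through $E_{v^*}(x)$. It splits $U^1_p(x)$ as $A^p_{11}\cup A^p_{12}\cup A^p_{13}\cup A^p_2$ and shows the extra pieces are absorbed not in $E_{v^*}(x)$ but in $U^2_p(x)$ or in $A^p_2$: the inclusions $A^p_{11}\cup A^p_{13}\subseteq U^2_p(x)$ come directly from the third property of Lemma~\ref{thm:zeroValue}, and the problematic type-(i) piece $A^p_{12}$ is handled via Corollary~\ref{thm:XEEquivalence}: since $E_{v^{p-1}_\infty}$ is constant on $X^{p-1}_E$-equivalence classes, each far-away $x'$ may be replaced by a nearby $\hat x\in(x+\alpha_p\mathcal B)\cap\textbf{X}^{p-1}$ with the same profile, giving $A^p_{12}\subseteq A^p_2$ for large $p$. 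Lemma~\ref{thm:setInequality} then reduces $b_p(x)$ to $d_H(A^p_2(x),U^2_p(x))$, which vanishes by Theorem~\ref{thm:kru1}. As for your final compactness argument, it is also unsupported as written: Theorem~\ref{thm:kru1} does not establish the semicontinuity of $x\mapsto E_{v^*}(x)$ you invoke, and the paper itself argues only pointwise in $x$.
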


The following lemma shows that, under $\mathbb G$, the distance of $v^p_n$ and $v^p_\infty$ at any node $x\in\textbf{X}^p$ is discounted by $e^{-\kappa_p}$.

\begin{lemma}\label{thm:preliminaryContractionProperty}
If Assumptions \ref{asmp:monoConv} and \ref{asmp:lowerBound} are satisfied, then the following holds for any $p\geq1$, $n\geq0$ and $x\in\textbf{S}^p$:
\begin{equation*}
\begin{split}
d_H((\textbf{1}-\Delta\tau(x))\circ A, (\textbf{1}-\Delta\tau(x))\circ B)\leq e^{-\kappa_p}d_H(A, B),
\end{split}\end{equation*}
where
$
A\triangleq\bigcup_{\tilde x\in \tilde X^p(x)}E_{v^p_n}(\tilde x), 
B\triangleq\bigcup_{\tilde x\in \tilde X^p(x)}E_{v^p_\infty}(\tilde x).
$

\end{lemma}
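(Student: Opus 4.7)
The plan is to exploit the two-tier structure of the multiplier $\textbf{1}-\Delta\tau(x)$: by the explicit formula in~\eqref{eq:deltaTau}, its $i$-th component equals $e^{-\kappa_p}$ when $i\notin\mathcal V^G_p(x)$ and equals $1$ when $i\in\mathcal V^G_p(x)$. A direct componentwise Hadamard-product estimate would thus only give a contraction on the coordinates outside $\mathcal V^G_p(x)$, so the heart of the argument is to show that the coordinates in $\mathcal V^G_p(x)$ contribute zero to the Hausdorff distance between $(\textbf{1}-\Delta\tau(x))\circ A$ and $(\textbf{1}-\Delta\tau(x))\circ B$. This will be achieved by showing that in those coordinates the sets $A$ and $B$ are ``free''—their epigraphical profiles project surjectively onto $[0,1]$—so that any point in one set can be matched in those coordinates by a nearby point in the other.

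First, I would assemble the structural facts. By Lemma~\ref{thm:zeroValue}(1), every $\tilde x\in\tilde X^p(x)$ satisfies $\mathcal V^G_p(x)\subseteq \mathcal V^G_p(\tilde x)$. Then Lemma~\ref{thm:zeroValue}(3) applied to $\tilde x$ (and the remark extending it to $v^p_\infty$) yields $\tau_i=0$ for every $\tau\in v^p_n(\tilde x)\cup v^p_\infty(\tilde x)$ and every $i\in\mathcal V^G_p(x)$. Consequently, both $E_{v^p_n}(\tilde x)$ and $E_{v^p_\infty}(\tilde x)$ contain the full slab $\{\tau\in[0,1]^N:\tau_j=0\text{ for } j\in\mathcal V^G_p(x)\}+\mathbb R^N_{\geq0}$ intersected with $[0,1]^N$, so coordinates indexed by $\mathcal V^G_p(x)$ are unconstrained.

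Second, I would establish the one-sided inclusion. Fix $a\in A$ and pick $b\in B$ with $\|a-b\|\leq d_H(A,B)$; say $b\in E_{v^p_\infty}(\tilde x_b)$ with $b\succeq \tau^b$ for some $\tau^b\in v^p_\infty(\tilde x_b)$. Define $b'\in[0,1]^N$ by $b'_i=a_i$ for $i\in\mathcal V^G_p(x)$ and $b'_i=b_i$ otherwise. Because $\tau^b_i=0$ on $\mathcal V^G_p(x)$, we have $b'_i=a_i\geq 0=\tau^b_i$ on those coordinates and $b'_i=b_i\geq \tau^b_i$ off them, so $b'\succeq\tau^b$, giving $b'\in E_{v^p_\infty}(\tilde x_b)\subseteq B$. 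Then
\begin{equation*}
\|(\textbf{1}-\Delta\tau(x))\circ a-(\textbf{1}-\Delta\tau(x))\circ b'\|^2=\sum_{i\notin\mathcal V^G_p(x)}e^{-2\kappa_p}(a_i-b_i)^2\leq e^{-2\kappa_p}\|a-b\|^2,
\end{equation*}
which gives distance at most $e^{-\kappa_p}d_H(A,B)$ from any point of $(\textbf{1}-\Delta\tau(x))\circ A$ to $(\textbf{1}-\Delta\tau(x))\circ B$. The symmetric direction is identical, modifying $a$ instead of $b$ and invoking the $v^p_n$ case of Lemma~\ref{thm:zeroValue}(3). Combining the two directions yields the claimed inequality.

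The main obstacle is the construction of the witness $b'$ and the verification that it actually lies in $B$. Without that step the ``$1$'' appearing in coordinates $i\in\mathcal V^G_p(x)$ would destroy the contraction; with it, those coordinates are annihilated and only the genuinely contractive coordinates survive. The key ingredient enabling this maneuver is precisely the vanishing property furnished by Lemma~\ref{thm:zeroValue}, together with the epigraphical-profile definition, and ensuring I invoke the right version of that lemma (for $v^p_n$ in one direction and $v^p_\infty$ in the other) is the only delicate point.
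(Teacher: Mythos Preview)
Your proposal is correct and follows essentially the same strategy as the paper: pick a Hausdorff-near witness, then replace its coordinates on the ``free'' goal indices by those of $a$ so that the only surviving terms carry the factor $e^{-\kappa_p}$, and verify the modified point still lies in the epigraphical profile via Lemma~\ref{thm:zeroValue}. The only cosmetic difference is that the paper modifies the coordinates indexed by $\mathcal V^G_p(\tilde x)$ (the next-hop goal set) and then uses $\mathcal V^G_p(x)\subseteq\mathcal V^G_p(\tilde x)$ to get the $e^{-\kappa_p}$ factor on the remaining terms, whereas you modify only the coordinates indexed by $\mathcal V^G_p(x)$, which is slightly more direct; also, the paper works with an arbitrary $\delta'>d_H(A,B)$ rather than an attained nearest point, but since $B$ is a finite union of compact epigraphical profiles this is immaterial.
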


Finally, we come to the contraction property of $\mathbb G$.

\begin{theorem}\label{thm:ContractionProperty}
If Assumptions \ref{asmp:monoConv} and \ref{asmp:lowerBound} are satisfied, the following holds for any $p\geq1$, $n\geq0$ and $x\in\textbf{S}^p$:
\begin{equation}\label{eq:contractionInequality1}
\begin{split}
&d_{\textbf{S}^p}(E_{\mathbb Gv^p_n}, E_{\mathbb Gv^p_\infty})\\
\leq&e^{-\kappa_p}d_{\textbf{X}}(\bigcup_{\tilde x\in (x+\alpha_p\mathcal B)\cap\textbf{X}^p}E_{v^p_n}(\tilde x),\bigcup_{\tilde x\in (x+\alpha_p\mathcal B)\cap\textbf{X}^p}E_{v^p_\infty}(\tilde x)).
\end{split}\end{equation}

In addition, the following is also true:
\begin{equation}\label{eq:contractionInequality2}
\begin{split}
d_{\textbf{S}^p}(E_{\mathbb Gv^p_n},  E_{\mathbb Gv^p_\infty})\leq e^{-\kappa_p}d_{\textbf{S}^p}(E_{v^p_n},E_{v^p_\infty}).
\end{split}\end{equation}
\end{theorem}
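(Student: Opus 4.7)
The plan is to use Lemma~\ref{thm:tildeHatEquivalence} to pass from $\mathbb G$ to the auxiliary operator $\hat{\mathbb G}$, exploit the affine form of $\hat{\mathbb G}$ to isolate the Hadamard scaling $(\textbf{1}-\Delta\tau(x))\circ(\cdot)$, and then apply Lemma~\ref{thm:preliminaryContractionProperty} to extract the contraction factor $e^{-\kappa_p}$. Fix $x\in\textbf{S}^p$. From \eqref{eq:auxBellman}, both $E_{\hat{\mathbb G}v^p_n}(x)$ and $E_{\hat{\mathbb G}v^p_\infty}(x)$ share the additive shift $\Delta\tau(x)$, which drops out of Hausdorff distance by translation invariance, and by Lemma~\ref{thm:tildeHatEquivalence} the $\hat X^p$- and $\tilde X^p$-unions coincide. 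Lemma~\ref{thm:preliminaryContractionProperty} then yields
\begin{equation*}
d_H(E_{\mathbb Gv^p_n}(x),E_{\mathbb Gv^p_\infty}(x))\leq e^{-\kappa_p}\,d_H(A,B),
\end{equation*}
where $A\triangleq\bigcup_{\hat x\in\hat X^p(x)}E_{v^p_n}(\hat x)$ and $B\triangleq\bigcup_{\hat x\in\hat X^p(x)}E_{v^p_\infty}(\hat x)$.

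Next, I would decompose $\hat X^p(x)$ as a control-indexed union of local balls. Defining $z(u)\triangleq x+\epsilon_p\hat f(x,u)$ with $\hat f_i(x_i,u_i)=f_i(x_i,u_i)$ for $i\notin\mathcal V^G_p(x)$ and $\hat f_i\equiv 0$ otherwise, the product definition of $\hat X^p(x)$ gives $\hat X^p(x)=\bigcup_{u}\bigl[(z(u)+\alpha_p\mathcal B_\textbf{X})\cap\textbf{S}^p\bigr]$. The elementary inequality $d_H(\bigcup_i A_i,\bigcup_i B_i)\leq\sup_i d_H(A_i,B_i)$ applied to the outer $u$-union produces a supremum over centers $z(u)$; enlarging this supremum to all $y\in\textbf{X}$ and the inner intersection from $\textbf{S}^p$ to $\textbf{X}^p$ reproduces the right-hand side of \eqref{eq:contractionInequality1}. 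The intersection enlargement is harmless because every $E_{v^p_n}(\hat x)\subseteq[0,1]^N$ contains $\textbf{1}_N$ and the construction in Theorem~\ref{thm:kru1} keeps $v^p_n(\tilde x)=v^p_\infty(\tilde x)=\{\textbf{1}_N\}$ on $\textbf{X}^p\setminus\textbf{S}^p$, so the added $\textbf{X}^p\setminus\textbf{S}^p$ contribution is either already present or common to both unions. Supremizing the pointwise bound over $x\in\textbf{S}^p$ completes \eqref{eq:contractionInequality1}.

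For \eqref{eq:contractionInequality2}, I would apply $d_H(\bigcup_i A_i,\bigcup_i B_i)\leq\sup_i d_H(A_i,B_i)$ a second time to each inner $\alpha_p$-ball union. The same observation that $v^p_n$ and $v^p_\infty$ agree on $\textbf{X}^p\setminus\textbf{S}^p$ lets me restrict the index supremum to $\tilde x\in(y+\alpha_p\mathcal B)\cap\textbf{S}^p$, and taking a further supremum over $y$ and $\tilde x$ bounds everything by $\sup_{\tilde x\in\textbf{S}^p}d_H(E_{v^p_n}(\tilde x),E_{v^p_\infty}(\tilde x))=d_{\textbf{S}^p}(E_{v^p_n},E_{v^p_\infty})$. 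Chaining with \eqref{eq:contractionInequality1} then delivers \eqref{eq:contractionInequality2}.

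The main obstacle I expect is the geometric identification $\hat X^p(x)=\bigcup_u(z(u)+\alpha_p\mathcal B_\textbf{X})\cap\textbf{S}^p$: the coordinates indexed by $\mathcal V^G_p(x)$ must be treated separately because the set-valued dynamics degenerate there, and the intersection with $\textbf{S}^p$ must commute with the control-parameterized union. Once this decomposition and the $\textbf{1}_N$-saturation of epigraphical profiles are in hand, the remaining steps combine translation invariance of $d_H$ with the monotonicity estimate $d_H(\bigcup_i A_i,\bigcup_i B_i)\leq\sup_i d_H(A_i,B_i)$, together with Lemmas~\ref{thm:tildeHatEquivalence} and \ref{thm:preliminaryContractionProperty}.
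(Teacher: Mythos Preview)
Your proposal is correct and follows essentially the same route as the paper. The paper likewise uses translation invariance to drop $\Delta\tau(x)$, applies Lemma~\ref{thm:preliminaryContractionProperty} to obtain the factor $e^{-\kappa_p}$, and invokes Lemma~\ref{thm:tildeHatEquivalence} to pass between $\tilde X^p$ and $\hat X^p$; the only cosmetic difference is that where you spell out the decomposition $\hat X^p(x)=\bigcup_u(z(u)+\alpha_p\mathcal B)\cap\textbf{S}^p$ and the elementary bound $d_H(\bigcup_i A_i,\bigcup_i B_i)\leq\sup_i d_H(A_i,B_i)$ by hand, the paper packages exactly this argument into Lemma~\ref{thm:maximumNormShrinkingPerturbation} (inequalities \eqref{eq:shrinking} and \eqref{eq:shrinking2}), and the $\textbf{S}^p$-to-$\textbf{X}^p$ enlargement via the $\{\textbf{1}_N\}$-saturation on $\textbf{X}^p\setminus\textbf{S}^p$ is handled identically.
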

The next lemma derives a recursive relation of $d_{\textbf{X}^p}(E_{v^p_{\bar n_p}}, E_{v^p_\infty})$.

\begin{lemma}\label{thm:discountedDistanceOnOneGrid}
If Assumption \ref{asmp:1} and \ref{asmp:resolutions} are satisfied, the following inequality holds for each grid $\textbf{X}^p$:
\begin{equation}\label{eq:discountedDistanceOnOneGrid}
\begin{split}
d_{\textbf{X}^p}(E_{v^p_{\bar n_p}}, E_{v^p_\infty})
\leq \gamma_pd_{\textbf{X}^{p-1}}(E_{v^{p-1}_{\bar n_{p-1}}},E_{v^{p-1}_\infty})+b_{p},
\end{split}\end{equation}
where $\gamma_p\triangleq e^{-n_p\kappa_p}$ and $b_{p}$ is defined in Lemma \ref{thm:Lemma9}.
\end{lemma}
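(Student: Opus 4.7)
The plan is to combine the $n_p$-fold contraction from Theorem~\ref{thm:ContractionProperty} with a triangle-inequality decomposition that naturally produces the grid-refinement error $b_p$ from Lemma~\ref{thm:Lemma9}. At a high level, the contraction discounts the distance between $v^p_0$ and $v^p_\infty$ by $\gamma_p = e^{-n_p\kappa_p}$; a triangle inequality through $\mathbb P \tilde v^{p-1}_\infty$ then splits that distance into a ``grid-refinement'' piece controlled by $b_p$ and an ``inherited'' piece controlled by $d_{\textbf{X}^{p-1}}(E_{v^{p-1}_{\bar n_{p-1}}}, E_{v^{p-1}_\infty})$.

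First I would iterate inequality~\eqref{eq:contractionInequality2} of Theorem~\ref{thm:ContractionProperty} over $n_p$ steps on the value-iteration region $\textbf{S}^p \setminus (\textbf{X}^G + (M^+\epsilon_p + h_p)\mathcal B_\textbf{X})$, where $v^p_n = \mathbb G v^p_{n-1}$ holds by construction, to obtain
\begin{equation*}
d_{\textbf{S}^p}(E_{v^p_{\bar n_p}}, E_{v^p_\infty}) \le \gamma_p\, d_{\textbf{S}^p}(E_{v^p_0}, E_{v^p_\infty})
\end{equation*}
when $\bar n_p = n_p$; when $\bar n_p < n_p$, early termination of the while loop forces $v^p_{\bar n_p}$ to be a fixed point of $\mathbb G$ and the bound is trivial. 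Inserting $\mathbb P \tilde v^{p-1}_\infty$ via the triangle inequality yields
\begin{equation*}
d_{\textbf{S}^p}(E_{v^p_0}, E_{v^p_\infty}) \le d_{\textbf{S}^p}(E_{v^p_0}, E_{\mathbb P \tilde v^{p-1}_\infty}) + d_{\textbf{S}^p}(E_{\mathbb P \tilde v^{p-1}_\infty}, E_{v^p_\infty}),
\end{equation*}
where the second summand is exactly the quantity captured by $b_p$. For the first summand I would exploit that $v^p_0 = \mathbb P \tilde v^{p-1} = \mathbb P \mathbb I^{p-1} v^{p-1}_{\bar n_{p-1}}$ and $\mathbb P \tilde v^{p-1}_\infty = \mathbb P \mathbb I^{p-1} v^{p-1}_\infty$. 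The operators $\mathbb P$ and $\mathbb I^{p-1}$ either copy values on $\textbf{X}^{p-1}$, insert identical deterministic values on $\textbf{S}^p\setminus\textbf{X}^{p-1}$, or form unions over equivalent-node sets $X^p_E(x) \subseteq \textbf{X}^{p-1}$. Each operation is nonexpansive in the Hausdorff distance, and Corollary~\ref{thm:XEEquivalence} guarantees equivalent nodes share the fixed-point value, so
\begin{equation*}
d_{\textbf{S}^p}(E_{v^p_0}, E_{\mathbb P \tilde v^{p-1}_\infty}) \le d_{\textbf{X}^{p-1}}(E_{v^{p-1}_{\bar n_{p-1}}}, E_{v^{p-1}_\infty}).
\end{equation*}
The extension from $\textbf{S}^p$ to $\textbf{X}^p$ is then straightforward: outside the value-iteration region, Algorithm~\ref{alg:1} overwrites $v^p_{\bar n_p}$ with $\tilde v^{p-1}$, and a parallel characterization holds for $v^p_\infty$, so that contribution is again bounded by $d_{\textbf{X}^{p-1}}(E_{v^{p-1}_{\bar n_{p-1}}}, E_{v^{p-1}_\infty})$.

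The main obstacle is the transfer step: one must show $\mathbb P$ does not inflate Hausdorff distance despite taking unions over $X^p_E(x)$, which relies essentially on Corollary~\ref{thm:XEEquivalence} to ensure the two unions are taken over nodes whose fixed-point values agree. A secondary subtlety is the precise form of $b_p$---a supremum over $\textbf{X}$ of Hausdorff distances of neighborhood unions---which interacts with how inequality~\eqref{eq:contractionInequality1} expands the domain of the sup; I would track distances in the union-over-$(x+\alpha_p\mathcal B)\cap\textbf{X}^p$ form throughout the decomposition, possibly invoking inequality~\eqref{eq:contractionInequality1} rather than \eqref{eq:contractionInequality2} in the iteration step, so that the $b_p$ term appears in exactly the form required by the statement.
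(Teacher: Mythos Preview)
Your proposal is correct and matches the paper's proof closely: the paper also distinguishes the trivial fixed-point case, iterates the contraction, inserts $\mathbb P\tilde v^{p-1}_\infty$ via the triangle inequality, identifies the second summand as $b_p$, and shows the $\mathbb P\mathbb I$ operations are nonexpansive so the first summand reduces to $d_{\textbf{X}^{p-1}}(E_{v^{p-1}_{\bar n_{p-1}}},E_{v^{p-1}_\infty})$. The one refinement you already anticipated is exactly what the paper does: it applies \eqref{eq:contractionInequality2} only $n_p-1$ times and then switches to \eqref{eq:contractionInequality1} for the last step, so that the triangle inequality is taken in the $(x+\alpha_p\mathcal B)\cap\textbf{X}^p$-union form and the second summand matches the definition of $b_p$ verbatim; your initial version with the pointwise $d_{\textbf{S}^p}$ triangle inequality would not directly produce $b_p$.
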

\end{subsection}
\end{section}

\begin{section}{Experiments and Simulations}\label{sect:simulation}

This section presents the experiments on an indoor multi-robot platform and computer simulations conducted to assess the performance of Algorithm \ref{alg:1}.
The experiment environment, shown in Figure \ref{fig:realPlat}, is a four-way intersection with no signs or signals. 
Each road is $420$mm wide and consists of two lanes of same width with opposite directions. 
Three Khepera III robots of diameters $170$mm can neither sense the environment nor communicate with each other.
A centralized computer can measure robots' locations and heading angles via Vicon system, a motion capture system, and remotely command each robot's motion via bluetooth.

\begin{figure}[ht]
\includegraphics[scale=0.3]{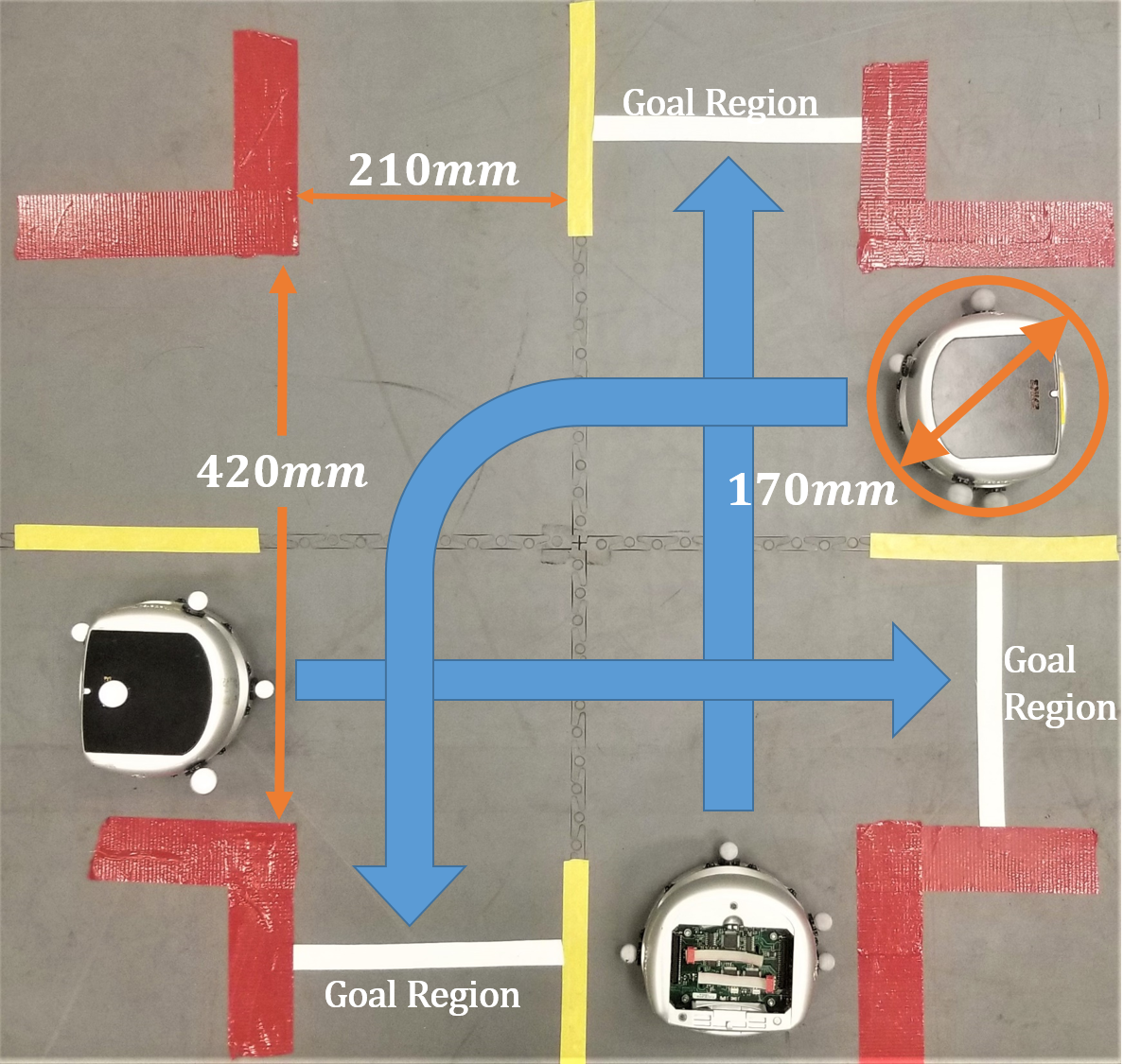}\centering
\caption{Three Khepera III robots arrive at the intersection at the same time.}\centering\label{fig:realPlat}
\squeezeup
\end{figure}

Each robot is modeled as a unicycle and its dynamic is given by $\dot p^x_i=v_i\cos \theta_i, \dot p^y_i=v_i\sin \theta_i$, where $x_i=(p^x_i, p^y_i)$ denotes the $i$-th robot's position and $u_i=(\theta_i, v_i)\in U_i=U_i^\theta\times U_i^v$ is its control including heading angle $\theta_i$ and linear speed $v_i$.
The goal for each robot is to pass the crossroads and arrive at its goal region without colliding with curbs or any other robot.
The robots stop as long as they pass their respective white goal lines in Figure~\ref{fig:realPlat}.

In practice, the allowable computational times for the robots are varying and uncertain. 
Therefore, it is desired to compute control policies, which can safely steer the robots to their goal regions within a short time and keep improving the control policies if more time is given.
This property is referred to the anytime property, which is widely adopted in robotic motion planning literature \cite{pineau2003pointbased, likhachev2005anytime, ferguson2006anytime, karaman2011anytime}.
In the following, we demonstrate that our algorithm is an anytime algorithm; i.e., it is quickly feasible and increasingly optimal.
In addition, the simulations are also used to analyze the computational complexity of our algorithm.
\begin{subsection}{Demonstration of quick feasibility}
In this subsection, an experiment on three physical robots is conducted to examine the quick feasibility of our algorithm for multiple robots. 
In our MATLAB codes, we normalize the road width to $1$ and scale robot radii to $0.2$. 
We choose $\epsilon_p=\sqrt{h_p}$.
The constraint sets of controls are given as: $U_i^v=[0, 0.25], U^\theta_1=[-\pi, -\pi/2], U^\theta_2=[-\pi/2, \pi/2]$ and $U_3^\theta=[0, \pi]$.
The dimension of state space is $6$.
For the purpose of collision avoidance, we set the inter-robot safety distance as $0.6$ and ignore perturbations added to $\textbf{S}$ in line~\ref{alg:1:Sp} of Algorithm~\ref{alg:1}; i.e., we choose $\textbf{S}^p=\textbf{S}\cap\textbf{X}^p$.
In order to efficiently address the failure of arrival caused by coarse resolutions of discrete grids, we use finer grids near goal regions.
Specifically, in the one-hop expansion of each robot's goal region $\{x\in\textbf{X}|d(x_i, X_i^G)\leq M_i\epsilon_p+h_p, i\in\mathcal V\}$, we refine the grids, perform Algorithm \ref{alg:1} on the new nodes and replace coarse controller with the refined one.
Since Algorithm~\ref{alg:1} only returns control policies on discrete grids, we need to interpolate the control policies into the continuous state space.
In particular, Unif$(\cdot)$ is used to uniformly select one control from $\mathcal U^p(x)$ for $x\in\textbf{S}^p$.
For state $x\in\textbf{X}\setminus\textbf{S}^p$, the control is interpolated by nearest neighbor method; i.e., we take $u=\text{Unif}(\mathcal U^p(\arg\min_{\hat x\in\textbf{S}^p}\|\hat x-x\|))$.
Algorithm~\ref{alg:1} is executed in MATLAB on a $3.40$ Ghz Intel Core i7 computer.

Each physical robot has inertia in changing its heading angle $\theta_i$ and is subject to $\dot\theta_i=\omega_i$, where $\omega_i$ is the angular velocity that robot $i$ can directly command.
To address this difference in dynamics, a PID controller is leveraged to modulate robots' heading angles; i.e., $\omega_i=\text{PID}(u_{i, 1}-\theta_i)$, where $u_{i, 1}$ is the returned heading angle of robot $i$.

\squeezeup
\begin{figure}[ht]
\includegraphics[scale=0.5]{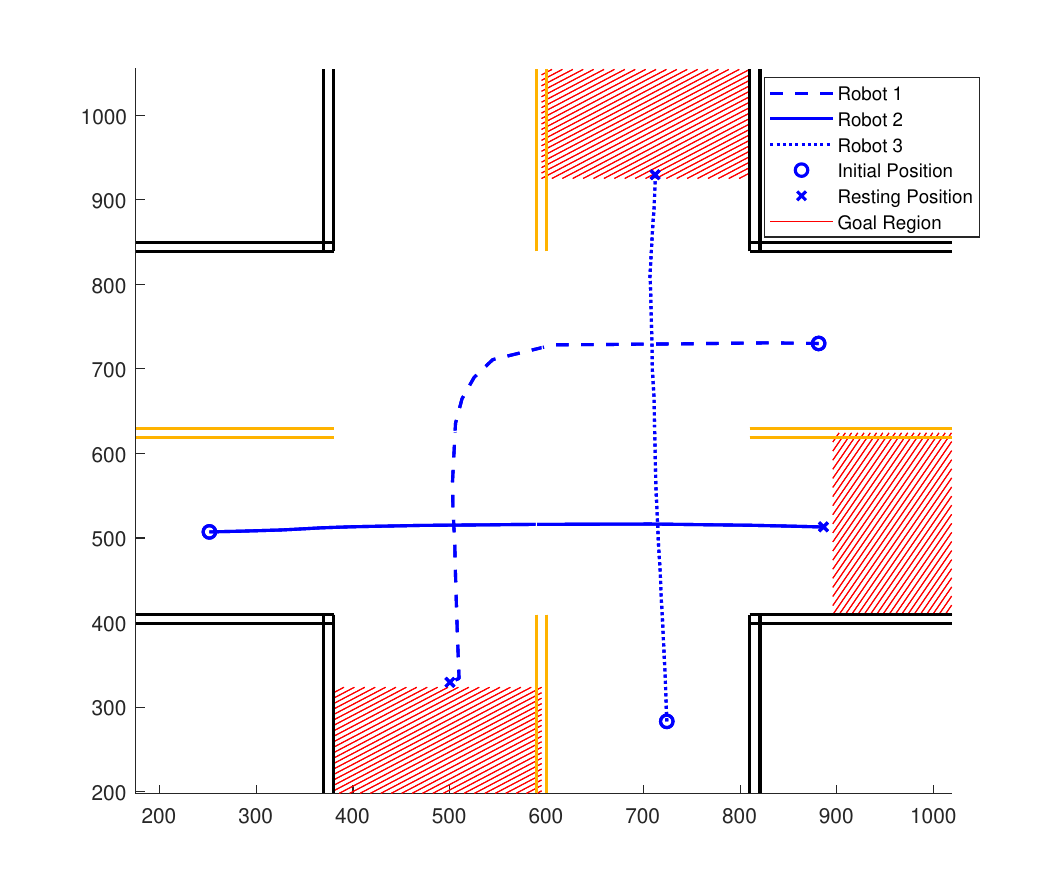}\centering
\caption{The trajectories of centers of three robots when the computation time is $1.05$s.}\centering\label{fig:1}
\squeezeup
\end{figure}
\begin{figure}[ht]
\includegraphics[scale=0.6]{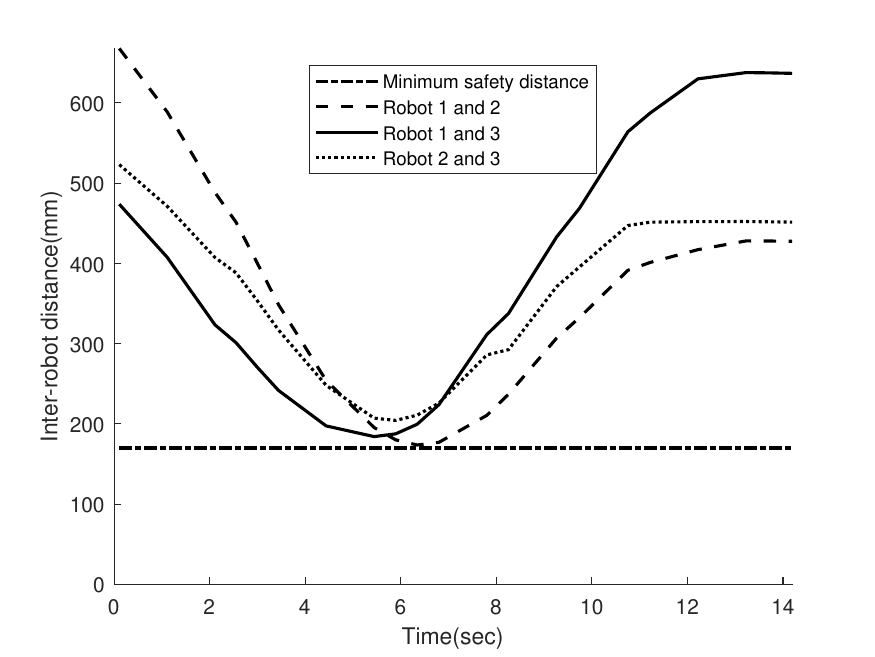}\centering
\caption{Inter-robot distances over time.}\label{fig:2}
\squeezeup
\end{figure}
\begin{figure}[ht]
\includegraphics[scale=0.6]{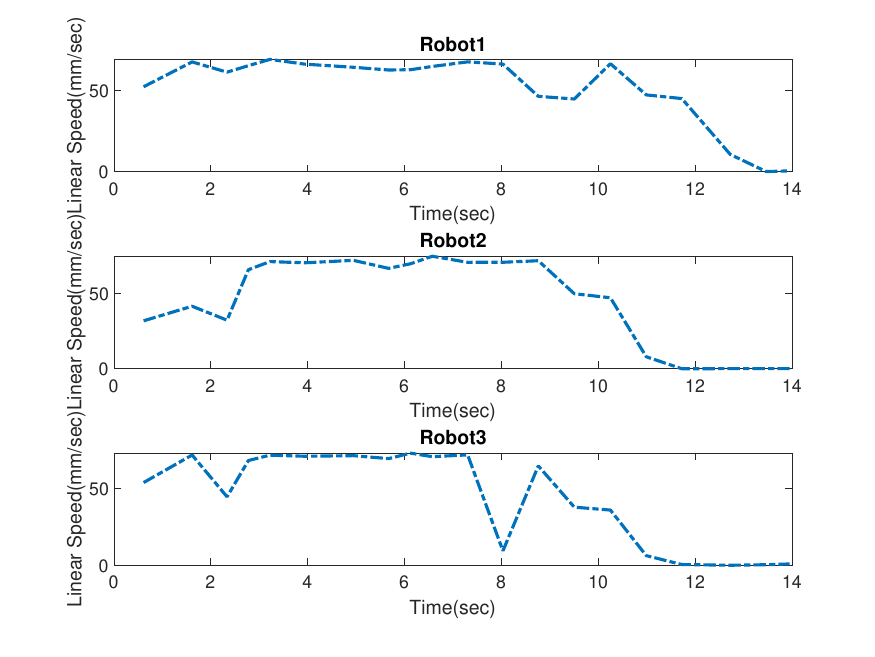}\centering
\caption{Robot linear speeds over time.}\label{fig:3}
\squeezeup
\end{figure}

Figure \ref{fig:1} shows the trajectories of the robots when they apply the interpolated control policies computed in $1.05$s.
Figure \ref{fig:2} shows the inter-robot distances over time corresponding to Figure \ref{fig:1}, indicating that no collision is caused throughout the movement of the robots.
Figure \ref{fig:3} displays the linear speeds of each robot over time.
At around $2$s, both robot $2$ and robot $3$ slow down so that robot $1$ can first pass the intersection.
At $8$s, robot $3$ is no more than one hop away from its goal region and stops owing to the coarse resolution of the grid. 
After this moment, the robots switch to the refined controller, hence robot $3$ continues to move until it rests at its goal region.
The results show that given short computational time; i.e., $1.05$s, our algorithm can already generate a feasible policy which accomplishes the planning task without violating any hard constraint.
Therefore, the quick feasibility is verified.

\end{subsection}

\begin{subsection}{Demonstration of increasing optimality}
A set of computer simulations is performed to examine the increasing optimality of Algorithm \ref{alg:1}.
The parameters are identical to the previous experiment with the differences that robot $3$ is excluded and safety distance is $0.4$.
The operating region of the robot team is discretized by the sequence of uniform square grids $\{\textbf{X}^p\}$ for $p\in\{1, \dots, 4\}$ with resolutions $h_p\in\{0.2, 0.1, 0.05, 0.025\}$, each of which contains $145, 3403, 34344, 416689$ nodes respectively.
All the grids are within the same update window.
We choose $\epsilon_p=\sqrt{h_p/M^+}$.
In computations, we only update values of nodes in the safety region $\textbf{S}^p$ as nodes in $\textbf{X}^p\setminus\textbf{S}^p$ indicate collisions and therefore are irrelevant.
In addition, we ignore the perturbation added to $\textbf{S}^p$ to avoid excessive computations.
In line~\ref{alg:1:initializeValue} of Algorithm~\ref{alg:1}, we choose any single node $x_E(x)\in X_E^p(x)\cap\textbf{X}^1$ to represent the whole equivalent set $X_E^p(x)$ as it is the minimizer of $\bigcup_{\tilde x\in X_E^p(x)}\tilde v^{p-1}(\tilde x)$.
Our algorithm refines grids if the relative difference between two consecutive value functions $v^p_n$ and $v^p_{n-1}$ is less than $10\%$ of the total difference between $v^p_n$ and $v^p_0$; i.e., $\mathcal D^p_{n-1, n}/\mathcal D^p_{0, n}\leq 10\%$, where $\mathcal D^p_{n_1, n_2}\triangleq\sqrt{\sum_{x\in \textbf{S}^p}d_H^2(v^p_{n_1}(x), v^p_{n_2}(x))}$ is the $2$-norm difference between $v^p_{n_1}$ and $v^p_{n_2}$.
The benchmark $v^\star$ is the estimate of minimal arrival time function computed on the finest grid $\textbf{S}^4$ with resolution $h_p= 0.025$.
To measure approximation errors, we use nearest neighbor method to interpolate each estimate of minimal arrival time function $v^p_n$ into $\hat v^p_n$ so that both $\hat v^p_n$ and $v^\star$ share the finest grid as their domains.
Note that $\hat v^p_n(x)\triangleq v^p_n(\arg\min_{\hat x\in\textbf{S}^p}\|\hat x-x\|)$ for every $x\in\textbf{S}^4$.
Then approximation error of $\hat v^p_n$ is measured by $\sqrt{\sum_{x\in \textbf{S}^4}d_H^2(\hat v^p_n(x), v^\star(x))}$.
Figure \ref{fig:000} shows the approximation errors over time.
The $n$-th dot from the left in Figure~\ref{fig:000} represents the total computational time after $n$ value iterations and the associated approximation error.
The peak at $2$s is caused by the nonlinearity of Kruzhkov transform, where the initial value $\textbf{1}_2$ is closer to the benchmark values.
Other than this, the approximation errors are monotonically decreasing over time.
\squeezeup
\begin{figure}[ht]
\includegraphics[scale=0.6]{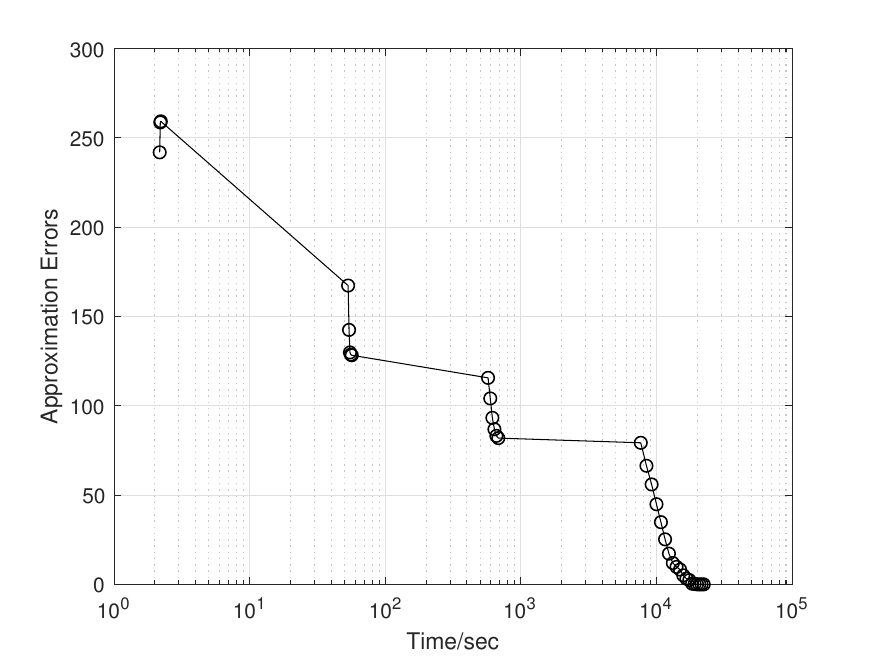}\centering
\caption{Approximation errors over time.}\label{fig:000}
\squeezeup
\end{figure}
\squeezeup

\end{subsection}

\begin{table*}[!hbp]
\centering
\begin{tabular}{|c|c|c|c|c|c|c|}
\hline
\multirow{2}{*}{Grid index} & \multirow{2}{*}{Grid size} & \multirow{2}{*}{Total time/sec} & \multicolumn{2}{c|}{Construction of set-valued dynamics} & \multicolumn{2}{c|}{Execution of value iteration}\\\cline{4-7}
 &  &  & Computational time/sec & Percentage in total time & Computational time/sec & Percentage in total time \\ \hline
1 & 145 & 2.21 & 2.14 & 96.8\% & 0.07 & 3.2\% \\ \hline
2 & 3403 & 54.35 & 50.08 & 92.1\% & 4.27 & 7.9\% \\ \hline
3 & 34344 & 624.56 & 494.03 & 79.1\% & 130.53 & 20.9\% \\ \hline
4 & 416689 & 21586.20 & 6206.59 & 28.7\% & 15379.61 & 71.3\% \\ \hline
\end{tabular}
\caption{Computational times on each grid.}\label{table:timesVsGrid}
\end{table*}

\begin{subsection}{Computational complexity}

Algorithm~\ref{alg:SVD} and Algorithm~\ref{alg:VI} correspond to two steps: construction of set-valued dynamics and execution of value iteration.
In Figure~\ref{fig:timesVsErros}, the $n$-th dot from the right represents the time to execute $n$ value iterations and the resulting approximation error except the rightmost ones around $250$.
Figure~\ref{fig:timesVsErros} shows the time to perform value iteration exponentially increases as approximation errors decrease.

\begin{figure}[t]
\includegraphics[scale=0.55]{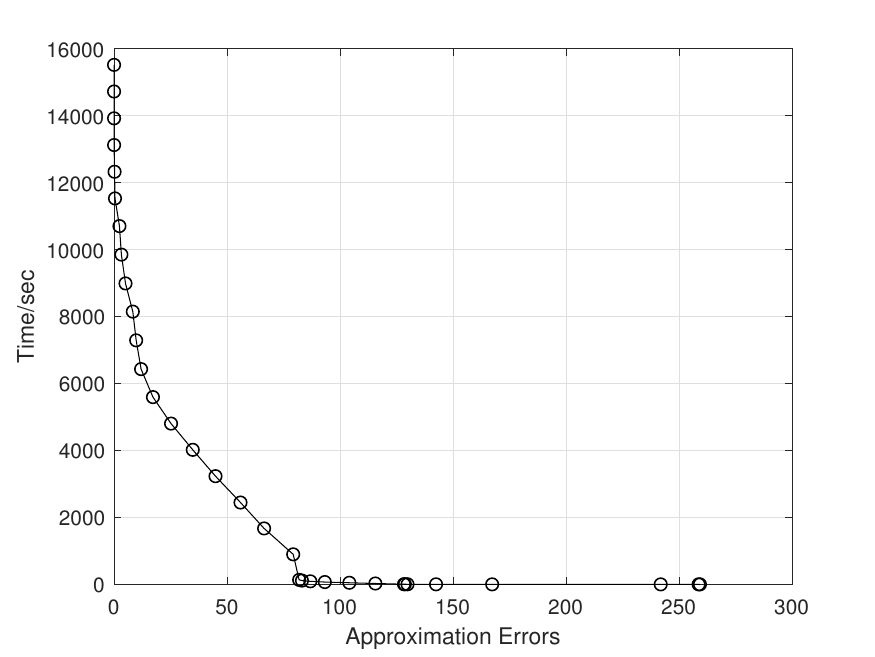}\centering
\caption{Value iteration time over approximation errors.}\centering\label{fig:timesVsErros}
\squeezeup
\end{figure}

Table~\ref{table:timesVsGrid} summarizes the total time to compute the last estimate $v^p_{\bar n_p}$ on each grid $\textbf{S}^p$ and its size.
The total computational time grows polynomially with respect to the grid size.
Specifically, the time to construct set-valued dynamics is linear with respect to the grid size while the time to execute value iteration grows polynomially.
As a result, most of the total computational time is spent on constructing set-valued dynamics on the coarse grids while the time to execute value iteration dominates on the fine grids.

\end{subsection}

\end{section}

\begin{section}{Proofs}\label{sect:proofs}
In this section, detailed proofs of theoretic results in Section \ref{sect:analysis} are provided.
\begin{subsection}{Preliminary}\label{sect:preliminary}
In this subsection, some preliminary properties of Hausdorff distance are introduced.

The following lemma shows the union of two expanded sets is the expansion of their unions.
\begin{lemma}\label{thm:unitBall}
Given two sets $A, B\subseteq\mathcal X$ and $\eta>0$, the following holds
$
(A+\eta\mathcal B)\cup(B+\eta\mathcal B)=(A\cup B)+\eta\mathcal B.
$
\end{lemma}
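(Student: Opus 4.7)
The plan is to prove the identity by double inclusion, working directly from the definition of the Minkowski-type expansion $A + \eta\mathcal{B} = \{a + b : a \in A,\ b \in \eta\mathcal{B}\}$ (equivalently, $\{x \in \mathcal{X} : d(x, A) \leq \eta\}$). Both inclusions are essentially immediate once one unpacks the quantifiers, so the argument should be short.

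First, for the inclusion $(A+\eta\mathcal{B})\cup(B+\eta\mathcal{B}) \subseteq (A\cup B)+\eta\mathcal{B}$, I would take an arbitrary $x$ in the left-hand side, split into the two cases $x \in A+\eta\mathcal{B}$ and $x \in B+\eta\mathcal{B}$, and in each case write $x = c + b$ with $c$ in $A$ or $B$ respectively and $b \in \eta\mathcal{B}$. Since $A \subseteq A\cup B$ and $B \subseteq A\cup B$, in either case $c \in A\cup B$, hence $x \in (A\cup B)+\eta\mathcal{B}$.

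Second, for the reverse inclusion $(A\cup B)+\eta\mathcal{B} \subseteq (A+\eta\mathcal{B})\cup(B+\eta\mathcal{B})$, I would take $x = c + b$ with $c \in A \cup B$ and $b \in \eta\mathcal{B}$, and split on whether $c \in A$ or $c \in B$. In the first case $x \in A+\eta\mathcal{B}$, and in the second $x \in B+\eta\mathcal{B}$, so $x$ lies in the union.

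There is no real obstacle here: the statement is a general property of set addition and the union operator, independent of any of the dynamics or value-function structure in the rest of the paper. The only minor subtlety is to make sure one is using the same definition of $A + \eta\mathcal{B}$ throughout (either the Minkowski sum definition or the distance-based definition $\{x : d(x,A) \leq \eta\}$); with either definition the two inclusions follow by the same case split on membership in $A$ versus $B$. I would present the argument in one short displayed chain of equalities if space is tight, writing $(A\cup B) + \eta\mathcal{B} = \{c+b : c \in A\cup B,\ b \in \eta\mathcal{B}\} = \{a+b : a \in A, b \in \eta\mathcal{B}\} \cup \{a'+b : a' \in B, b \in \eta\mathcal{B}\} = (A+\eta\mathcal{B}) \cup (B+\eta\mathcal{B})$, and then conclude.
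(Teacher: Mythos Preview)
Your proposal is correct and follows essentially the same approach as the paper: a double-inclusion argument that, in each direction, picks a witness point and splits into the two cases $c\in A$ or $c\in B$. The paper phrases the expansion via the distance condition $\|x-y\|\le\eta$ rather than the Minkowski-sum form $x=c+b$, but this is only a cosmetic difference.
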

\begin{proof}
Fix $x\in(A+\eta\mathcal B)\cup(B+\eta\mathcal B)$.
Then $x\in A+\eta\mathcal B$ or $x\in B+\eta\mathcal B$.
We focus on the first case. 
It follows from the definition of $\mathcal B$ that $\exists y\in A$ s.t. $\|x-y\|\leq\eta$.
Since $y\in A\subseteq A\cup B$, we have $x\in(A\cup B)+\eta\mathcal B$.
Similar conclusion can be drawn for the case when $x\in B+\eta\mathcal B$.
Hence we have $(A+\eta\mathcal B)\cup(B+\eta\mathcal B)\subseteq(A\cup B)+\eta\mathcal B$.

Now consider $x\in(A\cup B)+\eta\mathcal B$.
It again follows from the definition of $\mathcal B$ that $\exists y\in A\cup B$ s.t. $\|x-y\|\leq\eta$.
If $y\in A$, we have $x\in A+\eta\mathcal B\subseteq(A+\eta\mathcal B)\cup(B+\eta\mathcal B)$; if $y\in B$, we have $x\in B+\eta\mathcal B\subseteq(A+\eta\mathcal B)\cup(B+\eta\mathcal B)$.
By either way, we have $(A\cup B)+\eta\mathcal B\subseteq(A+\eta\mathcal B)\cup(B+\eta\mathcal B)$.

Therefore, the relationship in the lemma statement is proven in both directions and the lemma is hence proven.
\end{proof}

The following lemma compares set distances given their set inclusion relationships.
\begin{lemma}\label{thm:setInequality}
Given four nonempty compact sets $A\subseteq B$ and $C\subseteq D$, the following relationships hold:
\begin{equation}\label{eq:setInequality1}
\begin{split}
&d_H(A\cup D, B)\leq d_H(D, B)\leq\max\{d_H(A, D), d_H(B, C)\}. 
\end{split}
\end{equation}
\end{lemma}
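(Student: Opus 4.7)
The plan is to prove both inequalities directly from the definition of Hausdorff distance, namely $d_H(A,B)=\inf\{\delta\geq 0 \mid A\subseteq B+\delta\mathcal B,\ B\subseteq A+\delta\mathcal B\}$, exploiting the two containments $A\subseteq B$ and $C\subseteq D$ as monotonicity tools. No compactness argument beyond noting that the infima are attained (or arbitrarily closely approached) is needed; the entire proof is essentially bookkeeping of set inclusions.

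For the first inequality $d_H(A\cup D,B)\leq d_H(D,B)$, I would let $\delta\triangleq d_H(D,B)$, so that $D\subseteq B+\delta\mathcal B$ and $B\subseteq D+\delta\mathcal B$. From $A\subseteq B$ one immediately has $A\subseteq B\subseteq B+\delta\mathcal B$, and combining with $D\subseteq B+\delta\mathcal B$ gives $A\cup D\subseteq B+\delta\mathcal B$. Conversely, $B\subseteq D+\delta\mathcal B\subseteq (A\cup D)+\delta\mathcal B$. By definition of the Hausdorff distance, $d_H(A\cup D,B)\leq\delta$, which is the desired inequality.

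For the second inequality $d_H(D,B)\leq \max\{d_H(A,D),d_H(B,C)\}$, I would set $\delta_1\triangleq d_H(A,D)$ and $\delta_2\triangleq d_H(B,C)$, and write $\delta\triangleq\max\{\delta_1,\delta_2\}$. The containment $D\subseteq A+\delta_1\mathcal B$ combined with $A\subseteq B$ yields $D\subseteq B+\delta_1\mathcal B\subseteq B+\delta\mathcal B$. Symmetrically, $B\subseteq C+\delta_2\mathcal B$ together with $C\subseteq D$ gives $B\subseteq D+\delta_2\mathcal B\subseteq D+\delta\mathcal B$. Both inclusions now yield $d_H(D,B)\leq\delta$, completing the proof.

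There really is no main obstacle here; the whole argument is a one-line manipulation of the definition in each direction. The only subtlety is picking the correct pair of containments to chain together: for the upper bound on $d_H(D,B)$, one uses $A\subseteq B$ to inflate $A$ to $B$ on the ``$D\subseteq \cdots$'' side, and $C\subseteq D$ to inflate $C$ to $D$ on the ``$B\subseteq\cdots$'' side; mixing them up would yield an inequality that does not close.
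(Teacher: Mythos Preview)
Your proof is correct and, for the first inequality, essentially identical to the paper's argument (the paper works with an arbitrary $\delta'>d_H(D,B)$ and then lets $\delta'\downarrow d_H(D,B)$, whereas you use the attained value directly, but this is cosmetic). For the second inequality the paper does not give a proof at all; it simply cites Theorem~7.1.1 in Sternberg's \emph{Dynamical Systems}. Your direct chaining of $D\subseteq A+\delta_1\mathcal B\subseteq B+\delta\mathcal B$ and $B\subseteq C+\delta_2\mathcal B\subseteq D+\delta\mathcal B$ is the natural self-contained argument behind that citation, so your write-up is in fact more complete than the paper's own proof.
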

\begin{proof}
First we proceed to the proof of the first inequality of \eqref{eq:setInequality1}.
Take $\delta'>\delta\triangleq d_H(D, B)$, then it holds that
$B\subseteq D+\delta'\mathcal B,\quad  D\subseteq B+\delta'\mathcal B.$
The first relationship implies $B\subseteq D\cup A+\delta'\mathcal B$.
Since $A\subseteq B$, the second relationship implies $A\cup D\subseteq B+\delta'\mathcal B$.
Therefore $d_H(A\cup D, B)\leq\delta'$. 
Since this holds for all $\delta'>\delta$, $d_H(A\cup D, B)\leq \delta=d_H(D, B)$.

By Theorem 7.1.1 in \cite{sternberg2010dynamical}, the second inequality holds.
\end{proof}

The next lemma shows the triangle inequality holds for $d_H$.

\begin{lemma}\label{thm:TriangleInequality}
Given three set-valued maps $g^l:\mathcal X\rightrightarrows[0,1]^N$, $g^l(x)$ is compact for all $x\in\mathcal X$, $ l\in\{1,2,3\}$. It holds that $d_{\mathcal X}(g^1,g^2)\leq d_{\mathcal X}(g^1,g^3)+d_{\mathcal X}(g^3, g^2)$.
\end{lemma}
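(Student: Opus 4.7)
The plan is to reduce the inequality for $d_{\mathcal X}$ to the well-known triangle inequality for the Hausdorff distance $d_H$ applied pointwise, and then pass to the supremum over $x\in\mathcal X$. Since each $g^l(x)$ is assumed compact, the Hausdorff distance $d_H(g^l(x), g^{l'}(x))$ is a genuine metric on the space of nonempty compact subsets of $[0,1]^N$, so the triangle inequality holds pointwise for free.

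First, I would fix an arbitrary $x\in\mathcal X$ and invoke the standard triangle inequality for $d_H$ on the compact sets $g^1(x), g^2(x), g^3(x)\subseteq [0,1]^N$ to write
\begin{equation*}
d_H(g^1(x), g^2(x)) \le d_H(g^1(x), g^3(x)) + d_H(g^3(x), g^2(x)).
\end{equation*}
Next, I would upper bound each term on the right by its supremum over $\mathcal X$, yielding
\begin{equation*}
d_H(g^1(x), g^2(x)) \le d_{\mathcal X}(g^1, g^3) + d_{\mathcal X}(g^3, g^2),
\end{equation*}
where the right-hand side no longer depends on $x$. Finally, taking the supremum over $x\in\mathcal X$ on the left-hand side gives $d_{\mathcal X}(g^1, g^2) \le d_{\mathcal X}(g^1, g^3) + d_{\mathcal X}(g^3, g^2)$, which is exactly the claim.

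There is no real obstacle here; the only point worth a line of justification is that the pointwise Hausdorff triangle inequality requires the sets involved to be nonempty compact, which is guaranteed by the hypothesis that $g^l(x)$ is compact (and implicitly nonempty for $d_H$ to be defined at all). If needed, I would cite a standard reference such as Theorem~7.1.1 in \cite{sternberg2010dynamical} already used in Lemma~\ref{thm:setInequality}, since it establishes $d_H$ as a metric on compact subsets.
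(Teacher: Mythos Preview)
Your proposal is correct and follows essentially the same approach as the paper: apply the pointwise Hausdorff triangle inequality (the paper cites page~144 of \cite{sternberg2010dynamical}), then pass to the supremum over $\mathcal X$ and bound the right-hand side by splitting the supremum of a sum into a sum of suprema.
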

\begin{proof}
Since $g^l(x)$ is compact for $l\in\{1, 2, 3\}$, it follows from page 144 in \cite{sternberg2010dynamical} that for any $x\in\mathcal X$, $d_H(g^1(x), g^2(x))\leq d_H(g^1(x), g^3(x))+d_H(g^3(x), g^2(x))$.
Take supremum over $\mathcal X$ on both sides and we have
\begin{align*}
d_{\mathcal X}(g^1, g^2)=&\sup_{x\in\mathcal X}d_H(g^1(x), g^2(x))\\
\leq&\sup_{x\in\mathcal X}[d_H(g^1(x), g^3(x))+d_H(g^3(x), g^2(x))].
\end{align*}
In addition, splitting the sum on the right-hand side yields
\begin{align*}
d_{\mathcal X}(g^1, g^2)
\leq&\sup_{x\in\mathcal X}d_H(g^1(x), g^3(x))+\sup_{x\in\mathcal X}d_H(g^3(x), g^2(x))\\
=&d_{\mathcal X}(g^1, g^3)+d_{\mathcal X}(g^3,g^2).
\end{align*}
Hence, the lemma is proven.
\end{proof}

Lemma \ref{thm:maximumNormShrinkingPerturbation} reveals that, for two perturbed set-valued maps, the union of images of fewer nodes contributes to larger distance.

\begin{lemma}\label{thm:maximumNormShrinkingPerturbation}
Given two subsets $\mathcal X^1,\mathcal X^2\subseteq\mathcal X$, consider two set-valued maps $g_1, g_2:\mathcal X\rightrightarrows[0, 1]^N$ and perturbation radii  $\eta_l>0$  s.t. $(x+\eta_l\mathcal B)\cap\mathcal X^l\neq\emptyset, \forall x\in\mathcal X, l\in\{1, 2\}$.
The following holds for any set-valued map $Y:\mathcal X\rightrightarrows\mathcal X$ s.t. $Y(x)\neq\emptyset, \forall x\in\mathcal X$:
\begin{equation}\label{eq:shrinking}
\begin{split}
&d_{\mathcal X}(\bigcup_{\tilde x\in(Y(x)+\eta_1\mathcal B)\cap\mathcal X^1}g_1(\tilde x),\bigcup_{\tilde x\in(Y(x)+\eta_2\mathcal B)\cap\mathcal X^2}g_2(\tilde x))\\
\leq&d_{\mathcal X}(\bigcup_{\tilde x\in(x+\eta_1\mathcal B)\cap\mathcal X^1}g_1(\tilde x),\bigcup_{\tilde x\in(x+\eta_2\mathcal B)\cap\mathcal X^2}g_2(\tilde x)).
\end{split}\end{equation}
If $\mathcal X^1=\mathcal X^2\triangleq\bar{\mathcal X}$ and $\eta_1=\eta_2\triangleq\bar\eta$, we have
\begin{equation}\label{eq:shrinking2}
\begin{split}
d_{\mathcal X}(\bigcup_{\tilde x\in(x+\eta\mathcal B)\cap\bar{\mathcal X}}g_1(\tilde x),\bigcup_{\tilde x
\in(x+\eta\mathcal B)\cap\bar{\mathcal X}}g_2(\tilde x))\leq d_{\bar{\mathcal X}}(g_1,g_2).
\end{split}\end{equation}

\end{lemma}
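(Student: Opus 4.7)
\textbf{Proof plan for Lemma \ref{thm:maximumNormShrinkingPerturbation}.}
The plan is to reduce both inequalities to the following elementary monotonicity property of the Hausdorff distance: for any indexed families $\{A_\alpha\}_{\alpha\in I}$ and $\{B_\alpha\}_{\alpha\in I}$ of compact subsets of a common metric space,
\[
d_H\!\left(\bigcup_{\alpha\in I}A_\alpha,\ \bigcup_{\alpha\in I}B_\alpha\right)\ \leq\ \sup_{\alpha\in I} d_H(A_\alpha,B_\alpha).
\]
I would justify this in one line using the characterization $d_H(A,B)=\inf\{\delta\ge 0\mid A\subseteq B+\delta\mathcal B,\ B\subseteq A+\delta\mathcal B\}$: if $\delta\triangleq\sup_\alpha d_H(A_\alpha,B_\alpha)$, then $A_\alpha\subseteq B_\alpha+\delta\mathcal B\subseteq(\bigcup_\alpha B_\alpha)+\delta\mathcal B$ for every $\alpha$, and taking the union on the left (with the symmetric inclusion) yields the claim.

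For the first inequality \eqref{eq:shrinking}, fix $x\in\mathcal X$ and denote, for each $y\in\mathcal X$ and $l\in\{1,2\}$,
\[
G_l(y)\ \triangleq\ \bigcup_{\tilde x\in(y+\eta_l\mathcal B)\cap\mathcal X^l} g_l(\tilde x).
\]
I would first observe the distributive identity
\[
\bigcup_{\tilde x\in(Y(x)+\eta_l\mathcal B)\cap\mathcal X^l} g_l(\tilde x)\ =\ \bigcup_{y\in Y(x)} G_l(y),
\]
which holds because $(Y(x)+\eta_l\mathcal B)\cap\mathcal X^l=\bigcup_{y\in Y(x)}(y+\eta_l\mathcal B)\cap\mathcal X^l$. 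Applying the monotonicity property above to the index set $Y(x)$ gives
\[
d_H\!\left(\bigcup_{y\in Y(x)} G_1(y),\ \bigcup_{y\in Y(x)} G_2(y)\right)\ \leq\ \sup_{y\in Y(x)} d_H\bigl(G_1(y),G_2(y)\bigr)\ \leq\ \sup_{y\in\mathcal X} d_H\bigl(G_1(y),G_2(y)\bigr),
\]
where the last step uses $Y(x)\subseteq\mathcal X$. The rightmost expression is exactly the right-hand side of \eqref{eq:shrinking}, and since the bound does not depend on $x$, taking $\sup_{x\in\mathcal X}$ on the left concludes the argument.

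For the second inequality \eqref{eq:shrinking2}, I would specialize to $\mathcal X^1=\mathcal X^2=\bar{\mathcal X}$ and $\eta_1=\eta_2=\eta$, so that the two unions are indexed by the same set $S(x)\triangleq(x+\eta\mathcal B)\cap\bar{\mathcal X}$. Applying the monotonicity property again, this time with index set $S(x)$,
\[
d_H\!\left(\bigcup_{\tilde x\in S(x)} g_1(\tilde x),\ \bigcup_{\tilde x\in S(x)} g_2(\tilde x)\right)\ \leq\ \sup_{\tilde x\in S(x)} d_H\bigl(g_1(\tilde x),g_2(\tilde x)\bigr)\ \leq\ \sup_{\tilde x\in\bar{\mathcal X}} d_H\bigl(g_1(\tilde x),g_2(\tilde x)\bigr)\ =\ d_{\bar{\mathcal X}}(g_1,g_2),
\]
which is independent of $x$; taking the supremum over $x\in\mathcal X$ yields \eqref{eq:shrinking2}.

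There is no real obstacle here: both claims are bookkeeping consequences of the union--monotonicity of $d_H$ together with the fact that expanding a base point to a set of base points merely enlarges the index over which the inner union is taken. The only point requiring care is the non-emptiness hypothesis on $(x+\eta_l\mathcal B)\cap\mathcal X^l$ and $Y(x)$, which I would invoke to guarantee that every union in sight is non-empty so that $d_H$ is well defined.
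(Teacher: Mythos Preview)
Your proposal is correct and follows essentially the same route as the paper. The paper also uses the distributive identity $\bigcup_{\tilde x\in(Y(x)+\eta_l\mathcal B)\cap\mathcal X^l}g_l(\tilde x)=\bigcup_{y\in Y(x)}\bigcup_{\tilde x\in(y+\eta_l\mathcal B)\cap\mathcal X^l}g_l(\tilde x)$ and then the same union--monotonicity of $d_H$ (derived there via Lemma~\ref{thm:unitBall} and a $\delta'>\delta$ argument rather than stated as a separate lemma); your abstraction of that step into a single monotonicity principle is a clean repackaging of the same argument.
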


\begin{proof}
We first proceed to prove inequality \eqref{eq:shrinking}.
Let $\delta_1\triangleq d_{\mathcal X}(\bigcup_{\tilde x\in(x+\eta_1\mathcal B)\cap\mathcal X^1}g_1(\tilde x),\allowbreak\bigcup_{\tilde x\in(x+\eta_2\mathcal B)\cap\mathcal X^2}g_2(\tilde x))$ and pick $\delta'_1>\delta_1$.
For any $\hat x\in \mathcal X$, two relationships hold:
\begin{equation}\label{eq:shrinking11}
\begin{split}
\bigcup_{\tilde x\in(\hat x+\eta_1\mathcal B)\cap\mathcal X^1}g_1(\tilde x)\subseteq\bigcup_{\tilde x\in(\hat x+\eta_2\mathcal B)\cap\mathcal X^2}g_2(\tilde x)+\delta'_1\mathcal B, \\
\bigcup_{\tilde x\in(\hat x+\eta_2\mathcal B)\cap\mathcal X^2}g_2(\tilde x)\subseteq\bigcup_{\tilde x\in(\hat x+\eta_1\mathcal B)\cap\mathcal X^1}g_1(\tilde x)+\delta'_1\mathcal B.
\end{split}\end{equation}
Fix $x\in\mathcal X$.
Notice that $\bigcup_{\tilde x\in(Y(x)+\eta_1\mathcal B)\cap\mathcal X^1}g_1(\tilde x)=\bigcup_{\hat x\in Y(x)}\bigcup_{\tilde x\in(\hat x+\eta_1\mathcal B)\cap\mathcal X^1}g_1(\tilde x)$.
Then it follows from the first relationship in \eqref{eq:shrinking11} that
\begin{align*}
\bigcup_{\tilde x\in(Y(x)+\eta_1\mathcal B)\cap\mathcal X^1}g_1(\tilde x)\subseteq\bigcup_{\hat x\in Y(x)}\bigcup_{\tilde x\in(\hat x+\eta_2\mathcal B)\cap\mathcal X^2}(g_2(\tilde x)+\delta'_1\mathcal B).
\end{align*}
By Lemma \ref{thm:unitBall}, the right-hand side of the above relationship becomes $[\bigcup_{\hat x\in Y(x)}\bigcup_{\tilde x\in(\hat x+\eta_2\mathcal B)\cap\mathcal X^2}g_2(\tilde x)]+\delta'_1\mathcal B=\bigcup_{\tilde x\in(Y(x)+\eta_2\mathcal B)\cap\mathcal X^2}g_2(\tilde x)+\delta'_1\mathcal B$.
Therefore, it renders at
\begin{align*}
\bigcup_{\tilde x\in(Y(x)+\eta_1\mathcal B)\cap\mathcal X^1}g_1(\tilde x)\subseteq\bigcup_{\tilde x\in(Y(x)+\eta_2\mathcal B)\cap\mathcal X^2}g_2(\tilde x)+\delta'_1\mathcal B.
\end{align*}
The symmetric relationship holds if $g_1$ and $g_2$ are swapped.
This implies $$d_H(\bigcup_{\tilde x\in(Y(x)+\eta_1\mathcal B)\cap\mathcal X^1}g_1(\tilde x), \bigcup_{\tilde x\in(Y(x)+\eta_2\mathcal B)\cap\mathcal X^2}g_2(\tilde x))\leq\delta'_1.$$
Since this relationship holds for all $\delta'_1>\delta_1$, $d_H(\bigcup_{\tilde x\in(Y(x)+\eta_1\mathcal B)\cap\mathcal X^1}g_1(\tilde x), \bigcup_{\tilde x\in(Y(x)+\eta_2\mathcal B)\cap\mathcal X^2}g_2(\tilde x))\leq\delta_1=\allowbreak d_{\mathcal X}(\bigcup_{\tilde x\in(x+\eta_1\mathcal B)\cap\mathcal X^1}g_1(\tilde x),\bigcup_{\tilde x\in(x+\eta_2\mathcal B)\cap\mathcal X^2}g_2(\tilde x))$.
Taking supremum for all $x\in\mathcal X$, \eqref{eq:shrinking} is established.

Then we proceed to show \eqref{eq:shrinking2}.
Let $\delta_2\triangleq d_{\bar{\mathcal X}}(g_1,g_2)$
and pick $\delta'_2>\delta_2$.
For any $\tilde x\in \bar{\mathcal X}$, two relationships hold:
$g_1(\tilde x)\subseteq g_2(\tilde x)+\delta'_2\mathcal B, \quad g_1(\tilde x)\subseteq g_2(\tilde x)+\delta'_2\mathcal B.$
Focus on the first relationship and take union over $(x+\eta\mathcal B)\cap\bar{\mathcal X}$, we have
$\bigcup_{\tilde x\in(x+\eta\mathcal B)\cap\bar{\mathcal X}}g_1(\tilde x)\subseteq\bigcup_{\tilde x\in(x+\eta\mathcal B)\cap\bar{\mathcal X}}(g_2(\tilde x)+\delta'_2\mathcal B)$.
By following the arguments towards \eqref{eq:shrinking}, the lemma is proven.
\end{proof}

Lemma \ref{thm:lemmaIX5} shows that an exponentially diminishing sequence subject to diminishing perturbations remains diminishing.
\begin{lemma}\label{thm:lemmaIX5}
A sequence $\{a_p\}\subseteq\mathbb R_{\geq0}$ satisfies $a_{p+1}\leq\gamma (a_p+c_p)$, where $\gamma\in[0,1)$, $c_p\geq0, \forall p\geq1$ and $\lim_{p\to+\infty}c_p=0$. Then $\lim_{p\to+\infty}a_p=0$.
\end{lemma}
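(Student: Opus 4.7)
The plan is to unroll the recursion and control the resulting convolution sum via the diminishing tail of $\{c_p\}$. First I would show by a straightforward induction on $p$ that
\[
a_{p+1} \;\leq\; \gamma^p a_1 \;+\; \sum_{k=1}^{p} \gamma^{p-k+1}\, c_k,
\]
using the hypothesis $a_{p+1}\leq \gamma a_p + \gamma c_p$ repeatedly. Since $\gamma\in[0,1)$, the first term $\gamma^p a_1$ clearly tends to $0$, so it remains to show that the convolution sum $S_p \triangleq \sum_{k=1}^{p}\gamma^{p-k+1}c_k$ tends to $0$.

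The main obstacle, such as it is, is handling $S_p$: we cannot bound each $c_k$ uniformly by a small number because the early $c_k$'s may be large. The standard $\epsilon/2$ splitting does the job. Given $\epsilon>0$, pick $K$ such that $c_k < \epsilon(1-\gamma)/(2\gamma)$ for all $k>K$, which is possible because $c_k\to 0$. Split
\[
S_p \;=\; \sum_{k=1}^{K} \gamma^{p-k+1} c_k \;+\; \sum_{k=K+1}^{p} \gamma^{p-k+1} c_k.
\]
The first (fixed, finite) sum is bounded by $\gamma^{p-K+1}\sum_{k=1}^{K} c_k$ (or even crudely by $K\,\gamma^{p-K+1}\max_{k\leq K}c_k$), which vanishes as $p\to\infty$ because $\gamma^{p-K+1}\to 0$. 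The second (tail) sum is bounded by
\[
\frac{\epsilon(1-\gamma)}{2\gamma}\sum_{j=1}^{\infty}\gamma^{j} \;=\; \frac{\epsilon}{2}.
\]
Therefore $\limsup_{p\to+\infty} S_p \leq \epsilon/2$, and since $\epsilon$ is arbitrary, $S_p\to 0$. Combined with $\gamma^p a_1 \to 0$ and nonnegativity of $a_p$, this yields $a_p\to 0$.

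An alternative, even shorter route would be a \emph{limsup argument}: first observe that the unrolled bound implies $\{a_p\}$ is bounded (since $\{c_p\}$ is bounded as a convergent sequence), let $A\triangleq\limsup_{p\to\infty}a_p<\infty$, take $\limsup$ on both sides of $a_{p+1}\leq \gamma a_p+\gamma c_p$ to obtain $A\leq \gamma A + 0$, and conclude $A=0$ from $\gamma<1$ and $A\geq 0$. I would likely present whichever of the two is more consistent with the rest of the paper's style; the explicit split is more self-contained and avoids quoting properties of $\limsup$.
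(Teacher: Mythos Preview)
Your proposal is correct and takes essentially the same approach as the paper: unroll the recursion and control the geometric convolution with the vanishing $c_p$'s. The only cosmetic difference is that the paper unrolls starting from an index $q$ where $c_l<\epsilon$ already (so the ``early large terms'' are absorbed into $a_q$ rather than split off as a separate finite sum), whereas you unroll from $p=1$ and then split at $K$; the underlying idea is identical.
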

\begin{proof}
Since $c_p\to0$, $\forall \epsilon>0$, $\exists q>0$ s.t. $\forall p\geq q$, $c_p<\epsilon$.
Fix $\epsilon$ and $q$, we take $r=q+\log_\gamma\frac{\epsilon}{a_q}$ and $p\geq \max\{r, q\}$.
With this, we have
\begin{align*}
0\leq a_p\leq&\gamma^{p-q}a_q+\sum_{l=q}^{p-1}\gamma^{p-l}c_{l}
\leq\gamma^{p-q}a_q+\sum_{l=q}^{p-1}\gamma^{p-l}\epsilon\\
=&\gamma^{p-q}a_q+\frac{\gamma(1-\gamma^{p-q})}{1-\gamma}\epsilon
\leq\epsilon+\frac{1}{1-\gamma}\epsilon.
\end{align*}
The last inequality is due to $p\geq r$.
This is true for any $\epsilon>0$. 
Therefore, $\lim_{p\to+\infty}a_p=0$.
\end{proof}
\end{subsection}

\begin{subsection}{Auxiliary Bellman operator $\hat{\mathbb G}$}
In this subsection, an auxiliary Bellman operator $\hat{\mathbb G}$ is introduced to facilitate the analysis of $\mathbb G$ in Section \ref{sect:propertiesHausdorff}.

\begin{proofOf}{Lemma~\ref{thm:EquivalentSafety}}
Fix $p\geq1$, $x\in\textbf{S}^p$ and $\tilde x\in X^p_E(x)\subseteq\textbf{X}^p$.
Without loss of generality, we denote $\mathcal V^p_G(x)=\{1, \dots, N_p\}$.
It follows from the definition of $X^p_E$ that $\mathcal V^p_G(\tilde x)=\{1, \dots, N_p\}$.
It follows from the definition of $\textbf{S}^p$ that $\exists x'\in\textbf{S}$ s.t. $\|x-x'\|\leq h_p$.
Construct $\tilde x'$ s.t. $\tilde x'_i\triangleq\begin{cases}\tilde x_i, &\text{if }i\in\{1, \dots, N_p\};\\\tilde x_i+x'_i-x_i, &\text{otherwise.}\end{cases}$
Clearly, $\|\tilde x-\tilde x'\|\leq\|x-x'\|\leq h_p$.

Now we proceed to show that $\tilde x'\in\textbf{S}$.
It again follows from the definition of $X^p_E$ that $\tilde x_i=x_i$, $\forall i\in\{N_p+1, \dots, N\}$.
Therefore, we may rewrite $\tilde x'$ as $\tilde x'_i=\begin{cases}\tilde x_i, &\text{if }i\in\{1, \dots, N_p\};\\x'_i, &\text{otherwise.}\end{cases}$
By Assumption \ref{asmp:monoConv} and \ref{asmp:lowerBound}, we have $\forall i\in\{1, \dots, N_p\}$ and $j\in\{1, \dots, N\}$, $\|\tilde x_i-\tilde x_j\|\geq\sigma$.
Since $x'\in\textbf{S}$, it follows from the definition of $\textbf{S}$ that $\|x'_i-x'_j\|\geq\sigma, \forall i\neq j$ and $i,j\in\{N_p+1, \dots, N\}$.
This indicates that $\|\tilde x'_i-\tilde x_j\|\geq\sigma, \forall i\neq j$ and $i,j\in\{N_p+1, \dots, N\}$.
In summary, we have $\|\tilde x'_i-\tilde x_j\|\geq\sigma$ holds for every $i\neq j$, which implies $\tilde x'\in\textbf{S}$.

Since $\|\tilde x-\tilde x'\|\leq h_p$ and $\tilde x\in\textbf{X}^p$, we arrive at $\tilde x\in(\textbf{S}+h_p\mathcal B)\cap\textbf{X}^p=\textbf{S}^p$ and the proof is then finished.
\end{proofOf}

\begin{proofOf}{Lemma \ref{thm:zeroValue}}
The first property follows from the definition of $\tilde X^p$.
For any pair of $i\in\mathcal V^G_p(x)$ and $\tilde x\in\tilde X^p(x)$, it holds that $\tilde x_i=x_i$, then $i\in\mathcal V^G_p(\tilde x)$.

Now we proceed to show the second property.
Since both $\epsilon_p$ and $h_p$ are monotonically decreasing, $\forall i\in\mathcal V^G_{p+1}(x)$, $d(x_i,  X_i^G)\leq M_i\epsilon_{p+1}+h_{p+1}<M_i\epsilon_p+h_p$.
It follows from the definition of $\mathcal V^G_p$ that $i\in\mathcal V^G_p(x)$.
Then the second property is proven.

We are now in a position to prove the third property.
Throughout the rest of the proof, given any $p\geq1$, $n\geq0$ and $x\in\textbf{S}^p$, define a value in $v^p_n(x)$ by $\tau^{p, n}\in v^p_n(x)$.
The $i$-th element of $\tau^{p,n}$ is denoted by $\tau^{p,n}_i$.
The grid index $p$ in $\tau^{p,n}$ may be omitted when omission causes no ambiguity.
The proof is based on induction on $p$.
Denote the induction hypothesis for $p$ by $H(p)$ as $\tau^{p,n}_i=0$ for any $x\in\textbf{S}^p, i\in\mathcal V^G_p(x), 0\leq n\leq\bar n_p$ and $\tau^{p,n}\in v^p_n(x)$.

For $p=1$, fix $x\in\textbf{S}^p$ and $i\in\mathcal V^G_p(x)$ and take $n=0$. 
Since $\textbf{X}^0=\emptyset$, $v^1_0(x)=V^0(x)$.
It follows from \eqref{eq:interpolationFunction} that $\tau^0_i=0$ for every $\tau^0\in v^1_0(x)$.
Moreover, $\tilde T_i(x_i)=0$ and $\tilde X_i(x_i)=x_i$.
Now we adopt induction on $n$ to prove that $\tau^{1, n}_i=0$ for all $x\in\textbf{S}^1\subseteq\textbf{X}^1$, $i\in\mathcal V^G_1(x)$ and $0\leq n\leq\bar n_p$.
For $n=0$, it has been proven.
Assume it holds up to $0\leq n\leq\bar n_p$. 
Then $\tau^{1, n}_i+\tilde T_i(x_i)-\tau^{1, n}_i\tilde T_i(x_i)=\tau^{1, n}_i=0$ holds for any $x\in\textbf{S}^1$, $i\in\mathcal V^G_1(x)$ and $\tau^{1, n}\in v^p_n(x)$.
Therefore, it follows from \eqref{eq:krubellman} that $\tau^{1, n+1}_i=0$.

Assume $H(p)$ holds and let us consider $p+1$.
Fix $x\in\textbf{S}^{p+1}$ and $i\in\mathcal V^G_{p+1}(x)$.
By the second property of this lemma, $i\in\mathcal V^G_p(x)$.
Take $\tau^0\in v^{p+1}_0(x)$. 
If $x\in\textbf{X}^p$, that is, $x\in\textbf{S}^{p+1}\cap\textbf{X}^p=(\textbf{S}+h_{p+1}\mathcal B)\cap\textbf{X}^{p+1}\cap\textbf{X}^p\subseteq\textbf{S}^p$, we have $v^{p+1}_0(x)=\tilde v^p(x)=v^p_{\bar n_p}(x)$.
Therefore, $\forall\tau^0\in v^{p+1}_0(x)$, it follows from $H(p)$ that $\tau^0_i=0$. 
If $x\in\textbf{S}^{p+1}\setminus\textbf{X}^p$, $v^{p+1}_0(x)=\bigcup_{\tilde x\in X^{p+1}_E(x)}\tilde v^p(\tilde x)$.
Notice that when $x\in\textbf{S}^{p+1}$, it follows from Lemma \ref{thm:EquivalentSafety} that $\tilde x\in\textbf{S}^{p+1}$.
Then if $\tilde x\in\textbf{S}^{p+1}\setminus\textbf{X}^p$, $\tilde v^p(\tilde x)=V^p(\tilde x)$; hence, it follows from the definition of $V^p$ in \eqref{eq:interpolationFunction} that we have $V^p_i(x)=0$.
If $\tilde x\in\textbf{S}^{p+1}\cap\textbf{X}^p\subseteq\textbf{S}^p$, $\tilde v^p(\tilde x)=v^p_{\bar n_p}(\tilde x)$; hence, it follows from $H(p)$ that $\forall\tilde\tau\in\tilde v^p(\tilde x)$, $\tilde\tau_i=0$.
Therefore, $\forall\tilde x\in X^{p+1}_E(x)$ and $\tilde\tau\in\tilde v^p(\tilde x)$, $\tilde\tau_i=0$.
That is to say, we have $\tau^0_i=0$ for $x\in\textbf{S}^{p+1}\setminus\textbf{X}^p$ and $\tau^0\in v^{p+1}_0(x)$.
In summary, $\tau^0_i=0$ for every $x\in\textbf{S}^{p+1}$, $i\in\mathcal V^G_{p+1}(x)$ and $\tau^0\in v^{p-1}_0(x)$.
For $1\leq n\leq\bar n_p$, we follow the arguments for $p=1$ and it holds that $\tau^n_i=0$, $\forall\tau^n\in v^{p+1}_n(x)$.
Then $H(p+1)$ is proven and the proof of the third property is finished.
\end{proofOf}

\begin{proofOf}{Lemma \ref{thm:fixedGridSub}}
Throughout the proof, we adopt the shorthand notation $v\triangleq v^p_n$.
Without loss of generality, let $\mathcal V^G_p(x)=\{1, \dots, N_p\}$ for some $0\leq N_p\leq N$ and $\mathcal V^G_p(x')=\{1, \dots, N_p'\}$ for some $0\leq N_p'\leq N_p$.
Specifically, when $N_p=0$ (resp. $N_p'=0$), $\mathcal V^G_p(x)=\emptyset$ (resp. $\mathcal V^G_p(x')=\emptyset$).

Notice that when $N_p=N$, i.e. all robots are in their goal regions at state $x$, it follows from the third property of Lemma \ref{thm:zeroValue} that $E_{\mathbb G^mv}(x)=E_v(x)=[0, 1]^N\supseteq E_{\mathbb G^mv}(x')$ for any $x'\in X^p_P(x)$, hence the lemma trivially holds.
When $N_p=0$, it holds that $N_p'=0$,  $x=x'$, and the lemma also trivially holds.
In the following proof, we restrict $1\leq N_p\leq N-1$.

The lemma is proven by induction on $m$.
Denote the induction hypothesis for $m$ by $H(m)$.
Then $H(0)$ trivially holds.
Assume $H(m)$ holds and let us consider $m+1$.
It follows from \eqref{eq:epiProfileBellman} that
\begin{align*}
&E_{\mathbb G^{m+1}v}(x)=\Delta\tau(x)+(\textbf{1}-\Delta\tau(x))\circ\bigcup_{\tilde x\in\tilde X^p(x)}E_{\mathbb G^mv}(\tilde x),\\
&E_{\mathbb G^{m+1}v}(x')=\Delta\tau(x')+(\textbf{1}-\Delta\tau(x'))\circ\bigcup_{\tilde x'\in\tilde X^p(x')}E_{\mathbb G^mv}(\tilde x').
\end{align*}
First we focus on the unions on the right-hand side, especially the one-hop neighbors $\tilde X^p$.
\begin{claim}\label{thm:fixedGridSub:1}
For all $\tilde x'\in\tilde X^p(x')$, $\exists\tilde x\in\tilde X^p(x)$ s.t. $\tilde x'\in X^p_P(\tilde x)$.
\end{claim}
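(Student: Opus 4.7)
\textbf{Proof proposal for Claim~\ref{thm:fixedGridSub:1}.} The plan is to build the witness $\tilde x$ explicitly, by keeping the goal-close coordinates frozen at $x$ and inheriting the remaining coordinates from $\tilde x'$, then verify the two required memberships $\tilde x\in\tilde X^p(x)$ and $\tilde x'\in X^p_P(\tilde x)$. Without loss of generality write $\mathcal V^G_p(x)=\{1,\dots,N_p\}$ and define
\[
\tilde x_i\triangleq\begin{cases}x_i,& i\in\mathcal V^G_p(x),\\ \tilde x'_i,& i\notin\mathcal V^G_p(x).\end{cases}
\]

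First I would establish that $\tilde x$ lies in the Cartesian part $\prod_{i\in\mathcal V}\tilde X^p_i(x_i)$ of $\tilde X^p(x)$. For $i\in\mathcal V^G_p(x)$, the definition of $\tilde X^p_i$ collapses to $\{x_i\}$, and the first branch $\tilde x_i=x_i$ lands there. For $i\notin\mathcal V^G_p(x)$, the definition of $X^p_P(x)$ forces $x'_i=x_i$, so $\tilde X^p_i(x'_i)=\tilde X^p_i(x_i)$, and $\tilde x_i=\tilde x'_i\in\tilde X^p_i(x'_i)$ by $\tilde x'\in\tilde X^p(x')$. Hence $\tilde x\in\prod_{i\in\mathcal V}\tilde X^p_i(x_i)$.

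The main obstacle, and the step where Assumption~\ref{asmp:lowerBound} enters, is verifying $\tilde x\in\textbf{S}^p$. Grid membership is immediate. For the safety expansion I would pick witnesses $y,y'\in\textbf{S}$ with $\|y-x\|\le h_p$ and $\|y'-\tilde x'\|\le h_p$ (which exist because $x,\tilde x'\in\textbf{S}^p$) and form $z$ by taking $z_i=y_i$ on $\mathcal V^G_p(x)$ and $z_i=y'_i$ elsewhere, so that $\|z-\tilde x\|\le h_p$ componentwise. Same-set pairs inherit their $\sigma$-separation from $y\in\textbf{S}$ or $y'\in\textbf{S}$ directly. For a mixed pair $i\in\mathcal V^G_p(x)$, $j\notin\mathcal V^G_p(x)$ I would bound $d(y_i,X^G_i)\le M_i\epsilon_p+2h_p$ from the definition of $\mathcal V^G_p(x)$ together with $\|y_i-x_i\|\le h_p$, and use $y'_j\in X^F_j$ with Assumption~\ref{asmp:lowerBound} to obtain $d(y'_j,X^G_i)>\sigma+M_i\epsilon_1+h_1$. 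Combining the two, together with monotonicity of $\{\epsilon_p\},\{h_p\}$ from Assumption~\ref{asmp:monoConv}, yields $\|z_i-z_j\|\ge\sigma$; any residual slack is absorbed into the $h_p\mathcal B_\textbf{X}$ fattening that defines $\textbf{S}^p$, so $\tilde x\in(\textbf{S}+h_p\mathcal B_\textbf{X})\cap\textbf{X}^p=\textbf{S}^p$.

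Finally, $\tilde x'\in X^p_P(\tilde x)$ is a one-line check: $\tilde x'\in\textbf{S}^p$ holds by hypothesis, and Lemma~\ref{thm:zeroValue}\,(1) gives $\mathcal V^G_p(x)\subseteq\mathcal V^G_p(\tilde x)$, so every $i\notin\mathcal V^G_p(\tilde x)$ also lies outside $\mathcal V^G_p(x)$, whence the second branch of the construction delivers $\tilde x_i=\tilde x'_i$; this is precisely the defining condition of $X^p_P(\tilde x)$. I expect the mixed-pair separation to be the only delicate step: the constants in Assumption~\ref{asmp:lowerBound} are tailored to precisely this kind of hybrid witness, and the bookkeeping should close once the witnesses $y,y'$ are selected as above.
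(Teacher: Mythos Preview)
Your construction of $\tilde x$ and the verifications that $\tilde x\in\prod_i\tilde X^p_i(x_i)$ and $\tilde x'\in X^p_P(\tilde x)$ are exactly what the paper does. The gap is in the $\textbf{S}^p$ step, and it is not cosmetic.

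With your two-witness choice $z_i=y_i$ on $\mathcal V^G_p(x)$ and $z_i=y'_i$ elsewhere, you only get
\[
\|z-\tilde x\|^2=\sum_{i\in\mathcal V^G_p(x)}\|y_i-x_i\|^2+\sum_{i\notin\mathcal V^G_p(x)}\|y'_i-\tilde x'_i\|^2\le h_p^2+h_p^2,
\]
so $\|z-\tilde x\|\le\sqrt{2}\,h_p$. The fattening in $\textbf{S}^p=(\textbf{S}+h_p\mathcal B_{\textbf X})\cap\textbf{X}^p$ is exactly $h_p$ in the $2$-norm, so ``residual slack'' does not absorb the extra $\sqrt{2}$ factor, and $z\in\textbf{S}$ does not yield $\tilde x\in\textbf{S}^p$. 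A second, smaller issue compounds this: your mixed-pair bound uses $d(y_i,X^G_i)\le M_i\epsilon_p+2h_p$, and for $p=1$ this exceeds the $M_i\epsilon_1+h_1$ budget in Assumption~\ref{asmp:lowerBound}, so the resulting separation is only $\|z_i-z_j\|>\sigma-h_1$, not $\sigma$.

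The paper avoids both problems by using a \emph{single} witness: take $\tilde y'\in\textbf{S}$ with $\|\tilde x'-\tilde y'\|\le h_p$ and set $\tilde y_i=x_i$ on $\mathcal V^G_p(x)$, $\tilde y_i=\tilde y'_i$ otherwise. Since $\tilde x_i=x_i$ on the goal-close block, those coordinates contribute zero to $\|\tilde x-\tilde y\|$, and the remaining block gives $\|\tilde x-\tilde y\|\le\|\tilde x'-\tilde y'\|\le h_p$ on the nose. The trade-off is that the ``both goal-close'' pair $i,j\in\mathcal V^G_p(x)$ now requires $\|x_i-x_j\|\ge\sigma$ directly; the paper dispatches this (and the mixed pair) by invoking Assumptions~\ref{asmp:monoConv} and~\ref{asmp:lowerBound} with $d(x_i,X^G_i)\le M_i\epsilon_p+h_p\le M_i\epsilon_1+h_1$, which fits inside the $(\sigma+M_i\epsilon_1+h_1)$ buffer without the extra $h_p$ your $y$ introduced. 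Replacing your $y_i$ by $x_i$ on the goal-close coordinates closes both gaps and aligns your argument with the paper's.
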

\begin{proof}
Fix $\tilde x'\in\tilde X^p(x')$ and define $\tilde x\in\textbf{X}^p$ s.t. $\tilde x_i=\begin{cases}x_i, &\text{if }i\in\{1, \dots, N_p\};\\\tilde x'_i, &\text{otherwise}.\end{cases}$  
We proceed to show $\tilde x\in\tilde X^p(x)=(\prod_{i=1}^N\tilde X^p_i(x_i))\cap\textbf{S}^p$.

For $i\in\{1, \dots, N_p\}$, we have $\tilde X^p_i(x_i)=\{x_i\}$; therefore, $\tilde x_i\in\tilde X^p_i(x_i)$.
For $i\in\{N_p+1, \dots, N\}$, it follows from $x'\in X^p_P(x)$ that  $x_i=x_i'$.
Then $\tilde x_i=\tilde x_i'\in\tilde X^p_i(x'_i)=\tilde X^p_i(x_i)$.
Therefore, $\tilde x\in\prod_{i=1}^N\tilde X^p_i(x_i)$.

Notice that $\tilde x'\in\tilde X^p(x')\subseteq\textbf{S}^p$, thus $\exists \tilde y'\in\textbf{S}$ s.t. $\|\tilde x'-\tilde y'\|\leq h_p$.
Define $\tilde y$ s.t. $\tilde y_i=\begin{cases}x_i, &\text{if }i\in\{1, \dots, N_p\};\\\tilde y'_i, &\text{otherwise.}\end{cases}$
Clearly, $\|\tilde x-\tilde y\|\leq\|\tilde x'-\tilde y'\|\leq h_p$.
Since $\tilde y'\in\textbf{S}$, it holds that $\|\tilde y_i-\tilde y_j\|=\|\tilde y'_i-\tilde y'_j\|\geq\sigma, \forall i,j\in\{N_p+1, \dots, N\}, i\neq j$.
For $i\in\{1, \dots, N_p\}$, we have $\tilde x_i=x_i\in X^G_i+(M_i\epsilon_p+h_p)\mathcal B$.
Then by Assumptions \ref{asmp:monoConv} and \ref{asmp:lowerBound}, we have $\|\tilde y_i-\tilde y_j\|=\|x_i-\tilde y_j\|\geq\sigma, \forall i\in\{1, \dots, N_p\}$ and $j\in\{1, \dots, N\}$.
Therefore $\tilde y\in\textbf{S}$ and $\tilde x\in(\tilde y+h_p\mathcal B)\cap\textbf{X}^p\subseteq\textbf{S}^p$.
Thus, $\tilde x\in\tilde X^p(x)$ is proven.

Now we proceed to show $\tilde x'\in X^p_P(\tilde x)$.
By the first property of Lemma \ref{thm:zeroValue}, $\mathcal V^G_p(\tilde x)\supseteq\mathcal V^G_p(x)$.
Then for $i\in\mathcal V\setminus\mathcal V^G_p(\tilde x)\subseteq\mathcal V\setminus\mathcal V^G_p(x)=\{N_p+1, \dots, N\}$, $\tilde x'_i=\tilde x_i$.
By the definition of $X^p_P$, $\tilde x'\in X^p_P(\tilde x)$ and therefore the claim is proven.
\end{proof}

It follows from Claim \ref{thm:fixedGridSub:1} and $H(m)$ that $\forall \tilde x'\in\tilde X^p(x')$, $\exists\tilde x\in\tilde X^p(x)$ s.t. $E_{\mathbb G^mv}(\tilde x')\subseteq E_{\mathbb G^mv}(\tilde x)$.
Hence, $\bigcup_{\tilde x'\in\tilde X^p(x')}E_{\mathbb G^mv}(\tilde x')\subseteq\bigcup_{\tilde x\in\tilde X^p(x)}E_{\mathbb G^mv}(\tilde x)$ and we have

\begin{equation}\label{eq:11001}
\begin{split}
E_{\mathbb G^{m+1}v}(x')\subseteq\Delta\tau(x')+(\textbf{1}-\Delta\tau(x'))\circ\bigcup_{\mathclap{\tilde x\in\tilde X^p(x)}}E_{\mathbb G^mv}(\tilde x).
\end{split}\end{equation}
Next, we prove the right-hand side of \eqref{eq:11001} is a subset of $E_{\mathbb G^{m+1}v}(x)$.
\begin{claim}\label{thm:fixedGridSub:2}
The following relationship holds:
\begin{align*}
&\Delta\tau(x')+(\textbf{1}-\Delta\tau(x'))\circ\bigcup_{\tilde x\in\tilde X^p(x)}E_{\mathbb G^mv}(\tilde x)\\
\subseteq&\Delta\tau(x)+(\textbf{1}-\Delta\tau(x))\circ\bigcup_{\tilde x\in\tilde X^p(x)}E_{\mathbb G^mv}(\tilde x).
\end{align*}
\end{claim}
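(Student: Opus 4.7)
The plan is to prove the set inclusion pointwise by taking an arbitrary element $y$ on the left-hand side and constructing a corresponding preimage on the right-hand side. Specifically, any $y$ on the left can be written as $y = \Delta\tau(x') + (\mathbf{1}-\Delta\tau(x'))\circ z$ for some $\tilde x \in \tilde X^p(x)$ and $z\in E_{\mathbb G^mv}(\tilde x)$, and the goal is to exhibit some $z' \in E_{\mathbb G^mv}(\tilde x)$ (using the \emph{same} $\tilde x$) such that $y = \Delta\tau(x) + (\mathbf{1}-\Delta\tau(x))\circ z'$.

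First I would exploit the hypothesis $\mathcal V^G_p(x') = \{1,\dots,N_p'\} \subseteq \mathcal V^G_p(x) = \{1,\dots,N_p\}$ to split the index set $\mathcal V$ into three blocks. Using the explicit form of $\Delta\tau$ in \eqref{eq:deltaTau}, I observe that $\Delta\tau_i(x) = \Delta\tau_i(x')$ for $i\in\{1,\dots,N_p'\}\cup\{N_p+1,\dots,N\}$, whereas for the middle block $i\in\{N_p'+1,\dots,N_p\}$ we have $\Delta\tau_i(x) = 0$ but $\Delta\tau_i(x') = 1-e^{-\kappa_p}$. Only the middle block requires adjustment.

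Next I define $z'$ componentwise: set $z'_i := z_i$ on the outer two blocks, and on the middle block set $z'_i := (1-e^{-\kappa_p}) + e^{-\kappa_p}z_i$. A direct componentwise verification then gives the identity $y = \Delta\tau(x) + (\mathbf{1}-\Delta\tau(x))\circ z'$. Since $z_i\in[0,1]$, the modified coordinates satisfy $z'_i\in[1-e^{-\kappa_p},1]\subseteq[0,1]$, so $z'\in[0,1]^N$; and by construction $z'\succeq z$, which combined with the fact that $z\in E_{\mathbb G^mv}(\tilde x)$ dominates some $t\in(\mathbb G^mv)(\tilde x)$ yields $z'\succeq t$ as well, hence $z'\in E_{\mathbb G^mv}(\tilde x)$. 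This places $y$ in the right-hand side and closes the inclusion.

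The argument is almost entirely bookkeeping; the only subtle point is ensuring that the modified vector $z'$ still lies in the epigraphical profile. The main obstacle (minor here) is keeping track of where $\Delta\tau$ differs between $x$ and $x'$ and verifying that the only place where the coordinates of the epigraph element must be inflated coincides exactly with the indices where $\Delta\tau_i(x)=0$, so that inflation is absorbed cleanly into the $(1-\Delta\tau_i(x))z'_i$ term. I do not expect to need Lemma~\ref{thm:zeroValue} directly in this step, although the preceding Claim~\ref{thm:fixedGridSub:1} (which produced $\tilde x$) implicitly used it.
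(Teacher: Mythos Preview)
Your proposal is correct and follows essentially the same approach as the paper: the paper also fixes $\tilde x\in\tilde X^p(x)$ and $\tau\in E_{\mathbb G^mv}(\tilde x)$, defines $\hat\tau$ by inflating only the coordinates $i\in\{N_p'+1,\dots,N_p\}$ via $\hat\tau_i=(1-e^{-\kappa_p})+e^{-\kappa_p}\tau_i$, observes $\hat\tau\succeq\tau$ so $\hat\tau\in E_{\mathbb G^mv}(\tilde x)$, and then checks the componentwise identity $\Delta\tau(x')+(\mathbf 1-\Delta\tau(x'))\circ\tau=\Delta\tau(x)+(\mathbf 1-\Delta\tau(x))\circ\hat\tau$. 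Your $z,z'$ are exactly the paper's $\tau,\hat\tau$, and your remark that Lemma~\ref{thm:zeroValue} is not needed here is accurate.
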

\begin{proof}
For any $\tilde x\in \tilde X^p(x)$ and $\tau\in E_{\mathbb G^mv}(\tilde x)$, construct $\hat\tau$ s.t. $\hat\tau_i=\begin{cases}(1-e^{-\kappa_p})+e^{-\kappa_p}\tau_i, &\text{if } i\in\{N_p'+1, \dots, N_p\};\\\tau_i, &\text{otherwise.}\end{cases}$
Since $\hat\tau\succeq\tau$, $\hat\tau\in E_{\mathbb G^mv}(\tilde x)$.
Recall $\Delta\tau(x')=\Psi(\mathcal E(T^p(x')))$ and $\Delta\tau_i(x')=\begin{cases}0, &\text{if }i\in\{1, \dots, N_p'\};\\1-e^{-\kappa_p}, &\text{otherwise.}\end{cases}$
Then the following holds:
\begin{align*}
&\Delta\tau(x')+(\textbf{1}-\Delta\tau(x'))\circ\tau\\
=&\begin{bmatrix}\textbf{0}_{N_p'}\\(1-e^{-\kappa_p})\textbf{1}_{N_p-N_p'}\\(1-e^{-\kappa_p})\textbf{1}_{N-N_p}\end{bmatrix}+\begin{bmatrix}\textbf{1}_{N_p'}\\e^{-\kappa_p}\textbf{1}_{N_p-N_p'}\\e^{-\kappa_p}\textbf{1}_{N-N_p}\end{bmatrix}\circ\tau\\
=&\begin{bmatrix}\textbf{0}_{N_p'}\\\textbf{0}_{N_p-N_p'}\\(1-e^{-\kappa_p})\textbf{1}_{N-N_p}\end{bmatrix}+\begin{bmatrix}\textbf{1}_{N_p'}\\\textbf{1}_{N_p-N_p'}\\e^{-\kappa_p}\textbf{1}_{N-N_p}\end{bmatrix}\circ\hat\tau\\
=&\Delta\tau(x)+(\textbf{1}-\Delta\tau(x))\circ\hat\tau.
\end{align*}
In summary, for every $\tilde x\in\tilde X^p(x)$ and $\tau\in E_{\mathbb G^mv}(\tilde x)$, there is $\hat\tau\in E_{\mathbb G^mv}(\tilde x)$ s.t. $\Delta\tau(x')+(\textbf{1}-\Delta\tau(x'))\circ\tau=\Delta\tau(x)+(\textbf{1}-\Delta\tau(x))\circ\hat\tau$.
Hence the proof of the claim is finished.
\end{proof}

Together with \eqref{eq:11001}, Claim \ref{thm:fixedGridSub:2} indicates 
$E_{\mathbb G^{m+1}v}(x')\subseteq\Delta\tau(x)+(\textbf{1}-\Delta\tau(x))\circ\bigcup_{\tilde x\in\tilde X^p(x)}E_{\mathbb G^mv}(\tilde x)=E_{\mathbb G^{m+1}v}(x).$
Then $H(m+1)$ holds and the lemma is proven.
\end{proofOf}

\begin{proofOf}{Lemma \ref{thm:multiGridSub}}
Fix a pair of $x\in\textbf{S}^p$ and $x'\in X^p_P(x)$.
Without loss of generality, let $\mathcal V^G_p(x)=\{1, \dots, N_p\}$ and $\mathcal V^G_p(x')=\{1, \dots, N'_p\}$ for some $0\leq N_p'\leq N_p\leq N$.
Specifically, when $N_p=0$ (resp. $N_p'=0$), $\mathcal V^G_p(x)=\emptyset$ (resp. $\mathcal V^G_p(x')=\emptyset$).

Notice that when $N_p=N$, by the third property of Lemma \ref{thm:zeroValue}, $E_{v^p_n}(x')\subseteq [0, 1]^N=E_{v^p_n}(x)$ for any $x'\in X^p_P(x)$ and $0\leq n\leq\bar n_p$, and the lemma trivially holds.
When $N_p=0$, it holds that $N_p'=0$ and $x=x'$, and the lemma also trivially holds.
In the following proof, we restrict $1\leq N_p\leq N-1$.

The lemma is proven by induction on $p$.
Denote the induction hypothesis for $p$ by $H(p)$ as $ E_{v^p_n}(x')\subseteq E_{v^p_n}(x)$ holds for all $x\in\textbf{S}^p$, $x'\in X^p_P(x)$ and $0\leq n\leq\bar n_p$.

For $p=1$, it follows from the definition of $V^0$ that $E_{v^1_0}(x)=E_{V^0}(x)=[0,1]^{N_1}\times\{1\}^{N-N_1}\supseteq [0,1]^{N'_1}\times\{1\}^{N-N'_1}=E_{v^1_0}(x')$, where $E_{V^1}(x)\triangleq(\{V^1(x)\}+\mathbb R^N_{\geq0})\cap[0,1]^N$ is the epigraphical profile of interpolation function $V^1(x)$.
Since this holds for every $x\in\textbf{S}^1$ and $x'\in X^1_P(x)$, it follows from Lemma \ref{thm:fixedGridSub} that $E_{v^1_n}(x)\supseteq E_{v^1_n}(x')$ holds for all $0\leq n\leq\bar n_p$.
Hence $H(1)$ holds.

Assume $H(p)$ holds for $p\geq1$.
For $p+1$, pick a pair of $x\in\textbf{S}^{p+1}$ and $x'\in X^{p+1}_P(x)$.
There are four cases:
\begin{itemize}
\item Case 1: $x, x'\in\textbf{S}^p$;
\item Case 2: $x\in\textbf{S}^{p+1}\setminus\textbf{S}^p$ and $x'\in\textbf{S}^p$;
\item Case 3: $x\in\textbf{S}^p$ and $x'\in\textbf{S}^{p+1}\setminus\textbf{S}^p$;
\item Case 4: $x, x'\in\textbf{S}^{p+1}\setminus\textbf{S}^p$.
\end{itemize}
\begin{claim}
$E_{v^{p+1}_0}(x')\subseteq E_{v^{p+1}_0}(x)$ holds for Case 1.
\end{claim}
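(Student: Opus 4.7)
The plan is to unwind the value-function initialization in Algorithm~\ref{alg:1} and then reduce directly to the induction hypothesis $H(p)$ at $n=\bar n_p$. In Case~1 both $x$ and $x'$ lie in $\textbf{S}^p\subseteq\textbf{X}^p$, so lines~\ref{alg:1:Xp_begin}--\ref{alg:1:Xp_end} of Algorithm~\ref{alg:1} and the on-grid branch of the interpolation operator $\mathbb I^p$ give $v^{p+1}_0(x)=\tilde v^p(x)=v^p_{\bar n_p}(x)$, and likewise $v^{p+1}_0(x')=v^p_{\bar n_p}(x')$. Therefore the claim is equivalent to showing $E_{v^p_{\bar n_p}}(x')\subseteq E_{v^p_{\bar n_p}}(x)$, which is exactly what $H(p)$ delivers provided I can verify its hypothesis $x'\in X^p_P(x)$.

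To establish $x'\in X^p_P(x)$ I need two things: that $x'\in\textbf{S}^p$, which is immediate from the defining condition of Case~1, and that $x'_i=x_i$ for every $i\in\mathcal V\setminus\mathcal V^G_p(x)$. The second I plan to extract from the working assumption $x'\in X^{p+1}_P(x)$ combined with the second property of Lemma~\ref{thm:zeroValue}, which asserts $\mathcal V^G_{p+1}(x)\subseteq\mathcal V^G_p(x)$. Taking complements yields $\mathcal V\setminus\mathcal V^G_p(x)\subseteq\mathcal V\setminus\mathcal V^G_{p+1}(x)$, so agreement of $x$ and $x'$ on the larger $(p+1)$-level index set automatically forces agreement on the smaller $p$-level index set. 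Hence $x'\in X^p_P(x)$.

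With $x'\in X^p_P(x)$ established, $H(p)$ instantiated at $n=\bar n_p$ gives $E_{v^p_{\bar n_p}}(x')\subseteq E_{v^p_{\bar n_p}}(x)$, and substituting the two initialization identities $v^{p+1}_0(x)=v^p_{\bar n_p}(x)$ and $v^{p+1}_0(x')=v^p_{\bar n_p}(x')$ back in closes Case~1.

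The only subtle point is the bookkeeping between the two grid levels: the partial-perturbation condition is defined at level $p+1$ in the working hypothesis but must be read at level $p$ in order to invoke $H(p)$. The direction of the reduction is favourable, and is salvaged precisely by the monotonicity of the goal-set membership sets provided by Lemma~\ref{thm:zeroValue}(2); apart from this, the proof is a direct substitution and needs no new estimates.
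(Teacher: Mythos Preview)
Your argument is correct and follows the same approach as the paper: reduce $v^{p+1}_0$ to $v^p_{\bar n_p}$ via the on-grid initialization, then invoke $H(p)$ at $n=\bar n_p$. In fact you are more careful than the paper, which simply writes ``By $H(p)$'' without explicitly checking that $x'\in X^p_P(x)$; your use of Lemma~\ref{thm:zeroValue}(2) to pass from $x'\in X^{p+1}_P(x)$ to $x'\in X^p_P(x)$ fills in precisely the detail the paper leaves implicit.
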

\begin{proof}
It follows from the defintions of $v^{p+1}_0$ and $\tilde v^p$ that $E_{v^{p+1}_0}(x)=E_{\tilde v^p}(x)=E_{v^p_{\bar n_p}}(x)$ and $E_{v^{p+1}_0}(x')=E_{\tilde v^p}(x')=E_{v^p_{\bar n_p}}(x')$.
By $H(p)$, we have $E_{v^{p+1}_0}(x')=E_{v^p_{\bar n_p}}(x') \subseteq E_{v^p_{\bar n_p}}(x)=E_{v^{p+1}_0}(x)$. 
Then $E_{v^{p+1}_0}(x')\subseteq E_{v^{p+1}_0}(x)$ holds for Case 1.
\end{proof}

\begin{claim}\label{thm:multiGridSub:Case2}
$E_{v^{p+1}_0}(x')\subseteq E_{v^{p+1}_0}(x)$ holds for Case 2.
\end{claim}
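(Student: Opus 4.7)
The plan is to exhibit a single witness node $\tilde x^*\in X^{p+1}_E(x)\cap\textbf{X}^p$ built from $x'$, and then to invoke the induction hypothesis $H(p)$ at this witness so that the value at $x'$ is dominated by the contribution of $\tilde x^*$ inside the union defining $v^{p+1}_0(x)$.

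First I would reduce the two value functions to concrete expressions. Because Assumption \ref{asmp:monoConv} gives $h_{p+1}\le h_p$, it follows that $\textbf{S}^{p+1}\cap\textbf{X}^p\subseteq\textbf{S}^p$, so the Case 2 hypothesis $x\in\textbf{S}^{p+1}\setminus\textbf{S}^p$ forces $x\in\textbf{X}^{p+1}\setminus\textbf{X}^p$. Algorithm \ref{alg:1} then yields $v^{p+1}_0(x)=\bigcup_{\tilde x\in X^{p+1}_E(x)}\tilde v^p(\tilde x)$, whereas $v^{p+1}_0(x')=\tilde v^p(x')=v^p_{\bar n_p}(x')$ because $x'\in\textbf{S}^p\subseteq\textbf{X}^p$.

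Next, write $A\triangleq\mathcal V^G_{p+1}(x)$ and $B\triangleq\mathcal V^G_{p+1}(x')$; the inclusion $B\subseteq A$ follows from $x'\in X^{p+1}_P(x)$, since $x'_i=x_i$ for $i\in\mathcal V\setminus A$. I would define $\tilde x^*$ coordinate-wise by setting $\tilde x^*_i\triangleq x'_i$ for $i\in B\cup(\mathcal V\setminus A)$ and, for each $i\in A\setminus B$, picking $\tilde x^*_i\in X^p_i$ with $d(\tilde x^*_i,X^G_i)\le M_i\epsilon_{p+1}+h_{p+1}$. The two defining conditions of $X^{p+1}_E(x)$ are then immediate, and $\tilde x^*\in\textbf{S}^p$ follows from the safety of $x'$ on its $(\mathcal V\setminus A)\cup B$ components together with Assumption \ref{asmp:lowerBound}, which isolates the goal-region coordinates from other robots' free regions. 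Because $\epsilon_{p+1}\le\epsilon_p$ and $h_{p+1}\le h_p$, $A\subseteq\mathcal V^G_p(\tilde x^*)$, so $\mathcal V\setminus\mathcal V^G_p(\tilde x^*)\subseteq\mathcal V\setminus A$, where $x'$ and $\tilde x^*$ agree; hence $x'\in X^p_P(\tilde x^*)$. The induction hypothesis $H(p)$ at step $n=\bar n_p$ then gives $E_{v^p_{\bar n_p}}(x')\subseteq E_{v^p_{\bar n_p}}(\tilde x^*)=E_{\tilde v^p}(\tilde x^*)\subseteq E_{v^{p+1}_0}(x)$, which is the claim.

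The main obstacle I anticipate is the existence step for coordinates in $A\setminus B$: the point $x_i$ itself has the required closeness to $X^G_i$ and lies in $X^{p+1}_i$, but it need not belong to the coarser grid $X^p_i$, so a suitable near-goal node in $X^p_i$ has to be produced from a grid-density argument. If such a node cannot be guaranteed, my fallback is to split $X^{p+1}_E(x)$ into its $\textbf{X}^p$-part (handled through $H(p)$ as above for the components in $B$) and its $(\textbf{S}^{p+1}\setminus\textbf{X}^p)$-part (handled through the interpolation function $V^p$ of \eqref{eq:interpolationFunction}, whose $i$-th entry is forced to $0$ on $A$ and $1$ off $A$), and then to combine the two pieces to cover $E_{v^p_{\bar n_p}}(x')$ componentwise, using Lemma \ref{thm:zeroValue} to absorb the components on which the values are already zero.
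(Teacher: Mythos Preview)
Your proposal is correct and follows the same approach as the paper: build a witness $\tilde x\in X^{p+1}_E(x)\cap\textbf{X}^p$, show $x'\in X^p_P(\tilde x)$, and invoke $H(p)$. The paper's construction is slightly simpler than yours: rather than splitting $A=\mathcal V^G_{p+1}(x)$ into $B$ and $A\setminus B$ and keeping $x'_i$ on $B$, the paper uniformly picks $\tilde x_i\in X^G_i\cap X^p_i$ for every $i\in A$ and sets $\tilde x_i=x_i=x'_i$ for $i\notin A$. This choice also dissolves the obstacle you anticipated: since $\tilde x_i\in X^G_i$, the near-goal condition $d(\tilde x_i,X^G_i)\le M_i\epsilon_{p+1}+h_{p+1}$ is trivially met, so no grid-density argument is needed beyond the (implicit) nonemptiness of $X^G_i\cap X^p_i$.
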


\begin{proof}
Notice that $E_{v^{p+1}_0}(x)=\bigcup_{\tilde x\in X^{p+1}_E(x)}E_{\tilde v^p}(\tilde x)$ and $E_{v^{p+1}_0}(x')=E_{v^p_{\bar n_p}}(x')$.
Now we are going to construct $\tilde x\in X^{p+1}_E(x)$ s.t. $\tilde x\in\textbf{S}^p$ and prove that $E_{v^p_{\bar n_p}}(x')\subseteq E_{\tilde v^p}(\tilde x)$.
For $i\in\{N_{p+1}+1, \dots, N\}$, let $\tilde x_i=x_i$; for $i\in\{1, \dots, N_{p+1}\}$, pick $\tilde x_i\in X^G_i\cap X^p_i$.
Since $x'\in\textbf{S}^p$ and $x'\in X^{p+1}_P(x)$, we have $\tilde x_i=x_i=x'_i\in X^p_i, \forall i\in\{N_{p+1}+1, \dots, N\}$.
Therefore, we have $\tilde x\in X^{p+1}_E(x)$ and $\tilde x\in\textbf{S}^p$.

Then we show that $x'\in X^p_P(\tilde x)$.
Consider $j\in\{1, \dots, N\}$ s.t. $\tilde x_j\notin X^G_j+(M_j\epsilon_p+h_p)\mathcal B$.
By Assumption \ref{asmp:monoConv}, we have $\tilde x_j\notin X^G_j+(M_j\epsilon_{p+1}+h_{p+1})\mathcal B$.
The fact that $\tilde x\in X^{p+1}_E(x)$ implies $\tilde x_i\in X^G_i+(M_i\epsilon_{p+1}+h_{p+1})\mathcal B, \forall i\in\{1,\dots,  N_{p+1}\}$.
Therefore, $j\in\{N_{p+1}+1, \dots, N\}$ and hence $\tilde x_j=x_j$.
Moreover, since $x'\in X^{p+1}_P(x)$, it follows from $j\in\{N_{p+1}+1, \dots, N\}$ that $x'_j=x_j=\tilde x_j$.
This holds for every $j\in\mathcal V$ s.t. $\tilde x_j\notin X^G_j+(M_j\epsilon_p+h_p)\mathcal B$.
By the definition of $X^p_P$, we conclude that $x'\in X^p_P(\tilde x)$.

By utilizing $H(p)$, it follows from $x'\in X^p_P(\tilde x)$ that $E_{v^{p+1}_0}(x')=E_{v^p_{\bar n_p}}(x')\subseteq E_{v^p_{\bar n_p}}(\tilde x)$.
Since $\tilde x\in\textbf{S}^p$, $E_{v^p_{\bar n_p}}(\tilde x)=E_{\tilde v^p}(\tilde x)$.
Moreover, it follows from $\tilde x\in X^{p+1}_E(x)$ that $E_{\tilde v^p}(\tilde x)\subseteq \bigcup_{\tilde x\in X^{p+1}_E(x)}E_{\tilde v^p}(\tilde x)=E_{v^{p+1}_0}(x)$.
In summary, $E_{v^{p+1}_0}(x')\subseteq E_{v^{p+1}_0}(x)$.
Then $E_{v^{p+1}_0}(x')\subseteq E_{v^{p+1}_0}(x)$ holds for Case 2.
\end{proof}

\begin{claim}\label{thm:multiGridSub:Case3}
$E_{v^{p+1}_0}(x')\subseteq E_{v^{p+1}_0}(x)$ holds for Case 3.
\end{claim}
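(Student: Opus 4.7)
\begin{proofOf}{Claim \ref{thm:multiGridSub:Case3}}
The plan is to reduce Case~3 to the induction hypothesis $H(p)$ by a subcase analysis on each equivalent node of $x'$. The first observation is that $x'\in\textbf{S}^{p+1}\setminus\textbf{S}^p$ forces $x'\in\textbf{X}^{p+1}\setminus\textbf{X}^p$: otherwise Assumption~\ref{asmp:monoConv} would give $x'\in\textbf{S}^{p+1}\cap\textbf{X}^p\subseteq(\textbf{S}+h_p\mathcal B)\cap\textbf{X}^p=\textbf{S}^p$, a contradiction. Since $x\in\textbf{S}^p\subseteq\textbf{X}^p$, the initialization rule in Algorithm~\ref{alg:1} yields $E_{v^{p+1}_0}(x)=E_{v^p_{\bar n_p}}(x)$ and $E_{v^{p+1}_0}(x')=\bigcup_{\tilde x'\in X^{p+1}_E(x')}E_{\tilde v^p}(\tilde x')$. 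Hence it suffices to show $E_{\tilde v^p}(\tilde x')\subseteq E_{v^p_{\bar n_p}}(x)$ for every $\tilde x'\in X^{p+1}_E(x')$.

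By Lemma~\ref{thm:EquivalentSafety}, $\tilde x'\in\textbf{S}^{p+1}$, so two subcases arise. In subcase~(a), $\tilde x'\in\textbf{X}^p$, and the same inclusion as above gives $\tilde x'\in\textbf{S}^p$; thus $E_{\tilde v^p}(\tilde x')=E_{v^p_{\bar n_p}}(\tilde x')$ and my goal reduces to verifying $\tilde x'\in X^p_P(x)$ so that $H(p)$ applies. Combining $x'\in X^{p+1}_P(x)$ (which pins $x'_i=x_i$ for $i\in\{N_{p+1}+1,\dots,N\}$) with $\tilde x'\in X^{p+1}_E(x')$ (which pins $\tilde x'_i=x'_i$ for $i\in\{N_{p+1}'+1,\dots,N\}\supseteq\{N_{p+1}+1,\dots,N\}$) yields $\tilde x'_i=x_i$ on the complement of $\mathcal V^G_{p+1}(x)$. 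The monotonicity $\mathcal V^G_{p+1}(x)\subseteq\mathcal V^G_p(x)$ from Lemma~\ref{thm:zeroValue}(2) then enlarges this equality to the complement of $\mathcal V^G_p(x)=\{1,\dots,N_p\}$, giving $\tilde x'\in X^p_P(x)$ as required.

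In subcase~(b), $\tilde x'\in\textbf{S}^{p+1}\setminus\textbf{X}^p$, so $E_{\tilde v^p}(\tilde x')=E_{V^p}(\tilde x')=\prod_{i\in\mathcal V^G_{p+1}(\tilde x')}[0,1]\times\prod_{i\notin\mathcal V^G_{p+1}(\tilde x')}\{1\}$ by the definition of the interpolation function \eqref{eq:interpolationFunction}. I would show $\mathcal V^G_{p+1}(\tilde x')\subseteq\mathcal V^G_p(x)$. For any $i\notin\mathcal V^G_{p+1}(x)$, the chain $\tilde x'_i=x'_i=x_i$ (established in subcase~(a)) together with $d(x_i,X^G_i)>M_i\epsilon_{p+1}+h_{p+1}$ gives $d(\tilde x'_i,X^G_i)>M_i\epsilon_{p+1}+h_{p+1}$, so $i\notin\mathcal V^G_{p+1}(\tilde x')$. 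Taking contrapositives, $\mathcal V^G_{p+1}(\tilde x')\subseteq\mathcal V^G_{p+1}(x)\subseteq\mathcal V^G_p(x)$. The remark following Lemma~\ref{thm:zeroValue} then yields $E_{V^p}(\tilde x')\subseteq[0,1]^{N_p}\times\{1\}^{N-N_p}\subseteq E_{v^p_{\bar n_p}}(x)$.

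The main obstacle is the careful bookkeeping of three nested goal-region index sets, $\mathcal V^G_{p+1}(\tilde x')\subseteq\mathcal V^G_{p+1}(x)\subseteq\mathcal V^G_p(x)$, across two consecutive grid scales; the cross-scale monotonicity in Lemma~\ref{thm:zeroValue}(2) is the linchpin that connects them. Once both subcases are closed, taking the union over $\tilde x'\in X^{p+1}_E(x')$ delivers $E_{v^{p+1}_0}(x')\subseteq E_{v^{p+1}_0}(x)$ and completes Case~3.
\end{proofOf}
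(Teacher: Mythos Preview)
Your proof is correct and follows essentially the same approach as the paper. Your subcases~(a) and~(b) coincide with the paper's Cases~3.2 and~3.1 respectively (in reversed order), and the arguments within each are the same: use $H(p)$ via $\tilde x'\in X^p_P(x)$ when $\tilde x'\in\textbf{S}^p$, and use the remark after Lemma~\ref{thm:zeroValue} together with $\mathcal V^G_{p+1}(\tilde x')\subseteq\mathcal V^G_p(x)$ when $\tilde x'\notin\textbf{X}^p$; you are in fact slightly more explicit than the paper in justifying why $x'\in\textbf{S}^{p+1}\setminus\textbf{S}^p$ forces $x'\notin\textbf{X}^p$.
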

\begin{proof}
Notice $E_{v^{p+1}_0}(x)=E_{\tilde v^p}(x)=E_{v^p_{\bar n_p}}(x)$ and $E_{v^{p+1}_0}(x')=\bigcup_{\tilde x'\in X^{p+1}_E(x')}E_{\tilde v^p}(\tilde x')$.
For each $\tilde x'\in X^{p+1}_E(x')$, two cases arise: 

Case 3.1, $\tilde x'\in\textbf{S}^{p+1}\setminus\textbf{S}^p$:
Then $\tilde v^p(\tilde x')=\{V^p(\tilde x')\}$. 
Since $\tilde x'\in X^{p+1}_E(x')$, $E_{\tilde v^p}(\tilde x')=E_{V^p}(\tilde x')=[0, 1]^{N_{p+1}'}\times\{1\}^{N-N_{p+1}'}$. 
It follows from $N_{p+1}'\leq N_{p+1}$ that $[0, 1]^{N_{p+1}'}\times\{1\}^{N-N_{p+1}'}\subseteq[0, 1]^{N_{p+1}}\times\{1\}^{N-N_{p+1}}$.
By the third property of Lemma \ref{thm:zeroValue}, $[0, 1]^{N_p}\times\{1\}^{N-N_p}\subseteq E_{v^p_{\bar n_p}}(x)$.
Therefore, we have $E_{\tilde v^p}(\tilde x')\subseteq E_{v^{p+1}_0}(x)$.

Case 3.2, $\tilde x'\in\textbf{S}^p$: Then $E_{\tilde v^p}(\tilde x')=E_{v^p_{\bar n_p}}(\tilde x')$.
Since $\tilde x'\in X^{p+1}_E(x')$, thus for any $i\in\{N_{p+1}+1, \dots, N\}$, we have $\tilde x'_i=x'_i=x_i\notin X^G_i+(M_i\epsilon_{p+1}+h_{p+1})\mathcal B$.
Using the second property of Lemma \ref{thm:zeroValue}, we have $\tilde x'_i=x_i, \forall i\in\mathcal V\setminus\mathcal V^G_p(x)\subseteq\mathcal V\setminus\mathcal V^G_{p+1}(x)=\{N_{p+1}+1, \dots, N\}$.
Therefore, $\tilde x'\in X^p_P(x)$.
By $H(p)$, $E_{v^p_{\bar n_p}}(\tilde x')\subseteq E_{v^p_{\bar n_p}}(x)=E_{v^{p+1}_0}(x)$.
That is, $E_{\tilde v^p}(\tilde x')\subseteq E_{v^{p+1}_0}(x)$.

In summary, $\forall\tilde x'\in X^{p+1}_E(x')$, $E_{\tilde v^p}(\tilde x')\subseteq E_{v^{p+1}_0}(x)$.
Then $E_{v^{p+1}_0}(x')\subseteq E_{v^{p+1}_0}(x)$ holds for Case 3.
\end{proof}

\begin{claim}
$E_{v^{p+1}_0}(x')\subseteq E_{v^{p+1}_0}(x)$ holds for Case 4.
\end{claim}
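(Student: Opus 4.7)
The plan is to mimic Case 3's structure, enumerating $\tilde x'\in X^{p+1}_E(x')$ and, for each one, exhibiting a matching $\tilde x\in X^{p+1}_E(x)$ with $E_{\tilde v^p}(\tilde x')\subseteq E_{\tilde v^p}(\tilde x)$; the claim then follows from $E_{v^{p+1}_0}(y)=\bigcup_{\tilde y\in X^{p+1}_E(y)}E_{\tilde v^p}(\tilde y)$ for $y\in\{x,x'\}$. Throughout I will use the canonical candidate $\tilde x$ defined by $\tilde x_i=x_i$ for $i>N_{p+1}$ (forced by the definition of $X^{p+1}_E(x)$) and $\tilde x_i\in X^G_i\cap X^p_i$ for $i\leq N_{p+1}$, which automatically lies in $X^{p+1}_E(x)\subseteq\textbf{S}^{p+1}$ via Lemma~\ref{thm:EquivalentSafety}. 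I will then split on whether $\tilde x'\in\textbf{X}^p$, paralleling Sub-cases 3.1 and 3.2.

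If $\tilde x'\in\textbf{S}^{p+1}\setminus\textbf{S}^p$ (equivalently $\tilde x'\notin\textbf{X}^p$), the computation used in Sub-case 3.1 gives $E_{\tilde v^p}(\tilde x')=[0,1]^{N'_{p+1}}\times\{1\}^{N-N'_{p+1}}$. I then further split the analysis of the canonical $\tilde x$: if $\tilde x\in\textbf{X}^p$ then $\tilde x\in\textbf{S}^p$ (since $h_{p+1}\leq h_p$), so $\tilde v^p(\tilde x)=v^p_{\bar n_p}(\tilde x)$ and Lemma~\ref{thm:zeroValue}(3) forces $\tau_i=0$ for every $\tau\in v^p_{\bar n_p}(\tilde x)$ and $i\in\mathcal V^G_p(\tilde x)\supseteq\{1,\dots,N_{p+1}\}$; if $\tilde x\notin\textbf{X}^p$ then $\tilde v^p(\tilde x)=\{V^p(\tilde x)\}$, whose zero entries coincide with $\mathcal V^G_{p+1}(\tilde x)=\{1,\dots,N_{p+1}\}$. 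Either way $E_{\tilde v^p}(\tilde x)\supseteq[0,1]^{N_{p+1}}\times\{1\}^{N-N_{p+1}}$, which contains $E_{\tilde v^p}(\tilde x')$ because $N'_{p+1}\leq N_{p+1}$.

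If instead $\tilde x'\in\textbf{S}^p$, then $\tilde v^p(\tilde x')=v^p_{\bar n_p}(\tilde x')$ and I will invoke the induction hypothesis $H(p)$. The subtlety that makes this work, even though $x\notin\textbf{S}^p$ rules out applying $H(p)$ directly at $x$, is that $\tilde x'\in\textbf{X}^p$ forces $\tilde x'_i\in X_i^p$ for every $i$; chaining $\tilde x'\in X^{p+1}_E(x')$ with $x'\in X^{p+1}_P(x)$ then yields $\tilde x'_i=x'_i=x_i$ for all $i>N_{p+1}$, so $x_i\in X_i^p$ for such $i$ and hence the canonical $\tilde x$ lies in $\textbf{X}^p\cap X^{p+1}_E(x)\subseteq\textbf{S}^p$. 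Moreover $\mathcal V^G_p(\tilde x)\supseteq\{1,\dots,N_{p+1}\}$, so any $i\notin\mathcal V^G_p(\tilde x)$ satisfies $i>N_{p+1}$, whence $\tilde x_i=x_i=\tilde x'_i$; thus $\tilde x'\in X^p_P(\tilde x)$. Applying $H(p)$ gives $E_{v^p_{\bar n_p}}(\tilde x')\subseteq E_{v^p_{\bar n_p}}(\tilde x)=E_{\tilde v^p}(\tilde x)$. The main obstacle is this second sub-case, and the critical enabler is the forced inclusion $x_i\in X_i^p$ for $i>N_{p+1}$ extracted from the existence of the particular $\tilde x'\in\textbf{X}^p$ being matched.
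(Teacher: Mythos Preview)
Your argument is correct and rests on the same ingredients as the paper's proof: the canonical node $\tilde x\in X^{p+1}_E(x)$ with $\tilde x_i\in X^G_i\cap X^p_i$ for $i\le N_{p+1}$, the observation that the existence of some $\tilde x'\in\textbf{X}^p$ forces $x_i\in X^p_i$ for $i>N_{p+1}$, and the application of $H(p)$ to the pair $(\tilde x,\tilde x')$ via $\tilde x'\in X^p_P(\tilde x)$. The only difference is organizational: the paper first splits on whether some coordinate $x'_j$ with $j>N'_{p+1}$ lies outside $X^p_j$ (Case~4.1 versus~4.2) and in Case~4.2 reduces wholesale to Claim~\ref{thm:multiGridSub:Case3} with $\tilde x$ in place of $x$, whereas you iterate over each $\tilde x'\in X^{p+1}_E(x')$ and re-run the Sub-case~3.1/3.2 argument inline against the fixed canonical $\tilde x$. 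Your decomposition is slightly more uniform (one $\tilde x$ serves all $\tilde x'$), while the paper's is more modular (it invokes the already-proved Case~3); substantively they are the same proof.
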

\begin{proof}
Then $E_{v^{p+1}_0}(x)=\bigcup_{\tilde x\in X^{p+1}_E(x)}E_{\tilde v^p}(\tilde x)$ and $E_{v^{p+1}_0}(x')=\bigcup_{\tilde x'\in X^{p+1}_E(x')}E_{\tilde v^p}(\tilde x')$.
Consider $x'$, and there are two scenarios:

Case 4.1: $\exists j\in\{N_{p+1}'+1, \dots, N\}$ s.t. $x'_j\in X^{p+1}_j\setminus X^p_j$.
Then $\forall\tilde x'\in X^{p+1}_E(x')$, we have $\tilde x'_j=x'_j\in X^{p+1}_j\setminus X^p_j$.
This indicates $\tilde x'\in\textbf{S}^{p+1}\setminus\textbf{S}^p$.
Following Case 3.1, we have $E_{v^{p+1}_0}(x')=\bigcup_{\tilde x'\in X^{p+1}_E(x')}E_{\tilde v^p}(\tilde x')=\bigcup_{\tilde x'\in X^{p+1}_E(x')}([0, 1]^{N_{p+1}'}\times\{1\}^{N-N_{p+1}'})=[0, 1]^{N_{p+1}'}\times\{1\}^{N-N_{p+1}'}$ and $[0,1]^{N_{p+1}}\times\{1\}^{N-N_{p+1}}\subseteq E_{v^{p+1}_0}(x)$.
Notice that $N_{p+1}'\leq N_{p+1}$.
Then $E_{v^{p+1}_0}(x')\subseteq E_{v^{p+1}_0}(x)$.
Hence $E_{v^{p+1}_0}(x')\subseteq E_{v^{p+1}_0}(x)$ holds for Case 4.1.

Case 4.2: $\forall j\in\{N_{p+1}'+1, \dots, N\}, x'_j\in X^p_j$ while $\exists j\in\{1, \dots, N_{p+1}'\}$ s.t. $x'_j\in X^{p+1}_j\setminus X^p_j$.
We show that $\exists \tilde x\in X^{p+1}_E(x)$ s.t. $\tilde x\in\textbf{S}^p$.
Since $x'\in X^{p+1}_P(x)$, $x_i=x'_i\in X^p_i\subseteq X^{p+1}_i, \forall i\in\{N_{p+1}+1, \dots, N\}\subseteq\{N_{p+1}'+1, \dots, N\}$.
By picking $\tilde x_i\in X^G_i\cap X^p_i$ for $i\in\{1, \dots, N_{p+1}\}$ and $\tilde x_i=x_i, \forall i\in\{N_{p+1}+1, \dots, 1\}$, we have $\tilde x\in X^{p+1}_E(x)$.
In addition, since $x\in\textbf{S}^{p+1}$, we have $\exists y\in\textbf{S}$ s.t. $\|x-y\|\leq h_{p+1}\leq h_p$.
Define $\tilde y$ s.t. $\tilde y_i=\tilde x_i, \forall i\in\{1, \dots, N_{p+1}\}$ and $\tilde y_i=y_i, \forall i\in\{N_{p+1}+1, \dots, N\}$.
Then $\|\tilde y-\tilde x\|=\sqrt{\sum_{i=N_{p+1}+1}^N(y_i-x_i)^2}\leq\|y-x\|\leq h_{p+1}$.
Then $\forall i,j\in\{N_{p+1}+1, \dots, N\}$, $\|\tilde y_i-\tilde y_j\|=\|y_i-y_j\|\geq\sigma$.
Moreover, it follows from Assumptions \ref{asmp:monoConv} and \ref{asmp:lowerBound} that $\forall i\in\{1, \dots, N_{p+1}\}$ and $j\in\{1, \dots, N\}$, $\|\tilde y_i-\tilde y_j\|=\|\tilde x_i-\tilde x_j\|\geq\sigma$.
This indicates that $\tilde y\in\textbf{S}$ and hence $\tilde x\in\textbf{S}^p$.
By the definition of $X^{p+1}_E$, $X^{p+1}_E(x)=X^{p+1}_E(\tilde x)$.
This means we can replace $X^{p+1}_E(x)$ with $X^{p+1}_E(\tilde x)$ and degenerate the current case to Case 3.
Then by Claim \ref{thm:multiGridSub:Case3}, $H(p+1)$ holds for $x'\in\textbf{S}^{p+1}\setminus\textbf{S}^p$ and $\tilde{x}\in\textbf{S}^p$.
Thus, $E_{v^{p+1}_0}(x')=\bigcup_{\tilde x'\in X^{p+1}_E(x')}E_{\tilde v^p}(\tilde x')\subseteq E_{v^p_{\bar n_p}}(\tilde x)\subseteq \bigcup_{\tilde x\in X^{p+1}_E(x)} E_{\tilde v^p}(\tilde x)=E_{v^{p+1}_0}(x)$.

By the two cases discussed, $E_{v^{p+1}_0}(x')\subseteq E_{v^{p+1}_0}(x)$ holds for Case 4.
\end{proof}

By the four cases above, $E_{v^{p+1}_0}(x')\subseteq E_{v^{p+1}_0}(x)$ holds for all $x\in\textbf{S}^{p+1}$ and $x'\in X^{p+1}_P(x)$.
By Lemma \ref{thm:fixedGridSub}, $H(p+1)$ is proven.
Then the lemma is established.
\end{proofOf}

\begin{proofOf}{Corollary \ref{thm:XEEquivalence}}
Fix $p\geq1$, $0\leq n\leq \bar n_p$, $x\in\textbf{S}^p$ and $x'\in X^p_E(x)$.
On one hand, by Lemma \ref{thm:multiGridSub}, $\forall x'\in X^p_E(x)\subseteq X^p_P(x)$, $E_{v^p_n}(x')\subseteq E_{v^p_n}(x)$.
On the other hand, $x\in X^p_E(x')$; thus again by Lemma \ref{thm:multiGridSub} we also have $E_{v^p_n}(x)\subseteq E_{v^p_n}(x')$, which indicates that $E_{v^p_n}(x')= E_{v^p_n}(x)$.

Since Lemma \ref{thm:fixedGridSub} holds for every $m\geq0$, the above proof can be directly extended to $\mathbb G^mv^p_0$ for any $m>\bar n_p$.
By Theorem \ref{thm:kru1}, $v^p_\infty$ exists and $v^p_\infty(x)=\mathrm{Lim}_{m\to+\infty}\mathbb G^mv^p_0(x)$ for any $x\in\textbf{S}^p$.
Hence, the equivalence $E_{\mathbb G^mv^p_0}(x)=E_{\mathbb G^mv^p_0}(x')$ can be further extended to $E_{v^p_\infty}(x)=E_{v^p_\infty}(x)$ by taking $m\to+\infty$.
\end{proofOf}

\begin{proofOf}{Lemma \ref{thm:tildeHatEquivalence}}
We fix $p\geq1$, $0\leq n\leq\bar n_p$ and $x\in\textbf{S}^p$.
Recall that
\begin{align*}
E_{\mathbb Gv^p_n}(x)=\Delta\tau(x)+(\textbf{1}-\Delta\tau(x))\circ\bigcup_{\tilde x\in\tilde X^p(x)}E_{v^p_n}(\tilde x),\\
E_{\hat{\mathbb G}v^p_n}(x)=\Delta\tau(x)+(\textbf{1}-\Delta\tau(x))\circ\bigcup_{\hat x\in\hat X^p(x)}E_{v^p_n}(\hat x).
\end{align*}
The difference of $\mathbb G$ and $\hat{\mathbb G}$ solely depends on $\tilde X^p$ and $\hat X^p$.
The proof of either of the two equivalences automatically proves the other.

By the definitions of $\tilde X^p(x)$ and $\hat X^p(x)$, $\hat X^p_i(x_i)=\tilde X^p_i(x_i), \forall i\in\mathcal V\setminus\mathcal V^G_p(x)$.
Therefore, $\forall\hat x\in\hat X^p(x)$, $\exists\tilde x\in\prod_{i\in\mathcal V}\tilde X^p_i(x)$ s.t. $\hat x_i=\tilde x_i, \forall i\in\mathcal V\setminus\mathcal V^G_p(x)$.
It follows from the definitions of $\tilde X^p_i$ and $\mathcal V^G_p$ that $\tilde x_i=x_i\in X^G_i+(M_i\epsilon_p+h_p)\mathcal B, \forall i\in\mathcal V^G_p(x)$.
By Assumptions \ref{asmp:monoConv} and \ref{asmp:lowerBound}, we see that $\|\tilde x_i-\tilde x_j\|\geq\sigma$, $\forall i\in\mathcal V^G_p(x)$ and $j\in\mathcal V$ s.t. $i\neq j$.
It follows from $\tilde x_i=\hat x_i, \forall i\in\mathcal V\setminus\mathcal V^G_p(x)$ and $\hat x\in\hat X^p(x)\subseteq\textbf{S}^p$ that $\|\tilde x_i-\tilde x_j\|\geq\sigma$, $\forall i,j\in\mathcal V\setminus\mathcal V^G_p(x)$ s.t. $i\neq j$.
This indicates $\tilde x\in\textbf{S}^p$.
Therefore, it follows from the definition of $\tilde X^p$ that $\tilde x\in \tilde X^p(x)$.
Since $\mathcal V^G_p(\tilde x)=\mathcal V^G_p(x)$, then $\hat x\in X^p_P(\tilde x)$.
By Lemma \ref{thm:multiGridSub}, $E_{v^p_n}(\hat x)\subseteq E_{v^p_n}(\tilde x)$.
We see that this holds for all $\hat x\in\hat X^p(x)$, then $\bigcup_{\hat x\in\hat X^p(x)}E_{v^p_n}(\hat x)\subseteq \bigcup_{\tilde x\in\tilde X^p(x)}E_{v^p_n}(\tilde x)$.

In addition, $\tilde X^p(x)\subseteq\hat X^p(x)$, we have  $\bigcup_{\tilde x\in\tilde X^p(x)}E_{v^p_n}(\tilde x)\subseteq\bigcup_{\hat x\in\hat X^p(x)}E_{v^p_n}(\hat x)$.
It is concluded that $\bigcup_{\tilde x\in\tilde X^p(x)}E_{v^p_n}(\tilde x)=\bigcup_{\hat x\in\hat X^p(x)}E_{v^p_n}(\hat x)$ and $E_{\mathbb Gv^p_n}(x)=E_{\hat{\mathbb G}v^p_n}(x)$.
Since this holds for every $p\geq 0$, $0\leq n\leq n_p$ and $x\in\textbf{S}^p$, the first part of the lemma is proven.

Since Lemma \ref{thm:fixedGridSub} holds for every $m\geq0$, the above proof can be directly extended to $\mathbb G^mv^p_0$ for any $m>\bar n_p$.
By Theorem \ref{thm:kru1}, the fixed point $v^p_\infty$ exists and $v^p_\infty(x)=\mathrm{Lim}_{m\to+\infty}\mathbb G^mv^p_0(x)$ for any $x\in\textbf{S}^p$.
By the equivalence of $\mathbb G$ and $\hat{\mathbb G}$, we have $v^p_\infty(x)=\mathrm{Lim}_{m\to+\infty}\hat{\mathbb G}^mv^p_0(x)$ and $\hat{\mathbb G}v^p_\infty=v^p_\infty=\mathbb Gv^p_\infty$.
Therefore, $E_{\mathbb Gv^p_\infty}(x)=E_{\hat{\mathbb G}v^p_\infty}(x)$ and the second conclusion is proven.
\end{proofOf}

\end{subsection}

\begin{subsection}{Contraction property of $\mathbb G$}

In this subsection, Theorem \ref{thm:ContractionProperty} shows that the transformed Bellman operator $\mathbb G$ in \eqref{eq:krubellman} is contractive with factor $e^{-\kappa_p}$.

\begin{proofOf}{Lemma \ref{thm:Lemma9}}
We first consider $x\in\textbf{X}\setminus\textbf{S}$.
Since $\textbf{S}$ is closed and $\alpha_p$ is monotonically decreasing, then there exists $q>0$ s.t. $\forall p\geq q$, $(x+\alpha_p\mathcal B)\cap\textbf{S}^{p-1}=(x+\alpha_p\mathcal B)\cap(\textbf{S}+h_{p-1}\mathcal B)\cap\textbf{X}^{p-1}=\emptyset$.
This renders at $\bigcup_{\tilde x\in(x+\alpha_p\mathcal B)\cap\textbf{X}^p}E_{\mathbb P\tilde v^{p-1}_\infty}(\tilde x)=\{\textbf{1}_N\}$.
In addition, it also indicates that $(x+\alpha_p\mathcal B)\cap\textbf{S}^p=\emptyset$.
This renders at $\bigcup_{\tilde x\in(x+\alpha_p\mathcal B)\cap\textbf{X}^p}E_{v^p_\infty}(\tilde x)=\{\textbf{1}_N\}$.
Therefore, we have $b_p(x)=0$.
This holds for all $x\in\textbf{X}\setminus\textbf{S}$,

Then we fix $x\in\textbf{S}$.
The following shorthand notations are used throughout the proof:
\begin{align*}
&A^p_{11}(x)\triangleq\bigcup_{\tilde x\in(x+\alpha_p\mathcal B)\cap(\textbf{S}^p\setminus\textbf{X}^{p-1})}\bigcup_{\tilde x'\in X^p_E(\tilde x)\setminus\textbf{X}^{p-1}}E_{\tilde v_\infty^{p-1}}(\tilde x'),\\
&A^p_{12}(x)\triangleq\bigcup_{\tilde x\in(x+\alpha_p\mathcal B)\cap(\textbf{S}^p\setminus\textbf{X}^{p-1})}\bigcup_{\tilde x'\in X^p_E(\tilde x)\cap\textbf{X}^{p-1}}E_{\tilde v_\infty^{p-1}}(\tilde x'),\\
&A^p_{13}(x)\triangleq\bigcup_{\tilde x\in(x+\alpha_p\mathcal B)\cap(\textbf{X}^p\setminus(\textbf{S}^p\cup\textbf{X}^{p-1}))}E_{\tilde v_\infty^{p-1}}(\tilde x'),\\
&A^p_2(x)\triangleq\bigcup_{\tilde x\in(x+\alpha_p\mathcal B)\cap\textbf{X}^{p-1}}E_{\mathbb P\tilde v_\infty^{p-1}}(\tilde x),\\
&B^p(x)\triangleq\bigcup_{\tilde x\in(x+\alpha_p\mathcal B)\cap\textbf{X}^p}E_{v_\infty^p}(\tilde x).
\end{align*}
We drop the dependency of the above notations on $x$ for notational simplicity.
Now we are going to simplify $A^p_{11}$, $A^p_{12}$, $A^p_{13}$ and $A^p_2$.
From the definitions of $V^p$ and $\tilde v^{p-1}_\infty$, the following hold:
\begin{align*}
A^p_{11}=&\bigcup_{\tilde x\in(x+\alpha_p\mathcal B)\cap(\textbf{S}^p\setminus\textbf{X}^{p-1})}\bigcup_{\tilde x'\in X^p_E(\tilde x)\setminus\textbf{X}^{p-1}}E_{V^{p-1}}(\tilde x'),\\
=&\bigcup_{\tilde x\in(x+\alpha_p\mathcal B)\cap(\textbf{S}^p\setminus\textbf{X}^{p-1})}E_{V^{p-1}}(\tilde x)\\
A^p_{12}=&\bigcup_{\tilde x\in(x+\alpha_p\mathcal B)\cap(\textbf{S}^p\setminus\textbf{X}^{p-1})}\bigcup_{\tilde x'\in X^p_E(\tilde x)\cap\textbf{X}^{p-1}}E_{v_\infty^{p-1}}(\tilde x'),\\
A^p_{13}=&\{\textbf{1}_N\}.
\end{align*}
By the definitions of $\mathbb P$ and $\tilde v^{p-1}_\infty$, 
\begin{align*}
A^p_2=&\bigcup_{\tilde x\in(x+\alpha_p\mathcal B)\cap\textbf{X}^{p-1}}E_{\tilde v^{p-1}_\infty}(\tilde x)=\bigcup_{\tilde x\in(x+\alpha_p\mathcal B)\cap\textbf{X}^{p-1}}E_{v^{p-1}_\infty}(\tilde x).
\end{align*}

By Assumption \ref{asmp:alpha}, we have $(x+\alpha_p\mathcal B)\cap\textbf{X}^{p-1}\neq\emptyset$ and $A^p_2\neq\emptyset$.
It follows from the third property of Lemma \ref{thm:zeroValue} that $\forall \tilde x\in(x+\alpha_p\mathcal B)\cap\textbf{X}^p$, $E_{V^{p-1}}(\tilde x)\subseteq E_{v^p_\infty}(\tilde x)$.
This indicates that $A^p_{11}\subseteq B^p$.
In addition, it trivially holds that $A_{13}\subseteq B^p$.
By the first inequality of \eqref{eq:setInequality1} in Lemma~\ref{thm:setInequality}, $d_H(A^p_{11}\cup A^p_{12}\cup A^p_{13}\cup A^p_2, B^p)\leq d_H(A^p_{12}\cup A^p_2, B^p)$.

\begin{claim}\label{thm:ClaimV7A}
There is $q\geq1$ s.t. $\forall p\geq q$, if $x_i\in X^G_i$, $x_i+\alpha_p\mathcal B\subseteq X^G_i+(M_i\epsilon_p+h_p)\mathcal B$; if $x_i\notin X^G_i$, $(x_i+\alpha_p\mathcal B)\cap (X^G_i+(M_i\epsilon_p+h_p)\mathcal B)=\emptyset$.
\end{claim}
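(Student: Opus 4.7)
The plan is to compare the three quantities $\alpha_p$, $M_i\epsilon_p+h_p$, and (in the second case) the strictly positive distance $d(x_i, X_i^G)$, all in the limit $p\to+\infty$ under Assumption \ref{asmp:monoConv}. The key preliminary observation I would record is that $\alpha_p/\epsilon_p = 2h_p/\epsilon_p + h_p l^+ + \epsilon_p l^+ M^+ \to 0$, because $h_p/\epsilon_p\to 0$ and $\epsilon_p\to 0$, so $\alpha_p$ is of strictly smaller order than $\epsilon_p$, whereas $(M_i\epsilon_p+h_p)/\epsilon_p \to M_i$. This single asymptotic fact will drive both parts of the claim.

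For the first conclusion (the case $x_i\in X_i^G$), any $y$ with $\|y-x_i\|\leq \alpha_p$ automatically satisfies $d(y, X_i^G)\leq \|y-x_i\|\leq \alpha_p$ because $x_i\in X_i^G$, so it suffices to show $\alpha_p\leq M_i\epsilon_p+h_p$ for all large $p$. Using the preliminary observation together with $M_i>0$, there exists $q_1\geq 1$ such that $\alpha_p\leq M_i\epsilon_p+h_p$ for all $p\geq q_1$, and the inclusion $x_i+\alpha_p\mathcal B_{X_i}\subseteq X_i^G+(M_i\epsilon_p+h_p)\mathcal B_{X_i}$ follows.

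For the second conclusion (the case $x_i\notin X_i^G$), the closedness of $X_i^G$ (as a closed subset of the compact $X_i$ in \ref{asmp:compact}) yields $r\triangleq d(x_i, X_i^G)>0$. If, toward a contradiction, some $y$ lay in both $x_i+\alpha_p\mathcal B_{X_i}$ and $X_i^G+(M_i\epsilon_p+h_p)\mathcal B_{X_i}$, the triangle inequality would force $r\leq \alpha_p+M_i\epsilon_p+h_p$. Since the right-hand side vanishes as $p\to+\infty$ by \ref{asmp:monoConv}, it falls strictly below $r$ once $p\geq q_2$ for some $q_2\geq 1$. Setting $q\triangleq\max\{q_1, q_2\}$ completes the claim.

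The argument is entirely an asymptotic comparison and presents no genuine obstacle; the only mild point is that Part~1 implicitly uses $M_i>0$ (otherwise $\alpha_p\geq 2h_p > h_p = M_i\epsilon_p+h_p$ would rule out the inclusion), which is the natural nondegeneracy condition for a mobile robot and is consistent with the rest of the paper.
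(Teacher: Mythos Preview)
Your proof is correct and follows essentially the same approach as the paper's: both parts reduce to the asymptotic comparison $\alpha_p = o(\epsilon_p)$ (hence eventually $\alpha_p \leq M_i\epsilon_p + h_p$) and, in the second case, to $\alpha_p + M_i\epsilon_p + h_p \to 0$ eventually falling below the fixed positive distance $d(x_i, X_i^G)$. One small bookkeeping point: since the claim must hold simultaneously for every $i\in\mathcal V$, your thresholds $q_1,q_2$ implicitly depend on $i$ (through $M_i$ and through $r=d(x_i,X_i^G)$), so the final $q$ should be $\max_{i\in\mathcal V}$ of the per-robot thresholds, exactly as the paper does.
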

\begin{proof}
By Assumption \ref{asmp:monoConv}, we have $\exists q_i\geq0$ s.t. $\forall p\geq q_i$, $\alpha_p\leq M_i\epsilon_p+h_p$.
Then for any $p\geq q_i$, if $x_i\in X^G_i$, $x_i+\alpha_p\mathcal B\subseteq X^G_i+(M_i\epsilon_p+h_p)\mathcal B$.
It again follows from Assumption \ref{asmp:monoConv} that for each $i\in\mathcal V$ s.t. $x_i\notin X^G_i$, there exists $q_i\geq 1$ s.t. $\forall p\geq q_i$, $(x_i+\alpha_p\mathcal B)\cap (X^G_i+(M_i\epsilon_p+h_p)\mathcal B)=\emptyset$.
Then the desired $q$ is defined as $q\triangleq\max_{i\in\mathcal V}q_i$.
\end{proof}

\begin{claim}\label{thm:ClaimV7C}
For $p\geq q$ and any pair of $\tilde x\in x+\alpha_p\mathcal B$ and $i\in\mathcal V^G_p(\tilde x)$, $x_i\in X^G_i$.
\end{claim}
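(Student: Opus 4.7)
The plan is to derive Claim~\ref{thm:ClaimV7C} as an immediate consequence of Claim~\ref{thm:ClaimV7A} by contraposition, so the entire argument will be coordinate-wise and almost purely notational. First I would unpack the hypothesis $i\in\mathcal V^G_p(\tilde x)$. By the definition of $\mathcal V^G_p$, this is equivalent to $d(\tilde x_i, X_i^G)\leq M_i\epsilon_p+h_p$, i.e., $\tilde x_i\in X_i^G+(M_i\epsilon_p+h_p)\mathcal B_{X_i}$.

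Second, I would project the hypothesis $\tilde x\in x+\alpha_p\mathcal B_\textbf{X}$ down to the $i$-th coordinate. Since the product norm satisfies $\|\tilde x_i-x_i\|\leq\|\tilde x-x\|\leq\alpha_p$, we obtain $\tilde x_i\in x_i+\alpha_p\mathcal B_{X_i}$. Combining this with the first step yields
\begin{equation*}
\tilde x_i\in(x_i+\alpha_p\mathcal B_{X_i})\cap(X_i^G+(M_i\epsilon_p+h_p)\mathcal B_{X_i}),
\end{equation*}
so this intersection is nonempty.

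Finally, I would invoke Claim~\ref{thm:ClaimV7A} in contrapositive form. Since $p\geq q$, Claim~\ref{thm:ClaimV7A} asserts that if $x_i\notin X_i^G$ then the intersection displayed above must be empty. The nonemptiness just established therefore forces $x_i\in X_i^G$, which is the desired conclusion. The only thing worth flagging is the need to argue coordinate-wise, because the ball $\alpha_p\mathcal B_\textbf{X}$ lives in the product space $\textbf{X}$ while Claim~\ref{thm:ClaimV7A} is stated in each $X_i$; this bookkeeping is settled by the elementary estimate $\|\tilde x_i-x_i\|\leq\|\tilde x-x\|$, so I do not anticipate any substantive obstacle beyond a careful statement.
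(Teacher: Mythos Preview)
Your proposal is correct and follows essentially the same argument as the paper: both unpack $i\in\mathcal V^G_p(\tilde x)$ to get $\tilde x_i\in X_i^G+(M_i\epsilon_p+h_p)\mathcal B$, observe that $\tilde x_i\in x_i+\alpha_p\mathcal B$, and then apply Claim~\ref{thm:ClaimV7A} in contrapositive/contradiction form. Your version is slightly more explicit about the coordinate-wise projection $\|\tilde x_i-x_i\|\leq\|\tilde x-x\|$, which the paper leaves implicit, but the logical content is identical.
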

\begin{proof}
For every $i\in\mathcal V^G_p(\tilde x)$, $\tilde x_i\in X^G_i+(M_i\epsilon_p+h_p)\mathcal B$.
Assume $x_i\notin X^G_i$.
It follows from Claim \ref{thm:ClaimV7A} that $(x_i+\alpha_p\mathcal B)\cap(X^G_i+(M_i\epsilon_p+h_p)\mathcal B)=\emptyset$.
This contradicts the fact that $\tilde x_i\in X^G_i+(M_i\epsilon_p+h_p)\mathcal B$.
Then $x_i\in X^G_i$.
\end{proof}

Fix $p\geq q$ and $\tilde x\in(x+\alpha_p\mathcal B)\cap(\textbf{S}^p\setminus\textbf{X}^{p-1})$ s.t. $X^p_E(\tilde x)\cap\textbf{X}^{p-1}\neq\emptyset$.
Define $\hat x$ s.t. $\hat x_i=\begin{cases}\tilde x_i, &\text{if }\tilde x_i\in X^{p-1}_i;\\\arg\min_{\hat x_i\in X^{p-1}_i}\|\hat x_i-x_i\|, &\text{otherwise.}\end{cases}$
Notice that $\hat x\in\textbf{X}^{p-1}$.
It follows from the definition of $\textbf{X}^{p-1}$ that $\|\hat x_i-x_i\|\leq h_{p-1}\leq\alpha_p, \forall i$ s.t. $\tilde x_i\notin X^{p-1}_i$.
Since $\tilde x\in x+\alpha_p\mathcal B$, we have $\|\hat x_i-x_i\|=\|\tilde x_i-x_i\|\leq\alpha_p, \forall i$ s.t. $\tilde x_i\in X^{p-1}_i$.
Then it holds that $\hat x\in x+\alpha_p\mathcal B$.

\begin{claim}\label{thm:ClaimV7B}
For $p\geq q$, $\hat x\in X^{p-1}_E(\tilde x)$.
\end{claim}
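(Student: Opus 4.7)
The plan is to verify directly that $\hat x$ satisfies the three defining conditions of $X^{p-1}_E(\tilde x)$ given in \eqref{eq:XpE}, namely $\hat x \in \textbf{X}^{p-1}$, agreement with $\tilde x$ on the coordinates outside $\mathcal V^G_{p-1}(\tilde x)$, and the goal-distance bound on the coordinates inside $\mathcal V^G_{p-1}(\tilde x)$. Membership in $\textbf{X}^{p-1}$ is immediate from the construction, since every coordinate of $\hat x$ is either $\tilde x_i\in X^{p-1}_i$ or the nearest-neighbor projection of $x_i$ onto $X^{p-1}_i$.

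For the coordinates outside $\mathcal V^G_{p-1}(\tilde x)$, I would invoke property~(2) of Lemma~\ref{thm:zeroValue} to obtain $\mathcal V^G_p(\tilde x)\subseteq \mathcal V^G_{p-1}(\tilde x)$, which ensures that any $i\in\mathcal V\setminus\mathcal V^G_{p-1}(\tilde x)$ also lies in $\mathcal V\setminus\mathcal V^G_p(\tilde x)$. Since $X^p_E(\tilde x)\cap\textbf{X}^{p-1}\neq\emptyset$ by hypothesis, picking any $\tilde x'$ from this intersection and using the definition of $X^p_E$ gives $\tilde x_i=\tilde x'_i\in X^{p-1}_i$ for every such $i$; consequently the construction of $\hat x$ sets $\hat x_i=\tilde x_i$, which is exactly the required agreement.

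For the remaining coordinates $i\in\mathcal V^G_{p-1}(\tilde x)$, I would split into two sub-cases. If $\tilde x_i\in X^{p-1}_i$, then $\hat x_i=\tilde x_i$, and the distance bound $d(\hat x_i,X^G_i)\le M_i\epsilon_{p-1}+h_{p-1}$ is inherited directly from the membership $i\in\mathcal V^G_{p-1}(\tilde x)$. If $\tilde x_i\notin X^{p-1}_i$, then the contrapositive of the argument in the previous paragraph forces $i\in\mathcal V^G_p(\tilde x)$, and Claim~\ref{thm:ClaimV7C} then yields $x_i\in X^G_i$. In this case $\hat x_i$ is the nearest node of $X^{p-1}_i$ to $x_i$, so by the grid-resolution property $\|\hat x_i-x_i\|\le h_{p-1}$, and hence $d(\hat x_i,X^G_i)\le \|\hat x_i-x_i\|\le h_{p-1}\le M_i\epsilon_{p-1}+h_{p-1}$, as needed. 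The main obstacle is the bookkeeping between the two goal-index sets $\mathcal V^G_p(\tilde x)$ and $\mathcal V^G_{p-1}(\tilde x)$ and the correct identification of which coordinates of $\hat x$ must come from the nearest-neighbor projection; once Lemma~\ref{thm:zeroValue}(2) and Claim~\ref{thm:ClaimV7C} are in hand, the remaining verification is routine.
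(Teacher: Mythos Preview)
Your proposal is correct and follows essentially the same route as the paper: both arguments first observe that for $i\notin\mathcal V^G_p(\tilde x)$ one has $\tilde x_i\in X^{p-1}_i$ (hence $\hat x_i=\tilde x_i$), then use Claim~\ref{thm:ClaimV7C} to locate $x_i\in X^G_i$ for the remaining coordinates and conclude the goal-distance bound at level $p-1$. The only noteworthy difference is in that last bound: the paper routes through Claim~\ref{thm:ClaimV7A} and the inclusion $\hat x_j\in x_j+\alpha_p\mathcal B\subseteq X^G_j+(M_j\epsilon_p+h_p)\mathcal B$, then invokes monotonicity to pass to $p-1$, whereas you use the sharper fact $\|\hat x_i-x_i\|\le h_{p-1}$ together with $x_i\in X^G_i$ directly, which avoids Claim~\ref{thm:ClaimV7A} altogether. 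One minor caveat: Lemma~\ref{thm:zeroValue}(2) is stated for $x\in\textbf{X}^p$, but $\tilde x\notin\textbf{X}^{p-1}$ here; this is harmless since the inclusion $\mathcal V^G_p(\tilde x)\subseteq\mathcal V^G_{p-1}(\tilde x)$ follows immediately from the monotonicity of $M_i\epsilon_p+h_p$ and does not actually require $\tilde x$ to lie on either grid.
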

\begin{proof}
Since $\exists\tilde x'\in X^p_E(\tilde x)\cap\textbf{X}^{p-1}$, it follows from the definition of $X^p_E$ that $\tilde x_i=\tilde x'_i\in X^{p-1}_i, \forall i\in \mathcal V\setminus\mathcal V^G_p(\tilde x)$.
Then the following two properties hold for $\tilde x$: (a) $\forall i\in\mathcal V\setminus\mathcal V^G_p(\tilde x), \tilde x_i\in X^{p-1}_i$; (b) $\exists i\in\mathcal V^G_p(\tilde x)$ s.t. $\tilde x_i\in X^p_i\setminus X^{p-1}_i$.
Property (b) is a result of $\tilde x\in\textbf{S}^p\setminus\textbf{X}^{p-1}$.

Fix $j\in\mathcal V$ s.t. $\hat x_j\neq\tilde x_j$.
Now we are to show $\hat x_j\in X^G_j+(M_j\epsilon_{p-1}+h_{p-1})\mathcal B$.
By properties (a)(b), $j\in\mathcal V^G_p(\tilde x)$.
It follows from Claim \ref{thm:ClaimV7C} that $x_j\in X^G_j$.
It follows from Claim \ref{thm:ClaimV7A} that $x_j+\alpha_p\mathcal B\subseteq X^G_j+(M_j\epsilon_p+h_p)\mathcal B$.
Therefore, $\hat x_j\in x_j+\alpha_p\mathcal B\subseteq X^G_j+(M_j\epsilon_p+h_p)\mathcal B$.
By Assumption \ref{asmp:monoConv}, it renders at $\hat x_j\in X^G_j+(M_j\epsilon_{p-1}+h_{p-1})\mathcal B$.

This holds for all $j\in\mathcal V$ s.t. $\hat x_j\neq\tilde x_j$.
By the definition of $X^{p-1}_E$, we have $\hat x\in X^{p-1}_E(\tilde x)$.
\end{proof}

\begin{claim}\label{thm:ClaimV7}
There is $q\geq 1$ s.t. $A^p_{12}\subseteq A^p_2$ holds for all $p\geq q$.
\end{claim}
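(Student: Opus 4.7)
My plan is to show that each $E_{v^{p-1}_\infty}(\tilde x')$ contributing to $A^p_{12}$ coincides with $E_{v^{p-1}_\infty}(\hat x)$ for the same $\hat x$ already constructed in the paragraph preceding Claim~\ref{thm:ClaimV7B}. Since that $\hat x$ sits in $(x+\alpha_p\mathcal B)\cap\textbf{X}^{p-1}$, it contributes to $A^p_2$, so such an equality will immediately yield $A^p_{12}\subseteq A^p_2$. The equality itself will come from Corollary~\ref{thm:XEEquivalence} applied on grid $p-1$ with $\tilde x'$ as the base point, which requires two prerequisites: $\tilde x'\in\textbf{S}^{p-1}$ and $\hat x\in X^{p-1}_E(\tilde x')$. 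The integer $q$ in the claim will be taken at least as large as the one from Claim~\ref{thm:ClaimV7A}.

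For the first prerequisite I plan a short chain of inclusions. Lemma~\ref{thm:EquivalentSafety} applied to $\tilde x\in\textbf{S}^p$ gives $\tilde x'\in X^p_E(\tilde x)\subseteq\textbf{S}^p=(\textbf{S}+h_p\mathcal B)\cap\textbf{X}^p$; combined with $\tilde x'\in\textbf{X}^{p-1}$ and the monotonicity $h_p\leq h_{p-1}$ from Assumption~\ref{asmp:monoConv}, this collapses to $\tilde x'\in(\textbf{S}+h_{p-1}\mathcal B)\cap\textbf{X}^{p-1}=\textbf{S}^{p-1}$.

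The second prerequisite requires a coordinate-by-coordinate verification of the definition of $X^{p-1}_E(\tilde x')$, and this carries most of the technical weight. For $i\notin\mathcal V^G_{p-1}(\tilde x')$, monotonicity of $M_i\epsilon_p+h_p$ forces $i\notin\mathcal V^G_p(\tilde x)$ (otherwise $\tilde x'_i$ would fall within $M_i\epsilon_p+h_p\leq M_i\epsilon_{p-1}+h_{p-1}$ of $X^G_i$), so $\tilde x'_i=\tilde x_i$; since $\tilde x'_i\in X^{p-1}_i$, the same holds for $\tilde x_i$, and the construction of $\hat x$ then gives $\hat x_i=\tilde x_i=\tilde x'_i$. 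For $i\in\mathcal V^G_{p-1}(\tilde x')$, I will split on the two branches of $\hat x$'s definition. When $\tilde x_i\in X^{p-1}_i$, the value $\hat x_i=\tilde x_i$ lies in $X^G_i+(M_i\epsilon_p+h_p)\mathcal B$ whenever $i\in\mathcal V^G_p(\tilde x)$ (by the definition of $X^p_E$) and equals $\tilde x'_i$ otherwise, and Assumption~\ref{asmp:monoConv} lifts the enclosure to $X^G_i+(M_i\epsilon_{p-1}+h_{p-1})\mathcal B$. When $\tilde x_i\notin X^{p-1}_i$, we must have $i\in\mathcal V^G_p(\tilde x)$ since otherwise $\tilde x'_i=\tilde x_i\notin X^{p-1}_i$ would contradict $\tilde x'\in\textbf{X}^{p-1}$; then Claim~\ref{thm:ClaimV7C} forces $x_i\in X^G_i$, and Claim~\ref{thm:ClaimV7A} together with $\|\hat x_i-x_i\|\leq h_{p-1}$ places $\hat x_i$ inside $X^G_i+(M_i\epsilon_{p-1}+h_{p-1})\mathcal B$.

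Once both prerequisites are in place, Corollary~\ref{thm:XEEquivalence} applied at grid $p-1$ yields $E_{v^{p-1}_\infty}(\tilde x')=E_{v^{p-1}_\infty}(\hat x)\subseteq A^p_2(x)$, and taking the union over all admissible $\tilde x,\tilde x'$ establishes the claim. I expect the last sub-case above to be the main obstacle: snapping a coordinate onto the coarser grid $X^{p-1}_i$ can in principle push it outside the goal expansion, and it is precisely the combination of Claims~\ref{thm:ClaimV7A} and~\ref{thm:ClaimV7C} that rules this out by forcing $x_i\in X^G_i$ exactly in the scenarios where the snapping step is actually triggered.
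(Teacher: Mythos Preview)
Your proposal is correct and follows the same overall strategy as the paper: show $E_{v^{p-1}_\infty}(\tilde x')=E_{v^{p-1}_\infty}(\hat x)$ via Corollary~\ref{thm:XEEquivalence}, using the $\hat x\in(x+\alpha_p\mathcal B)\cap\textbf{X}^{p-1}$ constructed before Claim~\ref{thm:ClaimV7B}. The only difference is in how the membership $\hat x\in X^{p-1}_E(\tilde x')$ (equivalently $\tilde x'\in X^{p-1}_E(\hat x)$) is obtained: the paper first notes $X^p_E(\tilde x)\subseteq X^{p-1}_E(\tilde x)$ so that $\tilde x'\in X^{p-1}_E(\tilde x)$, invokes Claim~\ref{thm:ClaimV7B} to get $\hat x\in X^{p-1}_E(\tilde x)$, and then uses the equivalence-relation structure of $X^{p-1}_E$ to conclude; you instead verify the membership directly coordinate by coordinate, effectively re-deriving the content of Claim~\ref{thm:ClaimV7B} inline. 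Your explicit check that $\tilde x'\in\textbf{S}^{p-1}$ (via Lemma~\ref{thm:EquivalentSafety} and $h_p\le h_{p-1}$) is a nice addition, as this hypothesis of Corollary~\ref{thm:XEEquivalence} is left implicit in the paper's proof.
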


\begin{proof}
If $A^p_{12}=\emptyset$, the claim trivially holds.
Throughout the proof, assume that $\exists\tilde x\in(x+\alpha_p\mathcal B)\cap(\textbf{S}^p\setminus\textbf{X}^{p-1})$ s.t. $X^p_E(\tilde x)\cap\textbf{X}^{p-1}\neq\emptyset$.

Pick any $\tilde x' \in X^p_E(\tilde x)\cap \textbf{X}^{p-1}$. 
It follows from Assumption \ref{asmp:monoConv} that $X^p_E(\tilde{x}) \subseteq X^{p-1}_E(\tilde{x})$; then $\tilde{x}' \in X^{p-1}_E(\tilde{x})$. 
It follows from Claim \ref{thm:ClaimV7B} that $\exists\hat x\in X^{p-1}_E(\tilde x)$.
Since $\tilde x'\in X^{p-1}_E(\tilde x)$, by the definition of $X^{p-1}_E$, we have $\tilde x'\in X^{p-1}_E(\hat x)$.
Then by Corollary \ref{thm:XEEquivalence}, $E_{v^{p-1}_\infty}(\tilde x')= E_{v^{p-1}_\infty}(\hat x)$.
Since $\hat x\in(x+\alpha_p\mathcal B)\cap\textbf{X}^{p-1}$, then $E_{v^{p-1}_\infty}(\tilde x')\subseteq A^p_2$.
This holds for every pair of $\tilde x\in(x+\alpha_p\mathcal B)\cap(\textbf{S}^p\setminus\textbf{X}^{p-1})$ and $\tilde x'\in X^p_E(\tilde x)\cap\textbf{X}^{p-1}$.
Then $A^p_{12}\subseteq A^p_2$.
\end{proof}

It follows from Lemma~\ref{thm:setInequality} and Claim \ref{thm:ClaimV7} that $b_p(x)\leq d_H(A^p_{12}(x)\cup A^p_2(x), B^p(x))= d_H(A^p_2(x), B^p(x))$ holds for $p\geq q(x)$.
Recall $\alpha_p\geq h_p$ and Assumption \ref{asmp:alpha}.
It follows from Theorem \ref{thm:kru1} that $\lim_{p\to+\infty}b_p(x)\leq\lim_{p\to+\infty} d_H(A^p_2(x), B^p(x))=0$.
Since this holds for all $x\in\textbf{X}$, the lemma is proven.
\end{proofOf}

\begin{proofOf}{Lemma \ref{thm:preliminaryContractionProperty}}
Take $\delta'>\delta \triangleq d_H(A, B)$. Then
$A\subseteq B+\delta'\mathcal B_N, B\subseteq A+\delta'\mathcal B_N.$
Focus on the first relationship and we want to show: 
\begin{equation}\label{eq:A1subsetB1}
\begin{split}
(\textbf{1}-\Delta\tau(x))\circ A\subseteq(\textbf{1}-\Delta\tau(x))\circ B+ e^{-\kappa_p}\delta'\mathcal B_N.
\end{split}\end{equation}
This is equivalent to show that $\forall a\in A$, $\exists b\in B$ s.t. $\|(\textbf{1}-\Delta\tau(x))\circ a-(\textbf{1}-\Delta\tau(x))\circ b\|\leq e^{-\kappa_p}\delta'$.

We start with $A\subseteq B+\delta'\mathcal B_N$, which implies $\forall a\in A, \exists b'\in B$ s.t. $\|a-b'\|\leq\delta'$.
Fix $a$ and $b'$.
Denote the one-hop neighbor of $x$ that attains $b'$ by $\tilde x$; i.e., $\exists\tilde x\in\tilde X^p(x)$ s.t. $b'\in E_{v^p_\infty}(\tilde x)$.
Construct $b\in[0,1]^N$ s.t. $b_i=a_i$, if $i\in\mathcal V^G_p(\tilde x)$; $b_i=b'_i$, otherwise.
Since $b'\in E_{v^p_\infty}(\tilde x)$, $\exists \tau\in v^p_\infty(\tilde x)$ s.t. $b'\succeq\tau$; that is, $b'_i\geq\tau_i$ for all $i\in\mathcal V$.
Specifically, by the third property of Lemma \ref{thm:zeroValue}, for $i\in\mathcal V^G_p(\tilde x)$, $b'_i\geq \tau_i=0$.
Since $b_i=a_i\geq0=\tau_i, \forall i\in\mathcal V^G_p(\tilde x)$ and $b_i=b_i'\geq\tau_i, \forall i\in\mathcal V\setminus\mathcal V^G_p(\tilde x)$, we have $b\succeq\tau$ and thus $b\in E_{v^p_\infty}(\tilde x)$.

Now we have $\|(\textbf{1}-\Delta\tau(x))\circ a-(\textbf{1}-\Delta\tau(x))\circ b\|^2=\sum_{i\in\mathcal V\setminus\mathcal V^G_p(\tilde x)}(1-\Delta\tau_i(x))^2(a_i-b_i')^2$.
By the first property of Lemma \ref{thm:zeroValue}, $\mathcal V\setminus\mathcal V^G_p(x)\supseteq\mathcal V\setminus\mathcal V^G_p(\tilde x)$.
Then it follows from \eqref{eq:deltaTau} that $1-\Delta\tau_i(x)=e^{-\kappa_p}, \forall i\in\mathcal V\setminus\mathcal V^G_p(\tilde x)$.
Therefore,
\begin{align*}
&\|(\textbf{1}-\Delta\tau(x))\circ a-(\textbf{1}-\Delta\tau(x))\circ b\|^2\\
=&(e^{-\kappa_p})^2\sum_{i\in\mathcal V\setminus\mathcal V^G_p(\tilde x)}(a_i-b'_i)^2\\
\leq&(e^{-\kappa_p})^2\|a-b'\|^2\leq(e^{-\kappa_p}\delta')^2.
\end{align*}
Since this holds $\forall a\in A$ and $b\in B$, then \eqref{eq:A1subsetB1} is proven.
A similar relationship for $B\subseteq A+\delta'\mathcal B_N$ can be obtained by swapping $A$ and $B$:
\begin{equation}\label{eq:B1subsetA1}
\begin{split}
(\textbf{1}-\Delta\tau(x))\circ B\subseteq(\textbf{1}-\Delta\tau(x))\circ A+ e^{-\kappa_p}\delta'\mathcal B_N.
\end{split}\end{equation}
Combining \eqref{eq:A1subsetB1} and \eqref{eq:B1subsetA1}, we arrive at $d_H((\textbf{1}-\Delta\tau(x))\circ A, (\textbf{1}-\Delta\tau(x))\circ B)\leq\delta' e^{-\kappa_p}$.
Since these two relationships hold for all $\delta'>\delta$, the lemma is then proven.
\end{proofOf}

\begin{proofOf}{Theorem \ref{thm:ContractionProperty}}
Fix $x\in\textbf{S}^p$.
For simplicity, shorthand notations listed below are used in the rest of the proof:
\begin{align*}
\tilde A(x)=\bigcup_{\tilde x\in \tilde X^p(x)}E_{v^p_n}(\tilde x), \quad
\tilde B(x)=\bigcup_{\tilde x\in \tilde X^p(x)}E_{v^p_\infty}(\tilde x), \\
\hat A(x)=\bigcup_{\hat x\in \hat X^p(x)}E_{v^p_n}(\hat x), \quad
\hat B(x)=\bigcup_{\hat x\in \hat X^p(x)}E_{v^p_\infty}(\hat x).
\end{align*}
Since translating each term in the Hausdorff distance with a common vector $\Delta\tau(x)$ does not change the distance, we focus on the discounted terms in \eqref{eq:epiProfileBellman}.
The following holds:
\begin{equation*}
\begin{split}
&d_H(E_{\mathbb Gv^p_n}(x), E_{\mathbb Gv^p_\infty}(x))\\
=&d_H((\textbf{1}-\Delta\tau(x))\circ\tilde A(x),(\textbf{1}-\Delta\tau(x))\circ\tilde B(x))\\
\leq&e^{-\kappa_p}d_H(\tilde A(x), \tilde B(x)).
\end{split}\end{equation*}
where the last inequality follows from Lemma \ref{thm:preliminaryContractionProperty}.
By Lemma \ref{thm:tildeHatEquivalence}, the right-hand of the above may be rewritten as $e^{-\kappa_p}d_H(\hat A(x), \hat B(x))$.
Taking supremum over all $x\in\textbf{S}^p$ on both sides makes the left-hand side yield to $d_{\textbf{S}^p}(E_{\mathbb Gv^p_n},  E_{\mathbb Gv^p_\infty})$.
Then the following holds:
\begin{align}\label{eq:CP1}
d_{\textbf{S}^p}(E_{\mathbb Gv^p_n}, E_{\mathbb Gv^p_\infty})\leq e^{-\kappa_p}d_{\textbf{S}^p}(\hat A(x),\hat B(x)).
\end{align}
It follows from \eqref{eq:shrinking} in Lemma~\ref{thm:maximumNormShrinkingPerturbation} that
\begin{equation*}
\begin{split}
&d_{\textbf{S}^p}(\hat A(x),\hat B(x))\\
\leq&d_{\textbf{S}^p}(\bigcup_{\tilde x\in (x+\alpha_p\mathcal B)\cap\textbf{X}^p}E_{v^p_n}(\tilde x), \bigcup_{\tilde x\in(x+\alpha_p\mathcal B)\cap\textbf{X}^p}E_{v^p_\infty}(\tilde x)).
\end{split}\end{equation*}
Notice that $\forall\tilde x\in\textbf{X}^p\setminus\textbf{S}^p$ and $\tilde x'\in\textbf{X}^p$, it holds that $E_{v^p_n}(\tilde x)=\{\textbf{1}_N\}\subseteq E_{v^p_n}(\tilde x')$.
Then the above inequality can be extended to the following one:
\begin{equation}\label{eq:CP2}
\begin{split}
&d_{\textbf{S}^p}(\hat A(x),\hat B(x))\\
\leq&d_{\textbf{S}^p}(\bigcup_{\tilde x\in (x+\alpha_p\mathcal B)\cap\textbf{X}^p}E_{v^p_n}(\tilde x), \bigcup_{\tilde x\in(x+\alpha_p\mathcal B)\cap\textbf{X}^p}E_{v^p_\infty}(\tilde x))\\
\leq&d_\textbf{X}(\bigcup_{\tilde x\in (x+\alpha_p\mathcal B)\cap\textbf{X}^p}E_{v^p_n}(\tilde x), \bigcup_{\tilde x\in(x+\alpha_p\mathcal B)\cap\textbf{X}^p}E_{v^p_\infty}(\tilde x)).
\end{split}\end{equation}
Combine \eqref{eq:CP1} and \eqref{eq:CP2}, then \eqref{eq:contractionInequality1} is proven.
Inequality \eqref{eq:contractionInequality2} is a direct result of \eqref{eq:shrinking2} in Lemma~\ref{thm:maximumNormShrinkingPerturbation}.
\end{proofOf}

\begin{proofOf}{Lemma \ref{thm:discountedDistanceOnOneGrid}}
For each grid $\textbf{X}^p$, from Line \ref{alg:1:np} of Algorithm \ref{alg:1}, one can see that the value iterations on grid $\textbf{X}^p$ terminate when (1) $n>n_p$; or (2) the fixed point $v^p_{\infty}$ is reached. 
Two cases arise.

Case 1: Value iterations terminate before the fixed point is attained; i.e., $v^p_{\bar n_p}=v^p_{n_p}$.
Notice that $\forall x\in\textbf{X}^p\setminus\textbf{S}^p$, $E_{v^p_n}(x)=E_{v^p_\infty}(x)=\{\textbf{1}_N\}$.
Then the following holds:
\begin{equation*}
\begin{split}
&d_{\textbf{X}^p}(E_{v^p_{n_p}}, E_{v^p_\infty})\\
=&\max\{d_{\textbf{S}^p}(E_{\mathbb Gv^p_{n_p-1}}, E_{\mathbb Gv^p_\infty}), d_{\textbf{X}^p\setminus\textbf{S}^p}(E_{v^p_{n_p}}, E_{v^p_\infty})\}\\
=&d_{\textbf{S}^p}(E_{\mathbb Gv^p_{n_p-1}}, E_{\mathbb Gv^p_\infty}).
\end{split}
\end{equation*}

We apply inequality \eqref{eq:contractionInequality2} in Theorem \ref{thm:ContractionProperty} for $n_p-1$ times to $d_{\textbf{X}^p}(E_{v^p_{n_p}},E_{v^p_\infty})$, then the following inequalities are obtained:
\begin{equation*}
\begin{split}
&d_{\textbf{X}^p}(E_{v^p_{n_p}}, E_{v^p_\infty})
=d_{\textbf{S}^p}(E_{\mathbb Gv^p_{n_p-1}}, E_{\mathbb Gv^p_\infty})\\
\leq&e^{-\kappa_p}d_{\textbf{S}^p}(E_{v^p_{n_p-1}}, E_{v^p_\infty})
= e^{-\kappa_p}d_{\textbf{X}^p}(E_{\mathbb Gv^p_{n_p-2}},E_{\mathbb Gv^p_\infty})\\
\leq&\cdots\leq e^{-(n_p-1)\kappa_p}d_{\textbf{S}^p}(E_{\mathbb Gv^p_0}, E_{\mathbb Gv^p_\infty})\\
\leq&e^{-n_p\kappa_p}d_{\textbf{X}}(\bigcup_{\tilde x\in(x+\alpha_p\mathcal B)\cap\textbf{X}^p}E_{v^p_0}(\tilde x), \bigcup_{\tilde x\in(x+\alpha_p\mathcal B)\cap\textbf{X}^p}E_{v^p_\infty}(\tilde x)),
\end{split}
\end{equation*}
where the last equality is a result of \eqref{eq:contractionInequality1} in Theorem \ref{thm:ContractionProperty}.

By Lemma~\ref{thm:TriangleInequality}, the right-hand side of the above becomes:
\begin{equation*}
\begin{split}
&d_{\textbf{X}}(\bigcup_{\tilde x\in(x+\alpha_p\mathcal B)\cap\textbf{X}^p}E_{\mathbb P\tilde v^{p-1}_{n_{p-1}}}(\tilde x), \bigcup_{\tilde x\in(x+\alpha_p\mathcal B)\cap\textbf{X}^p}E_{v^p_\infty}(\tilde x))\\
\leq&d_{\textbf{X}}(\bigcup_{\tilde x\in(x+\alpha_p\mathcal B)\cap\textbf{X}^p}E_{\mathbb P\tilde v^{p-1}_{n_{p-1}}}(\tilde x),\bigcup_{\tilde x\in(x+\alpha_p\mathcal B)\cap\textbf{X}^p}E_{\mathbb P\tilde v^{p-1}_\infty}(\tilde x))\\
&+d_{\textbf{X}}(\bigcup_{\tilde x\in(x+\alpha_p\mathcal B)\cap\textbf{X}^p}E_{\mathbb P\tilde v^{p-1}_\infty}(\tilde x),\bigcup_{\tilde x\in(x+\alpha_p\mathcal B)\cap\textbf{X}^p}E_{v^p_\infty}(\tilde x)),
\end{split}
\end{equation*}
where the second term is $b_p$ in Lemma \ref{thm:Lemma9}.
As for the first term, it follows from \eqref{eq:shrinking2} in Lemma~\ref{thm:maximumNormShrinkingPerturbation} that
\begin{align*}
&d_\textbf{X}(\bigcup_{\tilde x\in(x+\alpha_{p}\mathcal B)\cap\textbf{X}^{p}}E_{\mathbb P\tilde v^{p-1}_{n_{p-1}}}(\tilde x), \bigcup_{\tilde x\in (x+\alpha_{p}\mathcal B)\cap\textbf{X}^{p}}E_{\mathbb P\tilde v^{p-1}_\infty}(\tilde x))\\
\leq&d_{\textbf{X}^p}(E_{\mathbb P\tilde v^{p-1}_{n_{p-1}}},E_{\mathbb P\tilde v^{p-1}_\infty}).
\end{align*}
We focus on the right-hand side of the above inequality and proceed to show that 
\begin{align}\label{eq:discountedDistanceOnOneGrid:1}
d_{\textbf{X}^p}(E_{\mathbb P\tilde v^{p-1}_{n_{p-1}}},E_{\mathbb P\tilde v^{p-1}_\infty})\leq d_{\textbf{X}^{p-1}}(E_{v^{p-1}_{n_{p-1}}},E_{v^{p-1}_\infty}).
\end{align}
For each $x\in\textbf{X}^p$, if $x\in\textbf{X}^{p-1}$, it follows from the definition of $\mathbb P$ that $E_{\mathbb P\tilde v^{p-1}_{n_{p-1}}}(x)=E_{\tilde v^{p-1}_{n_{p-1}}}(x)$ and $E_{\mathbb P\tilde v^{p-1}_\infty}(x)=E_{\tilde v^{p-1}_\infty}(x)$.
By the definition of $\tilde v$, $E_{\tilde v^{p-1}_{n_{p-1}}}(x)=E_{v^{p-1}_{n_{p-1}}}(x)$ and $E_{\tilde v^{p-1}_\infty}(x)=E_{v^{p-1}_\infty}(x)$.
Therefore, 
\begin{align}\label{eq:discountedDistanceOnOneGrid:2}
d_{\textbf{X}^{p-1}}(E_{\mathbb P\tilde v^{p-1}_{n_{p-1}}},E_{\mathbb P\tilde v^{p-1}_\infty})= d_{\textbf{X}^{p-1}}(E_{v^{p-1}_{n_{p-1}}},E_{v^{p-1}_\infty}).
\end{align}
If $x\in\textbf{X}^p\setminus\textbf{X}^{p-1}$, it follows the definition of $\mathbb P$ that $E_{\mathbb P\tilde v^{p-1}_{n_{p-1}}}(x)=\bigcup_{x'\in X^p_E(x)}E_{\tilde v^{p-1}_{n_{p-1}}}(x')$ and $E_{\mathbb P\tilde v^{p-1}_\infty}(x)=\bigcup_{x'\in X^p_E(x)}E_{\tilde v^{p-1}_\infty}(x')$.
For each $x'\in X^p_E(x)$, if $x'\in\textbf{X}^p\setminus\textbf{X}^{p-1}$, we have $E_{\tilde v^{p-1}_{n_{p-1}}}(x')=E_{V^{p-1}}(x')$ and $E_{\tilde v^{p-1}_\infty}(x')=E_{V^{p-1}}(x')$.
Otherwise, i.e. $x'\in\textbf{X}^p$, it follows from the definition of $\tilde v$ that $E_{\tilde v^{p-1}_{n_{p-1}}}(x')=E_{v^{p-1}_{n_{p-1}}}(x')$ and $E_{\tilde v^{p-1}_\infty}(x')=E_{v^{p-1}_\infty}(x')$. 
By the third properties of Lemma \ref{thm:zeroValue}, $E_{V^{p-1}}(x')\subseteq E_{v^{p-1}_{n_{p-1}}}(x')$ and $E_{V^{p-1}}(x')\subseteq E_{v^{p-1}_\infty}(x')$.
Then we have
\begin{equation*}\label{eq:discountedDistanceOnOneGrid:3}
\begin{split}
&d_{\textbf{X}^p\setminus\textbf{X}^{p-1}}(E_{\mathbb P\tilde v^{p-1}_{n_{p-1}}},E_{\mathbb P\tilde v^{p-1}_\infty})\\
=&d_{\textbf{X}^p\setminus\textbf{X}^{p-1}}(\bigcup_{x'\in X^p_E(x)\cap\textbf{X}^p}E_{v^{p-1}_{n_{p-1}}}(x'), \bigcup_{x'\in X^p_E(x)\cap\textbf{X}^p}E_{v^{p-1}_\infty}(x'))\\
=&d_{\textbf{X}^{p-1}}(E_{v^{p-1}_{n_{p-1}}},E_{v^{p-1}_\infty}).
\end{split}
\end{equation*}
Then \eqref{eq:discountedDistanceOnOneGrid:1} is a result of \eqref{eq:discountedDistanceOnOneGrid:2} and the above inequality.
Therefore, inequality \eqref{eq:discountedDistanceOnOneGrid} is obtained for this case.

Case 2: The fixed point is reached; i.e., $v^p_{\bar n_p}=v^p_\infty$.
The left-hand side of \eqref{eq:discountedDistanceOnOneGrid} is zero and it is trivially true.

In summary, the lemma is proven.
\end{proofOf}

\end{subsection}

\begin{subsection}{Proof of Theorem \ref{thm:multiRobotConvergence}}\label{sect:multiRobotMainProof}

We set out to finish the proof of Theorem  \ref{thm:multiRobotConvergence}.
For each grid $\textbf{X}^p$, we distinguish the folllowing two cases.

Case 1: $p=D_{k+1}$ for some $k\geq0$.
We look back to $D_k$-th grid and apply Lemma \ref{thm:discountedDistanceOnOneGrid} for $D_{k+1}-D_k$ times:
\begin{equation*}\label{eq:multiSynConvergence}
\begin{split}
&d_{\textbf{X}^p}(E_{v^p_{\bar n_p}},E_{v^p_\infty})\leq\gamma_p d_{\textbf{X}^{p-1}}(E_{v^{p-1}_{\bar n_{p-1}}},E_{v^{p-1}_\infty})+b_{p}\\
\leq&\gamma_p\gamma_{p-1}d_{\textbf{X}^{p-2}}(E_{v^{p-2}_{\bar n_{p-2}}},E_{v^{p-2}_\infty})+\gamma_pb_{p-1}+b_{p}\\
\leq&(\prod_{q=D_k+1}^{D_{k+1}}\gamma_q)d_{\textbf{X}^{D_k}}(E_{v^{D_k}_{\bar n_{D_k-1}}},E_{v^{D_k}_\infty})+\sum_{q=D_k+1}^{D_{k+1}}(\prod_{r=q+1}^{D_{k+1}}\gamma_r)b_q,
\end{split}\end{equation*}
where $b_q$ is defined in Lemma \ref{thm:Lemma9}.
By Assumption \ref{asmp:DWindow}, $\prod_{q=D_k+1}^{D_{k+1}}\gamma_q=\exp(-\sum_{q=D_k+1}^{D_{k+1}}n_q\kappa_q)\leq \gamma$.
Since $D_{k+1}-D_k\leq \bar D$ and $\gamma_r\leq 1$,
$$d_{\textbf{X}^{D_{k+1}}}(E_{v^{D_{k+1}}_{\bar n_p}},E_{v^{D_{k+1}}_\infty})\leq\gamma d_{\textbf{X}^{D_k}}(E_{v^{D_k}_{\bar n_{D_k}}},E_{v^{D_k}_\infty})+\sum_{q=D_k+1}^{D_k+\bar D}b_q.$$
By Lemma \ref{thm:Lemma9}, $b_q\to0$ as $q\to+\infty$; hence $\lim_{k\to+\infty}\sum_{q=D_k+1}^{D_k+\bar D}b_q=0$. 
Therefore, by Lemma~\ref{thm:lemmaIX5}, $\lim_{k\to+\infty}d_{\textbf{X}^{D_k}}(E_{v^{D_k}_{\bar n_{D_k}}},E_{v^{D_k}_\infty})=0$.

Case 2: $p\neq D_{k+1}$ for any $k\geq0$. Then $\exists k\geq0$ s.t. $D_k+1\leq p< D_{k+1}$.
We apply Lemma \ref{thm:discountedDistanceOnOneGrid} for $p-D_k$ times:
\begin{equation*}
\begin{split}
&d_{\textbf{X}^p}(E_{v^p_{\bar n_p}},E_{v^p_\infty})\leq(\prod_{q=D_k+1}^p\gamma_q)d_{\textbf{X}^{D_k}}(E_{v^{D_k}_{\bar n_{D_k}}},E_{v^{D_k}_\infty})\\
&+\sum_{q=D_k}^p(\prod_{r=q+1}^{D_{k+1}}\gamma_r)b_q\leq d_{\textbf{X}^{D_k}}(E_{v^{D_k}_{\bar n_{D_k}}},E_{v^{D_k}_\infty})+\bar D\bar B_{D_k},
\end{split}
\end{equation*}
where $\bar B_p\triangleq\sup_{q\geq p+1}b_q$.
It follows from Lemma \ref{thm:Lemma9} that $\lim_{p\to+\infty}\bar B_p=0$.
Hence, by Lemma~\ref{thm:lemmaIX5}, $\lim_{p\to+\infty}d_{\textbf{X}^p}(E_{v^p_{\bar n_p}},E_{v^p_\infty})=0$.

Combining the above two cases, we may rewrite the result as
$\lim_{p\to+\infty}d_{\textbf{X}^p}(E_{v^p_{\bar n_p}},E_{v^p_\infty})=0$.
Pick $x\in\textbf{X}$.
By \eqref{eq:shrinking2} in Lemma~\ref{thm:maximumNormShrinkingPerturbation}, the following holds:
\begin{align*}
&d_H(\bigcup_{\tilde x\in(x+\eta_p\mathcal B)\cap\textbf{X}^p}E_{v^p_{\bar n_p}}(\tilde x), \bigcup_{\tilde x\in(x+\eta_p\mathcal B)\cap\textbf{X}^p}E_{v^p_\infty}(\tilde x))\\
\leq& d_{\textbf{X}^p}(E_{v^p_{\bar n_p}}, E_{v^p_\infty}).
\end{align*}
Take the limit $p\to+\infty$ on both sides, then the above relationship yields:
\begin{align*}
\lim_{p\to+\infty}d_H(\bigcup_{\tilde x\in(x+h_p\mathcal B)\cap\textbf{X}^p}E_{v^p_{\bar n_p}}(\tilde x), \bigcup_{\tilde x\in(x+h_p\mathcal B)\cap\textbf{X}^p}E_{v^p_\infty}(\tilde x))=0.
\end{align*}
Since this holds for all $x\in\textbf{X}$, 
$$\lim_{p\to+\infty}d_{\textbf{X}}(\bigcup_{\tilde x\in(x+h_p\mathcal B)\cap\textbf{X}^p}E_{v^p_{\bar n_p}}(\tilde x),\bigcup_{\tilde x\in(x+h_p\mathcal B)\cap\textbf{X}^p}E_{v^p_\infty}(\tilde x))=0.$$
By Theorem \ref{thm:kru1}, $\Lim{p\to+\infty}\bigcup_{\tilde x\in(x+h_p\mathcal B)\cap\textbf{X}^p}E_{v^p_\infty}(\tilde x)$ exists for any $x\in\textbf{X}$ and equals to $E_{v^*}(x)$.
Hence, it holds that $ \forall x\in\textbf{X}, \Lim{p\to+\infty}\bigcup_{\tilde x\in(x+h_p\mathcal B)\cap\textbf{X}^p}E_{v^p_{\bar n_p}}(\tilde x)=E_{v^*}(x)$.
Then the theorem is proven.

\end{subsection}

\end{section}

\begin{section}{Conclusion}
In this paper, a numerical algorithm is proposed to find the Pareto optimal solution of a class of multi-robot motion planning problems.
The consistent approximation of the algorithm is guaranteed using set-valued analysis.
A set of experiments on an indoor multi-robot platform and computer simulations are conducted to assess the anytime property.
There are a couple of interesting problems to solve in the future. 
First, the proposed algorithm is centralized. 
It is of interest to study distributed implementation. 
Second, it is interesting to find more efficient ways to construct set-valued dynamics and perform value iteration.
\end{section}

\bibliographystyle{IEEEtran}
\bibliography{MRS}

\begin{section}{Appendix}\label{sect:multiRobotAppendix}

In this section, the proof of Theorem~\ref{thm:kru1} is provided.

We show the existence of fixed point $\varTheta^p_{\infty}$ for each $p$ and the convergence of $\varTheta^p_{\infty}$ to $\varTheta^*$ in the epigraphical profile sense.
The proof extends results of approximating minimal time functions in \cite{cardaliaguet1999setvalued} to multi-robot scenario and proves the existence and pointwise convergence of fixed points $\varTheta^p_\infty=\Psi^{-1}(v^p_\infty)$ of Algorithm \ref{alg:1}.

The appendix consists of the following subsections:

\begin{itemize}
\item Subsection \ref{sect:appNotations}: Notations used in the appendix and preliminary results;
\item Subsection \ref{sect:viabkernel}: Pareto optimal solutions are reformulated in terms of viability kernels in Theorem \ref{thm:viabepi}.
This transforms the problem of constantly approxmating Pareto optimal solutions to the problem of finding viability kernel;
\item Subsection \ref{sect:discretizing}: A fully discretized scheme is developed to consistently approximate the viability kernel. The convergence is summarized in Theorem \ref{thm:fullyApprox};

\item Subsection \ref{sect:varthetaconv}: epigraphical and pointwise convergence of fixed points $\varTheta^p_\infty$ to $\varTheta^*$ is proven in Theorem \ref{thm:varthetaconv}. 

\end{itemize}

\begin{remark}
The proofs in \cite{cardaliaguet1999setvalued} are not applicable to our multi-robot setting.
In \cite{cardaliaguet1999setvalued}, the objective function is single-valued; in this paper, the image of objective function is partial ordered, meaning multiple values may all be optimal. 
This requires a new comparison that returns every optimal value and extended the Principle of Optimality based on such comparison.
Theorem \ref{thm:varthetaconv} is the extension to Theorem 2.19 in \cite{cardaliaguet1999setvalued}.
Lemma \ref{thm:zerogoaldomain_prop} is a new result showing that the estimated travel time for robots in the goal regions remains zero throughout the updates.
\end{remark}

\subsection{Further Notations and Preliminaries}\label{sect:appNotations}
Throughout the appendix, we leverage the following concepts.
\begin{definition}[Graph]\label{def:graph}
The graph of $\varTheta$ is defined by $gph(\varTheta)\triangleq\{(x, t)\in\mathcal X\times\mathbb R^N|t\in\varTheta(x)\}$.
\end{definition}
\begin{definition}[Viability kernel]
Let $\mathcal{D}$ be a closed set. 
The viability kernel of $\mathcal{D}$ for some dynamics $\Phi$ is the set $\{(x, t)\in\mathcal{D}|\exists(x(\cdot), t(\cdot))\text{ s.t. }(x(0), t(0))=(x, t), (\dot{x}(s), \dot{t}(s))\in\Phi(x(s), t(s)), (x(s), t(s))\in\mathcal{D},\forall s\in [0, +\infty)\}.$ It is denoted as $Viab_{\Phi}(\mathcal{D})$.
\end{definition}

Define spatial-temporal space and safety region-temporal space by $\mathcal H$ by $\mathcal H\triangleq\textbf{X}\times\mathbb R^N_{\geq0}$ and $\mathcal{S}=\textbf{S}\times\mathbb{R}_{\geq0}^N$ respectively.
The discrete spatial-temporal space and discrete safety region-temproal space are defined as $\mathcal H^p\triangleq\textbf{X}^p\times(\mathbb R^N_{\geq0})^p$ and $\mathcal S^p\triangleq[(\textbf{S}+h_p\mathcal B)\cap\textbf{X}^p]\times(\mathbb R^N_{\geq0})^p$ respectively.

The following lemma shows the monotonicity of Kuratowski convergence.
\begin{lemma}\label{thm:dominatedConv}
For two sequences of sets $\{A_n\}, \{B_n\}$ s.t. $A_n\subseteq B_n\subseteq\mathcal X$ for all $n\geq1$, then the following hold: 
\begin{align*}
\Limsup{n\to+\infty}A_n\subseteq\Limsup{n\to+\infty}B_n, \quad\Liminf{n\to+\infty}A_n\subseteq\Liminf{n\to+\infty}B_n.
\end{align*}
\end{lemma}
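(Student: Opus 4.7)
The plan is to reduce both inclusions to the elementary monotonicity property of the point-to-set distance: if $A\subseteq B$, then $d(x,A)\geq d(x,B)$ for every $x$, since the infimum over a larger set is no larger. This property is immediate from the definition $d(x,A)=\inf\{\|x-a\|:a\in A\}$ and is the only nontrivial ingredient I would need.

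First I would fix $x\in\Liminf_{n\to+\infty}A_n$, which by definition means $\lim_{n\to+\infty}d(x,A_n)=0$. Applying the monotonicity property pointwise in $n$ to the inclusion $A_n\subseteq B_n$ gives $0\leq d(x,B_n)\leq d(x,A_n)$, and the squeeze theorem then yields $\lim_{n\to+\infty}d(x,B_n)=0$, so $x\in\Liminf_{n\to+\infty}B_n$. This proves the second inclusion.

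Next I would fix $x\in\Limsup_{n\to+\infty}A_n$, so that $\liminf_{n\to+\infty}d(x,A_n)=0$. Using the same inequality $d(x,B_n)\leq d(x,A_n)$ together with monotonicity of $\liminf$, I obtain $0\leq\liminf_{n\to+\infty}d(x,B_n)\leq\liminf_{n\to+\infty}d(x,A_n)=0$, hence $x\in\Limsup_{n\to+\infty}B_n$, establishing the first inclusion.

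Because both steps hinge only on a one-line set-theoretic inequality and the elementary behavior of $\lim$ and $\liminf$ under domination by a null sequence, I do not anticipate any real obstacle; the only care needed is to be explicit that the inclusion $A_n\subseteq B_n$ must hold for all sufficiently large $n$ (which is given here for all $n\geq 1$) in order to transfer it to the asymptotic quantities defining $\Liminf$ and $\Limsup$.
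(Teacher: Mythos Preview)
Your proposal is correct and follows essentially the same approach as the paper: both arguments rest on the elementary inequality $d(x,B_n)\leq d(x,A_n)$ coming from $A_n\subseteq B_n$, and then pass to the limit (respectively, $\liminf$) to conclude. If anything, your write-up is slightly more careful than the paper's, which in its proof of the $\Limsup$ case invokes $\lim_{n\to+\infty}d(x,A_n)=0$ (the $\Liminf$ characterization) rather than $\liminf$; your explicit separation of the squeeze argument for $\Liminf$ from the $\liminf$-monotonicity argument for $\Limsup$ avoids that slip.
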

\begin{proof}
Fix $x\in\mathcal X$ and $n\geq1$.
Since $A_n\subseteq B_n$, we have $d(x, B_n)\leq d(x, A_n)$.
Then fix $x\in\mathrm{Limsup}_{n\to+\infty}A_n$.
It follows from the definition of $\mathrm{Limsup}$ that $\lim_{n\to+\infty}d(x, A_n)=0$.
Since $0\leq d(x, B_n)\leq d(x, A_n)$, we have $\lim_{n\to+\infty}d(x, B_n)=0$.
This implies $x\in \mathrm{Limsup}_{n\to+\infty}B_n$.
Since it holds for all $x\in\mathrm{Limsup}_{n\to+\infty}A_n$, the first relationship is proven.
The second one can be shown by exactly following the arguments towards the first one.
\end{proof}

A preliminary lemma is introduced to show that for $2$-norm, $N$-fold Cartesian product expands uniform perturbation by $\sqrt{N}$.
\begin{lemma}\label{thm:prodDist}
Given $A_i\subseteq\mathbb R^{d_i}$, where $d_i\geq1$ and $i\in\{1, \dots, N\}$.
Let $d\triangleq\sum_{i=1}^Nd_i$.
Then for any $\eta>0$, it holds that $\prod_{i=1}^N(A_i+\eta\mathcal B_{d_i})\subseteq\prod_{i=1}^NA_i+\sqrt{n}\eta\mathcal B_d.$
\end{lemma}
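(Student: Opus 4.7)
The plan is to prove the inclusion directly by taking an arbitrary element of the left-hand side, decomposing it componentwise, and bounding the resulting perturbation vector in the product $2$-norm.

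First I would fix an arbitrary $x=(x_1,\dots,x_N)\in\prod_{i=1}^N(A_i+\eta\mathcal B_{d_i})$. By the definition of the Minkowski sum in each factor, for every $i\in\{1,\dots,N\}$ there exist $a_i\in A_i$ and $b_i\in\mathbb R^{d_i}$ with $\|b_i\|\le\eta$ such that $x_i=a_i+b_i$. Concatenating the components, set $a\triangleq(a_1,\dots,a_N)\in\prod_{i=1}^N A_i\subseteq\mathbb R^d$ and $b\triangleq(b_1,\dots,b_N)\in\mathbb R^d$, so that $x=a+b$.

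Next I would estimate $\|b\|$ in the ambient $2$-norm on $\mathbb R^d$. Since the $2$-norm of a concatenated vector equals the square root of the sum of squared component norms, $\|b\|^2=\sum_{i=1}^N\|b_i\|^2\le\sum_{i=1}^N\eta^2=N\eta^2$, hence $\|b\|\le\sqrt{N}\,\eta$. This places $b\in\sqrt{N}\eta\mathcal B_d$ and therefore $x=a+b\in\prod_{i=1}^N A_i+\sqrt{N}\eta\mathcal B_d$, which is exactly the desired inclusion (interpreting the ``$\sqrt{n}$'' in the statement as $\sqrt{N}$, the obvious typographical reading).

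There is no significant obstacle here; the lemma is a basic norm-compatibility fact between the product of Euclidean balls and the ball in the product Euclidean space. The only subtlety worth flagging is that the factor $\sqrt{N}$ is tight and arises precisely because the $2$-norm aggregates component perturbations quadratically, in contrast with the $\infty$-norm where no such factor would appear; this is why the constant $\sqrt{N}$ (rather than $1$) is unavoidable in the statement.
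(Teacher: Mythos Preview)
Your proof is correct and essentially identical to the paper's own argument: both pick an arbitrary $x$ in the product, choose $a_i\in A_i$ with $\|x_i-a_i\|\le\eta$ componentwise, and then bound $\|x-a\|^2=\sum_{i=1}^N\|x_i-a_i\|^2\le N\eta^2$ to conclude. Your reading of ``$\sqrt{n}$'' as $\sqrt{N}$ is also consistent with the paper's proof.
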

\begin{proof}
Pick $x\in\prod_{i=1}^N(A_i+\eta\mathcal B_{d_i})$.
We may rewrite $x$ as $x=\begin{bmatrix}x_1^T&\dots&x_N^T\end{bmatrix}$, where $x_i\in\mathbb R^{d_i}$.
Then it follows from the definition of $\mathcal B$ that for any $i\in\{1, \dots, n\}$, $\exists a_i\in A_i$ s.t. $\|x_i-a_i\|\leq\eta$.
That is, $\eta^2\geq\sum_{j=1}^{d_i}(x_{i,j}-a_{i,j})^2$, where $x_{i,j}$ and $a_{i,j}$ are the $j$-th element of $x_i$ and $a_j$ respectively.
Sum it up for all $i=1, \dots, N$, then we have $n\eta^2\geq\sum_{i=1}^N\sum_{j=1}^{d_i}(x_{i,j}-a_{i,j})^2=\|x-a\|^2$.
Therefore, $\|x-a\|\leq\sqrt{N}\eta$ and $x\in\prod_{i=1}^NA_i+\sqrt{N}\eta\mathcal B_d$.
The lemma is then proven.
\end{proof}

\begin{subsection}{From Pareto optimality to viability kernel: Theorem \ref{thm:viabepi}}\label{sect:viabkernel}


Consider a team of robots that each robot $i\in\mathcal V$ is equiped with an independent body-attached countdown clock with initial value $t_i$.
For each robot, it moves while its clock counts down until either the robot reaches its goal region or the clock counts to $0$.
Therefore, a non-collision trajectory for the whole team is always in the safety region coupled with ``positive time space'' $\mathcal{S}$. 
Then finding the collection of possible minimum arrival time vectors $\varTheta^*(x)$ for every state $x\in\textbf{X}$ is equivalent to finding the viability kernel of $\mathcal S$.
Rigorous reformulation is given.
First, an expanded set-valued map of $F$  is given to describe the dynamics:
\begin{align}\label{eq:phi}
\Phi(x, t)\triangleq\prod_{i\in\mathcal V}\Phi_i(x_i, t_i),
\end{align}
where
\begin{align*}
	\Phi_i(x_i, t_i)\triangleq\begin{cases}
		F_i(x_i)\times\{-1\},\quad\text{if}\;x_i\notin X_i^G;\\
		\bar{co}([F_i(x_i)\times\{-1\}]\cup[\{\textbf{0}_d\}\times\{0\}]),\\
		\quad\quad\text{if}\; x_i\in X_i^G
	\end{cases}
\end{align*}
and $\bar{co}(\cdot)$ represents the closed convex hull of a specified set.
Then the collection of minimum arrival time vectors greater than elements of $\{\vartheta(x, \pi)|\pi\in\varpi\}$ can be expressed as $\mathcal{E}pi(\varTheta^*)$.
The reformulation is summarized below.

\begin{theorem}\label{thm:viabepi}
If system \eqref{eq:0} satisfies Assumption \ref{asmp:1}, $Viab_{\Phi}(\mathcal S)=\mathcal{E}pi(\varTheta^*)$. 
\end{theorem}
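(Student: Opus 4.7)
The plan is to prove the set equality $Viab_\Phi(\mathcal{S}) = \mathcal{E}pi(\varTheta^*)$ by establishing the two inclusions separately, exploiting the fact that the viability kernel of $\mathcal{S}$ under $\Phi$ is the natural state-and-clock augmented characterization of Pareto-optimal arrival-time vectors. For the forward inclusion $\mathcal{E}pi(\varTheta^*) \subseteq Viab_\Phi(\mathcal{S})$, I would pick $(x, t) \in \mathcal{E}pi(\varTheta^*)$, choose $t^* \in \varTheta^*(x)$ with $t \succeq t^*$ and a Pareto optimal $\pi^* \in \mathcal{U}^*(x)$ attaining $t^*$, and then extract from $\pi^*$ the open-loop controls $u_i(s) = \pi_i^*(x(s))$ along the $\pi^*$-trajectory on $[0, \max_j t_j^*]$. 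From $(x, t)$ I would construct a $\Phi$-viable trajectory by running each robot with $(\dot x_i, \dot t_i) = (f_i(x_i, u_i(s)), -1)$ while $x_i(s) \notin X_i^G$, and then switching to the stop selection $(\mathbf{0}, 0)$ in the convex hull that defines $\Phi_i$ once $x_i$ enters $X_i^G$ at time $t_i^*$, freezing $x_i$ at $x_i(t_i^*)$ and $t_i$ at $t_i - t_i^* \geq 0$ thereafter. The delicate safety check is that after some robots stop in their goal regions while others are still moving, the moving robots remain in their free regions $X_j^F$, and the very definition of $X_j^F$ forces $\|x_j(s) - y\| \geq \sigma$ for every $y \in X_i^G$, so the resulting trajectory lies in $\mathcal{S}$ for all $s \geq 0$.

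For the reverse inclusion $Viab_\Phi(\mathcal{S}) \subseteq \mathcal{E}pi(\varTheta^*)$, I would take $(x, t) \in Viab_\Phi(\mathcal{S})$ together with a viable trajectory $(x(\cdot), t(\cdot))$ starting from $(x, t)$. Since $\dot t_i = -1$ whenever $x_i(s) \notin X_i^G$ while $t_i(s) \geq 0$ for all $s \geq 0$, each robot must first enter its goal region at some time $\tau_i \leq t_i$. On $[0, \max_j \tau_j]$ the spatial projection of the viable trajectory lies in $\textbf{S}$, and by convexity~(A4) together with a measurable selection theorem, there exist measurable open-loop controls $u_i(s) \in U_i$ with $\dot x_i(s) = f_i(x_i(s), u_i(s))$. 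A state-feedback policy $\pi$ that reproduces these controls along the trajectory then yields $\vartheta(x, \pi) \preceq \tau \preceq t$; compactness~(A1), continuity~(A2), and the Lipschitz bound~(A5) imply that the restricted achievable set $\{\vartheta(x, \pi') : \vartheta(x, \pi') \preceq \tau\}$ is nonempty and closed inside the compact box $\prod_i [0, \tau_i]$, so a standard Pareto minimization argument produces $t^* \in \varTheta^*(x)$ with $t^* \preceq \tau \preceq t$, placing $(x, t)$ in $\mathcal{E}pi(\varTheta^*)$.

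The main obstacles will be two-fold. First, preserving collision-freeness while freezing robots one-by-one as they arrive at their goals: the trajectory constructed under $\Phi$ is a modification of the trajectory under $\pi^*$, and this modification is only safe because $\textbf{X}^F$ bakes in $\sigma$-clearance between each robot's free region and every other robot's entire goal set, a point that must be invoked carefully. Second, reconstructing a valid state-feedback policy from an arbitrary $\Phi$-viable open-loop trajectory is subtle whenever the same state is visited at distinct times with distinct controls; I expect to handle this by exhibiting a (possibly non-unique) measurable selector that agrees with the viable trajectory on its range, which suffices because the policy space $\varpi_i$ is defined without any regularity requirements. The Pareto frontier attainment step is, by comparison, a routine consequence of compactness inside $\bar{\mathbb{R}}^N_{\geq 0}$.
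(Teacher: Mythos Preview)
Your proposal is correct and follows essentially the same two-inclusion strategy as the paper: for $\mathcal{E}pi(\varTheta^*)\subseteq Viab_\Phi(\mathcal{S})$ both of you take a Pareto optimal policy, run each robot until its arrival time and then freeze it via the $(\mathbf{0},0)$ selection in the convex hull defining $\Phi_i$; for the reverse inclusion both of you use the clock constraint $\dot t_i=-1$ outside $X_i^G$ together with $t_i(s)\geq 0$ to force arrival within time $t_i$, then dominate the resulting arrival vector by a Pareto optimal one. Your treatment is in fact more careful than the paper's on two points the paper glosses over---the safety check when frozen-in-goal robots coexist with still-moving ones (handled via the definition of $X_j^F$) and the reconstruction of a state-feedback policy from an open-loop viable trajectory---so nothing is missing.
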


\begin{proof}
The proof mainly follows the proof of Theorem 3.2 in \cite{cardaliaguet1999setvalued}.
For the sake of self-containedness, we provide the complete proof.

First, we proceed to show that $Viab_{\Phi}(\mathcal{S})\subseteq\mathcal{E}pi(\varTheta^*)$. 

Fix $(x, t)\in Viab_{\Phi}(\mathcal{S})$. 
If $x\in\textbf{X}^G$, $\dot t=\textbf{0}_N$.
Therefore, $(x, t)\in\mathcal{E}pi(\varTheta^*)$ holds for any $t\in[0, +\infty)^N$. 
If $x\notin\textbf{X}^G$, then  $\exists i\in\mathcal V\text{ s.t. } x_i\notin X^G_i$. Since $(x,t)\in Viab_{\Phi}(\mathcal{S})$, then $\exists \pi\in\varpi \text{ s.t. } $ 
$\dot{x}_i(s)\in F_i(x_i(s)),t_i(s)=t_i-s,(x(s), t(s))\in\mathcal{S},\forall s\in[0, \vartheta_i(x, \pi)]$. 
This indicates, $t_i(s)\geq 0$ and $t_i\geq\vartheta_i(x, \pi)$, i.e., $t\succeq\vartheta(x, \pi)$.
It follows from the definition of $\varTheta^*$ that $\exists\mathcal T\in\varTheta^*(x)$ s.t. $\vartheta(x, \pi)\succeq\mathcal T$.
That is, $t\succeq \mathcal T$ and $(x, t)\in\mathcal{E}pi(\varTheta^*)$.
Then we arrive at $Viab_{\Phi}(\mathcal{S})\subseteq\mathcal{E}pi(\varTheta^*).$

Second, prove $Viab_{\Phi}(\mathcal{S})\supseteq\mathcal{E}pi(\varTheta^*)$. 

Take $(x, t)\in\mathcal{E}pi(\varTheta^*)$. 
If $x\in\textbf{X}^G$, it is trivial.
If $x\notin\textbf{X}^G$, $\exists i\in\mathcal V$ s.t $x_i\notin X^G_i$.
Since $t\in E_{\varTheta^*}(x)$, then $\exists \mathcal T\in \varTheta^*(x)$ s.t. $t\succeq\mathcal T$.
Since $t$ is finite, which means $\mathcal T$ is finite, then $\exists \pi^*\in \mathcal{U}^*(x)$ s.t. $\vartheta(x, \pi^*)=\mathcal T$.
Moreover, $x(s)\in\textbf{X}, \forall s\in[0, +\infty)$ and $x_i(\vartheta_i(x, \pi^*))\in X^G_i, \forall i\in\mathcal V$. 
Denote the trajectory caused by $\pi^*$ as $(\bar{x}(s), \bar{t}(s)),\forall s\in [0, +\infty)$.
For each agent $i\in\mathcal V$, define $(x^*_i(s), t^*_i(s))$ as 
\begin{equation}\label{eq:starsys}
\begin{split}
	(x^*_i(s), t^*_i(s))=
		\begin{cases}
		(\bar{x}_i(s), \bar{t}_i-s),\text{ if } s\leq\vartheta_i(x, \pi^*); \\
		(\bar{x}_i(\vartheta_i(x, \pi^*)), \bar{t}_i-\vartheta_i(x, \pi^*)),\\
		\quad\text{if } s>\vartheta_i(x, \pi^*).
		\end{cases}
\end{split}\end{equation}
Since $t\succeq\mathcal T=\vartheta(x, \pi^*)$, then $\forall s\geq 0, t^*(s)\succeq \textbf{0}_N$.
For each agent $i$, at time $s=\vartheta_i(x, \pi^*)$, $x^*(s)\in\textbf{X}^G$. For $s\geq\vartheta_i(x, \pi^*)$, both $x^*(s)$ and $t^*(s)$ will not change over time and thus $\dot{x}_i^*=0, \dot{t}_i^*=0$. 
Therefore, \eqref{eq:starsys} is a solution to equation (1) and $(x, t)\in Viab_{\Phi}(\mathcal{S})$.
So $Viab_{\Phi}(\mathcal{S})\supseteq\mathcal{E}pi(\varTheta^*).$

Hence, $Viab_{\Phi}(\mathcal{S})\subseteq\mathcal{E}pi(\varTheta^*)$ is proved.
\end{proof}

\end{subsection}

\begin{subsection}{Consistent approximation of viability kernel: Theorem \ref{thm:fullyApprox}}\label{sect:discretizing}
Theorem \ref{thm:viabepi} connects viability kernel to $\mathcal Epi(\varTheta)$. 
Then if $Viab_\Phi(\mathcal{S})$ can be numerically obtained, $\varTheta$ is found.
In this subsection, a numerical method is developed which leverages a sequence of viability kernels in discretized spatial and temporal spaces to consistently approximate the viability kernel of interest.


We construct a good approximation of $\Phi$ via the following two steps:
\begin{itemize}
	\item Build a semi-discretization approximation map $\Phi^p(x, t)$;
	\item Build a fully discretization approximation map $\Gamma^{p}(x, t)$ to approximate $G^p(x, t)$, where $G^p(x, t)\triangleq(x, t)+\epsilon_p\Phi^p(x, t)$.
\end{itemize}

Some criteria are proposed to guarantee the validity of approximation maps. 
\begin{itemize}
	\item[\textbf{(H0)}] $\Phi^p: \mathcal H\rightrightarrows\mathcal H$ is upper semicontinuous with convex compact nonempty values;
	\item[\textbf{(H1)}] $gph(\Phi^p)\subseteq gph(\Phi)+\phi(\epsilon_p)\mathcal B_{\textbf{X}\times\mathbb R^N}$, where $\lim_{p\to+\infty}\phi(\epsilon_p)=0$;
	\item[\textbf{(H2)}] $\forall (x, t)\in\mathcal H$, $\bigcup_{\|(y, t')-(x, t)\|\leq M^+\epsilon_p}\Phi(y, t')\subseteq\Phi^p(x, t)$;
	\item[\textbf{(H3)}] $gph(\Gamma^p)\subseteq gph(G^p)+\psi(\epsilon_p, h_p)\mathcal B_{\textbf{X}\times\mathbb R^N}$, where $\lim_{p\to+\infty}\frac{\psi(\epsilon_p, h_p)}{\epsilon_p}=0^+$;
	\item[\textbf{(H4)}] $\forall (x, t)\in\mathcal H^p, \bigcup_{\|(y, t')-(x, t)\|\leq h_p}[G^p(y, t')+h_p\mathcal B_{\textbf{X}\times\mathbb R^N}]\cap\mathcal H^p\subseteq\Gamma^p(x, t)$.
\end{itemize}

The dynamics of robots are approximated by 
\begin{align}\label{eq:Gamma}
\Gamma^p(x, t)\triangleq\mathcal H^p\cap\prod_{i\in\mathcal V}\Gamma_i^p(x_i, t_i),
\end{align}
 where 
\begin{align*}
	\Gamma^p_i(x_i, t_i)\triangleq\begin{cases}
		\{x_i+\epsilon_p F_i(x_i)+\alpha_p\mathcal B_{X_i}\}\times\{t_i-\epsilon_p+2h_p\mathcal B_1\}, \\\quad\text{if } d(x_i,  X_i^G)> M_i\epsilon_p+h_p;\\
		\bar{co}([\{x_i+\epsilon_p F_i(x_i)+\alpha_p\mathcal B_{X_i}\}\\
			\quad\times\{t_i-\epsilon_p+2h_p\mathcal B_1\}]\cup[\{x_i+2h_p\mathcal B_{X_i}\}\\
			\quad\times\{t_i+2h_p\mathcal B_1\}]),\quad\text{otherwise}.
	\end{cases}
\end{align*}
Recall that $\alpha_p=2h_p+\epsilon_p h_pl^++\epsilon_p^2M^+l^+$.
The safety region-temporal space is discreitzed into a sequence $\{\mathcal S^p_n\}$ as follows:
\begin{align}\label{eq:ss}\begin{cases}
	\mathcal{S}^p_0=\mathcal{S}^p\\
	\mathcal{S}^p_{n+1}=\{(x, t)|\Gamma^p(x, t)\cap\mathcal{S}^p_n\neq\emptyset\}.
\end{cases}\end{align} 

A necessary concept is introduced before we proceed to the main theorem.

\begin{definition}[Discrete viability kernel]
Let $\mathcal{D}$ be a closed set. 
The discrete viability kernel of $\mathcal{D}$ for some dynamics $G^p$ is the set $\{(x, t)\in\mathcal{D}|\exists\{(x_n, t_n)\}_{n=0}^\infty\subseteq\mathcal{D}\text{ s.t. }(x_0, t_0)=(x, t), (x_{n+1}, t_{n+1})\in G^p(x_n, t_n),\forall n\in\mathbb{N}\}.$ It is denoted as $\overrightarrow{Viab}_{G^p}(\mathcal{D})$.
\end{definition}

\begin{theorem}\label{thm:fullyApprox}
If system \eqref{eq:0} satisfies Assumption \ref{asmp:1}, 
then $\Gamma^p$ is a fully discretization of system \eqref{eq:0} and $\mathrm{Lim}_{p\to+\infty}\overrightarrow{Viab}_{\Gamma^p}(\mathcal{S}^p)=Viab_{\Phi}(\mathcal S)$.
\end{theorem}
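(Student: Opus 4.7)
The plan is to decompose the theorem into two assertions and handle each separately. First, I would verify that $\Gamma^p$ is a valid fully-discretization scheme by checking hypotheses \textbf{(H0)}--\textbf{(H4)}; second, I would invoke a general convergence result from set-valued numerical analysis (in the spirit of Cardaliaguet et al.~\cite{cardaliaguet1999setvalued}) to conclude the Kuratowski convergence of the discrete viability kernels.

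To verify \textbf{(H0)}--\textbf{(H2)}, the plan is to construct a semi-discretization $\Phi^p$ explicitly as $\Phi^p(x,t) \triangleq \bar{co}\,\bigcup_{\|(y,t')-(x,t)\|\leq M^+\epsilon_p} \Phi(y,t')$, enlarged if needed to be upper semicontinuous with convex compact values. By the Lipschitz continuity of $F$ (Assumption~\ref{asmp:Lipschitz}), boundedness (Assumption~\ref{asmp:compact}), and linear growth (Assumption~\ref{asmp:lineargrowth}), $\Phi^p$ inherits the required regularity for \textbf{(H0)}, and the graph containment \textbf{(H1)} with $\phi(\epsilon_p) = O(\epsilon_p)$ follows by combining Lipschitz bounds on $F_i$ with the product-set perturbation estimate of Lemma~\ref{thm:prodDist}. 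Hypothesis \textbf{(H2)} holds by construction of $\Phi^p$.

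Next, for \textbf{(H3)}--\textbf{(H4)} with $\Gamma^p$ as in \eqref{eq:Gamma} and $G^p(x,t) = (x,t) + \epsilon_p\Phi^p(x,t)$, the main work is bookkeeping of the perturbation radii. The spatial term $x_i + \epsilon_p F_i(x_i) + \alpha_p \mathcal{B}_{X_i}$ must be compared to $x_i + \epsilon_p F_i(\tilde x_i)$ for $\tilde x_i$ within $h_p$ of $x_i$, with $F_i$ being $l_i$-Lipschitz and bounded by $M_i$. The three summands in $\alpha_p = 2h_p + \epsilon_p h_p l^+ + \epsilon_p^2 l^+ M^+$ are exactly designed to absorb, respectively: the grid quantization $h_p$ in both endpoints, the Lipschitz error $\epsilon_p h_p l^+$ from shifting the velocity evaluation point, and a second-order correction $\epsilon_p^2 l^+ M^+$ for the semi-discretization over the ball of radius $M^+\epsilon_p$. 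Combined with Assumption~\ref{asmp:monoConv} (i.e., $h_p/\epsilon_p\to 0$), this yields $\psi(\epsilon_p,h_p)/\epsilon_p\to 0^+$ as required by \textbf{(H3)}. Condition \textbf{(H4)} is a straightforward consequence of the intersection with $\mathcal{H}^p$ in the definition of $\Gamma^p$ and the $2h_p$ perturbation in the temporal component.

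With \textbf{(H0)}--\textbf{(H4)} in hand, the second step is to invoke the convergence theorem for viability kernels under consistent discretizations (e.g., Theorem 2.17 in \cite{cardaliaguet1999setvalued}), which gives $\mathrm{Lim}_{p\to+\infty}\overrightarrow{Viab}_{\Gamma^p}(\mathcal{S}^p) = Viab_{\Phi}(\mathcal{S})$ provided $\mathcal{S}^p$ converges to $\mathcal{S}$ in the Kuratowski sense, which follows immediately from $\mathcal{S}^p = (\textbf{S}+h_p\mathcal{B})\cap\textbf{X}^p \times (\mathbb{R}^N_{\geq 0})^p$ together with $h_p\to 0$ and the density of $\textbf{X}^p$ in $\textbf{X}$. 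The main obstacle is the careful accounting in \textbf{(H3)}: in particular, showing that the triangle inequalities used to bound $gph(\Gamma^p) \subseteq gph(G^p) + \psi(\epsilon_p,h_p)\mathcal{B}$ actually yield $\psi(\epsilon_p,h_p) = o(\epsilon_p)$ rather than $O(\epsilon_p)$, which is precisely where the assumption $h_p/\epsilon_p \to 0$ enters critically and where the convex-hull branch of $\Gamma_i^p$ for nodes close to the goal must be verified separately using the linear-growth bound to ensure no large jumps occur.
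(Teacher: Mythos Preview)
Your proposal is correct and follows essentially the same route as the paper: verify that $\Phi^p$ satisfies \textbf{(H0)}--\textbf{(H2)} and that $\Gamma^p$ satisfies \textbf{(H3)}--\textbf{(H4)} via componentwise case analysis on whether $d(x_i,X_i^G)$ exceeds the one-hop threshold, then invoke the viability-kernel convergence theorems of \cite{cardaliaguet1999setvalued}. The only cosmetic difference is that the paper writes $\Phi^p$ explicitly as the product map $\prod_i\Phi^p_i$ with the perturbation $\epsilon_p M^+ l^+\mathcal{B}_{X_i}$ added to $F_i$ (equation \eqref{eq:phieps}), rather than your abstract $\bar{co}\,\bigcup_{\|\cdot\|\leq M^+\epsilon_p}\Phi$; this lets the paper reduce \textbf{(H1)}--\textbf{(H2)} to the single-robot Lemma~3.3 of \cite{cardaliaguet1999setvalued} and then apply Theorems~2.14 and~2.19 there, obtaining the explicit constants $\phi(\epsilon_p)=2\sqrt{N}\epsilon_p M^+l^+$ and $\psi(\epsilon_p,h_p)=2\sqrt{Nd}(3h_p+2\epsilon_p h_p l^+)$.
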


\begin{proof}
The proof mainly follows the proofs of Lemma 3.3 and 3.4 in \cite{cardaliaguet1999setvalued}. 
For the sake of self-containedness, we provide the complete proof.

The proof is divided into two phases: the first phase exploits Euler discretization to approximates dynamics in temporal horizon and the second phase further discretizes temporal-spatial space.

In the first phase, one approximation of $F$ is proposed by 
\begin{align}\label{eq:phieps}
	\Phi^p(x, t)=\prod_{i\in\mathcal V}\Phi^p_i(x_i, t_i),
\end{align}
where 
\begin{align*}
	\Phi^p_i(x_i, t_i)=\begin{cases}
	\{F_i(x_i)+\epsilon_p M^+l^+\mathcal{B}_{X_i}\}\times\{-1\}, \\ \quad\text{if }d(x_i,  X_i^G)> M_i\epsilon_p,\\
	\bar{co}([\{F_i(x_i)+\epsilon_p M^+l^+\mathcal{B}_{X_i}\}\times\{-1\}] \\ \quad\cup[\{\textbf{0}_d\}\times\{0\}]), \text{otherwise}.
	\end{cases}
\end{align*}
This represents the collection of one-hop moves of robot team at state $(x, t)$.
After a small time $\epsilon_p$, all the possible states of the robot team from $(x, t)$ compose a set $G^p(x, t)\triangleq(x,t)+\epsilon_p\Phi^p(x,t)$.
Now we proceed to prove that $G^p$ can properly approximate $F$.
\begin{claim}\label{thm:appSemiAppx}
$\mathrm{Lim}_{p\to+\infty}\overrightarrow{Viab}_{G^p}(\mathcal S)=Viab_{\Phi}(\mathcal S)$.
\end{claim}
\begin{proof}
First, we are going to show that equation \eqref{eq:phieps} satisfies \textbf{(H1)} with $\phi(\epsilon_p)=2\sqrt{N}\epsilon_p M^+l^+$.
Fix $(x,t)\in\mathcal H$. 
For each $i\in\mathcal V$, it follows from the proof of Lemma 3.3 in \cite{cardaliaguet1999setvalued} that $\Phi_i^p(x_i, t_i)\subseteq\Phi_i(x_i, t_i)+2M^+l^+\epsilon_p\mathcal B_{X_i\times\mathbb R}$.
Therefore, it follows from Lemma \ref{thm:prodDist} that $\Phi^p(x, t)=\prod_{i\in\mathcal V}\Phi^p_i(x_i, t_i)\subseteq\prod_{i\in\mathcal V}(\Phi_i(x_i, t_i)+2M^+l^+\epsilon_p\mathcal B_{X_i\times\mathbb R})\subseteq \prod_{i\in\mathcal V}\Phi_i(x_i, t_i)+2\sqrt{N}M^+l^+\epsilon_p\mathcal B_{\textbf{X}\times\mathbb R^N}=\Phi(x, t)+\phi(\epsilon_p)\mathcal{B}$ and thus equation \eqref{eq:phieps} satisfies \textbf{(H1)}.

Then, we show that equation \eqref{eq:phieps} satisfies \textbf{(H2)}.
For each $i\in\mathcal V$, it again follows from the proof of Lemma 3.3 in \cite{cardaliaguet1999setvalued} that $\bigcup_{\|(y_i, t'_i)-(x_i, t_i)\|\leq M^+\epsilon_p}\Phi^p_i(y_i, t'_i)\subseteq\Phi^p_i(x_i, t_i)$.
Therefore, $\bigcup_{\|(y, t')-(x, t)\|\leq M^+\epsilon_p}\Phi(y, t')\subseteq\prod_{i\in\mathcal V}\bigcup_{\|(y_i, t'_i)-(x_i, t_i)\|\leq M^+\epsilon_p}\Phi^p_i(y_i, t'_i)\subseteq\prod_{i\in\mathcal V}\Phi^p_i(x_i, t_i)=\Phi^p(x, t)$. 
Therefore, quation \eqref{eq:phieps} satisfies \textbf{(H2)}.

Therefore, \textbf{(H1)} and \textbf{(H2)} are satisfied. 
Hypothesis \textbf{(H0)} is satisfied due to Assumption \ref{asmp:1}.
Hence, it follows from Theorem 2.14 in \cite{cardaliaguet1999setvalued} that $\mathrm{Lim}_{p\to+\infty}\overrightarrow{Viab}_{G^p}(\mathcal S)=Viab_{\Phi}(\mathcal S)$.
\end{proof}

In the second phase, the spatial-temporal space is discretized and we come back to $\Gamma^p$.
The following proves the convergence of the discrete viability kernel of $\Gamma^p$ on grid $\mathcal S^p$ to the viability kernel of interest.

\begin{claim}\label{thm:appFullyUpper}
Equation \eqref{eq:Gamma} satisfies \textbf{(H3)} with $\psi(\epsilon_p, h_p)=2\sqrt{Nd}(3h_p+2\epsilon_ph_pl^+)$.
\end{claim}
\begin{proof}
Fix $(x, t)\in\mathcal H^p$ and $i\in\mathcal V$.
Two cases arise:

Case 1: $d(x_i,  X_i^G)>M_i\epsilon_p+h_p>M_i\epsilon_p$. 
Then we have
$\Gamma_i^p(x_i, t_i)=\{x_i+\epsilon_p(F_i(x_i)+\epsilon_pM^+l^+\mathcal B_{X_i})+(2h_p+\epsilon_ph_pl^+)\mathcal B_{X_i}\}\times\{t_i-\epsilon_p+2h_p\mathcal B_1\}\subseteq\{x_i+\epsilon_p(F_i(x_i)+\epsilon_pM^+l^+\mathcal B_{X_i})\}\times\{t_i-\epsilon_p\}+\sqrt{2}(2h_p+\epsilon_p h_pl^+)\mathcal B_{X_i\times\mathbb R}\subseteq G_i^p(x_i, t_i)+\sqrt{2}(3h_p+2\epsilon_p h_pl^+)\mathcal B_{X_i\times\mathbb R}$, where the first inclusion follows from Lemma \ref{thm:prodDist}.

Case 2: $d(x_i,  X_i^G)\leq M_i\epsilon_p+h_p$. Therefore, $\exists x_i'\neq x_i$ s.t. $d(x_i',  X_i^G)\leq M_i\epsilon_p$ and $\|x_i'-x_i\|\leq h_p$.
Further, it follows from Assumption \ref{asmp:Lipschitz} that
\begin{align*}
	&\Gamma_i^p(x_i, t_i)\\
\subseteq&\bar{co}([\{x_i'+\|x_i-x_i'\|\mathcal B_{X_i}+\epsilon_p(F_i(x_i')+l_i\|x_i-x_i'\|\mathcal B_{X_i})\\
&+\alpha_p\mathcal B_{X_i}\}\times\{t_i-\epsilon_p+ 2h_p\mathcal B_1\}] \\
		&\cup[\{x_i'+\|x_i-x_i'\|\mathcal B_{X_i}+2h_p\mathcal B_{X_i}\}\times\{t_i+2h_p\mathcal B_1\}]) \\
		\subseteq&\bar{co}([\{x_i'+\epsilon_p F_i(x_i')+(h_p+\epsilon_p h_pl_i+\alpha_p)\mathcal B_{X_i}\}\\
		&\times\{t_i-\epsilon_p+2h_p\mathcal B_1\}]\cup[\{x_i'+3h_p\mathcal B_{X_i}\}\times\{t_i+2h_p\mathcal B_1\}])\\
		\subseteq&\bar{co}([\{x_i'+\epsilon_p(F_i(x_i')+\epsilon_pM^+l^+\mathcal B_{X_i})+\psi_0\mathcal B_{X_i}\}\\
		&\times\{t_i-\epsilon_p+\psi_0\mathcal B_1\}]\cup[\{x_i'+\psi_0\mathcal B_{X_i}\}\times\{t_i+\psi_0\mathcal B_1\}]),
\end{align*}
where $\psi_0=3h_p+2\epsilon_p h_pl^+$.
By Lemma \ref{thm:prodDist}, the above renders at $\Gamma_i^p(x_i, t_i)\subseteq G^p_i(x_i', t_i)+\sqrt{2}\psi_0\mathcal B_{X_i\times\mathbb R}.$

Combining the above two cases, we see that 
\begin{align*}&\Gamma^p(x, t)=\prod_{i\in\mathcal V}\Gamma^p_i(x_i, t_i)\\
\subseteq&\prod_{i\in\mathcal V}[\bigcup_{\|x'_i-x_i\|\leq h_p}G^p_i(x_i', t_i)+\sqrt{2}\psi_0\mathcal B_{X_i\times\mathbb R}]\\
\subseteq&\bigcup_{\|x'_i-x_i\|\leq h_p, i\in\mathcal V}G^p(x', t)+\sqrt{2N}\psi_0\mathcal B_{\textbf{X}\times\mathbb R^N}\\
\subseteq&\bigcup_{\|x'-x\|\leq \sqrt{Nd}h_p}G^p(x', t)+\sqrt{2N}\psi_0\mathcal B_{\textbf{X}\times\mathbb R^N},\end{align*}
where the second inclusion is a direct result of Lemma \ref{thm:prodDist} applied to the whole product and the third inclusion is a result of Lemma \ref{thm:prodDist} applied to the product of $x'_i-x_i$.
Therefore, for any pair of $(x, t, \tilde x, \tilde t)\in gph(\Gamma^p)$, $\exists(x', t, \tilde x', \tilde t')\in gph(G^p)$ s.t. $\|x'-x\|\leq\sqrt{Nd}h_p<\sqrt{2Nd}\psi_0$ and $\|(\tilde x-\tilde x', \tilde t-\tilde t')\|\leq\sqrt{2N}\psi_0<\sqrt{2Nd}\psi_0$.
Again we apply Lemma \ref{thm:prodDist} and set $\psi(\epsilon_p, h_p)=2\sqrt{Nd}\psi_0=2\sqrt{Nd}(3h_p+2\epsilon_ph_pl^+)$.
Clearly, $\lim_{p\to+\infty}\frac{\psi(\epsilon_p, h_p)}{\epsilon_p}=0^+$.
Hence the claim is proven.
\end{proof}

\begin{claim}\label{thm:appFullyLower}
Equation \eqref{eq:Gamma} satisfies \textbf{(H4)}.
\end{claim}
\begin{proof}
Pick a pair of $(y, t')\in\mathcal H\text{ and }(x, t)\in\mathcal H^p$ s.t. $\|(y, t')-(x, t)\|\leq h_p$.
Fix $i\in\mathcal V$. 
Two cases arise:

Case 1: $d(y_i,  X_i^G)>M_i\epsilon_p$. Then we have
\begin{align*}
	&G_i^p(y_i, t'_i)+h_p\mathcal B_{X_i\times\mathbb R}\\
	\subseteq&\{y_i+\epsilon_p F_i(y_i)+(h_p+\epsilon_p^2M^+l^+)\mathcal B_{X_i}\}\times\{t_i'-\epsilon_p+h_p\mathcal B_1\}\\
	\subseteq&\{x_i+\|y_i-x_i\|\mathcal B_{X_i}+\epsilon_p F_i(x_i)+\epsilon_p l_i\|y_i-x_i\|\mathcal B_{X_i}\\
	&\quad+(h_p+\epsilon_p^2M^+l^+)\mathcal B_{X_i}\}\times\{t_i+\|t_i'-t_i\|\mathcal B_1-\epsilon_p+h_p\mathcal B_1\}\\
	\subseteq&\{x_i+\epsilon_p F_i(x_i)+(2h_p+\epsilon_p h_pl^++\epsilon_p^2M^+l^+)\mathcal B_{X_i}\}\\
	&\quad\times\{t_i-\epsilon_p+2h_p\mathcal B_1\}.
\end{align*}
Therefore, $G_i^p(y_i, t'_i)+h_p\mathcal B_{X_i\times\mathbb R}\subseteq\Gamma_i^p(x_i, t_i)$.

Case 2:  $d(y_i,  X_i)\leq M_i\epsilon_p$.
Then it holds that $d(x_i,  X_i)\leq d(y_i,  X_i)+\|x_i-y_i\|\leq  M_i\epsilon_p+h_p$.
Hence we have
\begin{align*}
	&G_i^p(y_i, t'_i)+h_p\mathcal B_{X_i\times\mathbb R}\\
	\subseteq&\bar{co}([\{y_i+\epsilon_p F_i(y_i)+(h_p+\epsilon_p^2M^+l^+)\mathcal B_{X_i}\}\\
	&\times\{t_i'-\epsilon_p+h_p\mathcal B_1\}]\cup[\{y_i+h_p\mathcal B_{X_i}\}\times\{t'_i+h_p\mathcal B_1\}])\\
	\subseteq&\bar{co}([\{x_i+\|y_i-x_i\|\mathcal B_{X_i}+\epsilon_p F_i(x_i)+\epsilon_pl_i\|y_i-x_i\|\mathcal B_{X_i}\\
	&+(h_p+\epsilon_p^2M^+l^+)\mathcal B_{X_i}\}\times\{t_i+\|t'_i-t_i\|\mathcal B_1-\epsilon_p+h_p\mathcal B_1\}]\\
	&\cup[\{x_i+(\|y_i-x_i\|+h_p)\mathcal B_{X_i}\}\times\{t_i+(\|t'_i-t_i\|+ h_p)\mathcal B_1])\\
	\subseteq&\bar{co}([\{x_i+\epsilon_p F_i(x_i)+(2h_p+\epsilon_p h_pl_i+\epsilon_p^2M^+l^+)\mathcal B_{X_i}\}\\
	&\times\{t_i-\epsilon_p+ 2h_p\mathcal B_1\}]\cup[\{x_i+2h_p\mathcal B_{X_i}\}\times\{t_i+2h_p\mathcal B_1\}]).
\end{align*}

Therefore, $G_i^p(y_i, t'_i)+h_p\mathcal B_{X_i\times\mathbb R}\subseteq\Gamma_i^p(x, t)$ for every $i\in\mathcal V$.
Moreover, we have
\begin{align*}
&\bigcup_{\|(y, t')-(x, t)\|\leq h_p}[G^p(y, t')+h_p\mathcal B_{\textbf{X}\times R^N}]\\
\subseteq&\bigcup_{\|(y, t')-(x, t)\|\leq h_p}[\prod_{i\in\mathcal V}G^p_i(y_i, t'_i)+h_p\mathcal B_{X_i\times R}]
\subseteq\prod_{i\in\mathcal V}\Gamma^p_i(x_i, t_i).
\end{align*}
Intersect both sides with $\mathcal H^p$, then the right-hand side renders at $\Gamma^p(x, t)$.
Hence, $\Gamma^p$ satisfies \textbf{(H4)}.
\end{proof}
Finally, all conditions of Theorem 2.19 in \cite{cardaliaguet1999setvalued} are satisified by Claim \ref{thm:appSemiAppx}, Claim \ref{thm:appFullyUpper} and Claim \ref{thm:appFullyLower}.
Hence, it follows from Theorem 2.19 in \cite{cardaliaguet1999setvalued} that Theorem \ref{thm:fullyApprox} is established.
\end{proof}

\end{subsection}

\begin{subsection}{Consistent Approximation of $\varTheta^*$: Theorem \ref{thm:varthetaconv}}\label{sect:varthetaconv}
In this subsection, we dig it deep to discover the convergence of estimates of minimal arrival time functions $\varTheta^p_n$ updated by Bellman operator $\mathbb T$.
The result is summarized in Theorem \ref{thm:varthetaconv}.

We proposed the following way to cosntruct the estimates of minimal arrival time functions $\varTheta^p_n:\textbf{X}^p\rightrightarrows(\mathbb{R}_{\geq 0}^N)^p$:
\begin{align}\label{eq:constructionvartheta}
\begin{cases}
	\varTheta^{p}_0(x)=&\begin{cases}	\{\textbf{\emph{0}}_N\},	&\text{ if } x\in\textbf{S}^p;\\
									\{+\infty\textbf{\emph{1}}_N\},	&\text{ otherwise;}\\
						\end{cases}\\
	\varTheta^{p}_{n+1}(x)=&(\mathbb T\varTheta^{p}_n)(x),
\end{cases}
\end{align}
where $\mathbb T$ follows definition of \eqref{eq:bellman}.

The convergence is proven via establishing equivalence between $\varTheta^p_n$ and $\mathcal{S}^p_n$ defined in equation \eqref{eq:ss}. 
This is shown by Lemma \ref{thm:vartheta_s}.

\begin{lemma}\label{thm:vartheta_s}
Let system \eqref{eq:0} satisfy Assumption \ref{asmp:1} and \ref{asmp:double}.
Let $\{\varTheta^p_n:\textbf{X}^p\rightrightarrows(\mathbb{R}_{\geq 0}^N)^p\}_{n=0}^\infty$ be a sequence of mappings established by equation \eqref{eq:constructionvartheta}.
Then $\mathcal{E}pi(\varTheta^p_n)=\mathcal{S}^p_n$ for all $n\in\mathbb{N}$, where $\mathcal{S}^p_n$ follows equation \eqref{eq:ss}.
\end{lemma}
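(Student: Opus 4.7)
The plan is to proceed by induction on $n$, showing that both $\mathcal{S}^p_n$ and $\mathcal{E}pi(\varTheta^p_n)$ are characterized by the same backward-reachability condition on the discrete lattice $\mathcal{H}^p$.

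For the base case $n=0$, the identification is immediate from the definitions: $\mathcal{S}^p_0 = \textbf{S}^p\times(\mathbb{R}^N_{\geq 0})^p$, while $\varTheta^p_0$ takes value $\{\textbf{0}_N\}$ on $\textbf{S}^p$, producing the full non-negative fiber under the epigraph, and $\{+\infty\textbf{1}_N\}$ elsewhere, producing an empty fiber. The sets agree.

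For the inductive step, I would first rewrite $\mathcal{E}pi(\varTheta^p_{n+1})$ using two facts: (i) the Bellman operator $\mathbb{T}$ from \eqref{eq:bellman}, and (ii) that the epigraph absorbs Pareto minimization, since $\mathcal{E}(A)+\mathbb{R}^N_{\geq 0}=A+\mathbb{R}^N_{\geq 0}$. This yields the characterization that $(x,t)\in\mathcal{E}pi(\varTheta^p_{n+1})$ iff there exist $\tilde{x}\in\tilde{X}^p(x)$, $\tilde{t}\in\tilde{T}^p(x)$, and $t''\in\varTheta^p_n(\tilde{x})$ with $t\succeq\tilde{t}+t''$. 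Setting $t':=t-\tilde{t}$, this becomes $(\tilde{x},t')\in\mathcal{E}pi(\varTheta^p_n)=\mathcal{S}^p_n$ by the induction hypothesis. The work then reduces to matching this data with the defining condition $\Gamma^p(x,t)\cap\mathcal{S}^p_n\neq\emptyset$ of $\mathcal{S}^p_{n+1}$, i.e.\ showing componentwise that the admissible pairs $(\tilde{x}_i,t_i-\tilde{t}_i)$ trace out exactly the lattice-intersection of $\Gamma^p_i(x_i,t_i)$ relevant to the epigraph.

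The componentwise matching splits into two cases. For robots with $d(x_i,X_i^G)>M_i\epsilon_p+h_p$, the product $\tilde{X}^p_i(x_i)\times(t_i-\tilde{T}^p_i(x_i))$ coincides with the non-convex-hull branch of $\Gamma^p_i(x_i,t_i)$ directly, and both inclusions follow. For robots with $i\in\mathcal{V}^G_p(x)$, the Bellman side uses the singletons $\tilde{X}^p_i=\{x_i\}$ and $\tilde{T}^p_i=\{0\}$, whereas $\Gamma^p_i$ has the strictly larger convex hull including a ``move'' branch. Reconciliation proceeds from $(x,t)\in\mathcal{E}pi(\varTheta^p_{n+1})$ to $\mathcal{S}^p_{n+1}$ by noting that the vertex $(x_i,t_i)$ lies in $\Gamma^p_i(x_i,t_i)$ as the zero-perturbation point of the ``stay'' branch, so $(\tilde{x},t')=(x_i\text{ fibres},t)$ serves as the sought element of $\Gamma^p(x,t)\cap\mathcal{S}^p_n$.

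The main obstacle is the reverse direction at goal-region coordinates: given $(x',t')\in\Gamma^p(x,t)\cap\mathcal{S}^p_n$ where $x'_i$ might be drawn from the convex hull rather than the vertex $(x_i,t_i)$, one must exhibit $(\tilde{x},\tilde{t})\in\tilde{X}^p(x)\times\tilde{T}^p(x)$ and $t''\in\varTheta^p_n(\tilde{x})$ with $t\succeq\tilde{t}+t''$. The resolution rests on the structural observation (a direct analogue, for $\varTheta^p_n$ rather than $v^p_n$, of the third property of Lemma~\ref{thm:zeroValue}) that any $t''\in\varTheta^p_n(\tilde{x})$ satisfies $t''_i=0$ whenever $i\in\mathcal{V}^G_p(\tilde{x})$. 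This discrete invariance, together with the translation identity $(\tilde{x}_i,\tilde{t}_i)=(x_i,0)$, collapses the convex-hull ambiguity: any admissible $(x'_i,t'_i)$ at a goal coordinate can be replaced, at the level of the epigraph, by the trivial stay transition at zero time cost without violating $t\succeq\tilde{t}+t''$. With this observation in hand, both inclusions go through on the $\mathcal{H}^p$ lattice, completing the induction.
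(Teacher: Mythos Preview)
Your proposal is correct and follows essentially the same route as the paper's proof: induction on $n$, componentwise case analysis on goal-region proximity, and the zero-time-at-goal invariant (which the paper isolates as Lemma~\ref{thm:zerogoaldomain_prop}, precisely the ``analogue for $\varTheta^p_n$'' of the third property of Lemma~\ref{thm:zeroValue} that you invoke). The one ingredient you do not mention is that the paper reads the recursion~\eqref{eq:ss} as implicitly requiring $(x,t)\in\mathcal{S}^p_n$ as well, and handles this in the forward inclusion via a separate monotonicity result $\mathcal{E}pi(\varTheta^p_{n+1})\subseteq\mathcal{E}pi(\varTheta^p_n)$ (Claim~\ref{thm:epimonotone}).
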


Before we proceed to the proof of Lemma \ref{thm:vartheta_s}, several prelinimary results are provided.
The following claim shows the estimated travel time for robots whose distance to their goal regions are within one hop remains zero.
\begin{lemma}\label{thm:zerogoaldomain_prop}
For any $p\geq1$, $x\in\textbf{S}^p$ and any $t\in\varTheta^p_n(x)$, it holds that $t_i=0$ for all $i\in\mathcal V$ s.t. $d(x_i,  X_i^G)\leq M_i\epsilon_p+h_p$.
\end{lemma}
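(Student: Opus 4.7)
The plan is to prove the statement by induction on $n$, framing the induction hypothesis $H(n)$ as follows: for every $p \geq 1$, every $x \in \textbf{S}^p$, every $t \in \varTheta^p_n(x)$, and every $i \in \mathcal{V}^G_p(x)$, one has $t_i = 0$. The base case $H(0)$ is immediate from the construction in \eqref{eq:constructionvartheta}: since $x \in \textbf{S}^p$, we have $\varTheta^p_0(x) = \{\textbf{0}_N\}$, so every coordinate of the only element is zero regardless of the choice of $i$.

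For the inductive step, I would fix $x \in \textbf{S}^p$, $i \in \mathcal{V}^G_p(x)$, and $t' \in \varTheta^p_{n+1}(x) = (\mathbb{T}\varTheta^p_n)(x)$. By the definition of $\mathbb{T}$ in \eqref{eq:bellman}, every element of the Pareto frontier on the right is itself an element of the underlying set, so $t' = \tilde{t} + t$ for some $\tilde{t} \in \tilde{T}^p(x)$, $\tilde{x} \in \tilde{X}^p(x)$, and $t \in \varTheta^p_n(\tilde{x})$. The hypothesis $d(x_i, X^G_i) \leq M_i\epsilon_p + h_p$ triggers the second branch of \eqref{eq:tildeTi} and \eqref{eq:tildeXi}, forcing $\tilde{T}^p_i(x_i) = \{0\}$ and $\tilde{X}^p_i(x_i) = \{x_i\}$. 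Hence $\tilde{t}_i = 0$ and $\tilde{x}_i = x_i$, which in turn gives $d(\tilde{x}_i, X^G_i) = d(x_i, X^G_i) \leq M_i\epsilon_p + h_p$, i.e., $i \in \mathcal{V}^G_p(\tilde{x})$. The intersection with $\textbf{S}^p$ performed in line \ref{alg:SVD:intersect} of Algorithm \ref{alg:SVD} guarantees $\tilde{x} \in \textbf{S}^p$, so the inductive hypothesis applies at $(\tilde{x}, t)$ with the same index $i$ and yields $t_i = 0$. Consequently $t'_i = \tilde{t}_i + t_i = 0$, and $H(n+1)$ is established.

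I do not anticipate a substantive obstacle: the argument is a direct transfer of the ``frozen dynamics near goal'' feature encoded in \eqref{eq:tildeTi}--\eqref{eq:tildeXi} through the Bellman recursion, and it parallels the third property of Lemma \ref{thm:zeroValue} (which handles the analogous fact for the Kruzhkov-transformed operator $\mathbb{G}$, but with the additional complications of the interpolation and initialization steps of Algorithm \ref{alg:1}). The only points requiring mild care are that the Pareto minimization $\mathcal{E}$ may return an empty set, in which case $\varTheta^p_{n+1}(x) = \emptyset$ and the claim is vacuous, and that elements of $\varTheta^p_n(\tilde{x})$ may carry $+\infty$ in some coordinates; the latter is harmless because the argument inspects only the $i$-th coordinate, which is shown to be finite and in fact equal to zero.
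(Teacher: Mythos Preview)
Your proposal is correct and follows essentially the same induction-on-$n$ argument as the paper's own proof, using the fact that the second branch of \eqref{eq:tildeXi}--\eqref{eq:tildeTi} freezes robot $i$ near its goal. If anything, your version is slightly more careful: you explicitly verify $\tilde{x}\in\textbf{S}^p$ (via the intersection in line~\ref{alg:SVD:intersect}) and $i\in\mathcal V^G_p(\tilde x)$ before invoking the inductive hypothesis at $\tilde x$, whereas the paper's proof leaves these checks implicit.
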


\begin{proof}
The proof is established by induction on $n$.
Denote the induction hypothesis for $n$ by $H(n)$.
Throughout the proof, we fix $p\geq1$ and $x\in\textbf{S}^p$.

By equation \eqref{eq:constructionvartheta}, $H(0)$ trivially holds.
Assume $H(n)$ holds and consider $n+1$.
Fix $t\in\varTheta^p_{n+1}(x)$. 
Then it follows from equation \eqref{eq:constructionvartheta} that $\exists (\tilde{x}, \tilde{\tau})\in\tilde{X}^p(x)\times\tilde{T}^p(x)$ s.t. $t=\tilde{\tau}+\tilde{\mathcal{T}}$, where $\tilde{\mathcal{T}}\in\varTheta_{n+1}^p(\tilde{x})$.
It follows from the definitions of $\tilde{T}^p(x)$ and $\tilde{X}^p(x)$ that $\tilde{\tau}_i=0$ and $\tilde{x}_i=x_i$.
Then by $H(n)$, we have $\forall t\in\varTheta^p_n(x)$, $t_i=0$, which indicates $\tilde{\mathcal{T}}_i=0$.
Thus, $t_i=0+0=0$ and $H(n+1)$ holds.

Hence the claim holds for all $n\geq0$.
\end{proof}

The following claim shows that $\mathcal Epi(\varTheta^p_n)$ is monotonically decreasing with respect to $n$.

\begin{claim}\label{thm:epimonotone}
For any $n\geq0$, $\mathcal{E}pi(\varTheta^p_{n+1})\subseteq\mathcal{E}pi(\varTheta^p_{n})$.
\end{claim}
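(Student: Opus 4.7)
The plan is to argue by induction on $n$, reducing the problem to the monotonicity of the Bellman operator $\mathbb{T}$ with respect to epigraphical inclusion.

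For the base case $n=0$, fix $x \in \textbf{X}^p$. If $x \in \textbf{S}^p$, then $\varTheta^p_0(x) = \{\textbf{0}_N\}$, so $\mathcal{E}pi(\varTheta^p_0)|_x = \mathbb{R}^N_{\geq 0}$; inclusion is automatic because every element of $\varTheta^p_1(x) = (\mathbb{T}\varTheta^p_0)(x)$ has the form $\tilde t + t$ with $\tilde t \in \tilde T^p(x) \subseteq \mathbb{R}^N_{\geq 0}$ and $t \in \varTheta^p_0(\tilde x) \subseteq \mathbb{R}^N_{\geq 0}$. For $x \in \textbf{X}^p \setminus \textbf{S}^p$ the iteration leaves $\varTheta^p_n(x)$ at $\{+\infty \textbf{1}_N\}$ (matching the treatment of such nodes in Algorithm~\ref{alg:1}), so both epigraphs are empty at $x$.

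For the inductive step I would first establish the monotonicity of $\mathbb{T}$: \emph{if} $\mathcal{E}pi(\varTheta_1) \subseteq \mathcal{E}pi(\varTheta_2)$, \emph{then} $\mathcal{E}pi(\mathbb{T}\varTheta_1) \subseteq \mathcal{E}pi(\mathbb{T}\varTheta_2)$. Take $(x,s) \in \mathcal{E}pi(\mathbb{T}\varTheta_1)$; then $s \succeq s_0 = \tilde t + t$ for some $\tilde t \in \tilde T^p(x)$, $\tilde x \in \tilde X^p(x)$, and $t \in \varTheta_1(\tilde x)$. The hypothesis yields $t' \in \varTheta_2(\tilde x)$ with $t \succeq t'$, so $\tilde t + t'$ lies in the defining image set $A_2(x) = \{\tilde t + \tau \mid \tilde t \in \tilde T^p(x),\, \tilde x \in \tilde X^p(x),\, \tau \in \varTheta_2(\tilde x)\}$ of $\mathbb{T}\varTheta_2(x)$. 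Since $\tilde T^p(x)$, $\tilde X^p(x)$, and each $\varTheta_2(\tilde x)$ are finite (being subsets of discrete grids on $(\mathbb{R}^N_{\geq 0})^p$), $A_2(x)$ is finite, its Pareto frontier $\mathcal{E}(A_2(x)) = \mathbb{T}\varTheta_2(x)$ is nonempty, and every element of $A_2(x)$ dominates some $s'' \in \mathcal{E}(A_2(x))$. Chaining $s \succeq s_0 = \tilde t + t \succeq \tilde t + t' \succeq s''$ gives $(x,s) \in \mathcal{E}pi(\mathbb{T}\varTheta_2)$. Applying this with $\varTheta_1 = \varTheta^p_n$ and $\varTheta_2 = \varTheta^p_{n-1}$ closes the induction.

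The main obstacle is the finite-covering step inside the monotonicity argument — namely, that every element of $A_2(x)$ dominates a Pareto-minimal element. This is immediate once finiteness of $A_2(x)$ is confirmed, which itself reduces to a side induction verifying that each $\varTheta^p_n(\tilde x)$ remains a finite subset of the lattice $(\mathbb{R}^N_{\geq 0})^p$. Beyond this, the argument is a purely formal chain of $\succeq$-inequalities driven by the definition of $\mathbb{T}$, and the handling of $x \notin \textbf{S}^p$ amounts to carrying the convention $\varTheta^p_n(x) = \{+\infty \textbf{1}_N\}$ throughout.
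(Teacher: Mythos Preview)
Your proposal is correct and follows essentially the same inductive route as the paper: both unwind $(x,t)\in\mathcal{E}pi(\varTheta^p_{n+1})$, push the induction hypothesis through the one-step image $\tilde X^p(x)\times\tilde T^p(x)$, and then invoke external domination to land back in the Pareto frontier. The only cosmetic differences are that you package the inductive step as a monotonicity lemma for $\mathbb{T}$ and justify the domination step via finiteness of the discrete image set, whereas the paper argues directly and cites Theorem~3.2.10 of \cite{sawaragi1985theory} for that step.
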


\begin{proof}
The proof is based on induction on $n$ and follows the proof of Proposition 6.2 in \cite{guigue2014approximation}. 
For the sake of self-containedness, we provide the complete proof.
Denote the induction hypthesis for $n$ by $H(n)$ that $\mathcal{E}pi(\varTheta^p_{n+1})\subseteq\mathcal{E}pi(\varTheta^p_{n})$.

We now proceed to show $H(0)$ holds.
Pick $(x, t)\in\mathcal{E}pi(\varTheta^p_1)$. 
Therefore, $\exists (\tilde{x}, \tilde{\tau})\in(\tilde{X}^p(x)\times\tilde{T}^p(x))\cap\mathcal{S}^p$ s.t. $t\succeq\tilde{\tau}+\mathcal{T}$, where $\mathcal{T}\in\varTheta^p_{0}(\tilde{x})$.
It follows from the definition of $\tilde{T}^p(x)$ and Assumption \ref{asmp:double} that $\tilde{\tau}\succ\textbf{0}$.
By the initial condition in \eqref{eq:constructionvartheta}, we have $\mathcal{T}=\textbf{0}$.
Thus, $t\succ\textbf{0}$, and $t\succ\mathcal{T}$ for any $\mathcal{T}\in\varTheta^p_0(x)$, for any $x\in\textbf{X}^p$.
Hence, $H(0)$ is proven.

Assume $H(n)$ holds and consider $n+1$.
Pick $(x, t)\in\mathcal{E}pi(\varTheta^p_{n+2})$.
It follows from equation \eqref{eq:constructionvartheta} that $\exists (\tilde{x}, \tilde{\tau})\in(\tilde{X}^p(x)\times\tilde{T}^p(x))\cap\mathcal{S}^p$ s.t. $t\succeq\tilde{\tau}+\mathcal{T}^{n+1}$, where $\mathcal{T}^{n+1}\in\varTheta^p_{k+1}(\tilde{x})$.
By Assumption \ref{asmp:double} and the definition of $\tilde{T}^p(x)$, $\tilde{\tau}\succeq\textbf{0}$.
Therefore, $t-\tilde{\tau}\succeq\mathcal{T}^{n+1}$ and $(\tilde{x}, t-\tilde{\tau})\in\mathcal{E}pi(\varTheta^p_{n+1})\subseteq\mathcal{E}pi(\varTheta^p_{n})$.
Hence, $\exists\mathcal{T}^n\in\varTheta^p_n(\tilde{x})$ s.t. $t-\tilde{\tau}\succeq\mathcal{T}^n$, i.e. $t\succeq\tilde{\tau}+\mathcal{T}^n$.
Since $\tilde{\tau}\in\tilde{T}^p(x)\cap(\mathbb{R}^N_{\geq0})^p$ and $\mathcal{T}^n\in\varTheta^p_{n}(\tilde{x})$, where $\tilde{x}\in\tilde{X}^p(x)\cap\textbf{S}^p$, it follow from Theorem 3.2.10 in \cite{sawaragi1985theory} that 
$t\in\{\tilde{\tau}+\varTheta^p_n(\tilde{x})|(\tilde{x}, \tilde{\tau})\in\tilde{X}^p(x)\times\tilde{T}^p(x)\}+\mathbb{R}_{\geq0}^N\subseteq\mathcal{E}(\{\tilde{\tau}+\varTheta^p_n(\tilde{x})|(\tilde{x}, \tilde{\tau})\in\tilde{X}^p(x)\times\tilde{T}^p(x)\}+(\mathbb{R}_{\geq0}^N)^p=\varTheta^p_{n+1}(x)+\mathbb{R}_{\geq0}^N$.
Thus $(\tilde x, \tilde t)\in\mathcal{E}pi(\varTheta^p_{n+1})$ and $H(n+1)$ holds.

Then the claim is proven.
\end{proof}
%

\begin{proofOf}{Lemma \ref{thm:vartheta_s}}
The proof is based on induction on $n$.
Denote the induction hypothesis for $n$ by $H(n)$ as $\mathcal{E}pi(\varTheta^p_n)=\mathcal{S}^p_n$.

When $n=0$, we have $H(0)$ trivially holds.
Assume $H(n)$ holds and consider $n+1$.
We first proceed to show that $\mathcal{E}pi(\varTheta^p_{n+1})\subseteq\mathcal{S}^p_{n+1}$.
Fix $(x, t)\in\mathcal{E}pi(\varTheta^p_{n+1})$. 
Then $\exists\mathcal{T}\in\varTheta^p_{n+1}(x)$ s.t. $t\succeq\mathcal{T}$.
It follows from equation \eqref{eq:constructionvartheta} that $\exists \tilde{x}\in\tilde{X}^p(x)\cap\textbf{S}^p$ and $\tilde{\tau}\in\tilde{T}^p(x)\cap(\mathbb{R}_{\geq0}^N)^p$ s.t. $\mathcal{T}\in\tilde{\tau}+\varTheta^p(\tilde{x})$.
We rewrite $\mathcal{T}$ as $\mathcal{T}=\tilde{\tau}+\tilde{\mathcal{T}}$, where $\tilde{\mathcal{T}}\in\varTheta^p_{n}(\tilde{x})$.
Since $t-\tilde{\tau}\succeq\mathcal{T}-\tilde{\tau}=\tilde{\mathcal{T}}\in\varTheta^p_n(x)$, $(\tilde{x}, t-\tilde{\tau})\in\mathcal{E}pi(\varTheta^p_n)=\mathcal{S}^p_n$.

To prove $(x, t)\in\mathcal{S}^p_{n+1}$, two items are required to prove: (1), $(x, t)\in\mathcal{S}^p_n$; (2), $\Gamma^p(x, t)\cap\mathcal{S}^p_n\neq\emptyset$.
Condition (1) can be derived from Claim \ref{thm:epimonotone} as $(x, t)\in\mathcal{E}pi(\varTheta^p_{n+1})\subseteq\mathcal{E}pi(\varTheta^p_n)=\mathcal{S}^p_n$.
To prove condition (2), a stricter condition is proposed: (3), $(\tilde{x}, t-\tilde{\tau})\in\Gamma^p(x, t)\cap\mathcal{S}^p_{n}$.
Clearly, $(\tilde{x}, t-\tilde{\tau})\in\mathcal{S}^p_n$.
It follows from the definition of $\tilde{x}$ that $\tilde{x}\in(x+\epsilon_p F(x)+\alpha_p\mathcal{B}_\textbf{X})\cap\textbf{S}^p$.
Fix $i\in\mathcal V$.
If $d(x_i, X_i^G)>M_i\epsilon_p+h_p$, then $\tilde{\tau}_i\in\epsilon_p+2h_p\mathcal B_1$ and $t_i-\tilde{\tau}_i\in t_i-\epsilon_p+2h_p\mathcal B_1$.
Otherwise, i.e. $d(x_i, X_i^G)\leq M_i\epsilon_p+h_p$, then $\tilde{\tau}_i=0$ and $t_i-\tilde{\tau}_i=t_i\in\bar{co}((t_i-\epsilon_p+2h_p\mathcal B_1)\cup(t_i+2h_p\mathcal B_1))$.
Thus, $(\tilde{x}, t-\tilde{\tau})\in\Gamma^p(x, t)$.
Hence condition (3) is satisfied and $\mathcal{E}pi(\varTheta^p_{n+1})\subseteq\mathcal{S}^p_{n+1}$ is proven.

Next, we show that $\mathcal{E}pi(\varTheta^p_{n+1})\supseteq\mathcal{S}^p_{n+1}$.
Pick $(x, t)\in\mathcal{S}^p_{n+1}$.
By equation \eqref{eq:ss}, $\exists (\tilde{x}, \tilde{t})\in\Gamma^p(x, t)\cap\mathcal{S}^p_n$.
Since $\mathcal{S}^p_n=\mathcal{E}pi(\varTheta^p_n)$, $\exists\tilde{\mathcal{T}}\in\varTheta^p_n(\tilde{x})$ s.t. $\tilde{t}\succeq\tilde{\mathcal{T}}$.
In order to prove $(x, t)\in\mathcal{E}pi(\varTheta^p_{n+1})$, it is equivalent to prove that $\exists \mathcal{T}\in\varTheta^p_{n+1}(x)$ s.t. $t\succeq\mathcal{T}$.
A sufficient condition is to prove $\mathcal T=t-\tilde t+\tilde{\mathcal T}$, where $\tilde{t}-\tilde{\mathcal{T}}\succeq\textbf{0}$.
The following will show the construction of $(\tilde{x},\tilde{t})$ and $\tilde{\mathcal{T}}\in\varTheta^p_n(\tilde x)$.

By the definition of $\Gamma^p$, two cases arise:

Case 1, $d(x_i, X_i^G)> M_i\epsilon_p+h_p$: then we have $(\tilde{x}_i, \tilde{t}_i)\in(x_i+\epsilon_p F_i(x_i)+\alpha_p\mathcal{B}_{X_i})\times(t_i-\epsilon_p+2h_p\mathcal B_1)$.
Therefore, $\tilde{x}_i\in x_i+\epsilon_p F_i(x_i)+\alpha_p\mathcal{B}_{X_i}=\tilde{X}^p_i(x_i)$ and $t_i-\tilde{t}_i\in\epsilon_p+2h_p\mathcal B_1=\tilde{T}^p_i(x_i)$.

Case 2, $d(x_i, X_i^G)\leq M_i\epsilon_p+h_p$: hence $(\tilde{x}_i, \tilde{t}_i)\in\bar{co}((\{x_i+\epsilon_p F_i(x_i)+\alpha_p\mathcal{B}_{X_i}\}\times\{t_i-\epsilon_p+2h_p\mathcal B_1\})\cup(\{x_i+2h_p\mathcal{B}_{X_i}\}\times\{t_i+ 2h_p\mathcal B_1\}))$.
Choose $(\tilde{x}_i, \tilde{t}_i)=(x_i, t_i)$ so that $(\tilde{x}_i, t_i-\tilde{t}_i)\in\tilde{X}^p_i(x_i)\times\tilde{T}^p_i(x_i)$.
By Lemma \ref{thm:zerogoaldomain_prop} and $d(\tilde{x}_i, X_i^G)=d(x_i, X_i^G)> M_i\epsilon_p+h_p$, $\tilde{\mathcal{T}}_i=0$; hence $(\tilde x_i, \tilde t_i)=(x_i, t_i)$ is a feasible choice.

Hence, $(\tilde{x}, t-\tilde{t})\in\tilde{X}^p(x)\times\tilde{T}^p(x)$ and the corresponding $\tilde{\mathcal{T}}$ could be found.
Since $\mathcal{T}=t-\tilde t+\tilde{\mathcal T}\in\{\tilde{\tau}+\varTheta^p_n(\tilde{x})|(\tilde{x}, \tilde{\tau})\in\tilde{X}^p(x)\times\tilde{T}^p(x)\}$, it follows from Theorem 3.2.10 \cite{sawaragi1985theory} that $\mathcal{T}\in\mathcal{E}(\tilde{\tau}+\varTheta^p_n(\tilde{x})|(\tilde{x}, \tilde{\tau})\in\tilde{X}^p(x)\times\tilde{T}^p(x))=\varTheta^p_{n+1}(x)$. 
Therefore, by $t\succeq\mathcal{T}$, $(x, t)\in\mathcal{E}pi(\varTheta^p_{n+1})$. 
$\mathcal{E}pi(\varTheta^p_{n+1})\supseteq\mathcal{S}^p_{n+1}$ is proved.

Therefore, $H(n+1)$ is proven and so is the lemma.
\end{proofOf}

Finally, we can prove convergence of fixed points $\varTheta^p_\infty$.

\begin{theorem}\label{thm:varthetaconv}
Assume system \eqref{eq:0} satisfies Assumption \ref{asmp:1} and \ref{asmp:double}.
Let $\{\varTheta^p_n:\textbf{X}^p\rightrightarrows(\mathbb R^N_{\geq0})^p\}_{n\in\mathbb N}$ be a sequence of mappings established by equation \eqref{eq:constructionvartheta}.
Then there exists $\varTheta^{p}_\infty$ s.t. $\mathbb T\varTheta^{p}_\infty=\varTheta^p_\infty$, and $\mathrm{Lim}_{p\to+\infty}\mathcal{E}pi(\varTheta^{p}_\infty)=\mathcal{E}pi(\varTheta^*)$, where $\mathrm{Lim}$ denotes Kuratowski limit.
\end{theorem}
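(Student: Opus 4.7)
My plan is to chain together the three earlier results in the appendix to turn value iteration into a viability-kernel computation, and then invoke the already-established convergence of discrete viability kernels.

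First, I would establish the existence of the fixed point. By Claim \ref{thm:epimonotone}, the sequence $\{\mathcal{E}pi(\varTheta^p_n)\}_{n\geq 0}$ is nonincreasing, and by Lemma \ref{thm:vartheta_s} it coincides with $\{\mathcal{S}^p_n\}_{n\geq 0}$. Since each $\mathcal{S}^p_n$ is closed and the sequence is nested, the intersection $\bigcap_{n\geq 0}\mathcal{S}^p_n$ is precisely the discrete viability kernel $\overrightarrow{Viab}_{\Gamma^p}(\mathcal{S}^p)$ (this is the standard characterization of viability kernels as intersections of the iterated predecessor sets). I would then \emph{define} $\varTheta^p_\infty$ as the set-valued map whose epigraph equals $\overrightarrow{Viab}_{\Gamma^p}(\mathcal{S}^p)$; concretely, set $\varTheta^p_\infty(x)\triangleq\mathcal{E}(\{t\in(\mathbb{R}^N_{\geq 0})^p \mid (x,t)\in\overrightarrow{Viab}_{\Gamma^p}(\mathcal{S}^p)\})$. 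Because the epigraphs are nested, this is exactly the Kuratowski/pointwise limit of $\varTheta^p_n$.

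Next I would verify $\mathbb{T}\varTheta^p_\infty=\varTheta^p_\infty$. By Lemma \ref{thm:vartheta_s} it suffices to show $\mathcal{E}pi(\mathbb{T}\varTheta^p_\infty)=\mathcal{E}pi(\varTheta^p_\infty)$. The $\subseteq$ direction is immediate from monotonicity (Claim \ref{thm:epimonotone}) applied to the limit. For $\supseteq$, fix $(x,t)\in\overrightarrow{Viab}_{\Gamma^p}(\mathcal{S}^p)$; by definition there exists a $\Gamma^p$-trajectory staying inside the viability kernel, so $\Gamma^p(x,t)\cap\overrightarrow{Viab}_{\Gamma^p}(\mathcal{S}^p)\neq\emptyset$, and I would translate this invariance into the statement $(x,t)\in\mathcal{E}pi(\mathbb{T}\varTheta^p_\infty)$ using exactly the argument in the second half of the proof of Lemma \ref{thm:vartheta_s} (the passage from $(\tilde x,\tilde t)\in\Gamma^p(x,t)\cap\mathcal{S}^p_n$ to $(x,t)\in\mathcal{E}pi(\varTheta^p_{n+1})$), but with $\mathcal{S}^p_n$ replaced by the fixed set $\overrightarrow{Viab}_{\Gamma^p}(\mathcal{S}^p)$. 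This produces the fixed-point relation.

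Finally, the convergence in the epigraphical sense follows by composing the earlier theorems:
\begin{equation*}
\Lim{p\to+\infty}\mathcal{E}pi(\varTheta^p_\infty)=\Lim{p\to+\infty}\overrightarrow{Viab}_{\Gamma^p}(\mathcal{S}^p)=Viab_\Phi(\mathcal{S})=\mathcal{E}pi(\varTheta^*),
\end{equation*}
where the first equality is the construction of $\varTheta^p_\infty$, the second is Theorem \ref{thm:fullyApprox}, and the third is Theorem \ref{thm:viabepi}.

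The main obstacle I anticipate is the second step: rigorously swapping the limit $n\to\infty$ past the Bellman operator to obtain the fixed-point equation. Because $\mathbb{T}$ involves a Pareto-minimization $\mathcal{E}$, it is not automatically continuous under Kuratowski limits of epigraphs, and one must argue carefully that the invariance property of the discrete viability kernel under $\Gamma^p$ exactly matches one application of $\mathbb{T}$ at the level of epigraphs. The bookkeeping in the goal-region case of $\Gamma^p$ (where the convex hull is taken) is the delicate spot, but Lemma \ref{thm:zerogoaldomain_prop} already ensures the "no travel-time accrued" behavior matches the choice $(\tilde x_i,\tilde t_i)=(x_i,t_i)$ needed in $\Gamma^p$, so the same trick used in Lemma \ref{thm:vartheta_s} should close the argument.
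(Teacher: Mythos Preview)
Your approach is essentially the same as the paper's: identify $\mathcal{E}pi(\varTheta^p_n)$ with $\mathcal{S}^p_n$ via Lemma~\ref{thm:vartheta_s}, pass to the limit in $n$ to obtain the discrete viability kernel $\overrightarrow{Viab}_{\Gamma^p}(\mathcal{S}^p)$, and then chain Theorem~\ref{thm:fullyApprox} with Theorem~\ref{thm:viabepi}. The paper's proof is terser---it cites Proposition~2.18 of \cite{cardaliaguet1999setvalued} for the $n\to\infty$ step and does not spell out the verification of $\mathbb{T}\varTheta^p_\infty=\varTheta^p_\infty$---so your added detail on deriving the fixed-point equation from the invariance of the discrete viability kernel (reusing the second half of the proof of Lemma~\ref{thm:vartheta_s}) is a welcome elaboration rather than a different route.
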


\begin{proof}
By Lemma \ref{thm:vartheta_s}, take $\mathrm{Lim}_{n\to+\infty}$ on both sides of $\mathcal{E}pi(\varTheta^p_n)=\mathcal{S}^p_n$.
By Proposition 2.18 in \cite{cardaliaguet1999setvalued}, $\mathrm{Lim}_{n\to+\infty}\mathcal{S}^p_n=\overrightarrow{Viab}_{\Gamma^p}(\mathcal{S}^p)$.
Hence $\mathrm{Lim}_{n\to+\infty}\mathcal{E}pi(\varTheta^p_n)=\overrightarrow{Viab}_{\Gamma^p}(\mathcal{S}^p)$.
Then let the resolutions of time and state go to 0, by Theorem 2.19 in \cite{cardaliaguet1999setvalued}, $\mathrm{Lim}_{p\to+\infty}(\mathrm{Lim}_{n\to+\infty}\mathcal{E}pi(\varTheta^p_n))=Viab_{\Phi}(\mathcal{S})$.
Finally, by Theorem \ref{thm:viabepi}, $\mathrm{Lim}_{p\to+\infty}(\mathrm{Lim}_{n\to+\infty}\mathcal{E}pi(\varTheta^p_n))=\mathcal{E}pi(\varTheta)$.
\end{proof}

\end{subsection}


%
%
%

The following corollary extends Corollary 3.7 in \cite{cardaliaguet1999setvalued} and shows pointwise convergence of fixed points with any diminishing perturbations.
\begin{corollary}\label{thm:varThetaPointwiseConvergence}
If the assumptions in Theorem~\ref{thm:varthetaconv} are fulfilled, $\varTheta^p_\infty$ converges to $\varTheta^*$ in the epigraphical profile sense, i.e. for $\eta_p\geq h_p$ s.t. $\lim_{p\to+\infty}\eta_p=0$, it holds that
\begin{equation*}
\begin{split}
E_{\varTheta^*}(x)=\Lim{p\to+\infty}\bigcup_{\tilde x\in(x+\eta_p\mathcal B_\textbf{X})\cap\textbf{X}^p}E_{\varTheta^p_\infty}(\tilde x), \forall x\in\textbf{X}.
\end{split}\end{equation*}

\end{corollary}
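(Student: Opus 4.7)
The plan is to deduce the claimed pointwise Kuratowski limit at each fixed $x\in\textbf{X}$ from the epigraph convergence $\Lim{p\to+\infty}\mathcal{E}pi(\varTheta^{p}_\infty)=\mathcal{E}pi(\varTheta^*)$ supplied by Theorem~\ref{thm:varthetaconv}. Writing $A_p(x)\triangleq\bigcup_{\tilde x\in(x+\eta_p\mathcal B)\cap\textbf{X}^p}E_{\varTheta^p_\infty}(\tilde x)$, I would establish the two inclusions $\Limsup{p\to+\infty}A_p(x)\subseteq E_{\varTheta^*}(x)$ and $E_{\varTheta^*}(x)\subseteq \Liminf{p\to+\infty}A_p(x)$ separately; the stated equality then follows from the definition of Kuratowski limit.

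The first inclusion is a straightforward lifting argument. Given $t\in\Limsup{p\to+\infty}A_p(x)$, I would extract a subsequence $\{p_k\}$ and vectors $t_{p_k}\in A_{p_k}(x)$ with $t_{p_k}\to t$, and for each $k$ choose a witness $\tilde x_{p_k}\in(x+\eta_{p_k}\mathcal B)\cap\textbf{X}^{p_k}$ satisfying $t_{p_k}\in E_{\varTheta^{p_k}_\infty}(\tilde x_{p_k})$. Because $\|\tilde x_{p_k}-x\|\leq\eta_{p_k}\to 0$, the lifted pairs $(\tilde x_{p_k},t_{p_k})\in\mathcal{E}pi(\varTheta^{p_k}_\infty)$ converge to $(x,t)$, and Theorem~\ref{thm:varthetaconv} then forces $(x,t)\in\mathcal{E}pi(\varTheta^*)$, i.e., $t\in E_{\varTheta^*}(x)$.

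The second inclusion is the main technical step. Given $t\in E_{\varTheta^*}(x)$, so that $(x,t)\in\mathcal{E}pi(\varTheta^*)\subseteq\Liminf{p\to+\infty}\mathcal{E}pi(\varTheta^p_\infty)$, the epigraph Liminf yields a sequence $(\hat x_p,\hat t_p)\in\mathcal{E}pi(\varTheta^p_\infty)$ with $(\hat x_p,\hat t_p)\to(x,t)$, and I must produce $t_p\in A_p(x)$ with $t_p\to t$. A convenient reformulation is to introduce the inflated map $\tilde\varTheta^p(y)\triangleq\bigcup_{\tilde y\in(y+\eta_p\mathcal B)\cap\textbf{X}^p}\varTheta^p_\infty(\tilde y)$, so that $A_p(x)=E_{\tilde\varTheta^p}(x)$ and $\mathcal{E}pi(\tilde\varTheta^p)=\mathcal{E}pi(\varTheta^p_\infty)+\eta_p\mathcal B_{\textbf{X}}\times\{0\}$; since inflation by a vanishing amount preserves Kuratowski limits, also $\mathcal{E}pi(\tilde\varTheta^p)\to\mathcal{E}pi(\varTheta^*)$. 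The hard part, and the main obstacle I expect, is the passage from epigraph convergence to slice convergence at the specific base point $x$: a priori $\hat x_p$ need not lie in $x+\eta_p\mathcal B$ merely because $\|\hat x_p-x\|\to 0$. I would close this gap by combining the density hypothesis $\eta_p\geq h_p$ (ensuring $(x+\eta_p\mathcal B)\cap\textbf{X}^p$ always contains a nearest grid node $x^*_p$ with $\|x^*_p-x\|\leq h_p$) with the structural properties of the discrete viability construction underlying Lemma~\ref{thm:vartheta_s}, which allow replacing $\hat x_p$ by $x^*_p$ at the cost of only a vanishing perturbation in the time coordinate. This step mirrors the corresponding argument in Corollary~3.7 of~\cite{cardaliaguet1999setvalued}, where it resolves the same domain-mismatch issue.
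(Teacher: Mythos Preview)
Your overall structure---split into $\Limsup{}$ and $\Liminf{}$ inclusions and pass through an inflated auxiliary map---is the same as the paper's, and your $\Limsup{}$ argument is correct and in fact more direct than the paper's version (the paper routes it through its Claim~\ref{thm:epiProfileIneq2} and the auxiliary map $\tilde\varTheta^p_\infty$, whereas your lifting argument goes straight to $\mathcal{E}pi(\varTheta^*)$).

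The gap is in your $\Liminf{}$ direction. You correctly isolate the obstacle: from $(\hat x_p,\hat t_p)\in\mathcal{E}pi(\varTheta^p_\infty)$ with $(\hat x_p,\hat t_p)\to(x,t)$ one cannot conclude $\hat x_p\in x+\eta_p\mathcal B$, so one needs an element of $A_p(x)$ near $t$. But your proposed fix---pick a nearest grid node $x^*_p\in(x+\eta_p\mathcal B)\cap\textbf{X}^p$ and invoke ``structural properties of the discrete viability construction underlying Lemma~\ref{thm:vartheta_s}'' to transfer $\hat t_p$ to $E_{\varTheta^p_\infty}(x^*_p)$ with only vanishing time perturbation---is not substantiated. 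Nothing in Lemma~\ref{thm:vartheta_s} or the surrounding results gives a modulus relating $E_{\varTheta^p_\infty}(\hat x_p)$ to $E_{\varTheta^p_\infty}(x^*_p)$ when $\|\hat x_p-x^*_p\|$ is merely $o(1)$ rather than controlled by $\eta_p$ or $h_p$; the discrete maps $\varTheta^p_\infty$ have no a priori continuity in $x$. Deferring to Corollary~3.7 of~\cite{cardaliaguet1999setvalued} does not close the gap either, since you would still need to name the mechanism that transfers the time coordinate.

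The paper closes this gap differently and without any structural property of $\varTheta^p_\infty$: it defines $\tilde\varTheta^p_\infty$ via $\mathcal{E}pi(\tilde\varTheta^p_\infty)\triangleq\mathcal{E}pi(\varTheta^p_\infty)+\eta_p\mathcal B_{\textbf{X}\times\mathbb R^N}$ (inflation in the full product space, not just in $\textbf{X}$ as in your $\tilde\varTheta^p$), notes that $\mathcal{E}pi(\tilde\varTheta^p_\infty)\to\mathcal{E}pi(\varTheta^*)$, and then invokes Theorem~5.40 of~\cite{rockafellar2009variational} to obtain the slice convergence $E_{\tilde\varTheta^p_\infty}(x)\to E_{\varTheta^*}(x)$ for each fixed $x$. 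That black-box result is precisely the ``epigraph to slice'' passage you flagged as hard. A separate elementary claim (the paper's Claim~\ref{thm:epiProfileIneq1}) then shows $E_{\tilde\varTheta^p_\infty}(x)\subseteq A_p(x)+\eta_p\mathcal B_N$, and taking $\Liminf{}$ with $\eta_p\to 0$ finishes the inclusion. If you want to avoid citing~\cite{rockafellar2009variational}, you would need to supply an explicit argument for the slice convergence of the inflated map; your current sketch does not do this.
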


\begin{proof}
Fix $p\geq1$.
Define auxiliary value function $\tilde\varTheta^p_\infty$ s.t. $\mathcal Epi(\tilde \varTheta^p_\infty)\triangleq\mathcal Epi(\varTheta^p_\infty)+\eta_p\mathcal B_{\textbf{X}\times\mathbb R^N}$.
We may rewrite the epigraph of $\tilde\varTheta^p_\infty$ in the following way:
\begin{equation}\label{eq:epiProfileEq}
\begin{split}
\bigcup_{x\in\textbf{X}}[\{x\}\times E_{\tilde\varTheta^p_\infty}(x)]=\bigcup_{x\in\textbf{X}^p}[\{x\}\times E_{\varTheta^p_\infty}(x)]+\eta_p\mathcal B_{\textbf{X}\times\mathbb R^N}.
\end{split}\end{equation}

\begin{claim}\label{thm:epiProfileIneq1}
For any $x\in\textbf{X}$, it holds that
$E_{\tilde\varTheta^p_\infty}(x)\subseteq\bigcup_{\tilde x\in( x+\eta_p\mathcal B_\textbf{X})\cap\textbf{X}^p} E_{\varTheta^p_\infty}(x)+\eta_p\mathcal B_{\textbf{X}\times\mathbb R^N}$.
\end{claim}
\begin{proof}
Fix a pair of $x\in\textbf{X}$ and $t\in E_{\tilde\varTheta^p_\infty}(x)$.
It follows from \eqref{eq:epiProfileEq} that there is a pair of $x'\in\textbf{X}^p$ and $t'\in E_{\varTheta^p_\infty}(x')$ that $\|(x, t)-(x',t')\|\leq\eta_p$.
That is, $\|x-x'\|\leq\eta_p$ and $\|t-t'\|\leq\eta_p$.
Therefore, we have $t\in\bigcup_{x'\in(x+\eta_p\mathcal B_\textbf{X})\cap\textbf{X}^p}(E_{\varTheta^p_\infty}(x')+\eta_p\mathcal B_N)$.
Since this holds for every pair of $x\in\textbf{X}$ and $t\in E_{\tilde\varTheta^p_\infty}(x)$, then the claim is obtained.
\end{proof}

\begin{claim}\label{thm:epiProfileIneq2}
It holds that $E_{\tilde\varTheta^p_\infty}(x')\supseteq  E_{\varTheta^p_\infty}(x'), \forall x'\in\textbf{X}^p$.
\end{claim}
\begin{proof}
Fix a pair of $x'\in\textbf{X}^p$ and $t'\in E_{\varTheta^p_\infty}(x')+\eta_p\mathcal B_N$.
It follows from the definition of unit ball that there is $t\in E_{\varTheta^p_\infty}(x')$ s.t. $\|t-t'\|\leq\eta_p$.
Hence, $\|(x', t')-(x',t)\|\leq\eta_p$.
Since this holds for every $t'\in E_{\varTheta^p_\infty}(x')+\eta_p\mathcal B_N$, then 
$
(\{x'\}\times E_{\varTheta^p_\infty}(x'))+\eta_p\mathcal B_{\textbf{X}\times\mathbb R^N}\supseteq\{x'\}\times(E_{\varTheta^p_\infty}(x')+\eta_p\mathcal B_N).
$
Since this holds for every $x'\in\textbf{X}^p$, then we apply it to \eqref{eq:epiProfileEq} and have
\begin{equation*}
\begin{split}
\bigcup_{x\in\textbf{X}}[\{x\}\times E_{\tilde\varTheta^p_\infty}(x)]\supseteq\bigcup_{x\in\textbf{X}^p}[\{x\}\times(E_{\varTheta^p_\infty}(x)+\eta_p\mathcal B_N)].
\end{split}\end{equation*}
Hence, for any $x'\in\textbf{X}^p$, we take intersection with $\{x'\}\times\mathbb R^N_{\geq0}$ on both sides of the above relationship, then $E_{\tilde\varTheta^p_\infty}(x')\supseteq  E_{\varTheta^p_\infty}(x')+\eta_p\mathcal B_N\supseteq E_{\varTheta^p_\infty}(x')$.
\end{proof}

Recall that by Theorem \ref{thm:kru1}, we have $\mathrm{Lim}_{p\to+\infty}\mathcal Epi(\varTheta^p_\infty)=\mathcal Epi(\varTheta^*)$.
Since $\eta_p\to0$ as $p\to+\infty$, $\mathrm{Lim}_{p\to+\infty}\mathcal Epi(\tilde\varTheta^p_\infty)=\mathrm{Lim}_{p\to+\infty}(\mathcal Epi(\varTheta^p_\infty)+\eta_p\mathcal B)=\mathrm{Lim}_{p\to+\infty}\mathcal Epi(\varTheta^p_\infty)$.
Therefore, it holds true that $\mathrm{Lim}_{p\to+\infty}\mathcal Epi(\tilde\varTheta^p_\infty)=\mathcal Epi(\varTheta^*)$.
It follows from Theorem 5.40 in \cite{rockafellar2009variational} that $\mathrm{Lim}_{p\to+\infty}E_{\tilde\varTheta^p_\infty}(x)=E_{\varTheta^*}(x)$.

\begin{claim}\label{thm:epiProfileSup}
The following holds for all $x\in\textbf{X}$: $$\Limsup{p\to+\infty}\bigcup_{\tilde x\in(x+\eta_p\mathcal B_\textbf{X})\cap\textbf{X}^p}E_{\varTheta^p_\infty}(\tilde x)\subseteq E_{\varTheta^*}(x).$$
\end{claim}
\begin{proof}
Fix $x\in\textbf{X}$.
Since $\textbf{X}^p\subseteq\textbf{X}$, we have $\bigcup_{\tilde x\in x+\eta_p\mathcal B_\textbf{X}}E_{\tilde\varTheta^p_\infty}(\tilde x)\supseteq\bigcup_{\tilde x\in(x+\eta_p\mathcal B_\textbf{X})\cap\textbf{X}^p}E_{\tilde\varTheta^p_\infty}(\tilde x)$.
Further, it follows from Claim \ref{thm:epiProfileIneq2} that $\bigcup_{\tilde x\in(x+\eta_p\mathcal B_\textbf{X})\cap\textbf{X}^p}E_{\tilde\varTheta^p_\infty}(\tilde x)\supseteq\bigcup_{\tilde x\in(x+\eta_p\mathcal B_\textbf{X})\cap\textbf{X}^p}E_{\varTheta^p_\infty}(\tilde x)$.
Then, 
\begin{align}\label{eq:pointwiseConv1}
\bigcup_{\tilde x\in x+\eta_p\mathcal B_\textbf{X}}E_{\tilde\varTheta^p_\infty}(\tilde x)\supseteq\bigcup_{\tilde x\in(x+\eta_p\mathcal B_\textbf{X})\cap\textbf{X}^p}E_{\varTheta^p_\infty}(\tilde x).
\end{align}
Since $\eta_p\to0$, then $\forall\eta>0$, $\exists P>0$ s.t. $\forall p\geq P$, $\eta_p\leq \eta$.
Then it follows from \eqref{eq:pointwiseConv1} that
$\forall p\geq P, \bigcup_{\tilde x\in(x+\eta_p\mathcal B_\textbf{X})\cap\textbf{X}^p}E_{\varTheta^p_\infty}(\tilde x)\subseteq\bigcup_{\tilde x\in x+\eta\mathcal B_\textbf{X}}E_{\tilde\varTheta^p_\infty}(\tilde x).$
Take $\mathrm{Limsup}$ on $p$ at both sides, it follows from Lemma \ref{thm:dominatedConv} that
\begin{equation*}
\begin{split} 
&\Limsup{p\to+\infty}\bigcup_{\tilde x\in(x+\eta_p\mathcal B_\textbf{X})\cap\textbf{X}^p}E_{\varTheta^p_\infty}(\tilde x)\\
\subseteq&\Limsup{p\to+\infty}\bigcup_{\tilde x\in x+\eta\mathcal B_\textbf{X}}E_{\tilde\varTheta^p_\infty}(\tilde x)=\bigcup_{\tilde x\in x+\eta\mathcal B_\textbf{X}}E_{\varTheta^*}(\tilde x), 
\end{split}\end{equation*}
where the last equality follows from $\mathrm{Lim}_{p\to+\infty}E_{\tilde\varTheta^p_\infty}(x)=E_{\varTheta^*}(x)$.
This holds for any $\eta>0$ and $x\in\textbf{X}$, so the claim is proven.
\end{proof}

It follows from Claim \ref{thm:epiProfileIneq1} and Lemma \ref{thm:dominatedConv} that $\forall x\in\textbf{X}$,
$E_{\varTheta^*}(x)=\Liminf{p\to+\infty}E_{\tilde\varTheta^p_\infty}(x)
\subseteq\Liminf{p\to+\infty}\bigcup_{\tilde x\in(x+\eta_p\mathcal B_\textbf{X})\cap\textbf{X}^p}E_{\varTheta^p_\infty}(\tilde x)+\eta_p\mathcal B_N.$
Since $\eta_p$ is diminishing, we have
\begin{align*}\label{eq:epiProfileInf}
E_{\varTheta^*}(x)\subseteq\Liminf{p\to+\infty}\bigcup_{\tilde x\in(x+\eta_p\mathcal B_\textbf{X})\cap\textbf{X}^p}E_{\varTheta^p_\infty}(\tilde x), \forall x\in\textbf{X}.
\end{align*}
By the above inequality and Claim \ref{thm:epiProfileSup}, it is concluded that $\mathrm{Lim}_{p\to+\infty}\bigcup_{\tilde x\in(x+\eta_p\mathcal B_\textbf{X})\cap\textbf{X}^p}E_{\varTheta^p_\infty}(\tilde x)= E_{\varTheta^*}(x)$ and the corollary is proven.
\end{proof}

\end{section}

\end{document}